\setlist[itemize]{leftmargin=25pt}
\setlist[enumerate]{leftmargin=25pt}
\newcommand{\N}{\ensuremath{\mathbb{N}}}
\newcommand{\R}{\ensuremath{\mathbb{R}}}
\newcommand{\C}{\ensuremath{\mathbb{C}}}
\newcommand{\K}{\ensuremath{\mathbb{K}}}
\newcommand{\E}{\ensuremath{\mathbb{E}}}
\renewcommand{\P}{\ensuremath{\mathbb{P}}}
\newcommand{\mc}{\mathcal}
\newcommand{\ms}{\mathscr}
\DeclarePairedDelimiter\abs{\lvert}{\rvert}
\DeclarePairedDelimiter\cbrace\{\}
\DeclarePairedDelimiter\ha()
\DeclarePairedDelimiter{\ip}\langle\rangle
\DeclarePairedDelimiter{\nrm}\lVert\rVert
\DeclarePairedDelimiter{\nrmn}{|\!|\!|}{|\!|\!|}
\newcommand{\nrmb}[1]{\bigl\|#1\bigr\|}
\newcommand{\absb}[1]{\bigl|#1\bigr|}
\newcommand{\hab}[1]{\bigl(#1\bigr)}
\newcommand{\cbraceb}[1]{\bigl\{#1\bigr\}}
\newcommand{\ipb}[1]{\bigl\langle#1\bigr\rangle}
\newcommand{\nrms}[1]{\Bigl\|#1\Bigr\|}
\newcommand{\abss}[1]{\Bigl|#1\Bigr|}
\newcommand{\has}[1]{\Bigl(#1\Bigr)}
\newcommand{\cbraces}[1]{\Bigl\{#1\Bigr\}}
\newcommand{\ips}[1]{\Bigl\langle#1\Bigr\rangle}
\DeclareMathOperator{\re}{Re}
\DeclareMathOperator{\Tr}{Tr}
\DeclareMathOperator{\loc}{loc}
\DeclareMathOperator{\supp}{supp}
\DeclareMathOperator{\ind}{\mathbf{1}}
\DeclareMathOperator{\UMD}{UMD}
\DeclareMathOperator{\HL}{HL}
\DeclareMathOperator{\BMO}{BMO}
\DeclareMathOperator{\SMR}{SMR}
\DeclareMathOperator*{\esssup}{ess\,sup}
\DeclareMathOperator*{\essinf}{ess\,inf}
\DeclareMathOperator{\Rad}{{Rad}}
\DeclareMathOperator{\Dinir}{-Dini}
\newcommand{\Dini}[2]{(#1,#2)\Dinir}
\DeclareMathOperator{\Hormanderr}{-H\ddot{o}rm}
\newcommand{\Hormander}[1]{#1\Hormanderr}
\DeclareMathOperator{\Standardr}{-std}
\newcommand{\Standard}[2]{(#1,#2)\Standardr}
\newcommand{\dd}{\hspace{2pt}\mathrm{d}}
\newcommand{\ddn}{\mathrm{d}}
\newcommand{\ee}{\mathrm{e}}
\def\avint_#1{\mathchoice{\mathop{\kern 0.2em\vrule width 0.6em height 0.69678ex depth -0.58065ex \kern -0.8em \intop}\nolimits_{\kern -0.4em#1}}{\mathop{\kern 0.1em\vrule width 0.5em height 0.69678ex depth -0.60387ex \kern -0.6em \intop}\nolimits_{#1}} {\mathop{\kern 0.1em\vrule width 0.5em height 0.69678ex depth -0.60387ex \kern -0.6em \intop}\nolimits_{#1}} {\mathop{\kern 0.1em\vrule width 0.5em height 0.69678ex depth -0.60387ex \kern -0.6em \intop}\nolimits_{#1}}}
\DeclareFontFamily{U}{mathx}{\hyphenchar\font45}
\DeclareFontShape{U}{mathx}{m}{n}{<5> <6> <7> <8> <9> <10> <10.95> <12> <14.4> <17.28> <20.74> <24.88> mathx10}{}
\DeclareSymbolFont{mathx}{U}{mathx}{m}{n}
\DeclareMathAccent{\widecheck}{0}{mathx}{"71}
\newtheorem{theorem}{Theorem}
\newtheorem{corollary}[theorem]{Corollary}
\newtheorem{lemma}[theorem]{Lemma}
\newtheorem{proposition}[theorem]{Proposition}
\theoremstyle{remark}
\newtheorem{remark}[theorem]{Remark}
\newtheorem{example}[theorem]{Example}
\theoremstyle{definition}
\newtheorem{definition}[theorem]{Definition}
\numberwithin{theorem}{section}
\numberwithin{equation}{section}
\title[Singular stochastic integral operators]{Singular stochastic integral operators}
\author{Emiel Lorist and Mark Veraar}
\thanks{The authors are supported by the VIDI subsidy 639.032.427 of the Netherlands Organisation for Scientific Research (NWO)}
\address{Delft Institute of Applied Mathematics \\ Delft University of Technology \\ P.O. Box 5031\\ 2600 GA Delft \\The Netherlands}
\email{emiellorist@gmail.com}
\email{m.c.veraar@tudelft.nl}
\begin{document}

\begin{abstract}
In this paper we introduce Calder\'on--Zygmund theory for singular stochastic integrals with operator-valued kernel. In particular, we prove $L^p$-extrapolation results under a H\"ormander condition on the kernel. Sparse domination and sharp weighted bounds are obtained under a Dini condition on the kernel, leading to a stochastic version of the solution to the $A_2$-conjecture. The results are applied to obtain $p$-independence and weighted bounds for stochastic maximal $L^p$-regularity both in the complex and real interpolation scale. As a consequence we obtain several new regularity results for the stochastic heat equation on $\R^d$ and smooth and angular domains.
\end{abstract}

\keywords{singular stochastic integrals, stochastic maximal regularity, stochastic PDE, Calder\'on--Zygmund theory, Muckenhoupt weights, sparse domination}

\subjclass[2020]{Primary: 60H15; Secondary: 35B65, 35R60, 42B37, 47D06}


\maketitle

\tableofcontents

\section{Introduction}

In the study of stochastic partial differential equations (SPDEs), one often needs sharp regularity results for the linear equations. This together with fixed point arguments can be used to obtain existence, uniqueness and regularity for the solution to nonlinear SPDEs. During the last decades so-called maximal regularity results for SPDEs have been obtained in many papers. We refer to \cite[Section 6.3]{DPZ14} for an overview on the subject in the Hilbert space setting. In the $L^q$-setting sharp regularity results have been obtained in \cite{Kr99} by real analysis and PDE methods, and in \cite{NVW12} by functional calculus techniques.

In the above approaches one needs to prove sharp regularity estimates for singular stochastic integral operator  of the form
\begin{equation}\label{eq:SKdefintro}
  S_K G(s) := \int_{0}^\infty K(s,t) G(t) \dd W_H(t), \qquad s\in \R_+,
\end{equation}
where $G$ is an adapted process and $W_H$ is a cylindrical Brownian motion (see Section \ref{section:SIO}) and $K$ is a given operator-valued kernel $K\colon\R_+\times \R_+\to \mc{L}(X,Y)$. An important example of a kernel $K$ is
\begin{align}\label{eq:SMRkernel}
K(s,t) = e^{-(s-t)A} \ind_{t<s},
\end{align}
where $-A$ is the generator of an analytic semigroup and $Y$ is either the real interpolation space $(X, D(A))_{1/2,2}$, the complex interpolation space $[X,D(A)]_{1/2}$ or the fractional domain space $D((\lambda+A^{1/2}))$, where $\lambda\in \rho(-A)$. This kernel has a singularity of the form $\|K(s,t)\|\leq C(s-t)^{-1/2}$ for $|s-t|<1$. The $L^p$-boundedness of singular stochastic integrals with this kernel leads to {\em stochastic maximal $L^p$-regularity}.

Unlike in the deterministic setting, there is no general theory for the $L^p$-boundedness of singular stochastic integral operators. The aim of this paper is to provide a version of this theory and to use it to obtain new regularity results for abstract classes of SPDEs and more concrete examples such as the heat equation.

\subsection{Deterministic singular integrals}\label{subs:detintro}
Before Calder\'on-Zygmund theory \cite{CZ52} was developed, the $L^p$-boundedness of singular integral operators
\[T f := \int_{\R^d} K(s,t) f(t) \dd t\]
was considered on a case by case basis. Typically the singularity of the kernel $K$ is of the form $|K(s,t)|\leq C(s-t)^{-1}$. Important examples are the Hilbert transform
(for $d=1$), and the Riesz transforms (for $d\geq 2$) in which case the integral has to be interpreted as a principal value. Positive kernels are usually easier to deal with as in this case there is absolute convergence and one can apply Schur's lemma (see \cite[Appendix A]{Gr14b}).

In the convolution setting, i.e. $K(s,t) = k(s-t)$,  the Marcinkiewicz--Mihlin multiplier theorem gives simple sufficient conditions on $\widehat{k}$ (the Fourier transform of $k$) under which $T_K$ is a bounded operator on $L^p(\R^d)$  for all $p\in (1, \infty)$. For detailed expositions on Calder\'on--Zygmund operators and beyond, we refer to \cite{Gr14a,Gr14b,St93} and references therein.

The above results have been partially extended to the case where $K$ is scalar valued and $f$ takes values in a Banach space (see the monograph \cite{GR85}). A breakthrough result by \cite{Bu83} and \cite{Bo83} was that the Hilbert transform is bounded on $L^p(\R;X)$ with $p\in (1, \infty)$ if and only if $X$ is a so-called UMD space (see \cite[Chapter 4 and 5]{HNVW16} for details). Another major breakthrough was given in \cite{McC84}, \cite{Bo86} and \cite{Zi89}, where the Marcinkiewicz--Mihlin multiplier theorem and Littlewood--Paley decomposition were obtained for the class of UMD spaces.

For a long time operator-valued extensions of the latter results were unavailable outside Hilbert spaces. In \cite{We01b} the notion of $\mc{R}$-boundedness was used to obtain a Marcinkiewicz--Mihlin multiplier theorem in the operator-valued setting for $d=1$. This result was motivated by its applications to {\em maximal $L^p$-regularity} for parabolic PDEs, which were also discussed in \cite{We01b}. In this context the kernel $K:\R^2_+\to \mc{L}(X,D(A))$ is given by
\[K(s,t) = e^{-(s-t)A} \ind_{t<s},\]
where $-A$ is the generator of an analytic semigroup. This kernel satisfies $\|K(s,t)\|\leq C(s-t)^{-1}$ for $|t-s|<1$. Using Calder\'on--Zygmund theory one can therefore easily deduce that the $L^p$-boundedness of the associated singular integral operator for some $p_0\in [1, \infty]$ implies $L^p$-boundedness for all $p\in (1, \infty)$ (see \cite[Theorem 7.1]{Do00}). We refer to \cite{DHP03,KW04,PS16} for a detailed discussion on the history of maximal $L^p$-regularity and to \cite{KPW10,PSW18,PW17} for applications to nonlinear PDEs.

\subsection{Singular stochastic integrals}
A Calder\'on--Zygmund theory for stochastic integral operators as in \eqref{eq:SKdefintro} is not available. The behavior of stochastic singular integral operators \eqref{eq:SKdefintro} differs a lot from the deterministic setting. Due to the It\^o $L^2$-isometry the integrals converge absolutely and thus no principal values are needed. As a consequence, in contrast with the deterministic setting, the scalar-valued setting can easily be characterized, see Section \ref{section:scalarcase}.  In the operator-valued setting cancellation can for example occur in the following form:
\begin{align}\label{eq:HS:square}
\Big(\int_{\R_+}\|K(s,t)x\|^2_Y \dd t\Big)^{1/2} \leq C \|x\|_X,  \qquad s\in \R_+, \quad x\in X,
\end{align}
where $X$ and $Y$ are Banach spaces. If the kernel is of this form, then using a simple Fubini argument one can check that $S_K$ is $L^2$-bounded (see Propositions \ref{proposition:detcharacterization} and \ref{proposition:simplesufficient}\ref{it:sufficientL2}). In particular, this method was used for the kernel in \eqref{eq:SMRkernel} in the classical monograph \cite[Section 6.3]{DPZ14}. A sophisticated extension of this type of argument was used in  \cite{Br95}, \cite{BH09} and \cite{DL98} to cover $L^p$-boundedness in the scale of real interpolation spaces $(X, D(A))_{\theta,p}$.

The complex interpolation scale is more complicated. In particular, for $X = L^p(\R^d)$ \eqref{eq:HS:square} is often not true. For example it fails for $A = -\Delta$. To obtain $L^p$-estimates in this case, \cite{Kr94,Kr99, Kr08} use sharp estimates for stochastic integrals and sophisticated real analysis arguments. Moreover, by using PDE arguments the operator $A$ can be replaced by a second order elliptic operator with coefficients depending on $(t,\omega,x)\in [0,\infty)\times\Omega\times\R^d$, where some regularity in $x$ is assumed, but only progressive measurability is assumed in $(t,\omega)$. By an elaborate trick in \cite{Kr00} the estimates were extended to an $L^p(L^q)$-setting with $p\geq q\geq 2$. There are many sophisticated variations of the above methods in the literature in which different operators than $\Delta$ are considered and equations on different domains $D\subseteq \R^d$ are treated (see e.g.\ \cite{CKLL18,CKL18,Du18,Ki05, KK18,Kr09,Li14} and references therein).

On the scale of tent spaces stochastic maximal regularity for elliptic operators in divergence form is shown in \cite{ANP14}. This is done through extrapolation using off-diagonal estimates, which are substitutes for the classical pointwise kernel estimates of Calder\'on-–Zygmund theory. See also \cite{AKMP12} for the more general harmonic analysis framework developed to analyze this scale.

In \cite{NVW12,NVW15b} the $L^p$-boundedness of stochastic singular integrals with kernel \eqref{eq:SMRkernel} was obtained using the boundedness of the $H^\infty$-functional calculus together with the sharp two-sided estimates for stochastic integrals in UMD spaces developed in \cite{NVW07}. One of the advantages of this approach is that it can be used for an abstract operator $A$ as long as it has an $H^\infty$-calculus. Secondly, the stochastic integral operator is automatically $L^p$-bounded for any $p\in (2, \infty)$. Some geometric restrictions on $X$ are required, but these are fulfilled for $L^q$, $W^{s,q}$, etc.\ as long as $q\in [2, \infty)$ (see Section \ref{section:Rbddness}). In particular, mixed $L^p(L^q)$-regularity can be obtained for all $q\in [2, \infty)$ and $p\in (2, \infty)$, where $p=q=2$ is allowed as well.
The results of \cite{NVW12,NVW15b} have been applied to semilinear equations in \cite{NVW12b}, to quasilinear equations in \cite{Ho18} and to fully nonlinear equations in \cite{Ag18}.

Recently, in \cite{PV18} the framework of \cite{NVW12,NVW15b} has been extended to cover the case where $A$ depends on time and $\Omega$, as long as $D(A(t,\omega))$ is constant. The method is based on a  reduction to the time and $\Omega$-independent setting and gives a new approach to \cite{Kr99}, which additionally includes new optimal space-time regularity estimates and is applicable to a large class of SPDEs.

A large part of the theory of maximal $L^p$-regularity for deterministic PDEs was developed after the Calder\'on-Zygmund theory for operator-valued kernels was founded. In the stochastic case such a Calder\'on--Zygmund theory is not available yet, and our main motivation is to build such a theory and discover its potential for stochastic maximal $L^p$-regularity (see Subsection \ref{subsection:SMRintro}). Our first main theorem in this direction is as follows:

\begin{theorem}[$L^p$-boundedness of stochastic Calder\'on-Zygmund operators]\label{theorem:CZintro}
Let $X$ and $Y$ be Banach spaces with type $2$ and assume $Y$ is a UMD space. Let $K:\R_+\times\R_+\to \mc{L}(X,Y)$ be strongly measurable and assume that for every ball $B\subset \R_+$ we have the following H\"ormander condition
\begin{align}
   \has{\int_{\R_+ \setminus B} \nrm{K(s,t)-K(s',t)}^2\dd t}^{1/2} \leq C && s,s'\in \frac{1}{2}B \label{eq:horm1intro}\\
  \label{eq:horm2intro}
    \has{\int_{\R_+ \setminus B} \nrm{K(s,t)-K(s,t')}^2\dd s}^{1/2} \leq C && t,t'\in \frac{1}{2}B
\end{align}
for some constant $C>0$ independent of $B$. Fix $p\in [2, \infty)$ and suppose that the mapping $S_K$ as defined in \eqref{eq:SKdefintro} is bounded from $L^p_{\ms{F}}(\Omega\times \R_+;\gamma(H,X))$ into $L^p(\Omega\times \R_+;Y)$. Then for all $q\in (2, \infty)$ the mapping $S_K$ is bounded from $L^q_{\ms{F}}(\Omega\times \R_+;\gamma(H,X))$ into $L^q(\Omega\times \R_+;Y)$.
\end{theorem}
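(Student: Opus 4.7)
The plan is to adapt the classical Calder\'on--Zygmund extrapolation scheme to stochastic integrals with operator-valued kernels. Since the target range is $q \in (2, \infty)$ but the hypothesis provides only boundedness at a single $p \in [2, \infty)$, we treat the cases $q<p$ and $q>p$ separately; the two H\"ormander conditions \eqref{eq:horm1intro}--\eqref{eq:horm2intro} will be used for the two directions, in line with the classical (de)regularity split of Calder\'on--Zygmund theory.

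For $q \in (2, p)$, I would perform an adapted Calder\'on--Zygmund decomposition of $G$ at an appropriate level, writing $G = g + \sum_i b_i$ with each $b_i$ still $\mathscr{F}$-adapted, supported in a disjoint ball $B_i \subset \R_+$, with vanishing mean $\int_{B_i} b_i \dd t = 0$ and the usual size bounds, while the good part $g$ is controlled in $L^p$. The good part is handled by the $L^p$-hypothesis together with Chebyshev. For each bad atom, fix $s \notin 2B_i$ and use the cancellation to write
\[
    S_K b_i(s) = \int_{B_i} \ha{K(s,t) - K(s,t_i)}\, b_i(t) \dd W_H(t),
\]
where $t_i$ is the centre of $B_i$. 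The type~$2$ and UMD hypotheses on $Y$ yield an It\^o-type bound of the $L^2(\Omega;Y)$-norm of this stochastic integral by the $L^2$-in-$t$ norm of $\|K(s,t)-K(s,t_i)\|$ times a $\gamma$-norm of $b_i$. Integrating in $s \notin 2B_i$ and invoking Fubini together with \eqref{eq:horm2intro} (which is precisely the condition controlling $K$ along the integration variable) gives control of the bad part. These ingredients combine to a weak-type endpoint which, interpolated with the $L^p$-hypothesis via Marcinkiewicz, yields $L^q$-boundedness for every $q \in (2, p)$.

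For $q \in (p, \infty)$, the plan is to run a Fefferman--Stein sharp maximal function argument. The target is a pointwise estimate of the form
\[
    M^{\#}\ha{\|S_K G\|_Y}(s) \lesssim \ha{M\ha{\|G\|_{\gamma(H,X)}^p}(s)}^{1/p},
\]
from which extrapolation to $L^q$ for $q>p$ follows by Fefferman--Stein together with the $L^p$-hypothesis. For each ball $B$ I would split $G = G\ind_{2B} + G\ind_{(2B)^{c}}$; the near-diagonal piece is absorbed by the $L^p$-bound, while for the far piece one picks a centre $s_B \in B$ and writes, for $s\in B$,
\[
    S_K(G\ind_{(2B)^{c}})(s) - S_K(G\ind_{(2B)^{c}})(s_B) = \int_{(2B)^{c}} \ha{K(s,t)-K(s_B,t)}\, G(t) \dd W_H(t).
\]
Applying again the It\^o/type~$2$/UMD estimate and condition \eqref{eq:horm1intro} (which is the $s$-regularity H\"ormander condition, tailor-made for this centering) produces the desired oscillation bound.

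The main obstacle is executing these essentially deterministic Calder\'on--Zygmund maneuvers while respecting adaptedness: the CZ atoms $b_i$ must remain $\mathscr{F}$-adapted (so the CZ stopping-time construction has to be compatible with the filtration), and the centering constants in the sharp maximal argument must be realised as genuine stochastic integrals rather than pointwise evaluations. Equally delicate is the consistent passage through the $\gamma$-radonifying framework in both arguments: UMD on $Y$ (and not merely type $2$) is needed to obtain two-sided It\^o-isometry-like estimates in $L^p(\Omega)$, which are what allow the H\"ormander hypothesis --- itself an $L^2$-in-one-variable statement --- to feed into $L^p$-extrapolation on both sides of $p$.
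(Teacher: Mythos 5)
Your plan for $q>p$ via a Fefferman--Stein sharp maximal estimate is essentially the paper's route (Theorem~\ref{theorem:extrapolationup} together with Proposition~\ref{proposition:keyextrapolationup}), and your observation that \eqref{eq:horm1intro} is the $s$-regularity that drives that direction is correct. But for $q<p$ there is a genuine gap, and it is exactly the central technical point of the paper: the mean-zero cancellation trick for Calder\'on--Zygmund atoms does not work for stochastic (or $\gamma$-) integrals. You write
\[
  S_K b_i(s) = \int_{B_i}\hab{K(s,t)-K(s,t_i)}b_i(t)\,\dd W_H(t),
\]
invoking $\int_{B_i} b_i\,\dd t = 0$. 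This would require $\int_{B_i} K(s,t_i)\,b_i(t)\,\dd W_H(t) = K(s,t_i)\int_{B_i} b_i(t)\,\dd W_H(t) = 0$. But the It\^o integral $\int_{B_i} b_i(t)\,\dd W_H(t)$ does not vanish just because the pathwise Lebesgue mean of $b_i$ is zero; the two notions of ``mean zero'' are unrelated. The same obstruction appears after the reduction to $\gamma$-kernels: $\|K(s,\cdot)b_i(\cdot)\|_{\gamma(\R_+;Y)}$ is not equal to $\|(K(s,\cdot)-K(s,t_i))b_i(\cdot)\|_{\gamma(\R_+;Y)}$ even when $\int b_i = 0$. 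The paper flags this explicitly (``we will not be able to use this property for $\gamma$-integral operators'') and instead gets the needed cancellation from the auxiliary comparison operator: it introduces the unit-normalized bump kernels $\psi_{r_j}(s,t)=|B(t,r_j)|^{-1/2}\ind_{B(t,r_j)}(s)$, rewrites $\|T_K h(s)\|_{\gamma}$ as $\|T_\psi h(s)\|_{\gamma(\R^d\times\R^d)}$, and estimates the difference $T_\psi b - \sum_j \widetilde{T}_K S_j b_j$, which produces factors $K(s,t)-K(s,t')$ with $t,t' \in B_j$ that are controlled by \eqref{eq:horm2intro}. Your proof, as written, silently uses a vanishing that is false.

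Two smaller remarks. First, you raise adaptedness of the CZ atoms as a difficulty, but the paper sidesteps this entirely by first passing from $S_K$ to the deterministic $\gamma$-integral operator $T_K$ via the It\^o isomorphism (Propositions~\ref{proposition:detcharacterization} and~\ref{proposition:independenceH}); after that reduction the CZ decomposition is applied to an ordinary deterministic $f\in L^2(\R_+;X)$, so no adaptedness or filtration compatibility is needed. Second, the paper works with the $L^2$-Calder\'on--Zygmund decomposition (level set of $M_2$, not $M$), which matches the quadratic nature of the Hilbert--Schmidt/$\gamma$-norm and is what makes the comparison-operator argument close; this detail would also need to appear in a correct writeup of the $q<p$ direction.
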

The above theorem follows from Proposition \ref{proposition:detcharacterization}, Theorem \ref{theorem:extrapolationdown} and  Theorem \ref{theorem:extrapolationup} in the homogenous setting (see Section \ref{section:homogeneoustype}). In Theorems \ref{theorem:extrapolationdown} and  \ref{theorem:extrapolationup} we prove a general extrapolation result for singular $\gamma$-kernels. In this setting we also obtain the end point estimates $L^2\to L^{2,\infty}$ and $L^\infty\to \BMO$. The results are a stochastic version of the classical extrapolation results for Calder\'on--Zygmund operators (see \cite{Ho60} for the scalar case and \cite{BCP62,GR85} for the vector-valued case).

The conditions \eqref{eq:horm1intro} and \eqref{eq:horm2intro} are $L^2$-variants of what is usually called the H\"ormander condition. The $L^r$-variant for $r\in [1, \infty]$ also appears in \cite[Definition 2.1]{Ho60} in the scalar case and in \cite[Section 5.1]{RV17} in the vector-valued case, where it was used to extrapolate (deterministic) boundedness of operators from $L^p$ into $L^q$ with $\frac{1}{p} - \frac1q = \frac1r$ to other pairs $(u,v)$ satisfying $1<u\leq v<\infty$ and $\frac1u - \frac1v = \frac1r$.

\subsection{Weighted boundedness}
Next we will state a result on weighted boundedness of stochastic singular integral operators.  For deterministic Calder\'on-Zygmund operators satisfying the standard conditions, this result was obtained in \cite{Hy12}. It settles the so-called $A_2$-conjecture for standard Calder\'on-Zygmund operators and states that under standard assumptions on the kernel $K$ one has for all $p\in (1, \infty)$
\begin{align}\label{eq:A2hyt}
\nrms{s \mapsto \int_{\R^d} K(s,t)f(t)\dd t}_{L^p(\R^d,w)} \leq C_p [w]_{A_p}^{\max\{1,1/(p-1)\}} \|f\|_{L^p(\R^d,w)}.
\end{align}
The bound \eqref{eq:A2hyt} with a non-optimal dependence on the weight characteristic has been known for a much longer time (see \cite{GR85} and references therein).
Originally the $A_2$-conjecture was formulated for the Beurling--Ahlfors transform in \cite{AIS01} where it is shown to imply quasiregularity of certain complex functions. Shortly afterwards it was settled for this operator in \cite{PV02} and subsequently many other operators were treated, which eventually led to \cite{Hy12}.

A new proof was obtained in \cite{Le13a} where it was shown that any standard Calder\'on-Zygmund operator can be pointwise dominated by a positive sparse operator. A further extension to Calder\'on-Zygmund operators satisfying a weaker regularity condition (the so-called Dini condition) was obtained in \cite{La17b}. During the last few years simplified proofs have been obtained by several authors. For our purposes the method in \cite{LO19}, generalized to our setting in \cite{Lo19b}, is the most suitable and it can be used to obtain the following stochastic version of the $A_2$-theorem:

\begin{theorem}[Sharp weighted bounds]\label{theorem:sharpweighted}
Let $X$ and $Y$ be Banach spaces with type $2$ and assume $Y$ is a UMD space. Let $K:\R_+\times\R_+\to \mc{L}(X,Y)$ be strongly measurable and assume that
  \begin{align*}
   \nrm{K(s,t)-K(s',t)} &\leq \omega\has{\frac{\abs{s-s'}}{\abs{s-t}}}\frac{1}{\abs{s-t}^{1/2}} \quad &&\abs{s-s'}\leq \frac{1}{2}\abs{s-t},\\
    \nrm{K(s,t)-K(s,t')} &\leq \omega\has{\frac{\abs{t-t'}}{\abs{s-t}}}\frac{1}{\abs{s-t}^{1/2}} \quad &&\abs{t-t'}\leq \frac{1}{2}\abs{s-t},
  \end{align*}
where $\omega:[0,1]\to [0,\infty)$ is increasing and subadditive, $\omega(0)=0$ and
\begin{equation*}
\has{\int_0^1\omega(r)^2\frac{\dd r}{r}}^{1/2} <\infty.
\end{equation*}
Suppose $S_K$ as defined in \eqref{eq:SKdefintro} is bounded from $L^p_{\ms{F}}(\Omega\times \R_+;\gamma(H,X))$ into $L^p(\Omega\times \R_+;Y)$ for some $p\in [2, \infty)$. Then
$S_K$ is bounded from $L^q_{\ms{F}}(\Omega\times \R_+,w;\gamma(H,X))$ into $L^q(\Omega\times \R_+,w;Y)$ for all $q\in (2, \infty)$ and $w\in A_{q/2}(\R_+)$, and the following weighted bound on the operator norm holds
\[\|S_K\|_{L^q\to L^q}\lesssim [w]_{A_{q/2}}^{\max\{\frac12,\frac{1}{q-2}\}}.\]
\end{theorem}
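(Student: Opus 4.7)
The Dini pointwise bounds imply the $L^2$-Hörmander conditions \eqref{eq:horm1intro}--\eqref{eq:horm2intro}: splitting $\R_+\setminus B$ into the dyadic annuli around the center of $B$, on each annulus one has $\nrm{K(s,t)-K(s',t)}\leq \omega(2^{-j})\abs{s-t}^{-1/2}$, and square-integrating in $t$ produces a factor $\omega(2^{-j})^2$ per annulus, whose sum is comparable to $\int_0^1 \omega(r)^2\,\tfrac{dr}{r}<\infty$. Hence Theorem~\ref{theorem:CZintro} applies and upgrades the $L^p$-hypothesis to boundedness $L^q_{\ms{F}}(\Omega\times\R_+;\gamma(H,X))\to L^q(\Omega\times\R_+;Y)$ for every $q\in(2,\infty)$ without weights. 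This provides the unweighted base estimate on which the weighted theory is built.

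\textbf{Sparse domination.} Following the scheme of \cite{LO19} generalized in \cite{Lo19b}, I would establish a sparse domination for $S_K$. Introduce a stochastic grand maximal truncation operator, schematically
\[
\mc{M}_{S_K}^{\#}G(s):=\sup_{Q\ni s}\esssup_{s',s''\in Q}\has{\E\abs{S_K(G\ind_{\R_+\setminus 3Q})(s')-S_K(G\ind_{\R_+\setminus 3Q})(s'')}^2}^{1/2},
\]
and prove the weak-type endpoint $\mc{M}_{S_K}^{\#}:L^2_{\ms{F}}\to L^{2,\infty}$. The Dini hypothesis is consumed here in a dyadic telescoping: for $s',s''\in Q$ and $t\in 2^{j+1}Q\setminus 2^jQ$ one has $\nrm{K(s',t)-K(s'',t)}\leq \omega(2^{-j})\abs{s'-t}^{-1/2}$, and square-summing against the $L^2$-Dini bound $\has{\sum_j\omega(2^{-j})^2}^{1/2}\lesssim \has{\int_0^1\omega(r)^2\,\tfrac{dr}{r}}^{1/2}<\infty$ gives a uniform control. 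Combined with a weak $L^2$-bound for $S_K$ itself (from Theorem~\ref{theorem:CZintro} via a Calder\'on--Zygmund decomposition adapted to the filtration $\ms{F}$, so that good/bad parts remain adapted and the It\^o isometry applies), a Lerner--Ombrosi stopping-time argument yields, for each suitable $G$, a $\tfrac12$-sparse family $\ms{S}$ of intervals in $\R_+$ with
\[
\has{\E\nrm{S_K G(s)}_Y^2}^{1/2}\lesssim \sum_{Q\in\ms{S}}\has{\avint_Q\E\nrm{G(t)}_{\gamma(H,X)}^2\,dt}^{1/2}\ind_Q(s).
\]

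\textbf{From sparse to sharp weighted bounds.} Squaring the sparse estimate and applying Moen's sharp weighted bound for positive sparse operators on the scalar function $t\mapsto\E\nrm{G(t)}_{\gamma(H,X)}^2$ in $L^{q/2}(w)$ with $w\in A_{q/2}(\R_+)$ yields operator norm $\lesssim [w]_{A_{q/2}}^{\max\{1,\,2/(q-2)\}}$. Taking square roots produces the advertised exponent $\max\{\tfrac12,\,\tfrac{1}{q-2}\}$ on $\nrm{S_K}_{L^q\to L^q}$, and reinterpreting the right-hand side back in terms of $\nrm{G}_{\gamma(H,X)}$ closes the estimate. The principal obstacle is the sparse domination itself, and within it the weak $L^2$-endpoint for both $S_K$ and $\mc{M}_{S_K}^{\#}$: the Calder\'on--Zygmund decomposition must respect adaptedness so that it is compatible with the stochastic integral, and the operator-valued, UMD-with-type-$2$ setting forces one to work throughout with $L^2$-norms of kernel differences rather than the pointwise size estimates used in the scalar Calder\'on--Zygmund argument.
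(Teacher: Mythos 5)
There is a genuine gap, and it concerns the point you yourself flag as the principal obstacle. Your plan keeps the stochastic integral $S_K$ throughout and introduces a stochastic grand maximal truncation operator, which forces you to run a Calder\'on--Zygmund decomposition on adapted processes $G$. But the standard CZ decomposition does not preserve adaptedness: the ``good'' part $g$ is built from local averages $\avint_Q G$ over time intervals $Q$, and such an average is measurable with respect to the $\sigma$-algebra at the right endpoint of $Q$, not the left, so $g$ and $b$ are no longer progressively measurable and the It\^o isometry cannot be applied to $S_K g$, $S_K b$. You acknowledge this as the hard part but the proposal does not resolve it, and I do not see how it can be resolved within your framework.

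The paper's route is different precisely here, and the difference is decisive. Rather than work with $S_K$ directly, one first invokes the It\^o isomorphism (Theorem \ref{theorem:Ito}) together with Fubini to show that boundedness of $S_K$ on $L^q_{\ms F}(\Omega\times\R_+,w;\gamma(H,X))$ is \emph{equivalent} to boundedness of the purely deterministic, pathwise $\gamma$-integral operator $T_Kf(s):=K(s,\cdot)f(\cdot)\in\gamma(\R_+;Y)$ on $L^q(\R_+,w;\gamma(H,X))$ (Proposition \ref{proposition:detcharacterization}), and then uses type $2$ and the $\gamma$-Fubini theorem to reduce to $H=\R$ (Proposition \ref{proposition:independenceH}). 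All of the Calder\'on--Zygmund machinery — the $L^2$-CZ decomposition, the weak $L^2$ endpoint (Theorem \ref{theorem:extrapolationdown}), the grand maximal truncation bound (Lemma \ref{lemma:Lernertruncation}), and the sparse domination (Theorem \ref{theorem:sparsedomination}) — is then carried out for the deterministic operator $T_K$ acting on $L^q(\R_+;X)$, where there is no filtration and adaptedness is a non-issue. The weighted bound for $S_K$ on adapted $G$ is recovered at the end by applying the deterministic bound pathwise in $\omega$. Without this reduction, the adaptedness problem is not an inconvenience but a genuine obstruction.

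Two smaller points. First, the sparse estimate should read
\begin{equation*}
\nrm{T_K f(s)}_{\gamma(\R_+;Y)}\lesssim\Bigl(\sum_{Q\in\mc{S}}\Bigl(\avint_Q\nrm{f}_X^2\Bigr)\ind_Q(s)\Bigr)^{1/2},
\end{equation*}
with the sum \emph{inside} the square root: this is the $\ell^2$-sparse form that follows from the $2$-sublinearity of disjointly supported $\gamma$-functions in a type-$2$ space (Lemma \ref{lemma:2sublinear}), and it is what lets you square and apply the linear positive sparse bound to the scalar function $\nrm{f}_X^2$ in $L^{q/2}(w)$, yielding the exponent $\max\{1,2/(q-2)\}$ there and $\max\{1/2,1/(q-2)\}$ after the square root. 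Your version, with the sum outside the power, is a larger ($\ell^1$) majorant and does not give the sharp exponent. Second, invoking Theorem \ref{theorem:CZintro} to first upgrade to unweighted $L^q$-boundedness for all $q$ is an unnecessary detour: the sparse domination argument only needs the weak $L^2$-endpoint of $T_K$, which already follows from the Dini hypothesis via Lemma \ref{lemma:hormanderdini} and Theorem \ref{theorem:extrapolationdown}.
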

The above result follows from Theorem \ref{theorem:sparsedomination} in the homogenous setting (see Section \ref{section:homogeneoustype}), which is deduced from a sparse domination result. Furthermore we prove that the above estimate is sharp in terms of the dependence on the weight characteristic. Note that the difference with \eqref{eq:A2hyt} occurs because the $L^p$-norm of \eqref{eq:SKdefintro} is equivalent to a certain generalized square function. The conditions \eqref{eq:dini2} and \eqref{eq:dini3} together with the integrability condition on $\omega$ are $L^2$-versions of the so-called Dini condition. The integrability condition on $\omega$ holds in particular if $\omega(t) = Ct^{\epsilon}$ for some $C>0$ and $\epsilon\in (0,1/2]$.

\subsection{Consequences for stochastic maximal $L^p$-regularity}\label{subsection:SMRintro}

From Theorem \ref{theorem:CZintro} we find that in many instances stochastic maximal $L^p$-regularity for some $p\in [2, \infty)$ implies stochastic maximal $L^q$-regularity for all $q\in (2, \infty)$ (see Section \ref{section:pindSMR}). In order to state a particular result here consider the following stochastic evolution equation on a Banach space $X$:
\begin{equation}\label{eq:SEEintro}
du(t) + A u(t) \dd t  = G(t) d W_H(t),  \ \ \ u(0) = 0.
\end{equation}
If $-A$ generates a $C_0$-semigroup $(e^{-tA})_{t\geq 0}$ on $X$, then the solution $u$ is given by
\[u(t) = \int_0^t e^{-(t-s)A} G(s) \dd W_H(s).\]

\begin{theorem}\label{theorem:SMRintro}
Assume $-A$ is the generator of a bounded $C_0$-semigroup on a UMD Banach space $X$ with type $2$. Let $p\in [2, \infty)$ and $I = (0,T)$ with $T\in (0,\infty]$. Suppose that for all $G\in L^p_{\ms{F}}(\Omega\times (0,T);\gamma(H,X))$ the solution $u$ to \eqref{eq:SEEintro} satisfies
\[\|A^{\frac12} u\|_{L^p(\Omega\times I;X)}\lesssim \|G\|_{L^p_{\ms{F}}(\Omega\times I;\gamma(H,X))}.\]
Then for all $q\in (2, \infty)$, $w\in A_{q/2}(I)$
and $G\in L^q_{\ms{F}}(\Omega\times (0,T),w;\gamma(H,X))$ the solution to \eqref{eq:SEEintro} satisfies
\[\|A^{\frac12} u\|_{L^q(\Omega\times I,w;X)}\lesssim \|G\|_{L^q_{\ms{F}}(\Omega\times I,w;\gamma(H,X))}.\]
\end{theorem}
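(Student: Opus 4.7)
The plan is to apply Theorem~\ref{theorem:sharpweighted} directly to the operator-valued kernel
\[
K(s,t) := A^{1/2} e^{-(s-t)A}\ind_{t<s},\qquad s,t\in\R_+,
\]
viewed as a function $\R_+\times\R_+\to \mc{L}(X,X)$. First I would identify SMR with boundedness of $S_K$: the mild solution of \eqref{eq:SEEintro} is $u(t)=\int_0^t e^{-(t-s)A}G(s)\dd W_H(s)$, so applying the (closed) operator $A^{1/2}$ gives $A^{1/2}u(s)=S_K G(s)$ for $s\in I$, with $G$ extended by zero outside $I$. The SMR hypothesis at exponent $p$ then becomes boundedness of $S_K$ from $L^p_{\ms{F}}(\Omega\times I;\gamma(H,X))$ into $L^p(\Omega\times I;X)$, which is upgraded to a bound on $L^p_{\ms{F}}(\Omega\times\R_+;\gamma(H,X))\to L^p(\Omega\times\R_+;X)$ by passing to the truncated kernel $\ind_I(s)\ind_I(t)K(s,t)$; the Dini estimates below are inherited by this truncation, and the restriction back to $I$ at the end is automatic.

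Next I would verify the Dini hypothesis on $K$. SMR for a single exponent on a UMD type-$2$ space $X$ forces $-A$ to generate a bounded \emph{analytic} semigroup on $X$, so the standard analytic-semigroup bounds
\[
\nrm{A^{\alpha} e^{-tA}}_{\mc{L}(X)}\leq C_\alpha\,t^{-\alpha},\qquad t>0,\ \alpha\in\{1/2,3/2\},
\]
are available. The fundamental theorem of calculus gives
\[
K(s,t)-K(s',t) = -\int_{s'}^{s} A^{3/2}e^{-(r-t)A}\dd r\qquad(s'<s),
\]
and since $r-t\geq \tfrac12(s-t)$ whenever $|s-s'|\leq \tfrac12|s-t|$, one obtains
\[
\nrm{K(s,t)-K(s',t)} \lesssim \frac{|s-s'|}{|s-t|}\cdot\frac{1}{|s-t|^{1/2}}.
\]
The same calculation, now using $\partial_tK(s,t)=A^{3/2}e^{-(s-t)A}\ind_{t<s}$ away from $t=s$, yields the analogous bound in the $t$-variable. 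Consequently the linear modulus $\omega(r)=Cr$ is admissible in Theorem~\ref{theorem:sharpweighted}: it is increasing, subadditive, vanishes at $0$, and $\int_0^1 \omega(r)^2\tfrac{\dd r}{r}<\infty$.

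All hypotheses of Theorem~\ref{theorem:sharpweighted} are now in place (with $Y=X$, UMD and of type $2$). Applying it yields, for every $q\in(2,\infty)$ and every $w\in A_{q/2}(\R_+)$,
\[
\nrm{S_K G}_{L^q(\Omega\times\R_+,w;X)} \lesssim [w]_{A_{q/2}}^{\max\{1/2,1/(q-2)\}}\, \nrm{G}_{L^q_{\ms{F}}(\Omega\times\R_+,w;\gamma(H,X))}.
\]
Extending an arbitrary $w\in A_{q/2}(I)$ to a weight on $\R_+$ in the standard way, restricting back to $I$, and recalling $A^{1/2}u=S_K G$ on $I$, yields the announced weighted estimate.

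The principal obstacle is the passage from the qualitative hypothesis ``bounded $C_0$-semigroup plus SMR for one $p$'' to the quantitative analytic-semigroup bounds on $A^{3/2}e^{-tA}$ that drive the kernel computation; once those are secured, the Dini verification is essentially one line of the fundamental theorem of calculus. A secondary technical point is the reduction between a finite time interval $I$ and the homogeneous setting on $\R_+$ of Section~\ref{section:homogeneoustype}, but this is handled routinely via zero extension of $G$ and the corresponding extension of $A_{q/2}$-weights.
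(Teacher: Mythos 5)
Your overall strategy is the same as the paper's: identify $A^{1/2}u=S_K G$ for the kernel $K(s,t)=A^{1/2}e^{-(s-t)A}\ind_{t<s}$, invoke that stochastic maximal $L^p$-regularity on a UMD space of type~$2$ forces sectoriality of angle $<\pi/2$ (\cite[Theorem 4.1]{AV19}), verify the $(\omega,2)$-Dini condition with $\omega(r)=Cr$ via the fundamental theorem of calculus and the analytic-semigroup bound $\|A^{3/2}e^{-tA}\|\lesssim t^{-3/2}$, and apply the sparse-domination/weighted theorem. This is exactly the proof of Theorem~\ref{theorem:SMRsemigroup} (with the formal difference that the paper packages $A^{1/2}$ into the target space $Y=\dot D(A^{1/2})$ and uses Lemma~\ref{lemma:standardkernelderivatives}, whereas you fold $A^{1/2}$ into the kernel and redo the one-line FTC argument; these are the same thing).

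There is, however, a genuine gap in your reduction of the case $T<\infty$. You propose to pass to the kernel $\ind_I(s)\ind_I(t)K(s,t)$ on $\R_+\times\R_+$ and assert that ``the Dini estimates below are inherited by this truncation.'' That is false: the hard cutoff at $s=T$ destroys the regularity. Concretely, fix $t\in(0,T/2)$ and take $s\in(T-\delta,T)$, $s'\in(T,T+\delta)$ with $\delta$ small; then $|s-s'|\le 2\delta\le\frac12|s-t|$, yet
\begin{equation*}
\bigl\|\ind_I(s)K(s,t)-\ind_I(s')K(s',t)\bigr\|=\bigl\|K(s,t)\bigr\|\approx (s-t)^{-1/2},
\end{equation*}
which does not tend to $0$ as $\delta\to 0$, so no admissible modulus $\omega$ with $\omega(0^+)=0$ can control this difference. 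The same problem occurs at the boundary $t=T$. The correct route — and the one the paper takes — is not to extend by zero but to apply the sparse-domination/extrapolation machinery \emph{directly} on $(0,T)$, viewed as a space of homogeneous type; this is precisely the content of Section~\ref{section:homogeneoustype} together with Remark~\ref{remark:severalthingonSMR}\ref{it:finitetimeSMR} (alternatively one can invoke the finite-time transference of \cite[Section 5]{AV19}). The kernel restricted to $(0,T)\times(0,T)$ genuinely does satisfy the $(\omega,2)$-Dini condition there (your FTC computation applies verbatim), and the weight $w\in A_{q/2}(I)$ is then used as is, with no extension step; the extension-of-weights step in your last paragraph is likewise unnecessary once one works on $(0,T)$ directly.
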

The boundedness of the semigroup is only needed if $T=\infty$.
A more general result is contained in Theorem \ref{theorem:SMRsemigroup} below. For this we should note that the above $L^p$-estimate implies sectoriality of angle $<\pi/2$ (see \cite[Theorem 4.1]{AV19}). Theorem \ref{theorem:SMRintro} can be seen as the stochastic analogue of \cite[Theorem 7.1]{Do00}. Moreover, the weighted estimates are a stochastic version of \cite[Corollary 4]{CF14} and \cite[Theorem 5.1]{CK18}.

For many differential operators $A$ one can directly apply the results in \cite{NVW12,NVW15b} to obtain stochastic maximal $L^p$-regularity. However, there are numerous situations where this is not the case, for example if:
\begin{enumerate}[(i)]
\item \label{it:no1} $A$ does not have a bounded $H^\infty$-calculus;
\item \label{it:no2} There is no explicit characterization of $D(A^{1/2})$ known;
\item \label{it:no3} $A(t)$ and its domain $D(A(t))$ are time-dependent;
\item \label{it:no4} $X$ does not satisfy the $\mc{R}$-boundedness condition of \cite{NVW12,NVW15b}.
\end{enumerate}
In Corollary \ref{corollary:HScaseSMR} and Remark \ref{remark:noHinftymagreg} we give a situation where \ref{it:no1} occurs, i.e. we give an example of an operator $A$ without a bounded $H^\infty$-calculus which has stochastic maximal $L^p$-regularity. In Example \ref{ex:wedge}  it seems unknown if \ref{it:no1}  holds and \ref{it:no2} seems unavailable as well. In Subsection \ref{subsection:nonauto} we present applications to certain non-autonomous problems where \ref{it:no3} occurs and in Theorem \ref{theorem:realinterpMR} we have avoided the geometric restriction mentioned in \ref{it:no4}.

Another, important novelty is that it is possible to deduce mixed $L^p(L^q)$-boundedness from $L^2(L^2)$-boundedness by combining our extrapolation result with the stochastic-deterministic extrapolation result of \cite{KK16}. This reduces the study of the stochastic maximal $L^p$-regularity problem on $L^q$ to the study of the maximal $L^2$-regularity problem on $L^2$ and Green's function estimates (see Remark \ref{remark:KimKim} and Examples \ref{ex:BesselHeat} and \ref{example:domainneumann}).

The use of temporal $A_{p/2}(\R_+)$-weights in stochastic maximal $L^p$-regularity is new. In most of the results in \cite{NVW12,NVW15b} such weights can also be added without causing major difficulties, but it is very natural to deduce this from extrapolation theory. Moreover with our method we actually obtain a sharp dependence on the $A_{p/2}$-characteristic. Weights of the form $t^{\alpha}$ have already been considered before in \cite{AV19,PV18} and can be used to allow rough initial data in stochastic evolution equations. This has become a central tool in deterministic evolution equations (see e.g.\ \cite{KPW10,MS12,PS04} and references therein). General $A_p$-weights in parabolic PDEs have been used in \cite{DK18,DK19,GLV16,GV17,GV17b} to derive mixed $L^p(L^q)$-regularity estimates by Rubio de Francia's weighted extrapolation theorem \cite{GR85,CMP11}.

In Theorem \ref{theorem:CZintro} and Theorem \ref{theorem:sharpweighted} one always starts from an $L^p$-bounded stochastic integral operator. It would be interesting to find general sufficient conditions from which boundedness can be derived. In the deterministic case this can be done using $T(1)$ and $T(b)$-theorems (see e.g.\ \cite{HW06, Hy06, HH16} for the vector-valued case).
At least in the Hilbert space setting in the convolution case we obtain a full characterization in Corollary \ref{corollary:Hilbertspacesch} and Corollary  \ref{corollary:Hilbertspaceschweight} assuming a H\"ormander and Dini condition, respectively. Finally we mention that it would be interesting to develop a stochastic Calder\'on--Zygmund theory for noises other than cylindrical Brownian motion.

This paper is organized as follows:
\begin{itemize}
\item In Section \ref{section:preliminaries} we give some preliminaries on Banach space geometry, $\gamma$-radonifying operators, Lorentz spaces, maximal operators and Muckenhoupt weights.
\item In Section \ref{section:SIO} we introduce the stochastic integral operators and establish a connection with $\gamma$-integral operators.
\item In Section \ref{section:kernels} we give the definitions of $2$-H\"ormander kernels and  $(\omega,2)$-Dini kernels and provide some classes of examples.
\item In Section \ref{section:extrapolation} we prove the extrapolation results for $\gamma$-integral operators under a H\"ormander condition.
\item In Section \ref{section:sparse} we obtain sparse domination under a Dini condition and use this to prove sharp weighted bounds.
\item In Section \ref{section:homogeneoustype} we explain how the results of Sections \ref{section:extrapolation} and \ref{section:sparse} can be extended to spaces of homogeneous type. Motivated by the application to stochastic integral operators our main example here is the time interval $(0,T)$ with $T\in (0,\infty]$.
\item In Section \ref{section:pindSMR} we will apply the results of the previous sections to study the $p$-independence of stochastic maximal $L^p$-regularity. Here we cover both the time-independent setting and the time-dependent setting using the conditions of Acquistapace and Terreni. Moreover, applications to the (time-dependent) heat equation are given, leading to regularity results in both the complex and real interpolation scale.
\item In Section \ref{section:Rbddness} we prove a $p$-independence result on the $\mc{R}$-boundedness of stochastic convolutions.
\item Finally in Appendix \ref{section:technical} we prove some technical kernel estimates.
\end{itemize}

\subsection*{Notation}
We denote the Lebesgue measure of a set $E \in \mc{B}(\R^d)$ by $\abs{E}$ and we often abbreviate the integral of a function $f$ on $E$ as $\int_E f := \int_E f(s) \dd s$  and the mean as
$\avint_E f := \frac{1}{\abs{E}}\int f$. A ball with center $s$ and radius $r$ is denoted by $B(s,r)$ and by a cube $Q \subseteq \R^d$ we mean a cube with its sides parallel to the coordinate axes.

For Banach spaces $X$ and $Y$, $\mc{L}(X,Y)$ denotes the bounded linear operators from $X$ to $Y$. If we say that a function $f:\R^d \to \mc{L}(X,Y)$ is strongly measurable, we mean that $K$ is strongly measurable in the strong operator topology on $\mc{L}(X,Y)$.

Throughout the paper we write $C_{a,b,\cdots}$ to denote a constant, which only depends on
the parameters $a,b,\cdots$ and which may change from line to line. By $\lesssim_{a,b,\cdots}$ we mean that there is a constant $C_{a,b,\cdots}$ such that inequality holds and $\eqsim_{a,b,\cdots}$ implies $\lesssim_{a,b,\cdots}$ and $\gtrsim_{a,b,\cdots}$.

\subsubsection*{Acknowledgements}
The authors thank the anonymous referees for careful reading and helpful comments and Petru Cioica-Licht for pointing out a gap in the proof of Proposition \ref{proposition:propertiesheatwedge}\ref{it:wedgesectorial}.

\section{Preliminaries}\label{section:preliminaries}
\subsection{Banach space geometry}
Let $X$ be a Banach space and let $(\varepsilon_k)_{k=1}^\infty$ a sequence of independent \emph{Rademacher variables}, i.e. uniformly distributed random variables taking values in $\cbrace{z \in \K:\abs{z} \leq 1}$. We say that $X$ has type $p \in [1,2]$ is there exists a constant $C \geq 0$ such that for all $n \in \N$ and $x_1,\cdots,x_n \in X$ we have
\begin{equation}\label{eq:type}
  \has{\E\nrms{\sum_{k=1}^n\varepsilon_k x_k}_X^p}^{1/p} \leq C \, \has{\sum_{k=1}^n \nrm{x_k}_X^p}^{1/p}.
\end{equation}
We say that $X$ has cotype $q \in [2,\infty]$ if there exists a constant $C \geq 0$ such that for all $n \in \N$ and $x_1,\cdots,x_n \in X$ we have
\begin{equation}\label{eq:cotype}
   \has{\sum_{k=1}^n \nrm{x_k}_X^q}^{1/q}\leq C \,  \has{\E\nrms{\sum_{k=1}^n\varepsilon_k x_k}_X^q}^{1/q}.
\end{equation}
with the usual modification if $q=\infty$. The least admissible constants $C$ will be denoted by $\tau_{p,X}$ and $c_{q,X}$ respectively. Note that by randomization equations \eqref{eq:type} and \eqref{eq:cotype} imply the same estimates with the Rademacher sequence replaced by a \emph{Gaussian sequence}, i.e. a sequence of independent standard Gaussian random variables.

All Banach spaces have type $1$ and cotype $\infty$. We say that $X$ has \emph{nontrivial type} if $X$ has type $p \in (1,2]$ and finite cotype if $X$ has cotype $q \in [2,\infty)$. As example we note that the Lebesgue spaces $L^p(\R^d)$ and Sobolev spaces $W^{k,p}(\R^d)$ have type $p \wedge 2$ and cotype $p \vee 2$. For more details and examples we refer to \cite[Chapter 7]{HNVW17}.

We say that a Banach space $X$ has the $\UMD$ property if the martingale difference sequence of any finite martingale in $L^p(\Omega;X)$ is unconditional for some (equivalently all) $p \in (1,\infty)$. That is, if
 there exists a constant $C>0$ such that for all finite martingales $(f_k)_{k=1}^n$ in $L^p(\Omega;X)$ and scalars $|\epsilon_k|=1$, $k=1,\cdots,n$, we have
 \begin{equation}\label{eq:UMD}
   \has{\E\nrms{\sum_{k=1}^n \epsilon_kdf_k}_X^p}^{1/p} \leq C \, \has{\E\nrms{\sum_{k=1}^n df_k}_X^p}^{1/p}.
 \end{equation}
 The least admissible constant $C$ in \eqref{eq:UMD} will be denoted by $\beta_{p,X}$.
 Standard examples of Banach spaces with the $\UMD$ property include reflexive $L^p$-spaces, Lorentz spaces, Sobolev spaces and Besov spaces. For a thorough introduction to the theory of $\UMD$ spaces we refer the reader to \cite{Pi16} and \cite[Chapter 4]{HNVW16}.

\subsection{$\gamma$-radonifying operators} We recall the definition and some basic properties of $\gamma$-radonifying operators, for details we refer to \cite[Chapter 9]{HNVW17}.

Let $X$ be a Banach space and $H$ be a Hilbert space. For $x \in X$ and $e \in H$ we let $e \otimes x$ be the rank-one operator from $H$ to $X$ given by $h \mapsto \ip{h,e}x$. The \emph{$\gamma$-radonifying norm} of a finite-rank operator of the form $T=\sum_{k=1}^n e_k \otimes x_k$, with $e_1,\cdots,e_n \in H$ orthonormal and $x_1,\cdots,x_n \in X$, is defined by
\begin{equation*}
  \nrm{T}_{\gamma(H;X)} := \nrms{\sum_{k=1}^n \gamma_kx_k}_{L^2(\Omega;X)},
\end{equation*}
where $(\gamma_k)_{k=1}^n$ is a Gaussian sequence on a probability space $(\Omega,\P)$. By the invariance of Gaussians in $\R^d$ under orthogonal transformations (see \cite[Proposition 6.1.23]{HNVW16}), this norm is well-defined. The completion of all finite rank operators from $H$ into $X$ with respect to $\nrm{\cdot}_{\gamma(H;X)}$ is denoted by $\gamma(H,X)$. Note that in particular $\gamma(H,X)\hookrightarrow \mc{L}(H,X)$.

For a measure space $(S,\mu)$, we write $\gamma(S;H,X):=\gamma(L^2(S;H),X)$ and in particular $\gamma(S;X):=\gamma(L^2(S),X)$. Any strongly measurable $f\colon S \to X$, for which $\ip{f,x^*} \in L^2(S)$ for all $x^* \in X^*$, defines a bounded linear operator $T_f\colon L^2(S) \to X$ by
\begin{equation*}
  T_f \varphi := \int_S f\varphi \dd \mu, \qquad \varphi \in L^2(S),
\end{equation*}
where the integral is well-defined in the Pettis sense (see \cite[Theorem 1.2.37]{HNVW16}). If $T_f \in \gamma(S;X)$ we say that $f$ represents $T_f$ and write $f \in  \gamma(S;X)$.

If the Banach space $X$ has type $2$ we have the following embedding properties for the $\gamma$-spaces, which follow directly from  \cite[Theorem 9.2.10 and Proposition 7.1.20]{HNVW17}. See \cite[Proposition 2.5]{AV19} for the details.
\begin{lemma}\label{lemma:gammaL2}
Let $X$ be a Banach space with type $2$, $H$ a Hilbert space and $(S,\mu)$ a $\sigma$-finite measure space. Then we have the embeddings
\begin{align*}
L^2(S;\gamma(H;X)) &\hookrightarrow \gamma(S;\gamma(H;X)) \hookrightarrow \gamma(L^2(S;H);X)
\end{align*}
with both embedding constants bounded by $\tau_{2,X}$.
\end{lemma}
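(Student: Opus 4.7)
The plan is to handle the two embeddings separately and, for each, reduce to a standard fact about $\gamma$-radonifying operators in type~$2$ Banach spaces.

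For the first embedding $L^2(S;\gamma(H;X)) \hookrightarrow \gamma(S;\gamma(H;X))$, I would combine the following two ingredients. First, for any Banach space $Y$ of type~$2$ and any $\sigma$-finite measure space $(S,\mu)$, the canonical map $f\mapsto T_f$ provides a continuous inclusion $L^2(S;Y)\hookrightarrow \gamma(S;Y)$ with embedding constant $\tau_{2,Y}$; this is verified by testing on simple functions $f=\sum_k\ind_{A_k}\,y_k$ with disjoint equal-measure sets $A_k$, whose normalized indicators form an orthonormal system in $L^2(S)$, applying the type~$2$ inequality in $Y$, and extending by density. Second, $Y:=\gamma(H;X)$ inherits type~$2$ from $X$ with the same constant $\tau_{2,\gamma(H;X)}\leq\tau_{2,X}$, which is precisely \cite[Proposition~7.1.20]{HNVW17}. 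Combining the two gives the first embedding with constant at most $\tau_{2,X}$.

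For the second embedding $\gamma(S;\gamma(H;X)) \hookrightarrow \gamma(L^2(S;H);X)$, I would invoke the Fubini-type embedding \cite[Theorem~9.2.10]{HNVW17} for iterated $\gamma$-norms over Hilbert tensor products. The canonical identification sends $T\in\gamma(S;\gamma(H;X))$ to $\widetilde T\colon L^2(S;H)\to X$ specified on elementary tensors by $\widetilde T(\varphi\otimes h):=(T\varphi)h$. For a finite rank $T=\sum_k\varphi_k\otimes T_k$ with $(\varphi_k)$ orthonormal in $L^2(S)$ and each $T_k$ expanded against a fixed orthonormal basis $(e_j)$ of $H$ as $T_k=\sum_j e_j\otimes x_{k,j}$, the system $(\varphi_k\otimes e_j)$ is orthonormal in $L^2(S;H)\simeq L^2(S)\otimes_2 H$, so $\nrm{\widetilde T}_{\gamma(L^2(S;H);X)}$ equals the $L^2(\Omega;X)$-norm of the single Gaussian sum $\sum_{k,j}\gamma_{k,j}x_{k,j}$, while $\nrm{T}_{\gamma(S;\gamma(H;X))}$ equals the $L^2$-norm of the iterated (decoupled order-$2$) Gaussian chaos $\sum_{k,j}\gamma_k\gamma'_j x_{k,j}$. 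Theorem~9.2.10 asserts precisely that, for a type~$2$ Banach space $X$, the single Gaussian sum is dominated by $\tau_{2,X}$ times the chaos, and density of finite-rank operators then extends the estimate to the whole space.

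The main obstacle is the second embedding: comparing a single Gaussian sum to a decoupled order-$2$ Gaussian chaos in an arbitrary Banach space is not elementary, and it is precisely here that type~$2$ of $X$, with the sharp constant $\tau_{2,X}$, enters the argument. Once \cite[Proposition~7.1.20 and Theorem~9.2.10]{HNVW17} are in hand, the lemma follows immediately; this is the short argument written out in \cite[Proposition~2.5]{AV19}.
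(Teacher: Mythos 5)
Your proposal is correct and matches the paper's proof, which simply cites \cite[Theorem 9.2.10 and Proposition 7.1.20]{HNVW17} and refers to \cite[Proposition 2.5]{AV19} for details; you expand these citations into the same two-step reduction (type~$2$ of $\gamma(H,X)$ for the first embedding, the Gaussian-sum-versus-decoupled-chaos comparison for the second). The only cosmetic point is that the comparison for the second embedding is perhaps best seen as an instance of the general fact that in a type~$2$ space the Gaussian sum is dominated by any orthonormal-system sum — applied to the orthonormal system $(\gamma_k\gamma'_j)$ on $\Omega\times\Omega'$ — of which the $L^2\hookrightarrow\gamma$ embedding you use for the first step is itself a special case.
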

Finally, for our $\gamma$-version of the $A_2$-theorem in Section \ref{section:sparse} we will need the following lemma, which follows directly from \cite[Proposition 9.4.13]{HNVW17}.
\begin{lemma}\label{lemma:2sublinear}
Let $X$ be a Banach space with type $2$ and let $f_1,\cdots,f_n \in \gamma(S;X)$ be disjointly supported. Then we have
\begin{equation*}
  \nrms{\sum_{k=1}^n f_k}_{\gamma(S;X)}^2 \leq \tau_{2,X} \sum_{k=1}^n\nrm{f_k}_{\gamma(S;X)}^2.
\end{equation*}
\end{lemma}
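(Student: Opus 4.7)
The plan is to identify $\nrm{\sum_k f_k}_{\gamma(S;X)}^2$ with the second moment of a sum of independent mean-zero $X$-valued Gaussian random variables, and then apply the type $2$ property of $X$ to those independent summands.

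First I would use the disjoint supports to build an adapted orthonormal basis of $L^2(S)$. Choose pairwise disjoint measurable sets $A_k \supseteq \supp f_k$ and let $(h_{k,j})_{j\geq 1}$ be an orthonormal basis of $L^2(A_k)$ for each $k$; completing this system with an orthonormal basis of $L^2(S \setminus \bigcup_k A_k)$ gives an orthonormal basis of $L^2(S)$. Since $f_k$ is supported in $A_k$, the operator $T_{f_k}$ vanishes on every basis vector not indexed by $k$, while $T_f h_{k,j} = T_{f_k} h_{k,j}$ for $f := \sum_k f_k$. With an independent standard Gaussian sequence $(\gamma_{k,j})$ on some probability space and $Y_k := \sum_j \gamma_{k,j}\, T_{f_k} h_{k,j}$, the definition of the $\gamma$-norm yields
\begin{equation*}
\nrms{\sum_{k=1}^n f_k}_{\gamma(S;X)}^2 = \E\nrms{\sum_{k=1}^n Y_k}_X^2 \quad\text{and}\quad \nrm{f_k}_{\gamma(S;X)}^2 = \E\nrm{Y_k}_X^2.
\end{equation*}

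Second, I would apply the type $2$ property to the independent, symmetric random vectors $Y_1,\dots,Y_n$. By independence and symmetry of the Gaussians, $\sum_k \varepsilon_k Y_k$ has the same law as $\sum_k Y_k$ for any Rademacher sequence $(\varepsilon_k)$ independent of $(Y_k)$. Conditioning on $(Y_k)$, applying the type $2$ inequality for each fixed realization, and taking expectations then gives
\begin{equation*}
\E\nrms{\sum_{k=1}^n Y_k}_X^2 = \E_Y\E_\varepsilon\nrms{\sum_{k=1}^n \varepsilon_k Y_k}_X^2 \leq \tau_{2,X}^2 \sum_{k=1}^n \E\nrm{Y_k}_X^2,
\end{equation*}
which combined with the two identities above is the claimed estimate (with the best constant being $\tau_{2,X}^2$).

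The main technical obstacle is the convergence of the defining series for $Y_k$ when $f_k$ is not finite-rank; this is handled by first truncating each $f_k$ to $\spn\{h_{k,j} : j \leq N\}$, establishing the inequality in the finite-rank case, and passing to the limit $N \to \infty$ using that $T_{f_k} \in \gamma(L^2(S);X)$ ensures convergence of the relevant partial sums in $L^2(\Omega;X)$. Everything else is a direct application of the definitions together with the standard conditioning argument.
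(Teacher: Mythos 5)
Your proof is correct and essentially reconstructs the argument behind the paper's one-line appeal to \cite[Proposition 9.4.13]{HNVW17}: the disjoint supports provide an orthogonal decomposition of $L^2(S)$, the $\gamma$-norm becomes a Gaussian sum over independent blocks, and randomization by Rademachers lets one invoke type $2$. Regarding the constant, you are right that the argument naturally yields $\tau_{2,X}^2$ rather than $\tau_{2,X}$; with the paper's definition of $\tau_{2,X}$ (and the statement of the cited proposition, which after squaring gives the square of the Gaussian type $2$ constant), the exponent $2$ should be present, so the lemma as printed appears to contain a harmless typo.
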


\subsection{Lorentz spaces} \label{section:lorentz}We recall the definition and some elementary properties of Lorentz spaces,
for details we refer to  \cite{Gr14a, Tr78}. For a Banach space $X$ a $\sigma$-finite measure space $(S,\mu)$, $p\in (1,\infty)$ and $q\in [1, \infty]$ let
$$L^{p,q}(S;X) = (L^{1}(S;X),L^\infty(S;X))_{\theta,q}, \qquad \theta = \tfrac{1}{p'}.$$
The space $L^{p,q}(S;X)$ is called the $X$-valued {\em Lorentz space}. An equivalent {\em quasi-norm} is given by (see \cite[Proposition 1.4.9]{Gr14a} and \cite[Theorem 1.18.6]{Tr78}):
\begin{align*}
\nrmn{ f}_{L^{p,q}(S;X)}
 & := \|t\mapsto t^{1/p} f^*(t)\|_{L^q(\R_+,\frac{dt}{t})}
\\ &\phantom{:}=p^{1/q} \nrmb{t\mapsto t \mu\hab{\{s\in S:\|f(s)\|_X>t\}}^{1/p}}_{L^q(\R_+,\frac{dt}{t})},
 \end{align*}
where $f^*$ denotes the decreasing rearrangement of $\nrm{f}_X$ (see \cite[Section 1.4]{Gr14a}). An equivalent {\em norm} can be extracted from \cite[Exercise 1.4.3]{Gr14a}.
For $p\in (1, \infty)$ one has $L^{p,p}(S;X) = L^p(S;X)$. In the scalar case $L^{p,q}(S)$ is a Banach function space.

If $p\in (1, \infty)$ and $q\in [1, \infty)$, then the simple functions are dense in $L^{p,q}(S;X)$. Indeed, this follows from \cite[Theorems 1.6.2 and 1.18.6.2]{Tr78} and the density of the simple functions in $L^{r}(S;X)$ for $r\in [1, \infty)$.

If $p\in (1, \infty)$ and $q\in [1, \infty]$ and $\mu(S) <\infty$ we have $L^{p,\infty}(S) \hookrightarrow L^1(S)$ with
\begin{equation}\label{eq:weakLpembeddingL1}
  \nrm{f}_{L^1(S)} \leq \mu\ha{S}^{1/p'}\nrm{f}_{L^{p,\infty}(S)} ,\qquad f \in L^{p,\infty}(S),
\end{equation}
which follows directly from the definition and the embedding $L^\infty(S) \hookrightarrow L^1(S)$ with constant $\mu(S)$.

In the next result we extend the $\gamma$-Fubini theorem of \cite[Theorem 9.4.8]{HNVW17} to Lorentz spaces.
\begin{proposition}[$\gamma$-Fubini]\label{proposition:gammaFubLorentz}
Let $X$ be a Banach space and let $(S,\mu)$ be a measure space. Then the following assertions hold:
\begin{enumerate}[(i)]
\item\label{it:generalgammaFub} For all $p\in (1, \infty]$, $$\gamma(H,L^{p,\infty}(S;X)) \hookrightarrow L^{p,\infty}(S;\gamma(H,X))$$
\item\label{it:concgammaFub}  For all $p\in (1, \infty)$ and $q\in [1, \infty)$, $$\gamma(H,L^{p,q}(S;X)) = L^{p,q}(S;\gamma(H,X))$$ isomorphically.
\end{enumerate}
\end{proposition}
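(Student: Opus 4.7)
The plan is to bootstrap both assertions from the $L^p$-valued $\gamma$-Fubini theorem \cite[Theorem 9.4.8]{HNVW17} via the real-interpolation definition of the Lorentz spaces. For a finite-rank operator $T = \sum_{k=1}^N h_k \otimes f_k$ with $(h_k)$ orthonormal in $H$ and $f_k : S \to X$ strongly measurable, there is a natural Pettis-type representative $\tilde T(s) := \sum_{k=1}^N h_k \otimes f_k(s) \in \gamma(H, X)$, and the standard $\gamma$-Fubini theorem yields an isomorphism $T \leftrightarrow \tilde T$ between $\gamma(H, L^1(S;X))$ and $L^1(S; \gamma(H, X))$, together with a continuous embedding $\gamma(H, L^\infty(S;X)) \hookrightarrow L^\infty(S; \gamma(H, X))$.

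For part (i), set $\theta = 1/p'$ and apply the real interpolation functor $(\cdot, \cdot)_{\theta, \infty}$ to the identification map at the endpoints: this gives a bounded linear map from $(\gamma(H, L^1(S;X)), \gamma(H, L^\infty(S;X)))_{\theta, \infty}$ into $L^{p,\infty}(S; \gamma(H, X))$. To conclude the desired embedding $\gamma(H, L^{p,\infty}(S;X)) \hookrightarrow L^{p,\infty}(S; \gamma(H, X))$ it then suffices to show that $\gamma(H, L^{p,\infty}(S;X))$ is continuously contained in the interpolated $\gamma$-couple. The plan here is to realize an explicit $K$-functional decomposition at the level of finite-rank operators: given $T$ approximated in $\gamma$-norm by finite-rank $T_n$, truncate each coefficient $f_k$ at height $t > 0$ to obtain a splitting $T_n = T_n^0(t) + T_n^\infty(t)$ which is disjointly supported on $S$, and use Lemma \ref{lemma:2sublinear} to control the $\gamma(H, L^1)$-norm of $T_n^0(t)$ and the $\gamma(H, L^\infty)$-norm of $T_n^\infty(t)$ by the corresponding truncations of the scalar Lorentz quasi-norm of $T_n$.

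For part (ii), the same interpolation scheme with parameters $(\theta, q)$ yields the embedding $\gamma(H, L^{p,q}(S;X)) \hookrightarrow L^{p,q}(S; \gamma(H, X))$. Since $q < \infty$ the Lorentz norm is order-continuous and simple functions are dense in both spaces (as recalled in Section \ref{section:lorentz}), so the reverse embedding follows by running the argument in the opposite direction: starting from $f \in L^{p,q}(S; \gamma(H, X))$, the associated operator in $\gamma(H, L^{p,q}(S;X))$ is built first on simple functions, bounded by the same interpolation of endpoints, and extended by continuity.

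The main obstacle is that real interpolation does not commute with the functor $\gamma(H, \cdot)$ in general, so one cannot simply interpolate the $L^1$-endpoint isomorphism directly. The workaround, as outlined above, is to interpolate only the identification map $T \mapsto \tilde T$ (a genuine linear map on compatible couples at both endpoints) and to control $\gamma(H, L^{p,q}(S;X))$ \emph{inside} the interpolated $\gamma$-couple via a uniform Calder\'on-type $K$-functional decomposition. Executing this decomposition carefully, with the cotype-dependent constants of Lemma \ref{lemma:2sublinear} tracked through the truncation and the quasi-triangle inequality of the $L^{p,\infty}$-norm handled at the final step, is the main technical burden.
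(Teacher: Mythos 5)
There is a genuine gap at the $L^\infty$-endpoint, and it undermines the entire interpolation strategy. To place $\gamma(H,L^{p,\infty}(S;X))$ inside the interpolated $\gamma$-couple $(\gamma(H,L^1),\gamma(H,L^\infty))_{\theta,\infty}$ you must, for each $t>0$, produce a splitting $T = T^0 + T^\infty$ and bound both $\nrm{T^0}_{\gamma(H,L^1)}$ and $t\nrm{T^\infty}_{\gamma(H,L^\infty)}$. Your truncation naturally controls the pointwise $\gamma$-norm $\nrm{\widetilde T^\infty(s)}_{\gamma(H,X)}$, i.e. the $L^\infty(S;\gamma(H,X))$-norm. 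But the $\gamma$-Fubini embedding at $p=\infty$ is $\gamma(H,L^\infty(S;X))\hookrightarrow L^\infty(S;\gamma(H,X))$ — it goes from the bigger space to the smaller one — so knowing the $L^\infty(S;\gamma(H,X))$-norm of the flat part gives no control on its $\gamma(H,L^\infty(S;X))$-norm, which is what the $K$-functional needs. There is no reverse embedding to invoke here, and Lemma \ref{lemma:2sublinear} does not help: it yields an $\ell^2$-sum upper bound over disjointly supported pieces of $\gamma(S;X)$ (which is the wrong inequality shape for an $L^\infty$-endpoint bound), it acts in the wrong variable, and it requires a type~$2$ hypothesis that the proposition does not assume. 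The same obstruction blocks the reverse embedding in part (ii) via interpolation: the map $L^\infty(S;\gamma(H,X))\to\gamma(H,L^\infty(S;X))$ you would need at that endpoint is simply not bounded.

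For comparison, the paper's proof does not interpolate operators at all. Writing $E=L^{p,q}(S)$ and $\xi(\omega)=\nrmb{\sum_j\gamma_j(\omega)\xi_j}_X$ for a finite-rank $f=\sum_j h_j\otimes\xi_j$, it observes that $\nrm{f}_{\gamma(H,E(X))}=\nrm{\xi}_{L^2(\Omega;E)}$ and $\nrm{f}_{E(\gamma(H,X))}=\nrm{\xi}_{E(L^2(\Omega))}$. Kahane--Khintchine lets one replace the exponent $2$ on $\Omega$ by any $r\in[1,\infty)$, after which the desired embeddings become Minkowski-type inequalities between $L^r(\Omega;E)$ and $E(L^r(\Omega))$: the triangle inequality in $E$ (which holds for every Banach function space, including $L^{p,\infty}$) gives one direction at $r=1$, and $r$-concavity of $L^{p,q}$ for $q<\infty$ gives the other at a suitable finite $r$. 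This cleanly explains both why part (i) is one-sided for $L^{p,\infty}$ (that space is not $r$-concave for any $r<\infty$) and why part (ii) is an isomorphism. If you want to salvage an interpolation proof, you would need an independent argument that $\gamma(H,\cdot)$ commutes with the real method at the relevant endpoints, which for the $L^\infty$-endpoint is exactly the issue you cannot bypass.
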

\begin{proof}
Let $E = L^{p,q}(S)$ and for a Banach space $Y$ we write $E(Y)$ for the space of strongly measurable functions $f:S\to Y$ such that $\|f\|_{E(Y)}:=\big\|\|f\|_Y \big\|_E<\infty$.

We make two preliminary observations.
Since $E$ is a Banach space the triangle inequality in $E$ implies that for all simple functions $\xi:\Omega\to E$,
\begin{equation}\label{eq:1convex}
  \|\xi\|_{E(L^1(\Omega))}\leq \|\xi\|_{L^1(\Omega;E)}.
\end{equation}
By density this extends to a contractive embedding $L^1(\Omega;E)\hookrightarrow E(L^1(\Omega))$.

The second observation is a certain converse estimate to the above. If $p\leq q<\infty$, we set $r=q$ and if $q\leq p<\infty$ we set $r = p+1$. Then $E$ is $r$-concave (see \cite[Theorems 4.6 and 5.1]{Ma04}). This implies that for all simple functions $\xi:S\to L^r(\Omega)$,
\begin{equation}\label{eq:rconcave}
  \|\xi\|_{L^r(\Omega;E)} \leq  C_{p,q}\,\|\xi\|_{E(L^r(\Omega))}.
\end{equation}
By density this can be extended to a contractive embedding $E(L^r(\Omega))\hookrightarrow L^r(\Omega;E)$.

Let $(h_j)_{j=1}^n$ be an orthonormal system in $H$ and let $f = \sum_{j=1}^n h_j\otimes \xi_{j}$ with $\xi_j\in E(X)$.
Now setting $\xi = \big\|\sum_{j=1}^n \gamma_j \xi_{j}\big\|_X$, where $(\gamma_j)_{j=1}^n$ is a Gaussian sequence, we can write
\begin{align*}
\|f\|_{\gamma(H,E(X))} &= \Big\|\sum_{j=1}^n \gamma_j \xi_{j}\Big\|_{L^2(\Omega;E(X))} = \|\xi\|_{L^2(\Omega;E)},
\\ \|f\|_{E(\gamma(H,X))} &= \Big\|\sum_{j=1}^n \gamma_j \xi_{j}\Big\|_{E(L^2(\Omega;X))} = \|\xi\|_{E(L^2(\Omega))}
\end{align*}
By the Kahane--Khintchine inequalities replacing the $L^2(\Omega)$-norm on the right-hand sides of the above identities with $L^r(\Omega)$ with $r\in [1, \infty)$ leads to an equivalent norm. Taking $r=1$ we have by \eqref{eq:1convex}  that $$\|f\|_{E(\gamma(H,X))} \leq C\,  \|f\|_{\gamma(H,E(X))},$$ which by density proves $\gamma(H,E(X))\hookrightarrow E(\gamma(H,X))$ and this proves \ref{it:generalgammaFub} and one of the embeddings in \ref{it:concgammaFub}.

To prove \ref{it:concgammaFub} note that by the above with $r=p+1$  we find by \eqref{eq:rconcave}  that $$\|f\|_{\gamma(H,E(X))}\leq C_{p,q} \, \|f\|_{E(\gamma(H,X))}.$$  Again by density  this gives $E(\gamma(H,X))\hookrightarrow \gamma(H,E(X))$.
\end{proof}

\begin{remark}
Actually in Proposition \ref{proposition:gammaFubLorentz}\ref{it:generalgammaFub} the Lorentz space can be replaced by any Banach function space $E$. Moreover, the extension of \ref{it:concgammaFub} to this setting holds if $E$ is $r$-concave for some $r<\infty$.

The result of Proposition \ref{proposition:gammaFubLorentz} can also be extended to quasi-Banach function spaces which are $p$-convex and $q$-concave. For the definition of $\gamma(H,Y)$ for quasi-Banach spaces we refer to \cite{CCV18}. In particular, by \cite[Theorems 4.6 and 5.1]{Ma04} and \cite[Section 6]{Ka80} it follows that Proposition \ref{proposition:gammaFubLorentz}\ref{it:concgammaFub} holds for $L^{p,q}(S;X)$ for all $p,q\in (0,\infty)$.
\end{remark}

\subsection{Maximal operators}
We define the \emph{Hardy--Littlewood maximal operator} $M$ for an $f \in L^1_{\loc}(\R^d)$  by
\begin{equation*}
  Mf(s) := \sup_{Q \ni s} \avint_Q \abs{f},\qquad s \in \R^d,
\end{equation*}
where the supremum is taken over all cubes $Q \subseteq \R^d$ containing $s$. For $r \in (0,\infty)$ and $f \in L^r_{\loc}(\R^d)$ we define $M_rf = M(\abs{f}^r)^{1/r}$.
These operators satisfy the following bounds:
\begin{lemma}\label{lemma:maximaloperator} Let $0<r<p<\infty$, then
\begin{align*}
  \nrm{M_rf}_{L^p(\R^d)} &\leq C_{p,r,d} \,\nrm{f}_{L^p(\R^d)}, &&f \in L^p(\R^d),\\
  \nrm{M_rf}_{L^{p,\infty}(\R^d)} &\leq C_{p,r,d}\,\nrm{f}_{L^{p,\infty}(\R^d)}, &&f \in L^{p,\infty}(\R^d),\\
  \nrm{M_pf}_{L^{p,\infty}(\R^d)} &\leq C_d\,\nrm{f}_{L^p(\R^d)}, &&f \in L^p(\R^d).
\end{align*}
\end{lemma}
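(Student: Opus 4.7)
The plan is to reduce all three inequalities to classical bounds for the Hardy--Littlewood maximal operator $M$ itself (namely, the strong $(q,q)$ bound for $q>1$, the Marcinkiewicz interpolation extension to Lorentz scales $L^{q,\infty}$ for $q>1$, and the weak-$(1,1)$ bound), by exploiting the simple rescaling identity
\begin{equation*}
\nrm{\abs{h}^r}_{L^{q,s}(\R^d)} = \nrm{h}_{L^{rq,rs}(\R^d)}^r,
\end{equation*}
valid for all $r,q,s\in(0,\infty]$. This identity is immediate from the formula $\nrmn{h}_{L^{p,q}} = \nrm{t\mapsto t\mu(\{|h|>t\})^{1/p}}_{L^q(\R_+,dt/t)}$ after the change of variables $t\mapsto t^{1/r}$.

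First I would dispatch \ref{eq:weakLpembeddingL1}-style book-keeping: by definition $M_r f = M(\abs{f}^r)^{1/r}$, so the rescaling identity yields
\begin{equation*}
\nrm{M_r f}_{L^{p,q}(\R^d)} = \nrm{M(\abs{f}^r)}_{L^{p/r,q/r}(\R^d)}^{1/r},
\qquad \nrm{f}_{L^{p,q}(\R^d)}^r = \nrm{\abs{f}^r}_{L^{p/r,q/r}(\R^d)},
\end{equation*}
for the relevant values of $p,q$. So each claimed bound is equivalent to a mapping property of $M$ on a standard $L^{p/r}$ or $L^{p/r,\infty}$ space.

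Concretely: for the first inequality the condition $0<r<p<\infty$ gives $p/r>1$, so the classical strong-type bound of $M$ on $L^{p/r}(\R^d)$ (e.g.\ \cite[Theorem 2.1.6]{Gr14a}) provides the required estimate after taking $r$-th roots. For the second inequality the same threshold $p/r>1$ allows one to invoke the strong-type bound of $M$ on $L^{p/r,\infty}(\R^d)$; this follows from Marcinkiewicz interpolation between the weak $(1,1)$ and trivial $(\infty,\infty)$ estimates for $M$, which in the Lorentz-scale formulation gives $M\colon L^{q,\infty}\to L^{q,\infty}$ for every $q>1$. Finally, for the third inequality we have $p/r=1$, so we apply the weak-$(1,1)$ bound of $M$ directly to $\abs{f}^p\in L^1(\R^d)$ and obtain $\nrm{M(\abs{f}^p)}_{L^{1,\infty}}\lesssim_d \nrm{\abs{f}^p}_{L^1}=\nrm{f}_{L^p}^p$, and taking $p$-th roots gives the claim.

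There is no real obstacle here; the only subtlety is the second bound, which requires the Lorentz-space version of the strong-type inequality for $M$ rather than the $L^p$-version, but this is standard Marcinkiewicz interpolation. All constants depend only on $p,r,d$ as stated.
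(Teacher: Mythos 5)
Your proof is correct and follows essentially the same route as the paper: the paper explicitly reduces to the $r=1$ (resp.\ $p=1$) cases for the Hardy--Littlewood maximal operator, attributing those to a covering argument/Doob's maximal inequality in \cite{HNVW16}, and then invokes "a simple rescaling argument," which is precisely the identity $\nrm{\abs{h}^r}_{L^{q,s}}=\nrm{h}_{L^{rq,rs}}^r$ you spell out and use. The only cosmetic difference is that you explicitly note that the $L^{p/r,\infty}\to L^{p/r,\infty}$ bound for $M$ needed in the second inequality comes from Marcinkiewicz interpolation (real interpolation $(L^{1,\infty},L^\infty)_{\theta,\infty}=L^{q,\infty}$), whereas the paper just cites the same underlying references; both are correct and equivalent in content.
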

The case $r=1$ in the first two inequalities and the case $p=1$ in the third inequality follow for example from Doob's maximal inequalities and a covering argument (see \cite[Theorem 3.2.3 and Lemma 3.2.26]{HNVW16}). The general cases follow from a simple rescaling argument.

Let $X$ be a Banach space. We define the \emph{sharp maximal operator} for an $f \in L^1_{\loc}(\R^d;X)$  by
\begin{equation*}
  M^{\#}f(s) := \sup_{Q \ni s} \avint_Q \nrms{f(t)-\avint_Q f(t')\dd t'}_X\dd t,\qquad s \in \R^d,
\end{equation*}
where the supremum is again taken over all cubes $Q \subseteq \R^d$ containing $s$. Note that  it is immediate from this definition that $M^{\#}f \leq 2 M(\nrm{f}_X)$, so by Lemma \ref{lemma:maximaloperator} we have in particular that $M^{\#}f \in L^p(\R^d)$ if $f \in L^p(\R^d;X)$. The converse is also true, which is the content of the next lemma. The proof for the case $X = \C$ can be found in \cite[Corollary 3.4.6]{Gr14b}, the general case follows analogously replacing absolute values by norms.

\begin{lemma}[Fefferman-Stein]\label{lemma:sharpmaximal}
  Let $X$ be a Banach space, $1<r\leq p<\infty$ and $f \in L^r(\R^d;X)$. Then $f \in L^p(\R^d;X)$ if and only if $M^{\#}f \in L^p(\R^d)$ and
  \begin{equation*}
    C_{p,d}^{-1}\, \nrmb{M^{\#}f}_{L^p(\R^d)} \leq\nrm{f}_{L^p(\R^d;X)} \leq C_{p,d} \, \nrmb{M^{\#}f}_{L^p(\R^d)}.
  \end{equation*}
\end{lemma}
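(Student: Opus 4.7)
The plan is to adapt the classical Fefferman--Stein good-$\lambda$ argument to the $X$-valued setting. The easy direction is already remarked in the excerpt: the pointwise bound $M^\#f \leq 2 M(\nrm{f}_X)$ combined with Lemma \ref{lemma:maximaloperator} yields $\nrmb{M^\#f}_{L^p(\R^d)} \leq C_{p,d}\, \nrm{f}_{L^p(\R^d;X)}$.

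For the converse direction I would first establish a good-$\lambda$ inequality: there exist $\gamma_0 > 0$ and $C_d > 0$ such that for every $\gamma \in (0,\gamma_0)$ and every $\lambda > 0$,
\[
\absb{\cbrace{M_d(\nrm{f}_X) > 2\lambda,\ M^\#f \leq \gamma\lambda}} \leq C_d\, \gamma\, \absb{\cbrace{M_d(\nrm{f}_X) > \lambda}},
\]
where $M_d$ denotes the dyadic Hardy--Littlewood maximal operator. To prove this, one decomposes the open level set $\{M_d(\nrm{f}_X) > \lambda\}$ into its maximal dyadic cubes $(Q_j)$. On each $Q_j$ the dyadic parent $\tilde Q_j$ satisfies $\avint_{\tilde Q_j}\nrm{f}_X \leq \lambda$, so setting $c_j := \avint_{\tilde Q_j} f \in X$ gives $\nrm{c_j}_X \leq \lambda$. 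For any $s \in Q_j$ with $M^\#f(s) \leq \gamma\lambda$, the triangle inequality yields $\avint_{Q_j}\nrm{f - c_j}_X \lesssim_d \gamma\lambda$, after which the weak-$(1,1)$ bound on $M$ applied to $\nrm{f - c_j}_X\,\ind_{\tilde Q_j}$ controls the measure of the bad subset of $Q_j$ by $C_d\gamma\abs{Q_j}$. Summing over $j$ completes the good-$\lambda$ estimate.

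The second main step is to integrate this inequality against $p\lambda^{p-1}\dd\lambda$ and absorb the resulting bound into the left-hand side, obtaining
\[
\nrmb{M_d(\nrm{f}_X)}_{L^p(\R^d)} \leq C_{p,d}\, \nrmb{M^\#f}_{L^p(\R^d)}.
\]
The absorption step is the main technical obstacle, since it requires the left-hand side to be a priori finite. This is where the hypothesis $f \in L^r(\R^d;X)$ enters: one truncates to $f_N := f\,\ind_{B(0,N)}\,\ind_{\cbrace{\nrm{f}_X \leq N}}$, runs the above argument on $f_N$ (for which $M_d(\nrm{f_N}_X)$ is manifestly in every $L^s$), and passes $N \to \infty$ by monotone convergence, controlling $M^\#f_N$ by $M^\#f + 2M(\nrm{f-f_N}_X)$ and using Lemma \ref{lemma:maximaloperator} to send the remainder to zero in $L^p$. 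Combining the resulting bound with the Lebesgue differentiation inequality $\nrm{f(s)}_X \leq M_d(\nrm{f}_X)(s)$ for a.e.\ $s$ delivers the $L^p$-estimate for $f$.

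The passage from the scalar case to a general Banach space $X$ costs essentially nothing: the only property of absolute values used in the classical argument is the triangle inequality, and the Banach-valued analogue $\avint_Q\nrm{f - \avint_Q f}_X \leq 2\avint_Q \nrm{f - c}_X$ (valid for every $c \in X$) allows the scalar proof to transfer verbatim.
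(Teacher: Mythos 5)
Your plan follows the same good-$\lambda$ route as the scalar proof in \cite[Corollary 3.4.6]{Gr14b} that the paper cites, and passing to the Banach-valued setting via the triangle inequality is exactly what the authors mean by ``replacing absolute values by norms.'' The decomposition of $\{M_d(\nrm{f}_X) > \lambda\}$ into maximal dyadic cubes, the comparison with the parent average $c_j$, and the weak-$(1,1)$ bound on each $Q_j$ are all sound, as is the easy direction.

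The gap is in the absorption step. When you truncate to $f_N$ and try to bound $M^{\#}f_N \leq M^{\#}f + 2M(\nrm{f-f_N}_X)$, the error term $\nrm{M(\nrm{f-f_N}_X)}_{L^p(\R^d)}$ is \emph{not} controlled by Lemma~\ref{lemma:maximaloperator}: that lemma only gives $L^p$-boundedness of $M$, so sending the remainder to zero requires $\nrm{f - f_N}_{L^p(\R^d;X)} \to 0$, which is precisely the conclusion you are trying to establish and only holds a priori when $p = r$. For $p > r$ the legitimate way to perform the absorption is to truncate the $\lambda$-integral rather than $f$: your good-$\lambda$ inequality already holds for $f$ itself with no integrability hypothesis, and since $\nrm{f}_X \in L^r(\R^d)$ with $r>1$, the weak-$(r,r)$ bound for $M_d$ yields $\abs{\cbrace{M_d(\nrm{f}_X) > \lambda}} \lesssim \lambda^{-r}\nrm{f}_{L^r(\R^d;X)}^r$, so that $I_N := \int_0^N p\lambda^{p-1}\abs{\cbrace{M_d(\nrm{f}_X)>\lambda}}\dd\lambda$ is finite for every $N$ (the integrand is $O(\lambda^{p-1-r})$ near the origin, and $p-1-r>-1$). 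One then absorbs $C\gamma I_N$ into $2^{-p}I_{2N} \geq 2^{-p}I_N$ after choosing $\gamma$ small and lets $N\to\infty$; the boundary case $p=r$ is immediate since $M_d(\nrm{f}_X)\in L^p$ from the start.
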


Lemma \ref{lemma:sharpmaximal} is not valid for $p=\infty$. In this case the space of all $f \in L^1_{\loc}(\R^d;X)$ such that $M^{\#}f \in L^\infty(\R^d;X)$ is strictly larger than $L^\infty(\R^d;X)$. We let $\BMO(\R^d;X)$ be the space of all $f \in L^1_{\loc}(\R^d;X)$ such that
\begin{equation*}
  \nrm{f}_{\BMO(\R^d;X)}:= \sup_{Q} \inf_{c \in X} \avint_Q\nrm{f(s)-c}_X\dd s<\infty
\end{equation*}
where the supremum is taken over all cubes $Q \subseteq \R^d$. In analogy with  Lemma \ref{lemma:sharpmaximal} we have
\begin{equation*}
    \frac{1}{2}\nrm{M^\#f}_{L^\infty(\R^d)} \leq \nrm{f}_{\BMO(\R^d;X)} \leq \nrm{M^\#f}_{L^\infty(\R^d)}.
\end{equation*}
Note that $\nrm{\cdot}_{\BMO(\R^d;X)}$ is not a norm, since $\nrm{c \ind_{\R^d}}_{\BMO(\R^d;X)} = 0$ for any $c \in X$.

\subsection{Muckenhoupt weights} We recall the basic properties of  Muckenhoupt weights on $\R^d$, for a general overview see \cite[Chapter 7]{Gr14a}. Analogous definitions can be given for weights on $(0,T)$ for $T \in (0,\infty]$.

A \emph{weight} is a locally integrable function $w\colon\R^d\to (0,\infty)$. For $p \in [1,\infty)$ and a weight $w$ and a Banach space $X$ we let $L^p(\R^d,w;X)$ be the subspace of all $f \in L^0(\R^d;X)$ such that
\begin{equation*}
  \nrm{f}_{L^p(\R^d,w;X)}:= \has{\int_{\R^d}\nrm{f}_X^pw}^{1/p}<\infty.
  \end{equation*}
and let $L^{p,\infty}(\R^d,w;X)$ be defined as in Section \ref{section:lorentz}.
We will say that a weight $w$ lies in the \emph{Muckenhoupt class $A_p$} and write $w\in A_p$ if it satisfies
\[
[w]_{A_p}:=\sup_{Q}\avint_Q w \cdot \has{\avint_Q w^{1-p'}}^{p-1}<\infty,
\]
where the supremum is taken over all cubes $Q\subseteq\R^d$ and the second factor is replaced by $(\essinf_Q w)^{-1}$ if $p=1$. Note that $w \in A_p$ if and only if $w^{1-p'} \in A_{p'}$ with $[w]^{\frac{1}{p}}_{A_p} = [w^{1-p'}]_{A_{p'}}^{\frac{1}{p'}}$ for $p \in (1,\infty)$.

We will say that a weight $w$ lies in  $A_\infty$ and write $w\in A_\infty$ if
\begin{equation*}
  [w]_{A_\infty} := \sup_Q \frac{\int_Q M(w \ind_Q)   }{\int_Qw  } <\infty,
\end{equation*}
where the supremum is taken over all cubes $Q\subseteq\R^d$. Then $A_\infty  = \bigcup_{p\geq 1} A_p$ and for all $w \in A_p$ we have
\begin{equation*}
  [w]_{A_\infty} \leq C_d \, [w]_{A_p},
\end{equation*}
See e.g. \cite{HP13} for the proof of these facts and a more thorough introduction of the Fuji--Wilson $A_\infty$-characteristic.

\section{Stochastic integral operators}\label{section:SIO}

For details of the introduced notions in this section we refer to \cite{NVW07, NVW15b}.
Let $X$ be a Banach space and $H$ be a Hilbert space. Let $(\Omega, \mathcal{A}, \P)$ be a probability space with filtration $(\ms{F}_t)_{t\geq 0}$.

Let $\mathcal{W}\in \mc{L}(L^2(\R_+;H),L^2(\Omega))$ denote an {\em isonormal mapping} (see \cite{Kal02}) such that $\mathcal{W} f$ is $\ms{F}_t$-measurable if $f=0$ on $(t, \infty)$. Define a {\em cylindrical Brownian motion} $(W_H(t))_{t\geq 0}$ by $W_H(t) h := \mathcal{W} (\ind_{[0,t]} h)$.

For $f = \ind_{(a,b]} \otimes h \otimes \xi$, where $0\leq a<b<\infty$ and $\xi\in L^\infty(\Omega;X)$ is strongly $\ms{F}_a$-measurable, define
\[\int_0^s f(t) \dd W_H(t) := (W_H(b\wedge s) - W_H(a\wedge s))h \otimes \xi\in L^p(\Omega;X).\]
for each $s\geq 0$. The functions in the linear span of such $f$ are called the {\em finite rank adapted step processes}. We extend the definition of the stochastic integral by linearity.

For $T\in (0,\infty]$ and $p\in [1, \infty)$, we let $L^p_{\ms{F}}(\Omega;\gamma(0,T;H,X))$ denote the closure of all finite rank adapted step processes in $L^p(\Omega;\gamma(0,T;H,X))$. One has that $f\in L^p_{\ms{F}}(\Omega;\gamma(0,T;H,X))$ if and only if
$f (\ind_{[0,s]}\otimes h)$ is strongly $\ms{F}_s$-measurable and $f\in L^p(\Omega;\gamma(0,T;H,X))$ for all $s\in (0,T)$ and $h\in H$. The following result provides two-sided estimates for the stochastic integral with respect to an $H$-cylindrical Brownian motion $(W_H(t))_{t\geq 0}$.
\begin{theorem}[It\^o isomorphism]\label{theorem:Ito}
Let $X$ be a $\UMD$ Banach space, let $p\in (1, \infty)$ and $T\in (0,\infty]$.
For every adapted finite rank step process $f:(0,T)\times \Omega\to \mc{L}(H,X)$, one has
\[\Big\|\int_0^T f(t) \dd W_H(t)\Big\|_{L^p(\Omega;X)} \eqsim_{p,X} \|f\|_{L^p(\Omega;\gamma(0,T;H,X))}. \]
In particular, the mapping $f\mapsto \int_0^T f(t) \dd W_H(t)$ extends to an isomorphism from $L^p_{\ms{F}}(\Omega;\gamma(0,T;H,X))$ to $L^p(\Omega;X)$.
\end{theorem}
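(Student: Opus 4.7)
This is the vector-valued It\^o isomorphism of van Neerven--Veraar--Weis. My plan is to prove the two-sided estimate for a dense class of simple adapted processes and then extend by density and use the isomorphism to identify the closures on both sides.

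\textbf{Step 1 (Reduction to simple processes).} Every finite rank adapted step process $f$ can be written, after refining the time partition, as
\[f = \sum_{k=1}^{n} \ind_{(t_{k-1},t_{k}]} \otimes F_{k},\]
where $0 = t_{0} < t_{1} < \dots < t_{n} \leq T$ and $F_{k} \in L^{\infty}(\Omega;\gamma(H,X))$ is strongly $\ms{F}_{t_{k-1}}$-measurable of finite rank. By definition of the stochastic integral and linearity,
\[\int_{0}^{T} f \dd W_{H} = \sum_{k=1}^{n} F_{k}\brac*{W_{H}(t_{k}) - W_{H}(t_{k-1})},\]
where each increment $W_{H}(t_{k}) - W_{H}(t_{k-1})$ is an $H$-isonormal Gaussian mapping, independent of $\ms{F}_{t_{k-1}}$.

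\textbf{Step 2 (Decoupling).} This is the heart of the matter and the main obstacle. Let $(\widetilde{W}_{H}(t))_{t\geq 0}$ be an independent copy of the cylindrical Brownian motion, defined on an independent probability space $(\widetilde{\Omega},\widetilde{\P})$. The UMD property of $X$ is characterised by the two-sided decoupling inequality of Garling and McConnell for vector-valued Gaussian martingale difference sequences:
\[\E \nrms{\sum_{k=1}^{n} F_{k}\brac*{W_{H}(t_{k}) - W_{H}(t_{k-1})}}_{X}^{p} \eqsim_{p,X} \E\widetilde{\E} \nrms{\sum_{k=1}^{n} F_{k}\brac*{\widetilde{W}_{H}(t_{k}) - \widetilde{W}_{H}(t_{k-1})}}_{X}^{p},\]
the key point being that $F_{k}$ is $\ms{F}_{t_{k-1}}$-measurable so the original sum is a martingale, while on the right-hand side the Gaussian increments are globally independent of the coefficients $F_{k}$. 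The implicit constants depend on $p$ and the UMD constant $\beta_{p,X}$. This is the only place where UMD is used.

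\textbf{Step 3 (Identification with the $\gamma$-norm).} Fix $\omega \in \Omega$ and let $(h_{j})_{j=1}^{N}$ be an orthonormal basis for the finite-dimensional subspace of $H$ needed to express $F_{k}(\omega) = \sum_{j=1}^{N} h_{j} \otimes x_{k,j}(\omega)$. Then, using that the increments of $\widetilde{W}_{H}$ form a Gaussian system,
\[\sum_{k=1}^{n} F_{k}(\omega)\brac*{\widetilde{W}_{H}(t_{k}) - \widetilde{W}_{H}(t_{k-1})} = \sum_{k=1}^{n}\sum_{j=1}^{N} \widetilde{\gamma}_{k,j}\, x_{k,j}(\omega),\]
where $(\widetilde{\gamma}_{k,j})$ is, after normalisation by $(t_{k}-t_{k-1})^{1/2}$, a Gaussian sequence on $\widetilde{\Omega}$. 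By the definition of the $\gamma$-radonifying norm together with the Kahane--Khintchine inequalities,
\[\widetilde{\E}\nrms{\sum_{k=1}^{n} F_{k}(\omega)\brac*{\widetilde{W}_{H}(t_{k}) - \widetilde{W}_{H}(t_{k-1})}}_{X}^{p} \eqsim_{p} \nrm{f(\cdot,\omega)}_{\gamma(0,T;H,X)}^{p}.\]
Integrating over $\omega\in\Omega$ and combining with Step~2 yields the desired two-sided estimate on simple processes.

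\textbf{Step 4 (Density and extension).} The space of finite rank adapted step processes is, by definition, dense in $L^{p}_{\ms{F}}(\Omega;\gamma(0,T;H,X))$. The two-sided estimate from Step~3 shows that the stochastic integral $f \mapsto \int_{0}^{T} f \dd W_{H}$ extends from this dense subspace to a bounded linear map into $L^{p}(\Omega;X)$ which is simultaneously bounded below; hence it extends to an isomorphism onto its image, which is closed. The standard approximation of general integrands by simple ones (truncation, mollification and adaptation via the conditional expectation relative to $(\ms{F}_{t})_{t\geq 0}$) shows surjectivity onto its closure, and this closure is by construction exactly the image of the extended map.
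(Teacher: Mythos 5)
The paper does not prove this theorem; it is quoted from \cite{NVW07, NVW15b}, with the remark that the reader is referred there for details. So there is no in-paper proof to compare against — I am assessing your sketch on its own merits, against the known argument of van Neerven, Veraar and Weis.

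Your sketch is a correct outline of that argument. Step 1 is the standard reduction to a common refinement of the time partition. Step 2 is indeed the heart of the matter: the two-sided decoupling inequality for $X$-valued martingale difference sequences and decoupled tangent sequences, due to Garling and McConnell, which holds if and only if $X$ is UMD — this is precisely how the UMD hypothesis enters in \cite{NVW07}. Step 3 is the correct Gaussian bookkeeping: fixing $\omega$, the decoupled sum is a Gaussian sum whose $L^2(\widetilde{\Omega};X)$-norm is exactly $\|f(\cdot,\omega)\|_{\gamma(0,T;H,X)}$ (the natural adapted orthonormal basis of $L^2((0,T);H)$ gives the normalisation $(t_k-t_{k-1})^{1/2}$ you mention), and the Kahane–Khintchine inequality then replaces the $L^p(\widetilde{\Omega};X)$-norm by the $L^2$-norm. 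Step 4 is the usual density argument. One small point worth making explicit: the word \emph{isomorphism} in the theorem statement (and in your Step 4) should be read as an isomorphic embedding onto a closed subspace of $L^p(\Omega;X)$, not a surjection onto all of $L^p(\Omega;X)$; your formulation ("isomorphism onto its image, which is closed") is the correct reading.
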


\subsection{Stochastic integral operators\label{section:SII}}
For $p\in [2, \infty)$, $T \in (0,\infty]$ and a weight $w$ on $(0,T)$, let $L^p_{\ms{F}}(\Omega\times (0,T),w;\gamma(H,X))$ denote the closure of all finite rank adapted step processes in $L^p(\Omega\times (0,T),w;\gamma(H,X))$, where we omit the weight if $w \equiv 1$. The reason we consider $p\in [2, \infty)$ will become clear in Subsection \ref{section:scalarcase}. Although we will not assume type $2$ for the moment, it follows from \cite[Proposition 6.2]{NVW15} that already for very easy kernels $K$ in order to have boundedness of $S_K$ a type $2$ condition on $Y$ is necessary.

\begin{definition}[Stochastic integral operator]\label{definition:SIOW}
Let $X$ be a Banach space and $Y$ a UMD Banach space.   Let $p\in [2, \infty)$, $T \in (0,\infty]$, $w$ be a weight on $(0,T)$ and let
$$K\colon(0,T)\times (0,T)\to \mc{L}(X,Y)$$ be strongly measurable.  We say that $K\in \mc{K}_{W}^H(L^{p}((0,T),w))$ if for $f\in L^p_{\ms{F}}\ha{\Omega\times (0,T), w;\gamma(H,X)}$ and a.e. $s \in (0,T)$ the mapping $t\mapsto K(s,t) f(t)$ is in $L^p_{\ms{F}}(\Omega;\gamma(0,T;H,Y))$ and the operator $S_K$ given by
\begin{align*}
S_K f(s) := \int_{0}^T K(s,t) f(t) \dd W_H(t), \qquad s \in (0,T)
\end{align*}
is bounded from $L^p_{\ms{F}}(\Omega\times (0,T),w;\gamma(H,X))$ into $L^p(\Omega\times (0,T),w;Y)$.
We norm $\mc{K}_{W}^H(L^{p}((0,T),w))$ by
$$\|K\|_{\mc{K}_{W}^H(L^{p}((0,T),w))} := \|S_K\|_{L^p_{\ms{F}}(\Omega\times (0,T),w;\gamma(H,X)) \to L^p(\Omega\times (0,T),w;Y)}.$$
We omit the weight if $w \equiv 1$ and we omit the Hilbert space if $H = \R$.
\end{definition}

We want to study the boundedness properties of $S_K$. In the next results we will reformulate this problem by reducing to the deterministic setting using square functions ($\gamma$-norms in time) and reduce considerations to the case $H = \R$.

\begin{definition}[$\gamma$-integral operator]\label{definition:gammaintegral}
Let $X$ and $Y$ be a Banach spaces.  Let $p\in [2, \infty)$, $w$ be a weight on $\R^d$ and let $$K:\R^d\times \R^d\to \mc{L}(X,Y)$$ be strongly measurable. We say that $K\in \mathcal{K}_\gamma^H(L^{p}(\R^d,w))$ (resp. $K\in \mathcal{K}_\gamma^H(L^{p,\infty}(\R^d,w))$) if for $f \in L^p\ha{\R^d,w;\gamma(H,X)}$ and a.e  $s\in \R^d$ the mapping $t\mapsto K(s,t) f(t)$ is in $\gamma(\R^d;H,Y)$ and the operator $T_K$ given by
\begin{align*}
T_{K} f(s) := K(s,\cdot) f(\cdot), \qquad s \in \R^d
\end{align*}
is bounded from $L^p(\R^d,w;\gamma(H,X))$ into $L^p(\R^d,w;\gamma(\R^d;H,Y))$ (resp. from $L^p$ into $L^{p,\infty}$). We norm these spaces by
\begin{align*}
\|K\|_{\mathcal{K}_\gamma^H(L^{p}(\R^d,w))} & := \|T_K\|_{L^p(\R^d,w;\gamma(H,X))\to L^p(\R^d,w;\gamma(\R^d;H,Y))},
\\ \|K\|_{\mathcal{K}_\gamma^H(L^{p,\infty}(\R^d,w))} & := \|T_K\|_{L^p(\R^d,w;\gamma(H,X))\to L^{p,\infty}(\R^d,w;\gamma(\R^d;H,Y))}.
\end{align*}
We omit the weight if $w \equiv 1$ and we omit the Hilbert space if $H = \R$. We make the same definitions for $\R^d$ replaced by any measure space $(S,\mu)$ in the obvious way.
\end{definition}

We start by connecting the definitions of stochastic and $\gamma$-integral operators.

\begin{proposition}[Deterministic characterization]\label{proposition:detcharacterization}
Let $X$ be a Banach space and $Y$ a UMD Banach space. Let $p\in [2, \infty)$, $T \in (0,\infty]$ and let $w$ be a weight on $(0,T)$.
Then $$\mathcal{K}_{W}^H(L^p((0,T),w)) = \mathcal{K}_\gamma^{H}(L^p((0,T),w))$$ isomorphically.
\end{proposition}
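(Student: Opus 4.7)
The plan is to apply the It\^o isomorphism of Theorem \ref{theorem:Ito} pathwise in the outer variable $s\in(0,T)$, and then exchange the resulting $s$- and $\omega$-integrations by Fubini. I would first fix an adapted finite rank step process $f$ and an $s\in(0,T)$. Since $K$ is deterministic and strongly measurable, the process $t\mapsto K(s,t)f(t)$ is adapted, and by the hypothesis built into Definition \ref{definition:SIOW} it lies in $L^p_{\ms{F}}(\Omega;\gamma(0,T;H,Y))$; applying Theorem \ref{theorem:Ito} at this fixed $s$ then yields
\begin{equation*}
  \nrm{S_K f(s)}_{L^p(\Omega;Y)} \eqsim_{p,Y} \nrm{K(s,\cdot)f(\cdot)}_{L^p(\Omega;\gamma(0,T;H,Y))}.
\end{equation*}

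Next, I would raise to the $p$-th power, multiply by $w(s)$, integrate over $s\in(0,T)$, and interchange the $s$- and $\omega$-integrations, using the isometric identification $L^p(\Omega;L^p((0,T),w;E))=L^p(\Omega\times(0,T),w;E)$ for any Banach space $E$. This produces the bridge
\begin{equation*}
  \nrm{S_K f}_{L^p(\Omega\times(0,T),w;Y)} \eqsim_{p,Y} \nrm{T_K f}_{L^p(\Omega\times(0,T),w;\gamma(0,T;H,Y))},
\end{equation*}
valid for every adapted finite rank step process $f$. This equivalence is the heart of the proposition and both containments will fall out of it.

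If $K\in\mc{K}_\gamma^H(L^p((0,T),w))$, I would apply the defining bound for $T_K$ pathwise to the a.e.\ defined $f(\cdot,\omega)\in L^p((0,T),w;\gamma(H,X))$ and integrate against $\P$; this bounds the right-hand side of the bridge by $\nrm{K}_{\mc{K}_\gamma^H}\nrm{f}_{L^p(\Omega\times(0,T),w;\gamma(H,X))}$, so the bridge yields $K\in\mc{K}_W^H$ with the desired norm bound. Conversely, for $K\in\mc{K}_W^H(L^p((0,T),w))$ I would specialize the bridge to deterministic (i.e.\ $\omega$-independent) $f\in L^p((0,T),w;\gamma(H,X))$; such $f$ are trivially adapted, both sides of the bridge collapse to their $\omega$-free versions, and the $\mc{K}_W^H$-bound becomes exactly the $\mc{K}_\gamma^H$-bound. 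The isomorphism constant is inherited from the It\^o constant and therefore depends only on $p$ and $Y$.

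The hard part is purely bookkeeping: one must verify that $K(s,\cdot)f(\cdot)$ is an admissible integrand for Theorem \ref{theorem:Ito} for a.e.\ $s$, check the joint measurability needed to invoke Fubini on $\Omega\times(0,T)$, and then extend the identity from finite rank step processes to all of $L^p_{\ms{F}}(\Omega\times(0,T),w;\gamma(H,X))$ by density. All of these are routine consequences of the definitions in Section \ref{section:SIO}.
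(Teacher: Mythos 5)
Your proof is correct and follows essentially the same route as the paper: apply the It\^o isomorphism of Theorem \ref{theorem:Ito} at each fixed $s$, use Fubini to pass to the mixed $L^p(\Omega\times(0,T),w)$ norm, bound by the $\mc{K}_\gamma^H$-norm applied $\omega$-wise for one inclusion, and specialize to $\Omega$-independent $f$ for the converse. The only difference is expository — you spell out the "bridge" equivalence and the density/measurability bookkeeping more explicitly than the paper does.
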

\begin{proof}
The proof is completely straightforward from Theorem \ref{theorem:Ito}. Indeed if $K\in \mathcal{K}_{\gamma}^H(L^p((0,T),w))$, then for $f\in L^p_{\ms{F}}(\Omega\times(0,T),w;\gamma(H,X))$ one has
\begin{align*}
\|S_Kf(s)\|_{L^p(\Omega;Y)} \eqsim_{p,Y} \|T_K f(s)\|_{L^p(\Omega;\gamma(0,T;H,Y))}.
\end{align*}
Therefore by Fubini's theorem we have
\begin{align}
\nonumber \|S_Kf\|_{L^p(\Omega\times(0,T),w;Y))}& \eqsim_{p,Y} \|T_K f\|_{L^p(\Omega;L^{p}(w;\gamma(0,T;H,Y)))}
\\ & \label{eq:estSKTK}  \leq \|K\|_{\mc{K}_\gamma^H(L^p((0,T),w))} \|f\|_{L^p(\Omega;L^p(w;\gamma(H,X)))}.
\\ & =\|K\|_{\mc{K}_\gamma^H(L^p((0,T),w))} \|f\|_{L^p(\Omega\times (0,T),w;\gamma(H,X))} \nonumber
\end{align}
Conversely, taking $f$ independent of $\Omega$, a similar argument yields that $K\in \mathcal{K}_{W}^H(L^p((0,T),w))$ implies $K\in \mathcal{K}_{\gamma}^H(L^p((0,T),w))$.
\end{proof}

In the next result we show that one can take $H = \R$. The result extends \cite[Theorem 5.4]{AV19} where a particular kernel was considered.

\begin{proposition}[Independence of $H$]\label{proposition:independenceH}
Let $X$ and $Y$ be a Banach spaces and $(S,\mu)$ a measure space. Assume $Y$ has type $2$, let $p\in [2, \infty)$ and let $w$ be a weight on $S$.
Then \begin{align*}
  \mathcal{K}_\gamma^{H}(L^p(S,w)) &= \mathcal{K}_\gamma(L^p(S,w))\\
  \mathcal{K}_\gamma^{H}(L^{p,\infty}(S,w)) &= \mathcal{K}_\gamma(L^{p,\infty}(S,w))
\end{align*}isomorphically.
\end{proposition}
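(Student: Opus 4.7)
The plan is to prove the two one-sided inequalities $\|K\|_{\mathcal{K}_\gamma(L^p(S,w))}\le\|K\|_{\mathcal{K}_\gamma^H(L^p(S,w))}$ and $\|K\|_{\mathcal{K}_\gamma^H(L^p(S,w))}\lesssim_p\tau_{2,Y}\|K\|_{\mathcal{K}_\gamma(L^p(S,w))}$ (and analogously for $L^{p,\infty}$). The first inequality is immediate: fix a unit vector $h_0\in H$, so that for $g\in L^p(S,w;X)$ the function $f:=h_0\otimes g$ satisfies $\|f\|_{L^p(S,w;\gamma(H,X))}=\|g\|_{L^p(S,w;X)}$. A direct computation in a product orthonormal basis $(e_j\otimes h_k)$ of $L^2(S;H)$ with $h_0=h_1$ shows $T_K f(s)=h_0\otimes T_K g(s)$ and $\|T_K f(s)\|_{\gamma(L^2(S;H),Y)}=\|T_K g(s)\|_{\gamma(L^2(S),Y)}$, so the scalar boundedness follows from the $H$-valued one with constant one. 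The same argument covers the $L^{p,\infty}$-setting.

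For the nontrivial inequality I would combine three ingredients: the $\gamma$-ideal property, the $\gamma$-Fubini theorem of Proposition~\ref{proposition:gammaFubLorentz}, and a pointwise embedding obtained from Lemma~\ref{lemma:gammaL2}. By the ideal property, the boundedness of the scalar $T_K\colon L^p(S,w;X)\to L^p(S,w;\gamma(S,Y))$ extends with the same norm to
\[
(T_K)_H\colon\gamma(H,L^p(S,w;X))\to \gamma(H,L^p(S,w;\gamma(S,Y))).
\]
By Proposition~\ref{proposition:gammaFubLorentz}\ref{it:concgammaFub} the domain equals $L^p(S,w;\gamma(H,X))$, and a check on simple tensors $f=a\otimes(h\otimes x)$ identifies $(T_K)_H f$ with the function $s\mapsto F(s)\in\gamma(H,\gamma(S,Y))$ determined by $F(s)(h')=T_K(f(\cdot)h')(s)$. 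Applying Proposition~\ref{proposition:gammaFubLorentz}\ref{it:concgammaFub} a second time puts $F$ in $L^p(S,w;\gamma(H,\gamma(S,Y)))$ with norm controlled by $\|K\|_{\mathcal{K}_\gamma(L^p(S,w))}\|f\|_{L^p(S,w;\gamma(H,X))}$; for the $L^{p,\infty}$-variant the one-sided embedding in Proposition~\ref{proposition:gammaFubLorentz}\ref{it:generalgammaFub} plays the same role.

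The main obstacle is the remaining step, namely producing a pointwise continuous embedding $\gamma(H,\gamma(S,Y))\hookrightarrow\gamma(S;H,Y)=\gamma(L^2(S;H),Y)$ that sends $F(s)$ to the uncurried operator $\varphi\mapsto\int_S K(s,t)f(t)\varphi(t)\dd t$, i.e.\ to $T_K f(s)$. This is delicate because $\gamma(H_1,\gamma(H_2,Y))$ and $\gamma(H_1\otimes_2 H_2,Y)$ are in general \emph{not} isomorphic for non-Hilbert $Y$ (the former corresponds to a product of two independent Gaussian families, the latter to a single Gaussian family on the tensor basis). The trick is to apply Lemma~\ref{lemma:gammaL2} with the roles of outer measure space and inner Hilbert space swapped: after restricting to a separable subspace of $H$ that carries the essential range of $f$, write $H=L^2(S',\mu')$ for a $\sigma$-finite $(S',\mu')$ (e.g.\ $S'=\mathbb{N}$ with counting measure), and apply Lemma~\ref{lemma:gammaL2} to the target $Y$, outer measure space $S'$ and inner Hilbert space $L^2(S)$. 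Since $Y$ has type $2$, this yields
\[
\gamma(H,\gamma(S,Y))=\gamma(S';\gamma(L^2(S),Y))\hookrightarrow \gamma(L^2(S';L^2(S)),Y)=\gamma(L^2(S;H),Y),
\]
with constant at most $\tau_{2,Y}$, and a check on simple tensors confirms $F(s)$ is sent to $T_K f(s)$. Combining the three steps gives
\[
\|T_K f\|_{L^p(S,w;\gamma(S;H,Y))}\lesssim_p\tau_{2,Y}\|K\|_{\mathcal{K}_\gamma(L^p(S,w))}\|f\|_{L^p(S,w;\gamma(H,X))},
\]
and the same argument, with Proposition~\ref{proposition:gammaFubLorentz}\ref{it:generalgammaFub} in place of \ref{it:concgammaFub}, handles the $L^{p,\infty}$-case.
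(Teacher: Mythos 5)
Your proof is correct and follows essentially the same path as the paper's: both directions rest on the $\gamma$-ideal property, Proposition~\ref{proposition:gammaFubLorentz} (the $\gamma$-Fubini isomorphism), and the type-$2$ embedding from Lemma~\ref{lemma:gammaL2} that collapses the iterated $\gamma$-space into $\gamma(L^2(S;H),Y)$. You merely apply the ideal property and Fubini at the $L^p$-level before the pointwise embedding (and realize $\gamma(H,\gamma(S,Y))\hookrightarrow\gamma(L^2(S;H),Y)$ via $H\cong L^2(S')$), whereas the paper first passes to the pointwise inequality $\|T_K^H f(s)\|_{\gamma(S;H,Y)}\lesssim_Y\|T_K^{\mathbb{R}}f(s)\|_{\gamma(H,\gamma(S,Y))}$ using Lemma~\ref{lemma:gammaL2} together with the Hilbert-space $\gamma$-Fubini isometry $\gamma(S;\gamma(H,Y))=\gamma(H,\gamma(S,Y))$, and then takes $L^p$-norms — the same ingredients in a permuted order.
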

\begin{proof}
By considering a $1$-dimensional subspace of $H$, we immediately see that $\subseteq$ holds. For the converse let $T_K^H$ and $T_K^\R$ be the $\gamma$-integral operators on $L^p\ha{S,w;\gamma(H,X)}$ and $L^p\ha{S,w;X}$ respectively.
By Lemma \ref{lemma:gammaL2} one has
\begin{align*}
\|T_K^H f(s)\|_{\gamma(S;H,Y)}
& \lesssim_Y \|T_K^{\R} f(s)\|_{\gamma(S;\gamma(H,Y))}
  = \|T_K^{\R} f(s)\|_{\gamma(H,\gamma(S,Y))}
\end{align*}
Taking $L^p(S,w)$-norms and using Proposition \ref{proposition:gammaFubLorentz}\ref{it:concgammaFub} with $p=q$ we obtain
\begin{equation*}
\begin{aligned}
   \|T_K^H f\|_{L^p(w;\gamma(S;H,Y))} &\lesssim_{Y} \|T_K^{\R} f\|_{L^p(S,w;\gamma(H,\gamma(S;Y)))}
\\  & \eqsim_{p} \|T^{\R}_K f\|_{\gamma(H,L^p(S,w;\gamma(S;Y)))}
\\ & \leq \|K\|_{\mathcal{K}_\gamma(L^p(S,w))} \|f\|_{\gamma(H,L^p(S,w;X))}
\\ & \eqsim_p \|K\|_{\mathcal{K}_\gamma(L^p(S,w))}  \|f\|_{L^p(S,w;\gamma(H,X))}.
\end{aligned}
\end{equation*}
The $L^{p,\infty}$-case follows analogously using
Proposition \ref{proposition:gammaFubLorentz}\ref{it:generalgammaFub} instead.
\end{proof}

\subsection{Truncations}
We will now illustrate a major difference between stochastic and deterministic integral operators. Indeed, we will show that even when the kernel $K$ has a singularity, the ``$\gamma$-integrals'' converge absolutely. In particular, we show that if we truncate the singularity of $K$, then the operators associated to these truncations converge back to the operator associated to $K$ without any regularity assumptions on $K$. This is in contrast to the deterministic setting (see \cite[Section 5.3]{Gr14a}). For this let $X$ and $Y$ be Banach spaces and suppose that $K\colon\R^{d}\times \R^d \to \mc{L}(X,Y)$ is strongly measurable. We define for $\varepsilon>0$
\begin{equation*}
K_{\varepsilon}(s,t) := K(s,t) \ind_{B(0,1/\varepsilon)\setminus B(0,\varepsilon)}(s-t),\qquad s,t\in \R^d.
\end{equation*}
Let $p \in [2,\infty)$ and $w$ a weight on $\R^d$. If $K_{\varepsilon} \in \mc{K}_\gamma(L^p(\R^d,w))$ for all $\varepsilon>0$ we define for $f \in L^p(\R^d,w;X)$ the maximal truncation operator
\begin{equation*}
  T_K^\star f(s) :=  \sup_{\varepsilon>0}\,\nrm{T_{K_\varepsilon}f(s)}_{\gamma(\R^d;Y)} \qquad s \in \R^d.
\end{equation*}

\begin{proposition}[Truncations]\label{proposition:truncations}
Let $X$ and $Y$ Banach spaces and assume that $Y$ has type $2$. Let $p\in [2, \infty)$ and let $w$ be a weight on $\R^d$. Let $$K\colon\R^{d}\times \R^d \to \mc{L}(X,Y)$$ be strongly measurable such that $K_\varepsilon \in \mc{K}_\gamma(L^{p,\infty}(\R^d,w))$ for all $\varepsilon>0$. Then for $f \in L^p(\R^d;X)$ we have
\begin{equation*}
  T_{K}^\star f(s) = \|T_{K} f(s)\|_{\gamma(\R^d;Y)}, \qquad s \in \R^d,
\end{equation*}
and in particular
\begin{align*}
 \|K\|_{\mathcal{K}_\gamma(L^p(\R^d,w))}  &=  \sup_{\varepsilon> 0} \|K_{\varepsilon}\|_{\mathcal{K}_\gamma(L^p(\R^d,w))},
\\   \|K\|_{\mathcal{K}_\gamma(L^{p,\infty}(\R^d,w))} &= \sup_{\varepsilon> 0} \|K_{\varepsilon}\|_{\mathcal{K}_\gamma(L^{p,\infty}(\R^d,w))}.
\end{align*}
Furthermore if $K\in {\mathcal{K}_\gamma(L^{p}(\R^d,w))}$, then $T_{K_{\varepsilon}} \to T_K$ in the strong operator topology.
\end{proposition}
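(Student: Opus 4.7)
The heart of the proposition is the pointwise identity
\[T_K^\star f(s) = \nrm{T_K f(s)}_{\gamma(\R^d;Y)}, \qquad s \in \R^d,\]
interpreted with value $+\infty$ on both sides when $t \mapsto K(s,t) f(t)$ fails to represent an element of $\gamma(\R^d;Y)$; the two operator-norm equalities and the strong-operator convergence then follow from this identity by taking $L^p(\R^d,w)$ or $L^{p,\infty}(\R^d,w)$ norms in $s$ and invoking standard convergence theorems. I first observe that for $\varepsilon' < \varepsilon$ the set $A_\varepsilon^s := B(s, 1/\varepsilon) \setminus B(s, \varepsilon)$ is contained in $A_{\varepsilon'}^s$, and since $\ind_{A_\varepsilon^s}$ acts as a contractive multiplier on $L^2(\R^d)$, the ideal property of $\gamma$-spaces yields
\[\nrm{T_{K_\varepsilon} f(s)}_{\gamma(\R^d;Y)} \leq \nrm{T_{K_{\varepsilon'}} f(s)}_{\gamma(\R^d;Y)} \leq \nrm{T_K f(s)}_{\gamma(\R^d;Y)}.\]
In particular $T_K^\star f(s) \leq \nrm{T_K f(s)}_{\gamma(\R^d;Y)}$, and $\varepsilon \mapsto \nrm{T_{K_\varepsilon} f(s)}_{\gamma(\R^d;Y)}$ increases monotonically as $\varepsilon \searrow 0$.

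For the reverse inequality I split into two cases. When $T_K f(s)$ represents an element $U \in \gamma(L^2(\R^d), Y)$, one has $T_{K_\varepsilon} f(s) = U M_\varepsilon$ with $M_\varepsilon$ the multiplication by $\ind_{A_\varepsilon^s}$ on $L^2(\R^d)$. Since $A_\varepsilon^s \nearrow \R^d$ a.e., we have $M_\varepsilon \to I$ strongly on $L^2(\R^d)$, and a standard continuity property of the $\gamma$-norm under right-composition with strongly convergent bounded operators (verified first for simple $U$ by direct computation and then extended by density of simple elements in $\gamma(\R^d;Y)$) gives $U M_\varepsilon \to U$ in $\gamma(\R^d;Y)$, hence $\nrm{T_{K_\varepsilon} f(s)}_{\gamma(\R^d;Y)} \to \nrm{T_K f(s)}_{\gamma(\R^d;Y)}$. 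When instead $T_K f(s)$ does not represent an element of $\gamma(\R^d;Y)$, one must show that $\sup_\varepsilon \nrm{T_{K_\varepsilon} f(s)}_{\gamma(\R^d;Y)} = \infty$; this is the main obstacle, and the only step where the type $2$ hypothesis on $Y$ enters substantively. Because $Y$ has non-trivial type, it contains no isomorphic copy of $c_0$, and therefore $\gamma(\R^d;Y)$ enjoys the $\gamma$-Fatou property (see the relevant results in Chapter 9 of \cite{HNVW17}): uniform boundedness of the $\gamma$-norms of the approximants $T_{K_\varepsilon} f(s)$ would force $T_K f(s)$ to represent an element of $\gamma(\R^d;Y)$, contradicting the assumption.

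With the pointwise identity in hand, the two $\mc{K}_\gamma$-norm equalities are obtained by taking $L^p(\R^d,w)$ and $L^{p,\infty}(\R^d,w)$ norms of $s \mapsto \nrm{T_K f(s)}_{\gamma(\R^d;Y)}$. Monotonicity of $\nrm{T_{K_\varepsilon}f(s)}_{\gamma(\R^d;Y)}$ in $\varepsilon^{-1}$ combined with the monotone convergence theorem gives
\[\nrm{T_K f}_{L^p(\R^d,w;\gamma(\R^d;Y))} = \sup_{\varepsilon>0} \nrm{T_{K_\varepsilon} f}_{L^p(\R^d,w;\gamma(\R^d;Y))},\]
while the same monotone behaviour of the super-level sets $\{\nrm{T_{K_\varepsilon} f(s)}_{\gamma(\R^d;Y)} > t\}$ yields the analogous identity for the weak $L^{p,\infty}$-quasinorm. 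Taking supremum over $\nrm{f}_{L^p(\R^d,w;X)} \leq 1$ and interchanging the two suprema then delivers the asserted $\mc{K}_\gamma$-norm equalities. Finally, under the hypothesis $K \in \mc{K}_\gamma(L^p(\R^d,w))$, the pointwise convergence $\nrm{T_{K_\varepsilon}f(s) - T_K f(s)}_{\gamma(\R^d;Y)} \to 0$ established above, combined with the $L^p$-majorant $2 \nrm{T_K f(s)}_{\gamma(\R^d;Y)}$, gives $T_{K_\varepsilon}f \to T_K f$ in $L^p(\R^d,w;\gamma(\R^d;Y))$ by dominated convergence, which is the claimed strong-operator convergence.
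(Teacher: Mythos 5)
Your overall plan is sound and the $\leq$ inequality via the ideal/domination property of the $\gamma$-norm is exactly the paper's argument. The case split you introduce for the reverse inequality, however, is a genuinely different route than the paper takes. The paper handles the $\geq$ direction in a single stroke: assuming $T_K^\star f(s)<\infty$, it first uses the embedding $\gamma(\R^d;Y)\hookrightarrow\mc{L}(L^2(\R^d),Y)$ to show that $t\mapsto\ip{K(s,t)f(t),y^*}$ is in $L^2(\R^d)$ for every $y^*\in Y^*$ (hence $T_Kf(s)$ is a well-defined bounded operator), then proves $\ip{T_Kf(s)\varphi,y^*}=\lim_{\varepsilon\to0}\ip{T_{K_\varepsilon}f(s)\varphi,y^*}$ by dominated convergence, and finally applies the $\gamma$-Fatou lemma — this covers simultaneously the cases $T_Kf(s)\in\gamma(\R^d;Y)$ and $T_Kf(s)\notin\gamma(\R^d;Y)$. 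Your Case 1 argument (writing $T_{K_\varepsilon}f(s)=UM_\varepsilon$ with $M_\varepsilon\to I$ strongly, $M_\varepsilon$ self-adjoint, and deducing $UM_\varepsilon\to U$ in $\gamma$-norm by density of finite-rank operators) is a legitimate alternative that actually yields a stronger conclusion than the paper's, namely pointwise \emph{norm} convergence $\nrm{T_{K_\varepsilon}f(s)-T_Kf(s)}_{\gamma(\R^d;Y)}\to0$; you then cleverly reuse this for the strong-operator convergence at the end, whereas the paper invokes the $\gamma$-dominated convergence theorem as a black box.

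The one place your proposal leaves a genuine gap is Case 2. You assert that if $\sup_\varepsilon\nrm{T_{K_\varepsilon}f(s)}_{\gamma(\R^d;Y)}<\infty$, then the $\gamma$-Fatou property ``would force $T_Kf(s)$ to represent an element of $\gamma(\R^d;Y)$''. But the $\gamma$-Fatou lemma (\cite[Proposition 9.4.6]{HNVW17}) asks, in addition to the uniform $\gamma$-bound, that the candidate limit $T_Kf(s)$ already be a well-defined bounded operator from $L^2(\R^d)$ to $Y$ and that $T_{K_\varepsilon}f(s)\to T_Kf(s)$ in a suitable weak sense. Neither is automatic: if $t\mapsto K(s,t)f(t)$ is not even weakly $L^2$, then $T_Kf(s)$ does not make sense as an operator. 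This is exactly what the paper's preliminary step — bounding $\int_{\R^d}|\ip{K(s,t)f(t),y^*}|^2\dd t$ by $\sup_\varepsilon\nrm{T_{K_\varepsilon}f(s)}_{\gamma(\R^d;Y)}^2\nrm{y^*}^2$ — supplies, and it should not be skipped. Once that step is in place your contradiction argument becomes correct; without it, the invocation of $\gamma$-Fatou is not yet justified. You should also spell out, as the paper does (citing density of $L^p(\R^d;X)$ in $L^p(\R^d,w;X)$), how the pointwise identity, stated only for $f\in L^p(\R^d;X)$, upgrades to the operator-norm identities over $L^p(\R^d,w;X)$.
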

\begin{proof}
Fix $f\in L^p(\R^d;X)$ and $s \in \R^d$. Assume that $\|T_{K} f(s)\|_{\gamma(\R^d;Y)}<\infty$ and take $\varepsilon>0$. Then by domination (see \cite[Theorem 9.4.1]{HNVW17})
\begin{equation}\label{eq:Kepsilon}
  \|T_{K_\varepsilon } f(s)\|_{\gamma(\R^d;Y)} \leq \|T_{K} f(s)\|_{\gamma(\R^d;Y)}
\end{equation}
which yields $ T_{K}^\star f(s) \leq \|T_{K} f(s)\|_{\gamma(\R^d;Y)}$.

 Conversely assume that $T_{K}^\star f(s)<\infty$. Note that since $\gamma(\R^d,Y) \hookrightarrow \mc{L}(L^2(\R^d),Y)$, we have
\begin{align*}
\int_{\R^d} |\ip{ K(s,t) f(t), y^*}|^2 \dd t
& \leq \sup_{\varepsilon> 0} \int_{\R^d} |\ip{ K_{\varepsilon}(s,t) f(t), y^*}|^2 \dd t
\\ & \leq \sup_{\varepsilon>0}\|t\mapsto K_{\varepsilon}(s,t) f(t)\|_{\gamma(\R^d;Y)}^2 \|y^*\|^2<\infty.
\end{align*}
Therefore, $t\mapsto K(s,t) f(t)$ is weakly in $L^2$ and thus $T_Kf(s)$ is a bounded operator from $L^2(\R^d)$ into $Y$. Moreover, for all $\varphi\in L^2(\R^d)$ and $y^*\in Y^*$, the dominated convergence theorem yields that
\[\ip{T_Kf(s)\varphi,y^*} = \lim_{\varepsilon\to 0}\ip{T_{K_\varepsilon}f(s)\varphi,y^*}.\]
Now the $\gamma$-Fatou lemma (See \cite[Proposition 9.4.6]{HNVW17}) yields
\begin{align*}
\|T_Kf(s)\|_{\gamma(\R^d;Y)}\leq \lim_{\varepsilon\to 0}\|T_{K_\varepsilon}f(s)\|_{\gamma(\R^d;Y)} = \sup_{\varepsilon> 0}\|T_{K_\varepsilon}f(s)\|_{\gamma(\R^d;Y)} .
\end{align*}
where the equality follows again by domination. This concludes the proof of the equality
$$ T_{K}^\star f(s) = \|T_{K} f(s)\|_{\gamma(\R^d;Y)}.$$

By taking $L^p$-norms and using the density of $L^p(\R^d;X)$ in $L^p(\R^d,w;X)$ (see \cite[Exercise 7.4.1]{Gr14a}), we directly obtain
\begin{align*}
\|K\|_{\mathcal{K}(L^p(\R^d,w))}  &=  \nrm{T_K^\star}_{L^p(\R^d,w)} \leq \sup_{\varepsilon> 0} \|K_{\varepsilon}\|_{\mathcal{K}(L^p(\R^d,w))},
\end{align*}
and the converse inequality follows from \eqref{eq:Kepsilon}. The estimate for $L^{p,\infty}$ follows analogously. Finally, the strong convergence follows from \eqref{eq:Kepsilon}, the dominated convergence theorem and the $\gamma$-dominated convergence theorem (see \cite[Theorem 9.4.2]{HNVW17}).
\end{proof}

Next we prove a version of the above result for stochastic integral operators. For this let $X$ and $Y$ be Banach spaces, $p \in [2,\infty)$ and $w$ a weight on $\R_+$. If $K_{\varepsilon} \in \mc{K}_W^H(L^p(\R_+,w))$ for all $\varepsilon>0$ we define  for $f \in L^p_{\ms{F}}(\Omega \times \R_+;\gamma(H,Y))$, analogous to $T_K^\star$, the operator
\begin{equation*}
  S_K^{\star} f(s) = \sup_{\varepsilon>0}\,\nrm{S_{K_\varepsilon}f(s)}_Y, \qquad s \in \R_+.
\end{equation*}

\begin{theorem}\label{theorem:stochintoperatormaximal}
Let $X$ and $Y$ Banach spaces and assume that $Y$ has $\UMD$ and type $2$. Let $p\in [2, \infty)$ and let $w$ be a weight on $\R_+$. Let $$K\colon\R_+ \times \R_+ \to \mc{L}(X,Y)$$ be strongly measurable such that $K_\varepsilon \in \mc{K}_W^H(L^{p}(\R_+,w))$ for all $\varepsilon>0$. Then
\begin{align*}
  \|S_K^{\star}\|_{L^p_{\ms{F}}(\Omega\times \R_+,w;\gamma(H,Y)) \to L^p(\Omega\times \R_+,w)} &\eqsim_{p}  \sup_{\varepsilon> 0}\|K_{\varepsilon}\|_{\mathcal{K}_{W}^H(L^p(\R_+,w))} \\&\eqsim_{Y,p}
  \|K\|_{\mathcal{K}_{W}^H(L^p(\R_+w))}
\end{align*}
Furthermore if $K \in \mathcal{K}_{W}^H(L^p(\R_+w))$, then  $S_{K_{\varepsilon}} \to S_K$ in the strong operator topology.
\end{theorem}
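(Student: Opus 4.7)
The plan is to reduce the stochastic maximal operator $S_K^\star$ to its deterministic $\gamma$-version, where Proposition \ref{proposition:truncations} applies, and then to dominate the truncated stochastic integrals pathwise by the supremum of a single $Y$-valued martingale, to which Doob's maximal inequality applies. The delicate point is that $K$ itself is not yet known to belong to $\mc{K}_W^H$, so this has to be established first.

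\emph{The second equivalence.} By Proposition \ref{proposition:detcharacterization} combined with Proposition \ref{proposition:independenceH} (available since $Y$ has type $2$), the assumption $K_\varepsilon \in \mc{K}_W^H(L^p(\R_+, w))$ transfers to $K_\varepsilon \in \mc{K}_\gamma(L^p(\R_+, w))$ with $\eqsim_{Y,p}$-equivalent norms. Proposition \ref{proposition:truncations} then yields $\|K\|_{\mc{K}_\gamma(L^p(\R_+, w))} = \sup_\varepsilon \|K_\varepsilon\|_{\mc{K}_\gamma(L^p(\R_+, w))}$ together with the strong convergence $T_{K_\varepsilon} \to T_K$. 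Translating back gives the second equivalence and, crucially, $K \in \mc{K}_W^H$. Combining this strong convergence with the It\^o isomorphism (Theorem \ref{theorem:Ito}) and Fubini produces $S_{K_\varepsilon} f \to S_K f$ in $L^p(\Omega \times \R_+, w; Y)$ for every $f \in L^p_{\ms{F}}(\Omega \times \R_+, w; \gamma(H, X))$; in particular $\|S_K f\|_{L^p} \leq \sup_\varepsilon \|S_{K_\varepsilon} f\|_{L^p}$.

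\emph{The first equivalence via Doob.} The bound $\sup_\varepsilon \|K_\varepsilon\|_{\mc{K}_W^H} \leq \|S_K^\star\|_{L^p \to L^p}$ is immediate from the definition of $S_K^\star$. For the reverse, fix $f$ and $s>0$. Since $K \in \mc{K}_W^H$, the process
\[
M_s(r) := \int_0^r K(s,u)\, f(u) \dd W_H(u), \qquad r \geq 0,
\]
is a well-defined $Y$-valued c\`adl\`ag martingale with $M_s(\infty) = S_K f(s)$. Because the truncation region $\{u \in \R_+ : \varepsilon < |s - u| < 1/\varepsilon\}$ is a union of at most two intervals, we have
\[
S_{K_\varepsilon} f(s) = M_s((s - \varepsilon)_+) - M_s((s - 1/\varepsilon)_+) + M_s(s + 1/\varepsilon) - M_s(s + \varepsilon),
\]
and hence the pathwise bound $\sup_{\varepsilon > 0} \|S_{K_\varepsilon} f(s)\|_Y \leq 4 \sup_{r \geq 0} \|M_s(r)\|_Y$. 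Doob's maximal $L^p$-inequality (with constant $p/(p-1)$) applied to the Banach-valued martingale $M_s$ yields $\|S_K^\star f(s)\|_{L^p(\Omega)} \leq \tfrac{4p}{p-1} \|S_K f(s)\|_{L^p(\Omega; Y)}$; integrating in $s$ against $w$ and invoking $\|S_K f\|_{L^p} \leq \sup_\varepsilon \|S_{K_\varepsilon} f\|_{L^p}$ from the previous paragraph gives $\|S_K^\star\|_{L^p \to L^p} \leq \tfrac{4p}{p-1} \sup_\varepsilon \|K_\varepsilon\|_{\mc{K}_W^H}$, closing the chain. Finally, the strong convergence $S_{K_\varepsilon} \to S_K$ is exactly what was used in the previous paragraph.

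The main obstacle is that Doob's inequality requires the single dominating martingale $M_s$, which presupposes $K \in \mc{K}_W^H$; yet $K \in \mc{K}_W^H$ is itself part of what must be deduced from the uniform boundedness of the truncations. The resolution is to first descend to the deterministic $\gamma$-setting, where Proposition \ref{proposition:truncations} furnishes $K \in \mc{K}_\gamma$ together with the strong convergence $T_{K_\varepsilon} \to T_K$; the It\^o isomorphism then delivers both $K \in \mc{K}_W^H$ and the $L^p$-convergence $S_{K_\varepsilon} f \to S_K f$ that is needed to convert the Doob bound into a bound against $\sup_\varepsilon \|K_\varepsilon\|_{\mc{K}_W^H}$.
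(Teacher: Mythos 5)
Your proof is correct and follows essentially the same route as the paper's: the second equivalence is reduced to the $\gamma$-setting via Propositions \ref{proposition:detcharacterization}, \ref{proposition:independenceH} and \ref{proposition:truncations}, and the first equivalence is obtained by dominating $\|S_{K_\varepsilon}f(s)\|_Y$ pathwise by a fixed multiple of $\sup_{r\geq 0}\|M_s(r)\|_Y$ for the single martingale $M_s(r)=\int_0^r K(s,u)f(u)\dd W_H(u)$ and then applying Doob's $L^p$-maximal inequality. The only cosmetic difference is that the paper splits the truncated integral into a past and a future increment before invoking Doob on each, while you bound the whole expression by four martingale evaluations at once; both yield the constant $\tfrac{4p}{p-1}$.
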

\begin{proof}
It is clear from Propositions \ref{proposition:detcharacterization}, \ref{proposition:independenceH} and \ref{proposition:truncations} that the second and third expression are norm equivalent. Moreover, it is clear that $$\|S_K^{\star}\|_{L^p_{\ms{F}}(\Omega\times \R_+,w;\gamma(H,Y)) \to L^p(\Omega\times \R_+,w)}\geq \sup_{\varepsilon>0}\|K_{\varepsilon}\|_{\mathcal{K}_{W}^H(L^p(\R_+,w))}.$$ Thus it remains to prove the converse estimate. In order to show this let $f\in L^p_{\ms{F}}(\Omega\times \R_+,w;\gamma(H,Y))$ and $\varepsilon \in (0,1)$.
Since $K\in \mathcal{K}_{W}^H(L^p(\R_+,w))$, by Doob's maximal inequality we can write
\begin{align*}
\|S_K^{\star} f(s)\|_{L^p(\Omega)} & \leq  \Big(\E\,\sup_{\varepsilon>0} \Big\|\int_{\max\cbrace{s-1/\varepsilon,0}}^{\max\cbrace{s-\varepsilon,0}} K(s,t) f(t) \dd W_H(t)\Big\|_Y^p\Big)^{1/p}\\
&\hspace{2cm} +\Big(\E\,\sup_{\varepsilon>0} \Big\|\int_{s+\varepsilon}^{s+1/\varepsilon} K(s,t) f(t) \dd W_H(t)\Big\|_Y^p\Big)^{1/p}
\\ & \leq \frac{4p}{p-1} \|S_K f(s)\|_{L^p(\Omega;Y)}.
\end{align*}
 Taking $L^p(\R_+,w)$-norms the desired estimate follows.

For the strong convergence note that by the proof of Proposition \ref{proposition:detcharacterization} we have \begin{equation*}
  \|S_K f - S_{K_{\varepsilon}}f\|_{L^p(\Omega\times \R_+,w;Y)}  \eqsim_{p,Y}\|T_K f - T_{K_{\varepsilon}} f\|_{L^p(\Omega\times \R_+,w;\gamma(\R^d;H,Y))}.
\end{equation*}
Here the right-hand side for fixed $\omega \in \Omega$ is independent of $H$ by Proposition \ref{proposition:independenceH}, so the strong convergence follows by Proposition \ref{proposition:truncations} and the dominated convergence theorem.
\end{proof}

\subsection{Necessary and sufficient conditions}\label{section:necessaryandsufficient}
Before we turn to more involved results in the subsequent sections, we first analyze the boundedness of $\gamma$-integral operators in a few special cases. We start with a necessary condition for $T_K$ to be bounded if $K$ is of convolution type.
\begin{proposition}[Necessary condition for convolution type]\label{proposition:Kconvolutionnecessary}
Let $X$ and $Y$ be Banach spaces, assume that $Y$ has type $2$ and let $p\in [2, \infty)$. Let $k:\R^d \to \mc{L}(X,Y)$ be strongly measurable and set $K(s,t) := k(s-t)$. If $K\in \mathcal{K}_\gamma(L^{p,\infty}(\R^d))$, then for all $x \in X$
\[\|t\mapsto k(t)x\|_{\gamma(\R^d;Y)}\leq  C_{d} \,\nrm{K}_{\mc{K}_\gamma(L^{p,\infty}(\R^d))} \|x\|_X.\]
The same holds for $\R_+$ instead of $\R^d$, where we set $K(s,t) = 0$ if $s \leq t$.
\end{proposition}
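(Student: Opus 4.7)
The plan is to probe $T_K$ with a compactly supported test function of the form $f_R := \ind_{B(0,R)}\otimes x$ and extract $\nrm{k(\cdot)x}_{\gamma(\R^d;Y)}$ from a lower bound on $\nrm{T_K f_R(s)}_{\gamma(\R^d;Y)}$ that is uniform on a smaller ball, then send $R\to\infty$. Concretely, since the map $t\mapsto s-t$ is a unitary substitution on $L^2(\R^d)$ and $\gamma$-norms are invariant under such substitutions, one rewrites
\[
\nrm{T_K f_R(s)}_{\gamma(\R^d;Y)}=\nrm{u\mapsto k(u)x\,\ind_{s-B(0,R)}(u)}_{\gamma(\R^d;Y)},\qquad s\in\R^d.
\]
For $s\in B(0,R/2)$ we have $s-B(0,R)=B(s,R)\supseteq B(0,R/2)$, so $\gamma$-domination (Theorem 9.4.1 of HNVW, as used in the proof of Proposition \ref{proposition:truncations}) gives
\[
\nrm{T_K f_R(s)}_{\gamma(\R^d;Y)}\;\geq\;\alpha_R\;:=\;\nrm{u\mapsto k(u)x\,\ind_{B(0,R/2)}(u)}_{\gamma(\R^d;Y)}\qquad\text{for all }s\in B(0,R/2).
\]

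Next I convert this pointwise lower bound into an $L^{p,\infty}$ lower bound. Since $\nrm{T_K f_R(\cdot)}_{\gamma(\R^d;Y)}\geq \alpha_R\,\ind_{B(0,R/2)}$, the definition of the weak $L^p$ quasi-norm gives $\alpha_R\,|B(0,R/2)|^{1/p}\leq \nrm{T_K f_R}_{L^{p,\infty}(\R^d;\gamma(\R^d;Y))}$, which together with the hypothesis and $\nrm{f_R}_{L^p(\R^d;X)}=|B(0,R)|^{1/p}\nrm{x}_X$ yields
\[
\alpha_R\;\leq\;2^{d/p}\,\nrm{K}_{\mc{K}_\gamma(L^{p,\infty}(\R^d))}\nrm{x}_X,
\]
uniformly in $R>0$. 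Finally I let $R\to\infty$: the integral operators with kernel $k(\cdot)x\,\ind_{B(0,R/2)}$ converge to the integral operator with kernel $k(\cdot)x$ in the weak operator topology of $\mc{L}(L^2(\R^d),Y)$ (exactly as in the second half of the proof of Proposition \ref{proposition:truncations}: the uniform $\gamma$-bound combined with $\gamma(\R^d;Y)\hookrightarrow\mc{L}(L^2(\R^d),Y)$ forces $\ip{k(\cdot)x,y^*}\in L^2(\R^d)$ for every $y^*\in Y^*$ by monotone convergence, and the weak limit is then obtained by scalar dominated convergence). The $\gamma$-Fatou lemma (Proposition 9.4.6 of HNVW) then gives $k(\cdot)x\in\gamma(\R^d;Y)$ with $\nrm{k(\cdot)x}_{\gamma(\R^d;Y)}\leq 2^{d/p}\nrm{K}_{\mc{K}_\gamma(L^{p,\infty}(\R^d))}\nrm{x}_X\leq 2^{d/2}\nrm{K}_{\mc{K}_\gamma(L^{p,\infty}(\R^d))}\nrm{x}_X$, absorbing the constant into $C_d$.

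For the $\R_+$-case one repeats the same argument with $f_R:=\ind_{(0,R)}\otimes x$; the hypothesis $K(s,t)=0$ for $s\leq t$ means $T_K f_R(s)(t)=k(s-t)x\,\ind_{(0,s)}(t)$ for $s\in(0,R)$, and after the substitution $u=s-t$ one obtains, for $s\in(R/4,R/2)$, the lower bound $\nrm{T_K f_R(s)}_{\gamma(\R_+;Y)}\geq \nrm{u\mapsto k(u)x\,\ind_{(0,R/4)}(u)}_{\gamma(\R_+;Y)}$, and the rest proceeds identically.

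The routine input is the unitary-substitution identity and the $L^{p,\infty}$ computation; the one step that requires care is the passage $R\to\infty$, where one must verify that the weak operator limit of the truncated kernels is $k(\cdot)x$ so that $\gamma$-Fatou applies. This is the only real obstacle, and it is handled by the same argument already recorded in Proposition \ref{proposition:truncations}, so no new ideas are needed.
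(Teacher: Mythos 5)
Your proof is correct and follows essentially the same path as the paper's: probe $T_K$ with an indicator-tensor test function, use translation invariance of the $\gamma$-norm together with a weak-$L^p$ pigeonhole to bound $\|u\mapsto k(u)x\|_{\gamma(B(0,R);Y)}$ uniformly in $R$, and pass to the limit via the $\gamma$-Fatou lemma. The only cosmetic difference is how you extract the weak-$L^p$ lower bound (directly comparing against $\alpha_R\,\ind_{B(0,R/2)}$, rather than the paper's choice $\lambda = \tfrac12 L_r$), which changes nothing of substance.
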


\begin{proof}
Let $r>0$, $x\in X$ and set $f = \ind_{B(0,2r)}\otimes x$. Then for all $s\in B(0,r)$,
\begin{align*}
L_{r} :=\|t\mapsto k(t)x\|_{\gamma(B(0,r);Y)} &= \|t\mapsto k(s-t)x\|_{\gamma(B(s,r);Y)}
\\ & = \|T_Kf(s)\|_{\gamma(B(s,r);Y)} \leq \|T_Kf(s)\|_{\gamma(\R^d;Y)}.
\end{align*}
Therefore, for any $0<\lambda<L_r$ we find that
\begin{align*}
\lambda  &\leq \lambda \,\abs{B(0,r)}^{-1/p} \, \absb{\{s\in B(0,r):\|Tf(s)\|_{\gamma(\R^d;Y)}>\lambda\}}^{1/p}
\\ & \leq \abs{B(0,r)}^{-1/p} \,\nrm{K}_{\mc{K}_\gamma(L^{p,\infty}(\R^d))} \|f\|_{L^p(\R^d;X)}\\&= C_d \, \,\nrm{K}_{\mc{K}_\gamma(L^{p,\infty}(\R^d))}  \|x\|_X.
\end{align*}
Taking $\lambda = \frac{1}{2} L_{r}$, we find that $L_{r} \leq C_{d}\,\nrm{K}_{\mc{K}_\gamma(L^{p,\infty}(\R^d))}  \|x\|$.
Now the proposition follows by letting $r\to \infty$ and applying the $\gamma$-Fatou lemma (see \cite[Proposition 9.4.6]{HNVW17}). The proof for $\R_+$ is analogous, taking $s \in (r,2r)$ instead.
\end{proof}
\begin{remark}
If we replace $\R^d$ by $(0,T)$ with $T\in (0,\infty)$  in Proposition \ref{proposition:Kconvolutionnecessary}, we can deduce that
      \[\|t\mapsto k(t)x\|_{\gamma((0,\frac12 T);Y)}\leq  C_{d} \,\nrm{K}_{\mc{K}_\gamma(L^{p,\infty}(0,T))} \|x\|_X.\]
      For specific kernels one can  stretch this estimate to the whole interval $(0,T)$ with a constant dependent on $T$, see \cite[Lemma 4.2]{AV19}.
\end{remark}

Next we provide some simple sufficient conditions on $K$ for $T_K$ to be bounded using Fubini's theorem and Young's inequality:

\begin{proposition}[Simple sufficient conditions]\label{proposition:simplesufficient}
Let $X$ and $Y$ be Banach spaces, assume that $Y$ has type $2$ and suppose that $K\colon\R^d \times \R^d \to \mc{L}(X,Y)$ is strongly measurable. Then the following hold:
\begin{enumerate}[(i)]
  \item \label{it:sufficientL2} If there is an $A_0>0$ such that
  \[\|s\mapsto K(s,t)x\|_{L^2(\R^d;Y)}\leq  A_0\, \|x\|_X, \qquad t\in \R^d,\quad x\in X,\]
then $K\in \mathcal{K}_\gamma(L^2(\R^d))$ with $\nrm{K}_{\mathcal{K}_\gamma(L^2(\R^d))} \leq \tau_{2,Y}\, A_0.$
  \item \label{it:sufficientconvolution} If $\nrm{K(s,t)} \leq k(s-t)$ for some $k \in L^2(\R^d)$, then $K\in \mathcal{K}_\gamma(L^p(\R^d))$ for all $p \in [2,\infty)$ with $\nrm{K}_{\mathcal{K}_\gamma(L^p(\R^d))} \leq \tau_{2,Y} \|k\|_{L^2(\R^d)}.$
\end{enumerate}
The same holds for $(0,T)$ with $T \in (0,\infty]$ instead of $\R^d$, where $K(s,t) = 0$ if $s \leq t$.
\end{proposition}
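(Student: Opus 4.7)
The plan is to handle both statements simultaneously by first reducing to the case $H = \R$ using Proposition \ref{proposition:independenceH} (justified by the type $2$ hypothesis on $Y$), and then using the embedding $L^2(\R^d;Y) \hookrightarrow \gamma(\R^d;Y)$ with constant $\tau_{2,Y}$. The latter follows from Lemma \ref{lemma:gammaL2} applied with $Y$ in place of $X$ and $H = \R$ (so $\gamma(\R,Y) = Y$). In particular, for any strongly measurable $g \colon \R^d \to Y$ with $g \in L^2(\R^d;Y)$ we have
\begin{equation*}
  \nrm{g}_{\gamma(\R^d;Y)} \leq \tau_{2,Y}\nrm{g}_{L^2(\R^d;Y)}.
\end{equation*}
Applying this pointwise in $s$ to $g(t) = K(s,t)f(t)$ reduces both parts to estimates on a scalar function of $s$.

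For part \ref{it:sufficientL2}, with this embedding in hand I would take $L^2(\R^d)$ norm in $s$, swap the order of integration by Fubini, and apply the hypothesis with $x = f(t)$:
\begin{align*}
  \nrm{T_Kf}_{L^2(\R^d;\gamma(\R^d;Y))}^2
  &\leq \tau_{2,Y}^2 \int_{\R^d}\int_{\R^d} \nrm{K(s,t)f(t)}_Y^2\dd s\dd t\\
  &\leq \tau_{2,Y}^2 A_0^2 \int_{\R^d} \nrm{f(t)}_X^2\dd t = \tau_{2,Y}^2 A_0^2 \nrm{f}_{L^2(\R^d;X)}^2.
\end{align*}

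For part \ref{it:sufficientconvolution}, the pointwise bound $\nrm{K(s,t)f(t)}_Y \leq k(s-t)\nrm{f(t)}_X$ combined with the $\gamma$-embedding yields
\begin{equation*}
  \nrm{T_Kf(s)}_{\gamma(\R^d;Y)} \leq \tau_{2,Y}\Big(\bigl(k^2 \ast \nrm{f}_X^2\bigr)(s)\Big)^{1/2}.
\end{equation*}
Raising to the $p$-th power, integrating in $s$, and invoking Young's convolution inequality with exponents $1$ and $p/2 \geq 1$ (which is where the restriction $p \geq 2$ is used) gives
\begin{equation*}
  \nrm{k^2 \ast \nrm{f}_X^2}_{L^{p/2}(\R^d)} \leq \nrm{k}_{L^2(\R^d)}^2 \nrm{f}_{L^p(\R^d;X)}^2,
\end{equation*}
from which the claimed bound is immediate. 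The $(0,T)$ case is identical once we note that the convention $K(s,t) = 0$ for $s \leq t$ makes all integrals effectively supported in $(0,T)$, so the same Fubini and Young arguments apply verbatim. There is no serious obstacle: the only point that requires attention is to ensure that the type $2$ embedding in Lemma \ref{lemma:gammaL2} is applied in the correct form (with $Y$, not $X$, carrying the type $2$ assumption), which is precisely the present hypothesis.
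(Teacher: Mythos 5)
Your proof is correct and follows essentially the same route as the paper: Lemma \ref{lemma:gammaL2} to pass from the $\gamma$-norm to the $L^2$-norm pointwise in $s$, Fubini for part \ref{it:sufficientL2}, and Young's inequality for $k^2 \ast \nrm{f}_X^2$ with exponents $(1,p/2)$ for part \ref{it:sufficientconvolution}. The one superfluous step is the opening appeal to Proposition \ref{proposition:independenceH}: in the paper's conventions $\mathcal{K}_\gamma(L^p(\R^d))$ already denotes the case $H=\R$, so no reduction is needed or used in the actual estimates.
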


\begin{proof}
For \ref{it:sufficientL2} we have by Lemma \ref{lemma:gammaL2} that
\begin{align*}
\|T_K f(s)\|_{\gamma(\R^d;Y)}\leq \tau_{2,Y} \Big(\int_{\R^d} \|K(s,t) f(t)\|_Y^2 \dd t\Big)^{1/2}, \qquad s\in \R^d.
\end{align*}
Taking $L^2$-norms on both sides and applying Fubini's theorem we obtain
\begin{align*}
\|T_K f\|_{L^2(\R^d;\gamma(\R^d;Y))}& \leq \tau_{2,Y} \has{\int_{\R^d} \|s\mapsto K(s,t) f(t)\|_{L^2(\R^d;Y)}^2 \dd t}^{1/2} \\& \leq\tau_{2,Y} \nrm{f}_{L^2(\R^d;X)}.
\end{align*}
For \ref{it:sufficientconvolution} we have by Lemma \ref{lemma:gammaL2}
\begin{align*}
\|T_K f(s)\|_{\gamma(\R^d;Y)}\leq \tau_{2,Y} \Big(\int_{\R^d} |k(s-t)|^2 \|f(t)\|_Y^2 \dd t\Big)^{1/2}, \qquad s\in \R^d.
\end{align*}
Taking $L^p$-norms on both sides and applying Young's inequality we obtain
\begin{align*}
\|T_K f\|_{L^p(\R^d;\gamma(\R^d;Y))}& \leq \tau_{2,Y}\,\|k\|_{L^2(\R^d)} \nrm{f}_{L^p(\R^d;X)}.
\end{align*}
The $(0,T)$ case follows similarly, where we extend $K$ and $f$ by $0$ outside $(0,T)$ to apply Young's inequality for \ref{it:sufficientconvolution}.
\end{proof}

If $Y$ is a Hilbert space and $K$ is of convolution type, we can actually characterize the boundedness of $T_K$, since in this case $\gamma(\R^d;Y) = L^2(\R^d;Y)$. In Corollaries \ref{corollary:Hilbertspacesch} and \ref{corollary:Hilbertspaceschweight} the following result will be improved under regularity conditions on $K$.
\begin{corollary}\label{corollary:firstextrapol}Let $X$ be a Banach space and $Y$ be a Hilbert space. Let $k:\R^d \to \mc{L}(X,Y)$ be strongly measurable and set $K(s,t) := k(s-t)$. Then the following hold:
\begin{enumerate}[(i)]
\item \label{it:cornoreg1} $K \in \mc{K}_\gamma(L^2(\R^d))$ if and only if $\|t\mapsto k(t)x\|_{L^2(\R^d;Y)}\lesssim \|x\|_X.$
\item \label{it:cornoreg2} If $K\in \mathcal{K}_\gamma(L^{p,\infty}(\R^d))$ for some $p \in [2,\infty)$, then $K\in \mathcal{K}_\gamma(L^q(\R^d))$ for all $q\in [2, p)$.
\end{enumerate}
The same hold for $(0,T)$ with $T \in (0,\infty]$ instead of $\R^d$, where we set $K(s,t) = 0$ if $s \leq t$.
\end{corollary}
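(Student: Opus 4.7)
The key observation is that when $Y$ is a Hilbert space one has the identification $\gamma(\R^d;Y) = L^2(\R^d;Y)$ isomorphically, so that the target of $T_K$ reduces to an ordinary Bochner $L^2$-space, and both endpoints --- the necessary condition of Proposition \ref{proposition:Kconvolutionnecessary} and the sufficient condition of Proposition \ref{proposition:simplesufficient}\ref{it:sufficientL2} --- can be read against each other.

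For the ``only if'' direction of \ref{it:cornoreg1} I would use that $L^2(\R^d) \hookrightarrow L^{2,\infty}(\R^d)$, so $K \in \mc{K}_\gamma(L^2(\R^d))$ implies $K \in \mc{K}_\gamma(L^{2,\infty}(\R^d))$, and then invoke Proposition \ref{proposition:Kconvolutionnecessary} to obtain $\|k(\cdot)x\|_{\gamma(\R^d;Y)} \lesssim \|x\|_X$; the Hilbert-space identification converts this into the claimed $L^2(\R^d;Y)$-bound. Conversely, since $Y$ is Hilbert it has type $2$, and translation invariance of Lebesgue measure gives $\|s\mapsto K(s,t)x\|_{L^2(\R^d;Y)} = \|k(\cdot)x\|_{L^2(\R^d;Y)} \lesssim \|x\|_X$ for every $t\in\R^d$, so Proposition \ref{proposition:simplesufficient}\ref{it:sufficientL2} yields $K \in \mc{K}_\gamma(L^2(\R^d))$.

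For \ref{it:cornoreg2}, the hypothesis $K \in \mc{K}_\gamma(L^{p,\infty}(\R^d))$ combined with Proposition \ref{proposition:Kconvolutionnecessary} and the Hilbert-space identification again gives $\|k(\cdot)x\|_{L^2(\R^d;Y)} \lesssim \|x\|_X$, so by part \ref{it:cornoreg1} we already have $K \in \mc{K}_\gamma(L^2(\R^d))$, which handles $q = 2$. For $q \in (2,p)$ I would apply vector-valued real interpolation (equivalently, the Marcinkiewicz interpolation theorem) to the linear operator $T_K$ between the two endpoint bounds
\[
T_K \colon L^2(\R^d;X) \to L^2(\R^d;L^2(\R^d;Y)), \qquad T_K \colon L^p(\R^d;X) \to L^{p,\infty}(\R^d;L^2(\R^d;Y)).
\]
Since the target is a Banach-valued Lorentz space with scalar exponent, the standard identity $(L^{p_0,q_0}(Z), L^{p_1,q_1}(Z))_{\theta,q} = L^q(Z)$ for any Banach space $Z$ (applied here with $1/q = (1-\theta)/2 + \theta/p$) delivers $T_K \colon L^q(\R^d;X) \to L^q(\R^d;L^2(\R^d;Y))$, that is, $K \in \mc{K}_\gamma(L^q(\R^d))$.

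The $(0,T)$ case with $T \in (0,\infty]$ is handled identically, using the $(0,T)$-versions of Propositions \ref{proposition:Kconvolutionnecessary} and \ref{proposition:simplesufficient}\ref{it:sufficientL2} and the fact that $\int_0^{T-t}\|k(r)x\|_Y^2\dd r \leq \|k(\cdot)x\|_{L^2(0,T;Y)}^2$, which keeps the translation argument intact even on a finite interval. I expect the interpolation step in part \ref{it:cornoreg2} to be the only mildly delicate point, but since $T_K$ is linear and the target is a Banach-space valued Lorentz space, it follows directly from the standard theory and requires no $\gamma$-specific ingredient beyond the Hilbert-space identification already used.
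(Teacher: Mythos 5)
Your proof is correct and follows the paper's own route essentially verbatim: identify $\gamma(\R^d;Y)$ with $L^2(\R^d;Y)$ for Hilbert $Y$ (the paper cites \cite[Proposition 9.2.9]{HNVW17}), read the necessary condition of Proposition \ref{proposition:Kconvolutionnecessary} against the sufficient condition of Proposition \ref{proposition:simplesufficient}\ref{it:sufficientL2} for part \ref{it:cornoreg1}, and interpolate via Marcinkiewicz between the $L^2$ and $L^{p,\infty}$ endpoints for part \ref{it:cornoreg2}. Your exposition is somewhat more explicit than the paper's (spelling out the $L^2 \hookrightarrow L^{2,\infty}$ step and the Lorentz interpolation identity), but the ingredients and their order are the same.
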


\begin{proof}
By \cite[Proposition 9.2.9]{HNVW17} one has for all $t \in \R^d$ that
\[\|s\mapsto K(s,t)x\|_{L^2(\R^d;Y)} =  \|s\mapsto k(s) x\|_{L^2(\R^d;Y)}= \|s\mapsto k(s) x\|_{\gamma(\R^d;Y)},\]
from which \ref{it:cornoreg1} follows using by Proposition \ref{proposition:Kconvolutionnecessary} and \ref{proposition:simplesufficient}\ref{it:sufficientL2}. Part \ref{it:cornoreg2} follows by combining Proposition \ref{proposition:Kconvolutionnecessary}, part \ref{it:cornoreg1} and the Marcinkiewicz interpolation theorem (see \cite[Theorem 2.23]{HNVW16}).
\end{proof}

\subsection{Scalar kernels}\label{section:scalarcase}
If we allow $X$ to be any Banach space with type $2$, but restrict $K$ to be scalar-valued, we can easily characterize the boundedness of $T_K$ if $K$ is of convolution type. This explains why we study the more interesting operator-valued case.
\begin{proposition}\label{proposition:scalarkernels}
Let $X$ be a Banach space with type $2$, let $p\in [2, \infty)$, let $k:\R^d \to \K$ be measurable and set $K(s,t) := k(s-t)$. Then $T_K$ is bounded from $L^p(\R^d;X)$ to  $L^p(\R^d;\gamma(\R^d;X))$ if and only if $k \in L^2(\R^d)$. Moreover, in this case
$\nrm{K}_{\mathcal{K}_\gamma(L^p(\R^d))} \leq \tau_{2,X} \|k\|_{L^2(\R^d)}$.
\end{proposition}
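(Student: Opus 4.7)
The two directions follow almost immediately from what has been established.

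For the sufficiency, suppose $k \in L^2(\R^d)$. Since $k$ takes scalar values and $K(s,t)$ acts on $X$ by scalar multiplication, we have $\nrm{K(s,t)}_{\mc{L}(X)} = |k(s-t)|$. The plan is therefore to invoke Proposition \ref{proposition:simplesufficient}\ref{it:sufficientconvolution} with the dominating function $|k| \in L^2(\R^d)$, yielding directly $\nrm{K}_{\mc{K}_\gamma(L^p(\R^d))} \leq \tau_{2,X}\,\nrm{k}_{L^2(\R^d)}$.

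For the necessity, suppose $T_K$ is bounded from $L^p(\R^d;X)$ to $L^p(\R^d;\gamma(\R^d;X))$. Since $\nrm{\cdot}_{L^{p,\infty}} \leq \nrm{\cdot}_{L^p}$, this gives $K \in \mc{K}_\gamma(L^{p,\infty}(\R^d))$. I would then apply Proposition \ref{proposition:Kconvolutionnecessary} (with $Y = X$, which has type $2$) to obtain
\begin{equation*}
   \nrm{t \mapsto k(t)x}_{\gamma(\R^d;X)} \leq C_d\,\nrm{K}_{\mc{K}_\gamma(L^{p,\infty}(\R^d))}\,\nrm{x}_X, \qquad x \in X.
\end{equation*}
Fixing any $x \in X$ with $\nrm{x}_X = 1$, the map $t \mapsto k(t)x$ represents the rank-one operator $k \otimes x \colon L^2(\R^d) \to X$ given by $\varphi \mapsto \bigl(\int \varphi k\bigr)x$. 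By the very definition of the $\gamma$-radonifying norm, writing $e := k/\nrm{k}_{L^2}$ as a unit vector (when $k \neq 0$) and using that a single Gaussian has second moment $1$, one has $\nrm{k \otimes x}_{\gamma(\R^d;X)} = \nrm{k}_{L^2(\R^d)}\,\nrm{x}_X$. (Strictly, one should observe first that finiteness of the $\gamma$-norm forces $\ip{k(\cdot)x,x^*} = k\,\ip{x,x^*} \in L^2$ for every $x^* \in X^*$, which after choosing $x^*$ with $\ip{x,x^*} \neq 0$ already yields $k \in L^2$.) Consequently
\begin{equation*}
  \nrm{k}_{L^2(\R^d)} \leq C_d\,\nrm{K}_{\mc{K}_\gamma(L^{p,\infty}(\R^d))} \leq C_d\,\nrm{K}_{\mc{K}_\gamma(L^p(\R^d))},
\end{equation*}
so in particular $k \in L^2(\R^d)$.

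There is no real obstacle here: both halves are immediate corollaries of Propositions \ref{proposition:simplesufficient} and \ref{proposition:Kconvolutionnecessary}, together with the elementary identification of $\nrm{k \otimes x}_{\gamma}$ for a rank-one operator. The role of the proposition is essentially to record that in the scalar-kernel case the necessary condition from Proposition \ref{proposition:Kconvolutionnecessary} and the sufficient condition from Proposition \ref{proposition:simplesufficient}\ref{it:sufficientconvolution} coincide, which is why the operator-valued setting is the genuinely interesting one.
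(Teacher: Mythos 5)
Your proof is correct and follows essentially the same route as the paper: both halves rest on the key scalar identity $\nrm{t\mapsto k(t)x}_{\gamma(\R^d;X)} = \nrm{x}_X\nrm{k}_{L^2(\R^d)}$, with sufficiency via Proposition~\ref{proposition:simplesufficient}\ref{it:sufficientconvolution} and necessity via Proposition~\ref{proposition:Kconvolutionnecessary}. Your parenthetical remark, spelling out why finiteness of the $\gamma$-norm already forces $k\in L^2$, is a minor but correct elaboration of the same argument.
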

\begin{proof}
Since $k$ is scalar-valued, we have for $x \in X$
  \begin{equation*}
    \nrm{s\mapsto k(s) x}_{\gamma(\R^d;X)} = \nrm{x}_X\nrm{k}_{L^2(\R^d)}.
  \end{equation*}
  Therefore the result follows from Proposition \ref{proposition:Kconvolutionnecessary} and Proposition \ref{proposition:simplesufficient}\ref{it:sufficientconvolution}.
\end{proof}

 In the scalar case, i.e. $X = Y = \K$, the $L^p$-boundedness of $T_K$ can also be well-understood from existing theory for non-convolution kernels. Indeed, in this case $K\in \mathcal{K}_\gamma(L^p(\R^d))$ is equivalent to
\begin{equation}\label{eq:Kscalarchar}
\int_{\R^d} \Big(\int_{\R^d} |K(s,t)|^2 g(t) \dd t\Big)^{p/2} \dd s \leq C^p \|g\|^{p}_{L^{p/2}(\R^d)},
\end{equation}
where we have set $g(t) = |f(t)|^2$. The validity of the above estimate is completely characterized by the optimality of Schur's lemma (see \cite[Appendix A.2]{Gr14b}) applied to the positive kernel $|K(s,t)|^2$. Moreover, in this case $T_K$ is also bounded in the vector-valued setting where $X = Y$ has type $2$, since by Lemma \ref{lemma:gammaL2}
\[\|T_K f\|_{L^p(\R^d;\gamma(\R^d;X))}^p \leq \tau_{2,X}^p \int_{\R^d}\Big(\int_{\R^d} |K(s,t)|^2 g(t) \dd t\Big)^{p/2} \dd s,\]
where $g(t) = \|f(t)\|^2_X$.
Conversely, by considering a one-dimensional subspace of $X$, one obtains that \eqref{eq:Kscalarchar} is also necessary.

\begin{example}~\label{example:scalarcase}
\begin{enumerate}[(i)]
\item Let $d=1$ and $K(s,t) = \frac{1}{(s+t)^{1/2}}\ind_{s,t>0}$. Then by \cite[Theorem 5.10.1]{Ga07b} we know that $K\in \mathcal{K}_\gamma(L^p(\R))$ if and only if $p\in (2, \infty)$. More generally for $1\leq j \leq d$ set
    \begin{equation*}
  K_j\ha{s,t} := \frac{(s_j+t_j)^{1/2}}{\absb{s+t}^{(d+1)/2}}\ind_{s_j,t_j>0}, \qquad s,t \in \R^d.
\end{equation*}
Then we know by \cite[Theorem 1]{Os17} that $K_j\in \mathcal{K}_\gamma(L^p(\R^d))$ if and only if $p\in (2, \infty)$.
\item \label{example:hilbertanalog} If $K(s,t) = \frac{1}{|s-t|^{1/2}}$, then for all $p\in [2, \infty)$, we have $K\notin \mathcal{K}_\gamma(L^p(\R))$, which is immediate from Proposition \ref{proposition:Kconvolutionnecessary}.
\end{enumerate}
\end{example}

Example \ref{example:scalarcase}\ref{example:hilbertanalog} can be seen as the analog of the Hilbert transform. It is not bounded for any $p \in [2,\infty)$ due to the lack of cancellation in the stochastic, scalar-valued setting. This further exemplifies the difference between the deterministic and the stochastic theories.

\begin{remark}
  The scalar case also shows why we only consider $p\in [2, \infty)$. Boundedness for $p<2$ holds if and only if $K \equiv 0$ (see \cite{Ka78}). This also holds for the operator-valued case since $L^p$-boundedness with $p<2$ would imply that
$\ip{ K(t,s)x, y^*} = 0$ a.e.\ for all $x\in X$ and $y^*\in Y^*$. By strong measurability of $(t,s)\mapsto K(t,s)x$ this implies that for all $x\in X$, we have $K(t,s)x \equiv 0$. Thus by the density of $L^p(\R^d)\otimes X$ in $L^p(\R^d;X)$, we find that $K(s,t) f(t) = 0$.
\end{remark}

\section{Singular \texorpdfstring{$\gamma$}{y}-kernels of H\"ormander and Dini type}\label{section:kernels}
Motivated by the connection between stochastic integral operators and $\gamma$-integral operators proven in Proposition \ref{proposition:detcharacterization} and Proposition \ref{proposition:independenceH}, we will now start the systematic study of the $\mc{K}_\gamma$-classes for more involved kernels than those treated in Subsection \ref{section:necessaryandsufficient}. In particular, we will develop a $\gamma$-version of the Calder\'on--Zygmund theory for (deterministic) singular integral operators. This will first be done on $\R^d$ and afterwards in Section \ref{section:homogeneoustype} we will point out how our arguments carry over to the more general setting of spaces of homogeneous type, which for example includes the $(0,T)$-case for $T \in (0,\infty]$.

Let us first define our assumptions on the $\gamma$-kernels $K$:
\begin{definition}\label{definition:kernels}
  Let $X,Y$ be a Banach spaces and let $K\colon\R^d \times \R^d \to \mc{L}(X,Y)$ be strongly measurable.
  \begin{itemize}
    \item We say that $K$ is a $2$-H\"ormander kernel if for every ball $B\subseteq \R^d$ we have
  \begin{align}
   \has{\int_{\R^d \setminus B} \nrm{K(s,t)-K(s',t)}^2\dd t}^{1/2} \leq C && s,s'\in \frac{1}{2}B \label{eq:horm1}\\
  \label{eq:horm2}
    \has{\int_{\R^d \setminus B} \nrm{K(s,t)-K(s,t')}^2\dd s}^{1/2} \leq C && t,t'\in \frac{1}{2}B
  \end{align}
  for some constant $C>0$ independent of $B$. The least admissible $C$ will be denoted by $\nrm{K}_{\Hormander{2}}$.
  \item We say that $K$ is an $(\omega,2)$-Dini kernel if
  \begin{align}
   \nrm{K(s,t)-K(s',t)} &\leq \omega\has{\frac{\abs{s-s'}}{\abs{s-t}}}\frac{1}{\abs{s-t}^{d/2}} \quad \label{eq:dini2}&&\abs{s-s'}\leq \frac{1}{2}\abs{s-t},\\
    \nrm{K(s,t)-K(s,t')} &\leq \omega\has{\frac{\abs{t-t'}}{\abs{s-t}}}\frac{1}{\abs{s-t}^{d/2}} \quad \label{eq:dini3} &&\abs{t-t'}\leq \frac{1}{2}\abs{s-t},
  \end{align}
  where $\omega:[0,1]\to [0,\infty)$ is increasing, subadditive, $\omega(0)=0$ and
  \begin{equation*}
    \nrm{K}_{\Dini{\omega}{2}}:=\has{\int_0^1\omega(r)^2\frac{\dd r}{r}}^{1/2} <\infty.
  \end{equation*}
  \item We say that $K$ is an $(\epsilon,2)$-standard kernel if $K$ is an $(\omega,2)$-Dini kernel with $\omega(r) = C r^\epsilon$ for some $\epsilon \in (0,1]$ and we set  $$\nrm{K}_{\Standard{\epsilon}{2}}:= \nrm{K}_{\Dini{\omega}{2}}.$$
  \end{itemize}
  \end{definition}

  If $K$ is of convolution type, i.e. if $K(s,t) = k(s-t)$ for some $k\colon\R^d \to \mc{L}(X,Y)$, the H\"ormander and Dini conditions in Definition \ref{definition:kernels} can be reformulated using a change of variables. Indeed,
\eqref{eq:horm1} and \eqref{eq:horm2} both simplify to
\begin{equation}\label{eq:hormconvolution}
  \has{\int_{\frac{1}{2}\abs{s} \geq  \abs{t}} \nrm{k(s-t)-k(s)}^2\dd s}^{1/2} \leq C \qquad  t \in \R^d
\end{equation}
and \eqref{eq:dini2} and \eqref{eq:dini3} both simplify to
\begin{equation}\label{eq:diniconvolution}
  \nrm{k(s-t)-k(s)} \leq \omega\has{\frac{\abs{t}}{\abs{s}}}\frac{1}{\abs{s}^{d/2}} \qquad   \abs{t}\leq \frac{1}{2}\abs{s}.
\end{equation}
An $L^r$-variant of \eqref{eq:hormconvolution} already appeared in \cite{Ho60}.

 By definition an $(\epsilon,2)$-standard kernel is also an $(\omega,2)$-Dini kernel. As in the deterministic setting an  $(\omega,2)$-Dini kernel is also a $2$-H\"ormander kernel. The proof is a straightforward adaptation of the proof in the deterministic setting. For the convenience of the reader we include the details.
\begin{lemma}\label{lemma:hormanderdini}
   Let $X,Y$ be Banach spaces and suppose that $K\colon\R^d \times \R^d \to \mc{L}(X,Y)$ is an $(\omega,2)$-Dini kernel. Then $K$ is a $2$-H\"ormander kernel with
   \begin{equation*}
     \nrm{K}_{\Hormander{2}} \leq C_{d}\, \nrm{K}_{\Dini{\omega}{2}}.
   \end{equation*}
\end{lemma}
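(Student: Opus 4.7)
The plan is to mimic the classical deterministic reduction of a Dini condition to a H\"ormander condition, with one subtle twist. Fix a ball $B = B(x_0,r)$ and $s,s'\in \tfrac12 B$, and write $\rho := \abs{s-s'}$. Then $\rho \leq r$, and for any $t \in \R^d\setminus B$ we have $\abs{s_*-t} \geq r/2$ whenever $s_* \in \tfrac12 B$. The issue is that \eqref{eq:dini2} requires the strict bound $\abs{s-s'} \leq \tfrac12\abs{s-t}$, while in the worst case $\rho = r$ and $\abs{s-t} = r/2$, so the ratio $\abs{s-s'}/\abs{s-t}$ can be as large as $2$. This mismatch is the main (and essentially only) obstacle, and I would bypass it by a short interpolation chain.

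Introduce $s_j := s + \tfrac{j}{4}(s'-s)$ for $j = 0,1,2,3,4$, so that $s_0=s$, $s_4=s'$, and $\abs{s_j - s_{j+1}} = \rho/4$. Each $s_j$ lies on the segment joining $s$ and $s'$, hence in $\tfrac12 B$, so for any $t \in \R^d \setminus B$ we get $\abs{s_j - t} \geq r/2 \geq 2\cdot (\rho/4)$. Therefore \eqref{eq:dini2} applies to each consecutive pair, yielding
\[
\nrm{K(s_j,t) - K(s_{j+1},t)} \leq \omega\has{\frac{\rho/4}{\abs{s_j-t}}} \frac{1}{\abs{s_j-t}^{d/2}}.
\]

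Next I would combine these four estimates via the triangle inequality and Cauchy--Schwarz to obtain
\[
\nrm{K(s,t)-K(s',t)}^2 \leq 4 \sum_{j=0}^{3} \omega\has{\frac{\rho/4}{\abs{s_j-t}}}^{2} \frac{1}{\abs{s_j-t}^d}.
\]
For each $j$, the integral over $\R^d \setminus B$ is dominated by the integral over $\{t : \abs{s_j-t} \geq r/2\}$. Switching to polar coordinates around $s_j$ reduces this to $\int_{r/2}^\infty \omega(\rho/(4u))^2 \tfrac{\dd u}{u}$, and the substitution $v = \rho/(4u)$ together with $\rho/(2r) \leq 1/2$ then gives the bound
\[
\int_0^{\rho/(2r)} \omega(v)^2 \frac{\dd v}{v} \leq \nrm{K}_{\Dini{\omega}{2}}^{2}.
\]
Summing over $j$ and taking square roots produces \eqref{eq:horm1} with a constant of the form $C_d\,\nrm{K}_{\Dini{\omega}{2}}$.

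Finally, the second H\"ormander condition \eqref{eq:horm2} follows by an entirely analogous argument with the roles of the spatial and source variables interchanged, using \eqref{eq:dini3} in place of \eqref{eq:dini2}: the chain is now taken along the segment from $t$ to $t'$, and the remaining estimates are identical. No additional ideas are needed beyond the four-step interpolating chain, which is the only nontrivial point in adapting the standard deterministic argument.
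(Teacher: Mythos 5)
Your proof is correct and follows the same standard reduction as the paper's: triangle inequality through intermediate points, then integrate out the Dini modulus over radial shells. The single technical point you flag --- that for generic $s,s' \in \tfrac12 B$ and $t \notin B$ the ratio $|s-s'|/|s-t|$ can be as large as $2$, so \eqref{eq:dini2} cannot be applied directly --- is genuine, and your four-point chain along the segment $[s,s']$ handles it cleanly. The paper deals with the same obstruction more economically: it splits through the \emph{center} $x_0$ of the ball. Since $|x_0 - s_*| \leq r/2$ for $s_* \in \tfrac12 B$ and $|x_0 - t| \geq r$ for $t \notin B$, the ratio $|x_0-s_*|/|x_0-t| \leq 1/2$ is automatic, so only two applications of the Dini estimate are needed and no chain is required. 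Both routes are valid; the paper's choice of intermediate point is slightly shorter, while your polar-coordinates substitution $v = \rho/(4u)$ is a tidy alternative to the paper's dyadic-shell decomposition for the final integral. You might note for yourself that routing through the ball's center is the standard device that makes the deterministic Dini-to-H\"ormander argument so clean.
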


\begin{proof} We will only show \eqref{eq:horm1}, as \eqref{eq:horm2}
 follows analogously.
  Let $B=B(s,r)\subseteq \R^d$ be a ball and take $s^1,s^2 \in \frac{1}{2}B$. Then  $\abs{s-s^j} \leq \frac{1}{2}r \leq \frac{1}{2}\abs{s-t}$ for any $t \in \R^d \setminus B$, so
  \begin{align*}
    \has{\int_{\R^d \setminus B} &\nrm{K(s^1,t)-K(s^2,t)}^2\dd t}^{1/2}\\
     &\leq \sum_{j=1}^2 \has{\int_{\R^d \setminus B} \nrm{K(s,t)-K(s^j,t)}^2\dd t}^{1/2}\\
    &\leq \sum_{j=1}^2 \has{\int_{\abs{s-t}>r} \omega\has{\frac{\abs{s-s^j}}{\abs{s-t}}}^2\frac{1}{\abs{s-t}^d}\dd t }^{1/2}\\
    &\leq C_d \, \has{ \sum_{k=0}^\infty \omega(2^{-k-1})^2 \int_{2^kr<\abs{s-t}\leq r2^{k+1}} \frac{\dd t}{\abs{s-t}^d}}^{1/2}\\
    &\leq C_d \, \has{ \sum_{k=0}^\infty \omega(2^{-k-1})^2 \int_{B(0,r2^{k+1})} \frac{1}{(2^{k}r)^d}\dd t}^{1/2}\\
&\leq C_{d}\,\nrm{K}_{\Dini{\omega}{2}}.\qedhere
  \end{align*}
\end{proof}

If $K$ is differentiable, we can check the standard kernel conditions in terms of the derivatives of the kernel.
\begin{lemma}\label{lemma:standardkernelderivatives}
  Let $X$ and $Y$ be a Banach spaces and let
  $$K \in C^1\hab{\R^d\times \R^d\setminus \cbrace{(s,s):s\in \R^d};\mc{L}(X,Y)}.$$ Suppose that there is a constant $A_0>0$ such that
  \begin{align*}
  \nrmb{\partial^{\alpha}_s K(s,t)}&\leq A_0\cdot {\abs{s-t}^{-d/2 -1}} \qquad \abs{\alpha}=1,\,  s \neq t,\\
  \nrmb{\partial^{\alpha}_t K(s,t)}&\leq A_0\cdot  {\abs{s-t}^{-d/2 -1}} \qquad \abs{\alpha}= 1,\,  s \neq t.
  \end{align*}
Then $K$ is a $(1,2)$-standard kernel with $\nrm{K}_{\Standard{1}{2}} \leq C_d \, A_0 $.
\end{lemma}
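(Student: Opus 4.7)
The plan is to verify the two Dini-type estimates \eqref{eq:dini2} and \eqref{eq:dini3} with $\omega(r) = C_d A_0 \, r$ by integrating the gradient bounds along line segments, i.e.\ via the fundamental theorem of calculus. Only \eqref{eq:dini2} will be written out, since \eqref{eq:dini3} is handled identically with the roles of $s$ and $t$ swapped.

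First I would fix $s,s',t \in \R^d$ with $|s-s'| \leq \tfrac12 |s-t|$ and parametrize the segment joining $s'$ to $s$ by $u(\theta) := s' + \theta(s-s')$ for $\theta\in[0,1]$. The key geometric observation is that every $u(\theta)$ stays away from $t$: by the triangle inequality,
\begin{equation*}
|u(\theta)-t| \geq |s-t| - |s - u(\theta)| \geq |s-t| - |s-s'| \geq \tfrac12 |s-t|,
\end{equation*}
so in particular $u(\theta)\neq t$ and the hypothesis on $\partial_s^\alpha K$ applies at $(u(\theta),t)$.

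Next I would apply the fundamental theorem of calculus in the strong operator topology (which is legitimate since $K$ is $C^1$ off the diagonal) to write
\begin{equation*}
K(s,t)-K(s',t) = \int_0^1 \sum_{j=1}^d (s_j - s'_j)\, \partial_{s_j} K(u(\theta),t) \,\dd \theta.
\end{equation*}
Taking operator norms, using the bound $\|\partial_{s_j} K(u(\theta),t)\| \leq A_0 |u(\theta)-t|^{-d/2-1}$ together with the lower bound on $|u(\theta)-t|$, yields
\begin{equation*}
\|K(s,t)-K(s',t)\| \leq d\, A_0\, |s-s'| \cdot (\tfrac12 |s-t|)^{-d/2-1} = C_d\, A_0\, \frac{|s-s'|}{|s-t|}\cdot \frac{1}{|s-t|^{d/2}},
\end{equation*}
which is precisely \eqref{eq:dini2} with $\omega(r) = C_d A_0\, r$. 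The analogous argument using the $\partial_t^\alpha$ bound gives \eqref{eq:dini3}, and since $\nrm{K}_{\Standard{1}{2}} = \nrm{K}_{\Dini{\omega}{2}} \lesssim_d A_0$, the claim follows.

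I do not expect any serious obstacle: the only subtle point is the geometric lemma $|u(\theta)-t|\geq \tfrac12 |s-t|$ that keeps the path bounded away from the singular diagonal, which is exactly what the hypothesis $|s-s'|\leq \tfrac12|s-t|$ is designed to ensure.
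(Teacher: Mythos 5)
Your proof is correct and takes essentially the same approach as the paper's: both integrate the gradient bound along the line segment joining $s'$ to $s$ via the fundamental theorem of calculus, using the geometric estimate (forced by $|s-s'|\leq\tfrac12|s-t|$) that the segment stays at distance at least $\tfrac12|s-t|$ from $t$. The only cosmetic difference is that the paper parametrizes the segment as $s-\lambda(s-s')$ rather than $s'+\theta(s-s')$; the argument is otherwise identical.
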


\begin{proof}
We will prove \eqref{eq:dini2}, the proof of \eqref{eq:dini3} is analogous. Take  $s,s',t \in \R^d$ such that $0<\abs{s-s'}\leq \frac{1}{2}\abs{s-t}$. Then
we have for all $\lambda \in [0,1]$
\begin{align*}
  \abs{s-t-\lambda (s-s')} &\geq \abs{s-t} -\abs{\lambda(s-s')}\geq \abs{s-t} -\abs{s-s'} \geq \frac{1}{2}\abs{s-t}.
\end{align*}
Therefore using the fundamental theorem of calculus we obtain
\begin{align*}
  \nrmb{K(s,t)-K(s',t)}&= \nrms{\int_0^1 \frac{\ddn}{\ddn \lambda}  K(s-\lambda(s-s'),t)\dd \lambda} \\
  &\leq \sum_{j=1}^d\int_0^1 \nrmb{\partial_j K(s-\lambda(s-s'),t)\cdot(s_j-s'_j)} \dd \lambda\\
  &\leq A_0\, \sum_{j=1}^d \int_0^1 \frac{\abs{s-s'}}{\abs{s-t - \lambda(s-s')}^{d/2+1}} \dd \lambda\\
  &\leq C_{d}\,A_0 \frac{\abs{s-s'}}{\abs{s-t}} \frac{1}{\abs{s-t}^{d/2}}
\end{align*}
proving the lemma.
\end{proof}

If $K$ is of convolution type, a sufficient condition for \eqref{eq:horm1}, \eqref{eq:horm2}, \eqref{eq:dini2} and \eqref{eq:dini3} can also be formulated in terms of smoothness of the Fourier transform of $k$. For the usual H\"ormander and Dini kernels this is classical. The $L^2$-variants (or even the $L^r$-variants) in Definition \ref{definition:kernels} can be treated by similar methods (see e.g. \cite[Section 5.1]{RV17}).

We end this section with a sufficient condition for the standard kernel conditions in terms of fractional smoothness on $\R$.
\begin{lemma}\label{lemma:standardkernelfractional}
Let $\Phi\colon\R_+\to \mc{L}(X,Y)$ be strongly measurable and suppose there exists a constant $A_0>0$ and an $\epsilon\in (0,\tfrac12)$ such that
\begin{align*}
\|\Phi(s)\| \leq A_0 \,s^{-\frac12-\epsilon}, \qquad s>0.
\end{align*}
Let $k\colon \R \to \mc{L}(X,Y)$ be defined by
\begin{equation*}
k(s) =
\begin{cases}
    \frac1{\Gamma(\epsilon)} \int_0^s (s-r)^{\epsilon-1} \Phi(r) \dd r\qquad &s> 0\\ 0\qquad &s \leq 0
\end{cases}
\end{equation*}
Then $K(s,t):= k(s-t)$ is an $(\epsilon,2)$-standard kernel.
\end{lemma}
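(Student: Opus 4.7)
Since $K(s,t)=k(s-t)$ is of convolution type, by \eqref{eq:diniconvolution} the claim reduces to proving
\[
\nrm{k(s-t)-k(s)} \leq C_\epsilon \, A_0 \,|t|^{\epsilon}|s|^{-1/2-\epsilon} \qquad \text{whenever } |t|\leq |s|/2.
\]
If $s\leq 0$ then $|t|\leq |s|/2$ forces $s-t\leq s/2\leq 0$, so both $k(s)$ and $k(s-t)$ vanish and the bound is trivial. We may therefore assume $s>0$, and by treating $t\in(0,s/2]$ and $t\in[-s/2,0)$ separately (the two cases are handled by the same method, modulo the placement of the ``excess'' integration interval) it suffices to present the argument for $t\in(0,s/2]$. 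A preliminary size estimate $\nrm{k(s)}\leq C_\epsilon A_0 s^{-1/2}$ follows from the substitution $r=su$ in the defining integral together with the beta integral $\int_0^1 (1-u)^{\epsilon-1}u^{-1/2-\epsilon}\dd u = B(\epsilon,1/2-\epsilon)$, which is finite since $\epsilon\in(0,1/2)$. Via the triangle inequality this already dispatches the regime $\tau:=t/s\in[1/4,1/2]$.

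For the remaining regime $\tau\in(0,1/4]$ I would use the decomposition
\[
\Gamma(\epsilon)\bigl[k(s-t)-k(s)\bigr] = \int_0^{s-t}\bigl[(s-t-r)^{\epsilon-1}-(s-r)^{\epsilon-1}\bigr]\Phi(r)\dd r \;-\; \int_{s-t}^s (s-r)^{\epsilon-1}\Phi(r)\dd r.
\]
The boundary term is estimated using $r\geq s-t\geq s/2$, so that $\nrm{\Phi(r)}\leq C_\epsilon A_0 s^{-1/2-\epsilon}$, together with the elementary identity $\int_{s-t}^s(s-r)^{\epsilon-1}\dd r = t^\epsilon/\epsilon$; this gives a contribution $\leq C_\epsilon A_0\, t^\epsilon s^{-1/2-\epsilon}$. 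The main term is reduced, via the rescaling $r=su$ and pulling out the $s$-powers from both $(s-\cdots)^{\epsilon-1}$ and $\nrm{\Phi(su)}\leq A_0 s^{-1/2-\epsilon}u^{-1/2-\epsilon}$, to the dimensionless bound
\[
J(\tau):=\int_0^{1-\tau}\bigl[(1-\tau-u)^{\epsilon-1}-(1-u)^{\epsilon-1}\bigr] u^{-1/2-\epsilon}\dd u \;\leq\; C_\epsilon\, \tau^{\epsilon}.
\]

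The bound on $J(\tau)$ is the heart of the argument. I would split the range of integration into the three intervals $[0,1/2]$, $[1/2,1-2\tau]$, and $[1-2\tau,1-\tau]$, all well-defined since $\tau\leq 1/4$. On the first two I would apply the mean value estimate
\[
(1-\tau-u)^{\epsilon-1}-(1-u)^{\epsilon-1} = (1-\epsilon)\int_{1-\tau-u}^{1-u}v^{\epsilon-2}\dd v \leq (1-\epsilon)\tau (1-\tau-u)^{\epsilon-2},
\]
which uses the monotonicity of $v\mapsto v^{\epsilon-2}$. On $[0,1/2]$ we have $1-\tau-u\geq 1/4$ and $u^{-1/2-\epsilon}$ is integrable, giving a bound of order $\tau\leq \tau^{\epsilon}$. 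On $[1/2,1-2\tau]$ the factor $u^{-1/2-\epsilon}$ is bounded, and the substitution $w=1-\tau-u$ turns $\int(1-\tau-u)^{\epsilon-2}\dd u$ into $\int_\tau^{1/2-\tau}w^{\epsilon-2}\dd w \leq \tau^{\epsilon-1}/(1-\epsilon)$, which combined with the outer $\tau$ produces $\tau^{\epsilon}$. On the final interval $[1-2\tau,1-\tau]$ the mean value estimate is wasteful, so I would bound the two terms separately: $u^{-1/2-\epsilon}$ is bounded (since $u\geq 1/2$), and direct integration of $(1-\tau-u)^{\epsilon-1}$ and $(1-u)^{\epsilon-1}$ each yields $O(\tau^\epsilon)$. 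The main obstacle is precisely this three-way split: the integrand combines a diagonal singularity at $u=1-\tau$ with a weight singularity at $u=0$, and extracting exactly the exponent $\epsilon$ requires balancing the mean value estimate (which fails near the diagonal) against the pointwise bounds (which lose the cancellation); once the split is correctly placed at $1-2\tau$, each piece reduces to an elementary one-variable integration.
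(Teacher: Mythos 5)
Your proof is correct, and its skeleton is the same as the paper's: isolate the boundary integral over the short interval $[s-t,s]$ where only the farther kernel contributes (bounded via $\int_0^t u^{\epsilon-1}\,\dd u = t^\epsilon/\epsilon$ and the weight estimate $r\geq s/2$), then split the remaining Cauchy difference at the midpoint and control the far half by the mean-value estimate $(1-\tau-u)^{\epsilon-1}-(1-u)^{\epsilon-1}\leq(1-\epsilon)\tau(1-\tau-u)^{\epsilon-2}$ against the weight singularity at $0$. Where you diverge is in the near-diagonal half of the main term. You split further at $u=1-2\tau$ and bound the two pieces separately, which forces $\tau\leq 1/4$ and therefore requires the auxiliary crude bound $\nrm{k(s)}\lesssim_\epsilon A_0 s^{-1/2}$ to dispose of $\tau\in[1/4,1/2]$. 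The paper sidesteps both: it integrates its analogous region $(s/2,s)$ in closed form, obtaining $\epsilon^{-1}\bigl[(t-s)^\epsilon+(s/2)^\epsilon-(t-s/2)^\epsilon\bigr]$, and then simply observes that $(s/2)^\epsilon-(t-s/2)^\epsilon\leq 0$, which is uniform in the ratio $(t-s)/s\in(0,1/2]$ and needs no extra case. The same one-line sign observation is available to you on the whole interval $[1/2,1-\tau]$: the exact integral is $\epsilon^{-1}\bigl[\tau^\epsilon+(1/2-\tau)^\epsilon-(1/2)^\epsilon\bigr]\leq\tau^\epsilon/\epsilon$, which would eliminate both the further split at $1-2\tau$ and the preliminary size bound. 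One further cosmetic difference: you take the left shift ($s-t<s$) and declare the right shift symmetric, whereas the paper takes the right shift and leaves the left implicit; both choices are legitimate since the base points differ by at most a factor $3/2$. Net: essentially the same argument, with slightly more bookkeeping on the near-diagonal piece than the paper's sign trick requires.
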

\begin{proof}
Let $s>0$ and assume $t \in [s, \frac{3}{2}s]$. By \eqref{eq:diniconvolution} it suffices to show
\[\|k(s) - k(t)\|\leq A_0\, C_{\epsilon} \frac{(t-s)^{\epsilon}}{s^\epsilon} \frac{1}{s^{1/2}}.\]
To show this note that
\begin{align*}
\Gamma(\epsilon) \,&\|k(s) - k(t)\| \\ & \leq  \int_s^t (t-r)^{\epsilon-1} \|\Phi(r)\| \dd r + \int_0^s \big((s-r)^{\epsilon-1} - (t-r)^{\epsilon-1}\big) \|\Phi(r)\| \dd r
\\ & \leq A_0\, \underbrace{\int_s^t (t-r)^{\epsilon-1} r^{-\epsilon-\frac12} \dd r}_{ \text{\framebox[15pt]{A}}} + A_0 \, \underbrace{\int_0^s \big((s-r)^{\epsilon-1} - (t-r)^{\epsilon-1}\big) r^{-\epsilon-\frac12} \dd r}_{ \text{\framebox[15pt]{B}}}.
\end{align*}
For \framebox[15pt]{A} note that
\begin{align*}
\text{\framebox[15pt]{A}} \leq s^{-\epsilon-\frac12}  \int_s^t (t-r)^{\epsilon-1} \dd r = \epsilon^{-1} \frac{(t-s)^{\epsilon}}{s^\epsilon} \frac{1}{s^{1/2}}.
\end{align*}
For \framebox[15pt]{B} we write $\text{\framebox[15pt]{B}}=\text{\framebox[20pt]{B1}}+\text{\framebox[20pt]{B2}}$ where we have split the integral into parts over $(0,s/2)$ and $(s/2,s)$. For \framebox[20pt]{B1} we can write
\begin{align*}
\framebox[20pt]{B1} &  = \frac{1}{1-\epsilon} \int_0^{s/2} \int_{s-r}^{t-r} x^{\epsilon-2} \dd x \, r^{-\epsilon-\frac12}  \dd r
\\ & \leq \frac{1}{1-\epsilon} \int_0^{s/2} (t-s) (s-r)^{\epsilon-2} r^{-\epsilon-\frac12}  \dd r
\\ & \leq \frac{(t-s) (s/2)^{\epsilon-2} }{1-\epsilon} \int_0^{s/2} r^{-\epsilon-\frac12}  \dd r
\\ & \leq \frac{(t-s) (s/2)^{\epsilon-2}}{(1-\epsilon)(\frac12-\epsilon)} (s/2)^{\frac12-\epsilon} = \frac{2\sqrt{2}}{(1-\epsilon)(\frac12-\epsilon)}  \frac{t-s}{s} \frac{1}{s^{1/2}}
\end{align*}
where we used $\epsilon<\frac12$.
Finally,  using $t\geq s$, we obtain
\begin{align*}
\framebox[20pt]{B2} & \leq (s/2)^{-\epsilon-\frac12} \int_{s/2}^s \big((s-r)^{\epsilon-1} - (t-r)^{\epsilon-1}\big)  \dd r
\\ & = \epsilon^{-1} (s/2)^{-\epsilon-\frac12} \big((t-s)^{\epsilon} + (s/2)^{\epsilon} - (t- s/2)^{\epsilon}\big)
\\ & \leq \epsilon^{-1} 2^{\epsilon+\frac12} \frac{(t-s)^{\epsilon}}{s^\epsilon} \frac{1}{s^{1/2} },
\end{align*}
which implies the required estimate.
\end{proof}

\section{Extrapolation for \texorpdfstring{$\gamma$}{y}-integral operators}\label{section:extrapolation}
In this section we will prove the first results regarding the extrapolation of the $L^p$-boundedness of an $\gamma$-integral operator $T_K$ to the $L^q$-boundedness of $T_K$ for all $q \in (2,\infty)$ under a $2$-H\"ormander assumption on $K$. We will also obtain a weak $L^2$-endpoint and a $\BMO$-endpoint result.

\subsection{Extrapolation for \texorpdfstring{$2<q<p$}{2<q<p}}
Let us start our analysis with an extrapolation result downwards. We will show that if $K \in \mc{K}_\gamma(L^{p,\infty}(\R^d))$ satisfies the $2$-H\"ormander condition, then also $K \in \mc{K}_\gamma(L^{q}(\R^d))$ for all $q \in (2,p)$ and $K \in \mc{K}_\gamma(L^{2,\infty}(\R^d))$. For this we will adapt the Calder\'on-Zygmund decomposition technique for singular integral operators to the $\gamma$-case. Our main tool will be the following $L^2$-Calder\'on--Zygmund decomposition.

\begin{proposition}[$L^2$-Calder\'on--Zygmund decomposition]
  Let $X$ be a Banach space. For every $f \in L^2(\R^d;X)$ and $\lambda >0$ there exists a decomposition $f = g +b$ with
  \begin{align*}
    &\nrm{g}_{L^\infty(\R^d;X)} \leq 2^{d/2}\lambda ,  &&\nrm{g}_{L^2(\R^d;X)} \leq \nrm{f}_{L^2(\R^d;X)}
  \intertext{and $b = \sum_j b_j$ with}
  &\supp b_j \subseteq Q_j \qquad &&\sum_j\abs{Q_j}\leq \lambda^{-2}\nrm{f}_{L^2(\R^d;X)}^2\\
  &\nrm{b_j}_{L^2(\R^d;X)}^2 \leq 2^{d+2} \lambda^2\abs{Q_j},\qquad &&\sum_k\nrm{b_j}_{L^2(\R^d;X)}^2 \leq 2^{d+2} \nrm{f}_{L^2(\R^d;X)}^2
  \end{align*}
  for disjoint cubes $Q_j \subseteq \R^d$.
\end{proposition}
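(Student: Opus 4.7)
The approach I would take is to reduce the vector-valued $L^2$-decomposition to the classical scalar $L^1$-Calder\'on--Zygmund stopping time decomposition applied to the locally integrable function $\nrm{f(\cdot)}_X^2 \in L^1(\R^d)$ at height $\alpha := \lambda^2$. This standard dyadic procedure (see e.g.\ \cite[Theorem 4.3.1]{Gr14a}) produces a family of pairwise disjoint dyadic cubes $(Q_j)$ such that
\begin{equation*}
  \lambda^2 < \avint_{Q_j} \nrm{f}_X^2 \leq 2^d \lambda^2
\end{equation*}
for every $j$, while $\nrm{f(s)}_X^2 \leq \lambda^2$ for almost every $s\notin \bigcup_j Q_j$ (by Lebesgue differentiation). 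Chebyshev's inequality applied to the collection gives $\sum_j |Q_j| \leq \lambda^{-2}\|f\|_{L^2(\R^d;X)}^2$.

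Next I would define the good and bad parts in the vector-valued sense by setting $g(s):=f(s)$ off $E:=\bigcup_j Q_j$ and $g(s):=\avint_{Q_j} f$ (a Bochner average, which is a well-defined element of $X$ since $f \in L^2 \hookrightarrow L^1_{\loc}$) when $s\in Q_j$. Then $b_j(s) := \bigl(f(s)-\avint_{Q_j}f\bigr)\ind_{Q_j}(s)$ is supported in $Q_j$ and $f=g+\sum_j b_j$.

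The two bounds on $g$ follow from Jensen's inequality applied to the convex function $\|\cdot\|_X^2$. For the $L^\infty$ bound: off $E$ we have $\|g\|_X\leq \lambda$, and on $Q_j$ we have
\begin{equation*}
  \nrms{\avint_{Q_j} f}_X \leq \has{\avint_{Q_j}\nrm{f}_X^2}^{1/2} \leq 2^{d/2}\lambda.
\end{equation*}
For the $L^2$ bound the same estimate gives $\int_{Q_j}\|\avint_{Q_j} f\|_X^2 \leq \int_{Q_j}\|f\|_X^2$, summing to $\|g\|_{L^2(\R^d;X)}\leq \|f\|_{L^2(\R^d;X)}$. Finally, for the bad pieces the triangle inequality together with Jensen yields
\begin{equation*}
  \nrm{b_j}_{L^2(\R^d;X)}^2 \leq 4\int_{Q_j}\nrm{f}_X^2 \leq 4\cdot 2^d\lambda^2 |Q_j|,
\end{equation*}
and summing over $j$ (using disjointness of the $Q_j$) gives $\sum_j \|b_j\|_{L^2}^2 \leq 4\|f\|_{L^2}^2 \leq 2^{d+2}\|f\|_{L^2}^2$.

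There is no real obstacle here beyond making sure the vector-valued averages $\avint_{Q_j} f$ are handled correctly; the only place where the Banach-space structure enters nontrivially is in replacing $|\cdot|$ by $\|\cdot\|_X$ and invoking Jensen for the convex function $\|\cdot\|_X^2$, which is valid since Bochner integrals commute with continuous linear functionals and one can pass to the scalar inequality by duality, or more directly, apply Jensen in the form $\|\avint g\|_X \leq \avint \|g\|_X \leq (\avint \|g\|_X^2)^{1/2}$.
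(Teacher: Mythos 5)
Your proof is correct and takes essentially the same approach as the paper, which simply cites \cite[Exercise 5.3.8]{Gr14a} for the scalar case and observes that the argument carries over verbatim by replacing absolute values with norms — this is exactly the dyadic stopping-time construction applied to $\nrm{f(\cdot)}_X^2$ at height $\lambda^2$ that you carry out. As a minor aside, your construction also yields $\int_{Q_j} b_j = 0$, a property the paper deliberately omits from the statement since (as they remark) it cannot be exploited for $\gamma$-integral operators.
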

For the proof in the case $X=\C$ we refer to \cite[Exercise 5.3.8]{Gr14a}, where the more general $L^q$-Calder\'on--Zygmund decomposition for any $q \in [1,\infty)$ is shown. The proof carries over verbatim to the vector-valued setting, replacing absolute values by norms.

Note that in the deterministic setting the functions $b_j$ in a Calder\'on--Zygmund decomposition are usually also taken such that $\int_{Q_k}b_j = 0$, but we will not be able to use this property for $\gamma$-integral operators. Instead we use the $L^2$-Calder\'on--Zygmund decomposition in a way that is inspired by \cite{DM99}, which builds upon ideas developed in \cite{ DR96, Fe70, He90}.

\begin{theorem}[Extrapolation downwards]\label{theorem:extrapolationdown}
  Let $X$ and $Y$ be Banach spaces with type $2$, let $p \in [2,\infty)$ and suppose that $K \in \mc{K}_\gamma(L^{p,\infty}(\R^d))$ satisfies the $2$-H\"ormander condition. Then
 \begin{enumerate}[(i)]
 \item \label{it:extrapolationdowni} $K \in \mc{K}_\gamma(L^{q}(\R^d))$ for all $q \in (2,p)$ with
 \begin{align*}
  \nrm{K}_{\mc{K}_\gamma(L^{q}(\R^d))} &\leq C_{p,q,d}\,\has{\tau_{2,X} \tau_{2,Y} \nrm{K}_{\mc{K}_\gamma(L^{p,\infty}(\R^d))} + \tau_{2,Y} \nrm{K}_{\Hormander{2}}}.
\end{align*}\item \label{it:extrapolationdownii}  $K \in \mc{K}_\gamma(L^{2,\infty}(\R^d))$ with
\begin{align*}
  \nrm{K}_{\mc{K}_\gamma(L^{2,\infty}(\R^d))} &\leq C_{p,d}\,\has{\tau_{2,X} \tau_{2,Y} \nrm{K}_{\mc{K}_\gamma(L^{p,\infty}(\R^d))} + \tau_{2,Y} \nrm{K}_{\Hormander{2}}}.
  \end{align*}
  \end{enumerate}
\end{theorem}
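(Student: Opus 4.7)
The plan is a Calder\'on--Zygmund-type extrapolation adapted to the $\gamma$-setting, in the spirit of the classical Benedek--Calder\'on--Panzone argument. Part~(i) will follow from part~(ii) by Marcinkiewicz interpolation between the weak-type~$(2,2)$ bound and the hypothesized $L^{p,\infty}$-boundedness, so the substantive work is in proving~(ii).

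Fix $f\in L^2(\R^d;X)$ and $\lambda>0$, and apply the $L^2$-Calder\'on--Zygmund decomposition at level $\alpha\lambda$ for $\alpha>0$ to be optimised at the end: write $f=g+\sum_j b_j$ with $b_j$ supported on disjoint cubes $Q_j$, $\nrm{g}_\infty\lesssim\alpha\lambda$, $\nrm{g}_2\leq\nrm{f}_2$, $\sum_j|Q_j|\lesssim(\alpha\lambda)^{-2}\nrm{f}_2^2$, and $\sum_j\nrm{b_j}_2^2\lesssim\nrm{f}_2^2$, and arrange also the cancellation $\int_{Q_j}b_j=0$ by a standard modification. The inclusion
$$\{s:\nrm{T_Kf(s)}_\gamma>\lambda\}\subseteq\{s:\nrm{T_Kg(s)}_\gamma>\lambda/2\}\cup\{s:\nrm{T_Kb(s)}_\gamma>\lambda/2\}$$
reduces matters to treating the two pieces separately. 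The elementary interpolation $\nrm{g}_p^p\leq\nrm{g}_\infty^{p-2}\nrm{g}_2^2\lesssim(\alpha\lambda)^{p-2}\nrm{f}_2^2$ combined with the weak-$L^p$ hypothesis yields
$$\absb{\{s:\nrm{T_Kg(s)}_\gamma>\lambda/2\}}\lesssim\alpha^{p-2}\lambda^{-2}\nrm{K}_{\mc{K}_\gamma(L^{p,\infty})}^p\nrm{f}_2^2.$$

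For the bad part, let $Q_j^*$ be a suitable fixed dilate of $Q_j$ and set $B^*:=\bigcup_jQ_j^*$, so $|B^*|\lesssim(\alpha\lambda)^{-2}\nrm{f}_2^2$. Chebyshev on $\R^d\setminus B^*$ reduces matters to the $L^2$-bound $\int_{\R^d\setminus B^*}\nrm{T_Kb(s)}_\gamma^2\,ds\lesssim\nrm{f}_2^2$. Since the representing functions $t\mapsto K(s,t)b_j(t)\ind_{Q_j}(t)$ are disjointly supported in $t$ across $j$, Lemma~\ref{lemma:2sublinear} further reduces this to the per-cube estimate
\begin{equation*}
\int_{\R^d\setminus Q_j^*}\nrm{T_Kb_j(s)}_\gamma^2\,ds\lesssim\nrm{b_j}_{L^2(Q_j;X)}^2.
\end{equation*}
This is the crux of the proof. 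I would attack it by writing $T_Kb_j(s)$ as the $\gamma$-element represented by $t\mapsto K(s,t)b_j(t)\ind_{Q_j}(t)$ and splitting off the frozen-kernel element $t\mapsto K(s,c_j)b_j(t)\ind_{Q_j}(t)$ at the center $c_j$ of $Q_j$. The $L^2(\R^d\setminus Q_j^*;\gamma)$-norm of the difference is controlled by Lemma~\ref{lemma:gammaL2} (the type-$2$ embedding), Fubini, and the $L^2$-H\"ormander condition~\eqref{eq:horm2}, yielding the bound $\tau_{2,Y}\nrm{K}_{\Hormander{2}}\nrm{b_j}_2$.

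The main obstacle is the frozen-kernel remainder: in sharp contrast to the scalar setting, scalar cancellation $\int b_j=0$ does not annihilate its $\gamma$-norm, since the corresponding rank-one-in-$t$ operator is zero only when applied to constants while the $\gamma$-norm sees all of $L^2$. Closing the per-cube estimate therefore requires combining the cancellation with the ideal property of $\gamma$, the type-$2$ structure of $X$ and Lemma~\ref{lemma:gammaL2}, together with a finer use of the H\"ormander hypothesis --- for instance rewriting the rank-one-in-$t$ operator modulo constants and exploiting a Poincar\'e-type $\gamma$-bound, or iterating the decomposition at nearby scales to absorb the frozen contribution. Once the per-cube bound is established, summing in $j$ and choosing $\alpha$ small enough to absorb $\nrm{K}_{\mc{K}_\gamma(L^{p,\infty})}^p$ yields the weak-$(2,2)$ estimate of (ii), and (i) then follows from Marcinkiewicz interpolation.
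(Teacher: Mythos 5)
Your setup is correct and matches the paper: reduce (i) to (ii) via Marcinkiewicz interpolation, run the $L^2$-Calder\'on--Zygmund decomposition at level $\alpha\lambda$, treat $g$ by elementary H\"older interpolation combined with the weak-$L^p$ hypothesis, and try to reduce the bad part to a per-cube estimate using Lemma~\ref{lemma:2sublinear}. You have also correctly located the genuine difficulty: the standard freezing at the center $c_j$ of $Q_j$ produces a remainder term $t\mapsto K(s,c_j)b_j(t)$, and the scalar cancellation $\int_{Q_j}b_j=0$ does not kill the $\gamma(\R^d;Y)$-norm of this remainder, because a rank-one-in-$t$ operator with mean-zero symbol vanishes only on constants. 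So you have diagnosed exactly why the deterministic argument breaks.

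But diagnosing the obstacle is not the same as overcoming it, and here the proposal has a real gap. Your suggested escapes---a ``Poincar\'e-type $\gamma$-bound'' for the frozen remainder, or iterating the decomposition across nearby scales---are not substantiated, and I do not see how either would actually close the argument: the frozen remainder has a $\gamma$-norm of size $\nrm{K(s,c_j)}\,\nrm{b_j}_2$ with no smallness gained from $\int b_j=0$, and the H\"ormander condition only controls differences $K(s,t)-K(s,t')$, not $K(s,c_j)$ itself. The paper takes a fundamentally different route: it abandons cancellation entirely (the $b_j$ are not taken mean-zero) and instead introduces auxiliary smoothing operators $S_j$ with kernel $\psi_{r_j}(s,t)=|B(t,r_j)|^{-1/2}\ind_{B(t,r_j)}(s)$, and the crucial identity $\nrm{T_Kh(s)}_{\gamma(\R^d;Y)}=\nrm{T_\psi h(s)}_{\gamma(\R^d\times\R^d;Y)}$ that lifts the problem to the product space $\gamma(\R^d\times\R^d;Y)$. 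One then compares $T_\psi b$ against $\sum_j\widetilde{T}_KS_jb_j$: the difference is governed by $K(s,t)-K(s,t')$, which the $2$-H\"ormander condition controls, while the term $\sum_j\widetilde{T}_KS_jb_j$ is handled by the weak-$L^p$ hypothesis applied to $\sum_jS_jb_j\in L^p(\R^d;\gamma(\R^d;X))$, whose norm is made small using the type-$2$ embedding and the Calder\'on--Zygmund stopping estimates. This smoothing/lifting device is the missing idea; without it, the per-cube bound you state as the crux of the proof cannot be established along the lines you sketch.
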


\begin{proof}
 It suffices to show \ref{it:extrapolationdownii}, as  \ref{it:extrapolationdowni} then follows directly  from the Marcin\-kie\-wicz interpolation theorem, see e.g. \cite[Theorem 2.2.3]{HNVW16}.

 Let $f \in L^{2}(\R^d;X)\cap L^{p}(\R^d;X)$ be compactly supported, $\lambda>0$ and set $A_0:= \nrm{K}_{\mc{K}_\gamma(L^{p,\infty}(\R^d))}$. Let $f = g+b$ be the $L^2$-Calder\'on--Zygmund decomposition of $f$ at level $\mu\lambda$ for some $\mu>0$ to be chosen later. Then we have
  \begin{equation}\label{eq:gunderf}
    \nrm{g}_{L^{p}(\R^d;X)}^{p} \leq \nrm{g}_{L^\infty(\R^d;X)}^{p-2}\nrm{g}_{L^2(\R^d;X)}^{2} \leq \has{{2^{d/2}\mu\lambda}}^{p-2}\nrm{f}_{L^2(\R^d;X)}^{2},
  \end{equation}
  so in particular $g \in L^{p}(\R^d;X)$. It follows that $b =f-g \in L^{p}(\R^d;X)$, and thus
  \begin{equation*}
    T_Kf = T_Kg + T_Kb
  \end{equation*}
  is well-defined.

\bigskip
  To estimate the $L^{2,\infty}(\R^d;\gamma(\R^d;Y))$-norm of $T_Kf$ we need to analyze the size of $\cbrace{\nrm{T_Kf}_{\gamma(\R^d;Y)}>\lambda}.$ We split as follows:
  \begin{equation}\label{eq:fgbsplit}
    \absb{\cbraceb{\nrm{T_Kf}_{\gamma(\R^d;Y)}>\lambda}}
    \leq \absb{\cbraceb{\nrm{T_Kg}_{\gamma(\R^d;Y)}>\tfrac{\lambda}{2}}} + \absb{\cbraceb{\nrm{T_Kb}_{\gamma(\R^d;Y)}>\tfrac{\lambda}{2}}}.
  \end{equation}
  For the term with the ``good'' part $g$  we have by our assumption on $T_K$ and \eqref{eq:gunderf} that
\begin{align*}
  \absb{\cbrace{\nrm{T_Kg}_{\gamma(\R^d;Y)}>\lambda/2}} &\leq \frac{A_0^{p}}{(\lambda/2)^{p}}\nrm{g}^{p}_{L^{p}(\R^d;X)}  \\
    &\leq C_{p,d}\, {A_0^{p}\,\mu^{p-2}} \,\frac{\nrm{f}_{L^2(\R^d;X)}^2}{\lambda^2}
\end{align*}

For the term with the ``bad'' part $b$, let $Q_j$ be the cube corresponding to $b_j$ with center $s_j$ and diameter $2r_j$. Set $B_j:=B(s_j,r_j)$, then $Q_j \subseteq B_j$ and $\abs{B_j} \leq C_d \abs{Q_j}$. Set $B_j' = 4B_j$ and $O := \bigcup_j B_j'$.

 for our estimates we will define some auxiliary operators. For $r>0$ define
\begin{equation*}
  \psi_r(s,t) := \frac{1}{\abs{B(t,r)}^{1/2}} \ind_{B(t,r)}(s), \qquad s,t \in \R^d.
\end{equation*}
Note that $\nrm{\psi_r(\cdot,t)}_{L^2(\R^d)}=1$ for all $t \in \R^d$ and
\begin{equation}\label{eq:suppsi}
  \sup_{t \in B_j} \psi_{r_j}(s,t)= \frac{1}{\abs{B_j}^{1/2}} \ind_{2B_j}(s), \qquad s \in \R^d
\end{equation}
Let $S_{j}\colon L^2(\R^d;X) \to L^2(\R^d;\gamma(\R^d;X))$ be the $\gamma$-integral operator given by
\begin{equation*}
  S_{j}h(s) :=  \psi_{r_j}(s,\cdot)h(\cdot) ,\qquad s\in \R^d, \quad h \in L^2(\R^d;X)
\end{equation*}
which is bounded by Proposition \ref{proposition:simplesufficient}\ref{it:sufficientL2}.
We claim that $\sum_j S_jb_j$ converges in $L^p(\R^d;\gamma(\R^d;X))$. To prove this we first estimate for fixed $j$ and a.e. $s \in \R^d$
\begin{align*}
 \nrm{S_{j}b_j(s)}_{\gamma(\R^d;X)}^2
 &\leq \tau_{2,X}^2 \int_{B_j} \psi_{r_j}(s,t)^2 \nrm{b_j(t)}_X^2\dd t\\
 &\leq  \tau_{2,X}^2 \, \nrm{b_j}_{L^2(\R^d;X)}^2\sup_{t \in B_j} \psi_{r_j}(s,t)^2
 \\&\leq C_{d}  \, \tau_{2,X}^2 \, (\mu\lambda)^2 \ind_{2B_j}(s)
\end{align*}
using Lemma \ref{lemma:gammaL2}, the norm estimate of $b_j$ in terms of $\abs{Q_j}$ and \eqref{eq:suppsi}.
Thus for $\varphi \in L^{p'}(\R^d)$ positive we have
\begin{align*}
  \ipb{\nrm{S_{j}b_j}_{\gamma(\R^d;X)},\varphi}  &\leq C_{d}  \, \tau_{2,X} \, \mu\lambda\int_{Q_j} \avint_{2 B_j} \varphi(s)\dd s \dd t \\&\leq C_{d}  \, \tau_{2,X} \, \mu\lambda \,\ip{\ind_{Q_j},M\varphi}.
\end{align*}
So summing over $j$ we get, using the boundedness of the maximal operator $M$ as in Lemma \ref{lemma:maximaloperator} and the fact that the $Q_j$'s are disjoint, that
\begin{equation*}
\begin{aligned}
    \nrmb{\sum_{j} S_{j}b_j}_{L^p(\R^d;\gamma(\R^d;X))} &\leq \nrmb{\sum_j \nrm{S_{j}b_j}_{\gamma(\R^d;X)}}_{L^{p}(\R^d)}\\
  &= \sup_{\nrm{\varphi}_{L^{p'}(\R^d)}\leq 1} \ips{\sum_j \nrm{S_{j}b_j}_{\gamma(\R^d;X)},\abs{\varphi}}\\
  &\leq C_{d}  \, \tau_{2,X} \, \mu\lambda \,\nrmb{\sum_j \ind_{Q_j}}_{L^{p}(\R^d)} \sup_{\nrm{\varphi}_{L^{p'}(\R^d)}\leq 1}  \nrm{M\varphi}_{L^{p'}(\R^d)}\\
  &\leq C_{p,d}  \, \tau_{2,X} \, \mu\lambda \,\has{\sum_j\abs{Q_j}}^{1/p}.
\end{aligned}
\end{equation*}
Since $\sum_j\abs{Q_j}\leq (\mu\lambda)^{-2}\nrm{f}_{L^2(\R^d;X)}^2$ it follows that $\sum_j S_jb_j$ converges in $L^p(\R^d;\gamma(\R^d;X))$ as claimed and in particular we have
\begin{equation}\label{eq:Sestimate}
  \nrmb{\sum_{j} S_{j}b_j}_{L^p(\R^d;\gamma(\R^d;X))} \leq C_{p,d}  \, \tau_{2,X} \, (\mu\lambda)^{1-2/p} \nrm{f}_{L^2(\R^d;X)}^{2/p}.
\end{equation}

Next set
\begin{equation*}
\psi(s,t): = \sum_j \psi_{r_j}(s,t) \ind_{Q_j}(t)
\end{equation*}
and define for $h \in L^2(\R^d;X)$ and a.e. $s \in \R^d$
\begin{equation*}
  T_\psi h(s) := \has{(t,t') \mapsto K(s,t)\psi(t',t)h(t)}.
\end{equation*}
Note that since $T_K h(s) \in \gamma(\R^d;Y)$ by assumption, we also have $T_\psi h(s) \in \mc{L}(L^2(\R^d\times \R^d),Y)$. Moreover since $\nrm{\psi(\cdot,t)}_{L^2(\R^d)}=1$ we have
\begin{equation*}
  \nrmb{(t,t') \mapsto \ipb{K(s,t)\psi(t',t)h(t),y^*}}_{L^2(\R^d\times \R^d)} = \nrmb{t \mapsto \ipb{K(s,t)h(t),y^*}}_{L^2(\R^d)}
\end{equation*}
for every $y^* \in Y^*$. Thus by domination (see \cite[Theorem 9.4.1]{HNVW17})  it follows that $T_\psi h(s) \in \gamma(\R^d\times \R^d;Y)$ with
\begin{align*}
\nrmb{T_Kh(s)}_{\gamma(\R^d;Y)}
&=\nrmb{T_\psi h(s)}_{\gamma(\R^d\times \R^d;Y)}.
\end{align*}
Finally let
$$\widetilde{T}_K\colon\gamma(\R^d;L^p(\R^d;X)) \to \gamma(\R^d;L^{p,\infty}(\R^d;\gamma(\R^d;Y)))$$
 be the canonical extension of $T_K$, which is trivially bounded with norm $A_0$. By Lemma \ref{lemma:gammaL2} and the $\gamma$-Fubini embedding in Proposition \ref{proposition:gammaFubLorentz}, $\widetilde{T}_K$ is also bounded as an operator
 \begin{equation*}
   \widetilde{T}_K\colon L^p(\R^d;\gamma(\R^d;X)) \to  L^{p,\infty}(\R^d;\gamma(\R^d\times \R^d;Y))
 \end{equation*}
 with norm $C_p\, \tau_{2,Y} A_0$. Combined with \eqref{eq:Sestimate} this implies that $\sum_j\widetilde{T}_KS_{j}b_j$ is well-defined.

\bigskip

Using these auxiliary operators we now decompose as follows:
\begin{align*}
  \absb{\cbrace{\nrm{T_K b}_{\gamma(\R^d;Y)}>\lambda/2}} &= \absb{\cbrace{\nrm{T_{\psi}b}_{\gamma(\R^d\times\R^d;Y)}>\lambda/2}}\\
  &\leq \abss{\cbraceb{\nrmb{T_\psi b - \sum_j\widetilde{T}_KS_{j}b_j}_{\gamma(\R^d\times\R^d;Y)}>\lambda/4}\setminus O} \\ &\qquad +\abss{\cbraceb{\nrmb{\sum_j{\widetilde{T}_KS_{j}}b_j}_{\gamma(\R^d\times \R^d;Y)}>\lambda/4}} +\abs{O}\\
  &=: \text{\framebox[15pt]{A}}+\text{\framebox[15pt]{B}}+\text{\framebox[15pt]{C}}
\end{align*}
To estimate \framebox[15pt]{A} we first note that by Chebyshev's inequality and Lemma \ref{lemma:gammaL2} we have
\begin{equation*}
  \framebox[15pt]{A} \leq \tau_{2,Y}^2 \frac{16}{\lambda^2}\int_{\R^d \setminus O} \nrmb{T_\psi b-\sum_j\widetilde{T}_KS_{j}b_j}_{L^2(\R^d\times\R^d;Y)}^2
\end{equation*}
Using the fact that the $b_j$'s are disjointly supported on the cubes $Q_j \subseteq B_j$, Fubini's theorem, the $2$-H\"ormander condition and \eqref{eq:suppsi} we deduce
\begin{align*}
  &\int_{\R^d \setminus O} \nrmb{T_\psi b-\sum_j\widetilde{T}_KS_{j}b_j}_{L^2(\R^d \times \R^d;Y)}^2\\&\leq \sum_j   \int_{\R^d \setminus B_j'} \int_{B_j} \int_{\R^d} \nrmb{\hab{K(s,t) - K(s,t')}\psi_{r_j}(t',t)b_j(t)}_{Y}^2\dd t'   \dd t \dd s
  \\&\leq \sum_j{\int_{\R^d} \int_{B_j} \int_{\R^d \setminus B_j'} \nrm{{K(s,t) - K(s,t')}}^2\nrm{b_j(t)}_{X}^2 \dd s \dd t} \cdot   {\sup_{t \in B_j} \psi_{r_j}(t',t)^2 \dd t'}
  \\&\leq C_d \, \nrm{K}_{\Hormander{2}}^2 \sum_j\nrm{b_j}_{L^2(\R^d;X)}^2.
\end{align*}
Therefore by the norm estimate of the $b_j$'s in terms of $f$ we have
\begin{align*}
  \framebox[15pt]{A}  &\leq C_d\, \tau_{2,Y}^2  \nrm{K}_{\Hormander{2}}^2 \frac{\nrm{f}_{L^2(\R^d;X)}^2}{\lambda^2}.
\intertext{For \framebox[15pt]{B} we use the boundedness of $\widetilde{T}_K$ and  \eqref{eq:Sestimate} to obtain}
  \text{\framebox[15pt]{B}} &\leq C_p\, \frac{(\tau_{2,Y} A_0)^p}{\lambda^p} \nrmb{\sum_{j} S_{j}b_j}_{L^p(\R^d;\gamma(\R^d;X)))}^p \\&\leq C_{p,d} \hab{\tau_{2,X}\tau_{2,Y} A_0}^p \mu^{p-2}
\frac{\nrm{f}_{L^2(\R^d;X)}^2}{\lambda^2}
\intertext{and for \framebox[15pt]{C} we have by the estimate of $\abs{Q_j}$ in terms of $f$ that}
    \text{\framebox[15pt]{C}}&\leq \sum_j\abs{B_j'} \leq C_d \sum_j\abs{Q_j} \leq C_d\, \mu^{-2}\frac{\nrm{f}_{L^2(\R^d;X)}^2}{\lambda^2}.
  \end{align*}

Plugging the estimate for $g$ and the estimates for $b$ into \eqref{eq:fgbsplit}, we now have for all compactly supported $f \in L^{2}(\R^d;X)\cap L^{p}(\R^d;X)$ that
\begin{equation*}
  \nrm{T_Kf}_{L^{2,\infty}(\R^d;\gamma(\R^d;Y))} \leq C_{p,d} \has{\frac{\hab{\tau_{2,X}\tau_{2,Y}A_0\, \mu}^p+1}{\mu} + \tau_{2,Y} \nrm{K}_{\Hormander{2}}}\nrm{f}_{L^2(\R^d;X)},
\end{equation*}
where we used that $\tau_{2,X},\tau_{2,Y} \geq 1$. By density this estimate extends to all $f \in L^{2}(\R^d;X)$, so choosing $\mu := (\tau_{2,X}\tau_{2,Y}A_0)^{-1}$ finishes the proof of the weak $L^2$-endpoint.
\end{proof}

\begin{remark}\label{remark:endpointnotok}
In general one cannot expect $T_K \in \mc{K}_\gamma(L^2(\R^d))$ in Theorem \ref{theorem:extrapolationdown} , which is already clear from the scalar case. For instance the kernel $K(s,t) = \frac{1}{(s+t)^{1/2}}\ind_{s,t>0}$ of Example \ref{example:scalarcase} is a  $2$-H\"ormander kernel. However, $L^p$-boundedness holds only for $p\in (2, \infty)$.
\end{remark}

\subsection{Extrapolation for \texorpdfstring{$p<q<\infty$}{p<q<infty}}
We now turn our attention to extrapolation upwards for $\gamma$-integral operator. We will show that if $K \in \mc{K}_\gamma(L^{p,\infty}(\R^d))$ satisfies the $2$-H\"ormander condition, then also $K \in \mc{K}_\gamma(L^{q}(\R^d))$ for all $q \in (p,\infty)$ and we will prove a $\BMO$-endpoint result.

\begin{theorem}[Extrapolation upwards]\label{theorem:extrapolationup}
Let $X$ and $Y$ be Banach spaces and assume that $Y$ has type $2$. Let $p \in [2,\infty)$ and suppose $K \in \mc{K}_\gamma(L^{p,\infty}(\R^d))$ satisfies the $2$-H\"ormander condition. Then
\begin{enumerate}[(i)]
 \item \label{it:up1} $K \in \mc{K}_\gamma(L^{q}(\R^d))$ for all $q \in (p,\infty)$ with
 \begin{align*}
  \nrm{K}_{\mc{K}_\gamma(L^{q}(\R^d))} &\leq C_{p,q,d}\,\has{\nrm{K}_{\mc{K}_\gamma(L^{p,\infty}(\R^d))} + \tau_{2,Y} \nrm{K}_{\Hormander{2}}}.
\end{align*}
\item \label{it:up2} There exists a $\widetilde{T}_K \in \mc{L}(L^\infty(\R^d;X), \BMO(\R^d;\gamma(\R^d;Y)))$ such that
    \begin{align*}
  \nrmb{\widetilde{T}_K}_{L^\infty \to \BMO} &\leq C_{p,d}\,\has{\nrm{K}_{\mc{K}_\gamma(L^{p,\infty}(\R^d))}  + \tau_{2,Y} \nrm{K}_{\Hormander{2}}}.
  \end{align*}
  and $\widetilde{T}_Kf - T_Kf$ is constant for all $f \in L^p(\R^d;X)\cap L^\infty(\R^d;X)$.
 \end{enumerate}
\end{theorem}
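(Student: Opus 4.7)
The plan is to first establish the $\BMO$-endpoint (ii) by direct computation, and then derive part (i) for $q\in(p,\infty)$ by interpolation via the sharp maximal function.

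For (ii), the first step is to \emph{define} $\widetilde T_K f$ for $f\in L^\infty(\R^d;X)$, since $f$ need not lie in $L^p$. My approach is to define it modulo $\gamma(\R^d;Y)$-valued constants on each cube. Fix a dimensional constant $\alpha>1$ such that for every cube $Q$ there is a ball $B$ with $Q\subseteq\tfrac12 B\subseteq B\subseteq\alpha Q$. Given $Q$ with center $s_Q$, split $f=f_1+f_2$ with $f_1:=f\ind_{\alpha Q}$. Since $f_1\in L^p(\R^d;X)$, $T_K f_1$ is well-defined by hypothesis. By Lemma \ref{lemma:gammaL2} (type $2$) together with the $2$-H\"ormander condition, the element $t\mapsto (K(s,t)-K(s_Q,t))f_2(t)$ lies in $\gamma(\R^d;Y)$ uniformly in $s\in Q$ with norm bounded by $\tau_{2,Y}\|K\|_{\Hormander{2}}\|f\|_{L^\infty}$. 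Setting
\[
\widetilde T_K f(s):=T_K f_1(s)+\bigl(t\mapsto (K(s,t)-K(s_Q,t))f_2(t)\bigr),\qquad s\in Q,
\]
another choice of cube yields a function differing by a constant (in $s$) element of $\gamma(\R^d;Y)$, so this gives a well-defined $\BMO$-representative; when $f\in L^p\cap L^\infty$ the construction coincides with $T_K f$ up to the constant $t\mapsto K(s_Q,t)f_2(t)$. The $\BMO$ estimate on $Q$ then splits into a local term bounded using the $L^p\to L^{p,\infty}$ hypothesis and the finite-measure embedding $L^{p,\infty}(Q)\hookrightarrow L^1(Q)$ from \eqref{eq:weakLpembeddingL1}, and a far-field term controlled by Lemma \ref{lemma:gammaL2} and the H\"ormander condition; each is majorized by $C_{p,d}(A_0+\tau_{2,Y}\|K\|_{\Hormander{2}})\|f\|_{L^\infty}$, where $A_0=\|K\|_{\mc K_\gamma(L^{p,\infty}(\R^d))}$.

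For (i), consider the sublinear operator $Sf:=M^\#(T_K f)$. From the pointwise bound $M^\# g\leq 2M(\|g\|_\gamma)$, the weak $(p,p)$ bound on $M$ from Lemma \ref{lemma:maximaloperator}, and the hypothesis, we obtain $S\colon L^p\to L^{p,\infty}$ with norm $\lesssim A_0$. From part (ii), $S\colon L^\infty\to L^\infty$ with norm $\lesssim A_0+\tau_{2,Y}\|K\|_{\Hormander{2}}$. Marcinkiewicz interpolation then gives $S\colon L^q\to L^q$ with the desired quantitative bounds for $q\in(p,\infty)$. Finally, since $f\in L^p\cap L^q$ is dense in $L^q$, for such $f$ one has $T_K f\in L^{p,\infty}(\R^d;\gamma(\R^d;Y))$, and the Fefferman--Stein inequality (Lemma \ref{lemma:sharpmaximal}) yields $\|T_K f\|_{L^q(\R^d;\gamma(\R^d;Y))}\lesssim \|M^\#(T_K f)\|_{L^q}\lesssim \|f\|_{L^q}$.

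The main obstacle I expect is the rigorous setup of (ii): ensuring the local definitions of $\widetilde T_K f$ on different cubes patch together consistently as an element of $\BMO(\R^d;\gamma(\R^d;Y))$ modulo constants, and that the resulting assignment $f\mapsto\widetilde T_K f$ is linear and bounded. A secondary technicality is verifying the hypotheses of Lemma \ref{lemma:sharpmaximal} for the $\gamma(\R^d;Y)$-valued function $T_K f$, which requires $T_K f\in L^r(\R^d;\gamma(\R^d;Y))$ for some $r\in(1,q]$; this can be arranged by approximating $f\in L^q$ by functions in $L^p\cap L^q$ of compact support and using extrapolation down (Theorem \ref{theorem:extrapolationdown}) to upgrade the weak $L^p$-bound to strong $L^r$-boundedness for some $r\in(2,p)$, which suffices.
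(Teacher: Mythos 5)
Your strategy closely matches the paper's. The core construction---localizing $T_K$ to a cube $Q$ and subtracting the far-field contribution---is the paper's local operator $T^Q_K$ in disguise, except that you pin the reference point at the center $s_Q$ rather than averaging over $s'\in Q$, a harmless variation. The main organizational difference is that you establish the $\BMO$-endpoint (ii) first and then read off the $L^\infty\to L^\infty$ bound on $M^\#(T_Kf)$ from it, whereas the paper proves a stand-alone $M^\#$-estimate for $f\in L^p\cap L^\infty$ (Proposition \ref{proposition:keyextrapolationup}) and deduces both parts from it; both orderings work. You correctly flag the patching of the local definitions into a global $\BMO$-representative as the main bookkeeping task: the paper resolves it via an exhaustion $Q_1\subset Q_2\subset\cdots$ of $\R^d$ and the normalization $\widetilde{T}_Kf(s):=T^{Q_k}_Kf(s)-\avint_{Q_1}T^{Q_k}_Kf$ for $s\in Q_k$, which is consistent across $k$ by the ``difference is a constant'' property established in Lemma \ref{lemma:TQ}.

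The one genuine defect is your resolution of the Fefferman--Stein a priori hypothesis. You propose invoking Theorem \ref{theorem:extrapolationdown} to upgrade the weak $L^p$-bound on $T_K$ to a strong $L^r$-bound for some $r<p$. But Theorem \ref{theorem:extrapolationdown} requires $X$ to have type $2$, and the present theorem only assumes this for $Y$; your fix therefore smuggles in an extra hypothesis and would not recover the stated generality. The concern you raise is legitimate---Lemma \ref{lemma:sharpmaximal} as stated asks for $g\in L^r$ with $r\in(1,q]$, and the operator hypothesis only gives $T_Kf\in L^{p,\infty}$---but the correct remedy is that Fefferman--Stein holds under the weaker assumption $g\in L^{r,\infty}$ with $r<q$ (this is the form actually given in \cite[Corollary 3.4.6]{Gr14b}): the good-lambda argument needs only $\abs{\cbrace{M(\nrm{g})>\lambda}}<\infty$ for every $\lambda>0$ together with convergence of $\int_0^1\lambda^{q-1}\lambda^{-r}\dd\lambda$, both of which hold here since $M(\nrm{T_Kf})\in L^{p,\infty}$ by Lemma \ref{lemma:maximaloperator} and $p<q$. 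With that observation the extrapolation-down detour is both unnecessary and insufficient.
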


\begin{remark}The extension of $T_K$ to all $f \in L^\infty(\R^d;X)$ in Theorem \ref{theorem:extrapolationup}\ref{it:up2} is not in the traditional sense, as even for $f \in L^p(\R^d;X)\cap L^\infty(\R^d;X)$ the extension $\widetilde{T}_Kf$ may not coincide with $T_Kf$. However, as $\widetilde{T}_Kf$ and $T_Kf$ only differ by a constant in this case, they represent the same function in the Banach space $$\BMO(\R^d;\gamma(\R^d;Y))/\gamma(\R^d;Y).$$ Furthermore we cannot claim uniqueness, as $L^p(\R^d;X)\cap L^\infty(\R^d;X)$ is not dense in $L^\infty(\R^d;X)$
\end{remark}

In order to prove Theorem \ref{theorem:extrapolationup}, we need to introduce local versions of the operator $T_K$. For any cube $Q \subseteq \R^d$ we define the local operator
$$T^Q_K:L^\infty(\R^d;X) \to L^p(Q;\gamma(\R^d;Y))$$
for  $s \in Q$ and $\varphi\in L^2(\R^d)$ by
\begin{equation*}
    T^Q_Kf(s)\varphi:= T_K(\ind_{B}f)(s)\varphi + \avint_Q \int_{\R^d \setminus B}\hab{K(s,t)-K(s',t)}f(t)\varphi(t)\dd t\dd s',
\end{equation*}
where $B$ is the ball with the same center as $Q$ and twice the diameter of $Q$. Note that $T^Q_K$ is well-defined since $\ind_{B}f \in L^p(\R^d;X)$ and for a.e. $s,s' \in Q$ we have
  \begin{equation}\label{eq:Tbestimate}
     \nrmb{\hab{K(s,\cdot)- K(s',\cdot)}\ind_{\R^d \setminus B}f}_{\gamma(\R^d;Y)} \leq \tau_{2,Y} \nrm{K}_{\Hormander{2}}\nrm{f}_{L^\infty(\R^d;X)}.
  \end{equation}
  by Lemma \ref{lemma:gammaL2}. Heuristically one may think about $T^Q_K$ as
  \begin{equation*}
    T^Q_Kf(s)= T_K(f)(s)+ \avint_Q \hab{K(s',\cdot)}f(\cdot)\ind_{\R^d \setminus B}(\cdot)\dd s',
\end{equation*}
 which is of course not well-defined in general.

   These operators satisfy the following properties:

\begin{lemma}\label{lemma:TQ}
Let $X$ and $Y$ and be Banach spaces and assume that $Y$ has type $2$. Let $p \in [2,\infty)$ and  suppose $K \in \mc{K}_\gamma(L^{p,\infty}(\R^d))$ satisfies the $2$-H\"ormander condition. Let $Q, Q'\subseteq \R^d$ be cubes, then
\begin{enumerate}[(i)]
  \item \label{it:TQlemma1} for all  $f \in L^\infty(\R^d;X)$ we have
\begin{equation*}
    \nrmb{T^Q_Kf}_{L^{p,\infty}(Q;\gamma(\R^d;Y))} \leq C_{p,d} \hab{ \nrm{K}_{\mc{K}_\gamma(L^{p,\infty}(\R^d))} + \tau_{2,Y} \nrm{K}_{\Hormander{2}} }\abs{Q}^{1/p}\nrm{f}_{L^\infty(\R^d;X)}.
  \end{equation*}
  \item \label{it:TQlemma2} for all  $f \in L^p(\R^d;X) \cap L^\infty(\R^d;X)$ there exists a $c \in \gamma(\R^d;Y)$ such that
\begin{align*}
    T_Kf(s) -T^Q_Kf(s) &= c
\end{align*}
for a.e.  $s \in Q$.
 \item \label{it:TQlemma3} for all  $f \in L^\infty(\R^d;X)$ there exists a $c \in \gamma(\R^d;Y)$ such that
\begin{align*}
  T^Q_Kf(s) - T_K^{Q'}f(s) &= c
  \end{align*}
  for a.e. $s \in Q\cap Q'$,
\end{enumerate}
\end{lemma}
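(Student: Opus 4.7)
The plan is to establish the three parts in order, with \ref{it:TQlemma2} feeding into \ref{it:TQlemma3}.

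For \ref{it:TQlemma1}, I would split $T^Q_K f(s) = T_K(\ind_B f)(s) + \Psi_s$, where
\[
\Psi_s := \avint_Q \bigl(K(s,\cdot)-K(s',\cdot)\bigr)\ind_{\R^d\setminus B} f \dd s'
\]
is interpreted as a $\gamma(\R^d;Y)$-valued Bochner average. Since $\ind_B f\in L^p(\R^d;X)$ with norm at most $|B|^{1/p}\nrm{f}_\infty$, the first summand has $L^{p,\infty}(Q;\gamma(\R^d;Y))$-norm bounded by $\nrm{K}_{\mc{K}_\gamma(L^{p,\infty})} |Q|^{1/p}\nrm{f}_\infty$ up to constants depending on $d$. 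For $\Psi_s$, the pointwise estimate \eqref{eq:Tbestimate} (which is Lemma \ref{lemma:gammaL2} applied to the $2$-H\"ormander condition) yields $\nrm{\Psi_s}_{\gamma(\R^d;Y)}\leq \tau_{2,Y}\nrm{K}_{\Hormander{2}}\nrm{f}_\infty$ for a.e.\ $s\in Q$, hence the $L^{p,\infty}(Q)$-norm is trivially controlled by $|Q|^{1/p}$ times this.

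For \ref{it:TQlemma2}, since $f\in L^p(\R^d;X)\cap L^\infty(\R^d;X)$ I may split $T_K f = T_K(\ind_B f) + T_K(\ind_{\R^d\setminus B} f)$ legitimately on $L^p$. Unwinding the definition of $T^Q_K$ then gives the clean identity
\[
T_K f(s) - T^Q_K f(s) = \avint_Q T_K(\ind_{\R^d\setminus B} f)(s') \dd s',
\]
which is manifestly independent of $s$. Since $T_K(\ind_{\R^d\setminus B} f)\in L^{p,\infty}(\R^d;\gamma(\R^d;Y))$ and $Q$ has finite measure, \eqref{eq:weakLpembeddingL1} promotes this to $L^1(Q;\gamma(\R^d;Y))$, so the average is a well-defined constant $c\in\gamma(\R^d;Y)$.

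For \ref{it:TQlemma3}, assuming $Q\cap Q'\neq\varnothing$, I would choose a ball $B_0$ centered at some $s_0\in Q\cap Q'$ and large enough that $B\cup B'\subseteq B_0$ and $Q,Q'\subseteq \tfrac12 B_0$; this is possible by taking $B_0$ of radius comparable to $\diam(Q)+\diam(Q')$. Write $f = \ind_{B_0} f + h$ with $h:=\ind_{\R^d\setminus B_0} f$. The first piece lies in $L^p\cap L^\infty$, so applying \ref{it:TQlemma2} twice (to $Q$ and $Q'$) shows that $T^Q_K(\ind_{B_0}f) - T^{Q'}_K(\ind_{B_0}f)$ is a $\gamma(\R^d;Y)$-valued constant on $Q\cap Q'$. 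For $h$, the facts $h\ind_B = h\ind_{B'} = 0$ and $h\ind_{\R^d\setminus B} = h\ind_{\R^d\setminus B'} = h$ collapse $T^Q_K h$ and $T^{Q'}_K h$ to averages of kernel differences. Inserting $\pm K(s_0,\cdot)$ cancels the $s$-dependent piece common to both expressions, leaving
\begin{align*}
  T^Q_K h(s) - T^{Q'}_K h(s) &= \avint_{Q'}\bigl(K(s',\cdot)-K(s_0,\cdot)\bigr)\ind_{\R^d\setminus B_0} f \dd s'\\
  &\qquad - \avint_Q\bigl(K(s',\cdot)-K(s_0,\cdot)\bigr)\ind_{\R^d\setminus B_0} f \dd s',
\end{align*}
which is independent of $s$. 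Since $Q,Q'\subseteq \tfrac12 B_0$, the $2$-H\"ormander condition applied to $B_0$ combined with Lemma \ref{lemma:gammaL2} puts each integrand in $\gamma(\R^d;Y)$ with uniform norm at most $\tau_{2,Y}\nrm{K}_{\Hormander{2}}\nrm{f}_\infty$, so the Bochner averages yield elements of $\gamma(\R^d;Y)$.

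The main bookkeeping difficulty is that for $L^\infty$ inputs the integral $\int K(s,\cdot) f\dd t$ does not converge absolutely and $T_K f$ is \emph{not} a single $\gamma$-valued object. Every step must therefore be arranged so that each residual term is either $T_K$ acting on a genuine $L^p$-function (as in \ref{it:TQlemma1} and the $\ind_{B_0} f$ part of \ref{it:TQlemma3}) or a \emph{difference} $K(s,\cdot)-K(s',\cdot)$ tamed in $\gamma(\R^d;Y)$ via the $2$-H\"ormander condition and Lemma \ref{lemma:gammaL2}; organising the splittings in \ref{it:TQlemma3} so that only these two shapes appear is the delicate point.
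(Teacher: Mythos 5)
Parts \ref{it:TQlemma1} and \ref{it:TQlemma2} follow the paper's argument essentially verbatim: split off $T_K(\ind_B f)$, which is handled by the $\mc{K}_\gamma(L^{p,\infty})$-bound, and control the remainder either pointwise via \eqref{eq:Tbestimate} or by identifying it as the $L^{p,\infty}\hookrightarrow L^1(Q)$-integrable constant $\avint_Q T_K(\ind_{\R^d\setminus B}f)(s')\dd s'$.

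Part \ref{it:TQlemma3} is correct but takes a genuinely different route. The paper first reduces to nested cubes $Q'\subseteq Q$ (via an auxiliary cube $Q''$ containing both), fixes a test function $\varphi$, and reorganises the resulting double integrals via Fubini so that the $K(s,t)$-dependence drops out, exploiting the annulus $B\setminus B'$. You instead work directly with $Q,Q'$ intersecting nontrivially, choose a single large ball $B_0$ centred at some $s_0\in Q\cap Q'$ with $B\cup B'\subseteq B_0$ and $Q,Q'\subseteq\frac12 B_0$, and split $f=\ind_{B_0}f+h$: the near part is absorbed by two applications of part \ref{it:TQlemma2}, and for the tail $h$ you insert $\pm K(s_0,\cdot)$ to strip off the common $s$-dependent term, leaving averages of $(K(s',\cdot)-K(s_0,\cdot))\ind_{\R^d\setminus B_0}f$, each of which is a bona fide element of $\gamma(\R^d;Y)$ by the H\"ormander condition for $B_0$ and Lemma \ref{lemma:gammaL2}. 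Your approach is more symmetric and avoids the reduction to the nested case, at the cost of introducing the auxiliary ball $B_0$ and reusing part \ref{it:TQlemma2}; the paper's approach is a more direct Fubini bookkeeping but needs the intermediate cube $Q''$ to cover general pairs. Both yield the same constant $c$ up to relabelling, and both rely on the same ingredients (Lemma \ref{lemma:gammaL2}, the $2$-H\"ormander condition, weak $L^p$-boundedness), so the difference is organisational rather than conceptual.

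One small point worth making explicit in your write-up: the additivity $T^Q_K f = T^Q_K(\ind_{B_0}f)+T^Q_K h$ requires checking that both summands are individually well-defined, which holds here because $\ind_{B_0}f\in L^p$ (so its $T_K(\ind_B\cdot)$-term is fine) and $h$ vanishes on $B\subseteq B_0$ (so its $T_K(\ind_B\cdot)$-term is zero), with both kernel-difference averages controlled by \eqref{eq:Tbestimate}.
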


\begin{proof}
   Let $B,B'\subseteq \R^d$ be the balls with the same center as $Q,Q'$ and twice the diameter of $Q,Q'$. Take $f \in L^\infty(\R^d;X)$, then by the assumption on $T_K$ we have
  \begin{align*}
    \nrm{T_K(\ind_{B}f)}_{L^{p,\infty}(\R^d;\gamma(\R^d;Y))} &\leq \nrm{K}_{\mc{K}_\gamma(L^{p,\infty}(\R^d))}  \nrm{\ind_{B}f}_{L^p(\R^d;X)}\\&\leq C_{d} \,\nrm{K}_{\mc{K}_\gamma(L^{p,\infty}(\R^d))} \abs{Q}^{1/p}\nrm{f}_{L^\infty(\R^d;X)}.
  \end{align*}
  Since $\nrm{\ind_{Q}}_{L^{p,\infty}(\R^d)} = C_p\,\abs{Q}^{1/p}$, \ref{it:TQlemma1} now readily follows using the definition of $T^Q_Kf$ and \eqref{eq:Tbestimate}.

  Next take $f \in L^p(\R^d;X) \cap L^\infty(\R^d;X)$ and let $s,s' \in Q$. Then if we define $c := \avint_Q T_K(\ind_{\R^d \setminus B}f)(s')\dd s'$, we have for a.e. $s \in Q$ that
\begin{align*}
  T_Kf(s) &= T_K(\ind_{B} f)(s)  +  T_K(\ind_{\R^d \setminus B}f)(s) - \avint_Q T_K(\ind_{\R^d \setminus B}f)(s')\dd s' + c\\
  &= T_{K}^Qf(s) +c
\end{align*}
proving \ref{it:TQlemma2}.

For \ref{it:TQlemma3} by considering a cube $Q''$ containing both $Q$ and $Q'$ we may assume without loss of generality that $Q'\subseteq Q$ and thus also $B'\subseteq B$. Fix $\varphi \in L^2(\R^d)$ and define
\begin{equation*}
  g(s,s',t):= \hab{K(s,t)-K(s',t)} f(t) \varphi(t).
\end{equation*}
Then we have for a.e. $s \in Q\cap Q'$ by Fubini's theorem
  \begin{align*}
    T^Q_K&f(s)\varphi - T_K^{Q'}f(s)\varphi\\ &= T_K(\ind_{B\setminus B'}f)(s)\varphi
    + \has{\avint_Q \int_{\R^d \setminus B} - \avint_{Q'} \int_{B \setminus B'} - \avint_{Q'} \int_{\R^d \setminus B}} g(s,s',t)\dd t\dd s'\\
&= \avint_{Q'} T_K(\ind_{B\setminus B'}f)(s')\varphi \dd s'+ \int_{\R^d \setminus B} \has{\avint_Q  -\avint_{Q'}} g(s,s',t)\dd s'\dd t\\
&= \avint_{Q'} T_K(\ind_{B\setminus B'}f)(s')\varphi \dd s'- \int_{\R^d \setminus B} \has{\avint_Q  -\avint_{Q'}} K(s',t) f(t) \varphi(t) \dd s'\dd t.
  \end{align*}
As the final right-hand side does not depend on $s$, this proves \ref{it:TQlemma3}.
\end{proof}

Using the properties of these local operators $T_K^Q$ we can prove an $L^\infty$-estimate of $T_K$ involving the sharp maximal operator.
\begin{proposition}
  \label{proposition:keyextrapolationup}
Let $X$ and $Y$ be Banach spaces and assume that $Y$ has type $2$. Let $p \in [2,\infty)$ and suppose $K \in \mc{K}_\gamma(L^{p,\infty}(\R^d))$ satisfies the $2$-H\"ormander condition. Then for all $f  \in L^p(\R^d;X) \cap L^\infty(\R^d;X)$ we have
\begin{align*}
    \nrmb{M^{\#}\hab{T_Kf}}_{L^\infty(\R^d)} \leq C_{d}\,\hab{ \,\nrm{K}_{\mc{K}_\gamma(L^{p,\infty}(\R^d))} + \tau_{2,Y} \nrm{K}_{\Hormander{2}}} \nrm{f}_{L^\infty(\R^d;X)}.
  \end{align*}
\end{proposition}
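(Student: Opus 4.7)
The plan is to reduce everything to the local operators $T_K^Q$ and then apply Lemma \ref{lemma:TQ}. Fix $f \in L^p(\R^d;X) \cap L^\infty(\R^d;X)$, set $g := T_K f$, and let $Q \subseteq \R^d$ be an arbitrary cube. Since $M^{\#}g(s) \leq 2 \inf_{c \in \gamma(\R^d;Y)} \avint_Q \nrm{g(t)-c}_{\gamma(\R^d;Y)} \dd t$, it is enough to exhibit a single constant $c_Q \in \gamma(\R^d;Y)$ for which the right quantity is controlled by $\nrm{f}_{L^\infty(\R^d;X)}$, uniformly in $Q$.

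The natural choice comes from Lemma \ref{lemma:TQ}\ref{it:TQlemma2}: there exists $c_Q \in \gamma(\R^d;Y)$ with
\[
g(s) - c_Q = T_K^Q f(s) \qquad \text{for a.e. } s \in Q.
\]
Hence
\[
\avint_Q \nrm{g(s)-c_Q}_{\gamma(\R^d;Y)} \dd s = \avint_Q \nrmb{T_K^Q f(s)}_{\gamma(\R^d;Y)} \dd s.
\]
Next I would bound the right-hand side in $L^1(Q)$ via the weak-$L^p$ estimate of Lemma \ref{lemma:TQ}\ref{it:TQlemma1} combined with the local embedding $L^{p,\infty}(Q) \hookrightarrow L^1(Q)$ from \eqref{eq:weakLpembeddingL1} (with embedding constant $\abs{Q}^{1/p'}$). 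This yields
\[
\int_Q \nrmb{T_K^Q f(s)}_{\gamma(\R^d;Y)} \dd s \leq \abs{Q}^{1/p'} \nrmb{T_K^Q f}_{L^{p,\infty}(Q;\gamma(\R^d;Y))} \leq C_{p,d}\,A \,\abs{Q}^{1/p'+1/p} \nrm{f}_{L^\infty(\R^d;X)},
\]
where $A := \nrm{K}_{\mc{K}_\gamma(L^{p,\infty}(\R^d))} + \tau_{2,Y}\nrm{K}_{\Hormander{2}}$. Dividing by $\abs{Q} = \abs{Q}^{1/p'+1/p}$ gives the desired bound uniformly in $Q$, and taking the supremum over all cubes containing $s$ finishes the proof.

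There is no real obstacle here: the work has already been packaged into Lemma \ref{lemma:TQ}, and the only delicate bookkeeping is checking that the factors of $\abs{Q}$ cancel correctly (which they do because $\tfrac{1}{p} + \tfrac{1}{p'} = 1$). The reason one uses $c_Q$ from Lemma \ref{lemma:TQ}\ref{it:TQlemma2} rather than the mean $\avint_Q g$ is simply that it makes the identity $g - c_Q = T_K^Q f$ exact on $Q$, so the weak-$L^p$ bound for $T_K^Q$ can be applied directly.
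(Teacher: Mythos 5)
Your proof is correct and follows essentially the same route as the paper: invoke Lemma \ref{lemma:TQ}\ref{it:TQlemma2} to replace $T_Kf$ by $T_K^Q f$ up to a constant on $Q$, then combine the weak-$L^p$ bound of Lemma \ref{lemma:TQ}\ref{it:TQlemma1} with the embedding \eqref{eq:weakLpembeddingL1}, and observe that the powers of $\abs{Q}$ cancel because $\tfrac{1}{p}+\tfrac{1}{p'}=1$. The only cosmetic issue is the opening inequality, where a free $Q$ appears on the right while $M^{\#}g(s)$ already involves a supremum over cubes; it would be cleaner to say that for each cube $Q$ one has $\avint_Q\nrm{g-\avint_Q g}\leq 2\inf_c\avint_Q\nrm{g-c}$, bound the right side uniformly in $Q$, and then take the supremum.
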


\begin{proof}
  Let $Q\subseteq \R^d$ be a cube and let $f \in L^p(\R^d;X) \cap L^\infty(\R^d;X)$. Using Lemma \ref{lemma:TQ}\ref{it:TQlemma2}, choose $c \in \gamma(\R^d;Y)$ such that for a.e. $s\in Q$
  \begin{align*}
    T_Kf(s)-T^Q_Kf(s) &= c.
  \end{align*}
  Therefore using \eqref{eq:weakLpembeddingL1} and Lemma \ref{lemma:TQ}\ref{it:TQlemma1}, we have
  \begin{align*}
    \avint_Q\nrms{T_Kf(s)-\avint_Q &T_Kf(t)\dd t}_{\gamma(\R^d;Y)}\dd s \\
    &\leq 2\avint_Q\nrmb{T_Kf-c}_{\gamma(\R^d;Y)} \\&= 2\avint_Q\nrmb{T^Q_Kf}_{\gamma(\R^d;Y)}\\
     &\leq 2\abs{Q}^{-1+1/p'} \,\nrmb{T^Q_Kf}_{L^{p,\infty}(Q;\gamma(\R^d;Y))}  \\
    &\leq C_{d}\,\hab{\nrm{K}_{\mc{K}_\gamma(L^{p,\infty}(\R^d))} + \tau_{2,Y} \nrm{K}_{\Hormander{2}} }\nrm{f}_{L^\infty(\R^d;X)}.
  \end{align*}
   Therefore we have
  \begin{align*}
    \nrmb{M^{\#}\hab{T_Kf}}_{L^\infty(\R^d)} \leq C_{d} \,(\nrm{K}_{\mc{K}_\gamma(L^{p,\infty}(\R^d))} + \tau_{2,Y} \nrm{K}_{\Hormander{2}} ) \nrm{f}_{L^\infty(\R^d;X)},
  \end{align*}
  which proves the proposition.
\end{proof}

Using Proposition \ref{proposition:keyextrapolationup}, the proof of Theorem \ref{theorem:extrapolationup}\ref{it:up1} is now a straightforward application of Stampacchia interpolation (see e.g. \cite[Theorem II.3.7]{GR85}).

\begin{proof}[Proof of Theorem \ref{theorem:extrapolationup}\ref{it:up1}]
Let $f \in L^p(\R^d;X) \cap L^\infty(\R^d;X)$. Note that trivially $M^{\#}f \leq 2\,M (\nrm{f}_{\gamma(\R^d;Y)})$, so by Lemma \ref{lemma:maximaloperator} we know that $M^{\#}$ is bounded from  $L^{p,\infty}(\R^d;\gamma(\R^d;Y))$ to $L^{p,\infty}(\R^d)$. Thus
\begin{align*}
  \nrmb{M^{\#}\ha{T_Kf}}_{L^{p,\infty}(\R^d)} &\leq C_{p,d} \nrmb{{T_K}f}_{L^{p,\infty}(\R^d;\gamma(\R^d;Y))}\\
  &\leq C_{p,d}\, \nrm{K}_{\mc{K}_\gamma(L^{p,\infty}(\R^d))}  \nrm{f}_{L^{p}(\R^d;X)}.
\intertext{Moreover by Proposition \ref{proposition:keyextrapolationup} we know that}
    \nrmb{M^{\#}\hab{T_Kf}}_{L^\infty(\R^d)} &\leq C_{p,d}\,\hab{\nrm{K}_{\mc{K}_\gamma(L^{p,\infty}(\R^d))}  + \tau_{2,Y} \nrm{K}_{\Hormander{2}} } \nrm{f}_{L^\infty(\R^d;X)},
  \end{align*}
We can therefore apply the Marcinkiewicz interpolation theorem (see e.g. \cite[Theorem 2.2.3]{HNVW16}), to conclude that for all $f \in L^p(\R^d;X)\cap L^q(\R^d;X)$ we have
\begin{equation*}
  \nrmb{M^{\#}\ha{T_Kf}}_{L^{q}(\R^d)} \leq C_{p,q,d}\hab{\nrm{K}_{\mc{K}_\gamma(L^{p,\infty}(\R^d))} +\tau_{2,Y}\nrm{K}_{\Hormander{2}}}\nrm{f}_{L^q(\R^d;X)}.
\end{equation*}
By Lemma \ref{lemma:sharpmaximal} we deduce
\begin{align*}
  \nrm{T_Kf}_{L^q(\R^d;\gamma(\R^d;Y))} &\leq C_{q,d} \, \nrmb{M^{\#}\ha{T_Kf}}_{L^{q}(\R^d)} \\&\leq C_{p,q,d}\hab{\nrm{K}_{\mc{K}_\gamma(L^{p,\infty}(\R^d))} +\tau_{2,Y}\nrm{K}_{\Hormander{2}}}\nrm{f}_{L^q(\R^d;X)}.
\end{align*}
for all $f \in L^p(\R^d;X)\cap L^q(\R^d;X)$. As this is a dense subspace of $L^q(\R^d;X)$, assertion \ref{it:up1} of Theorem \ref{theorem:extrapolationup} follows.
\end{proof}

Assertion \ref{it:up2} of Theorem \ref{theorem:extrapolationup} does not follow directly from Proposition \ref{proposition:keyextrapolationup}, since $L^p(\R^d;X) \cap L^\infty(\R^d;X)$ is not dense in $L^\infty(\R^d;X)$ and therefore the extension of $T_K$ to all functions in  $L^\infty(\R^d;X)$ is a nontrivial matter.

\begin{proof}[Proof of Theorem \ref{theorem:extrapolationup}\ref{it:up2}]
 Let $(Q_k)_{k=1}^\infty$ be an increasing sequence of cubes such that $\bigcup_{k=1}^\infty Q_k = \R^d$. For $f \in L^\infty(\R^d;X)$ define
  \begin{equation*}
    \widetilde{T}_Kf(s):=T^{Q_k}_Kf(s)- \avint_{Q_1}T^{Q_k}_Kf\qquad \text{if } s \in Q_k.
  \end{equation*}
  Then $\widetilde{T}_Kf \in L^1_{\loc}(\R^d;\gamma(\R^d;Y))$ is well-defined. Indeed, by Lemma \ref{lemma:TQ}\ref{it:TQlemma2} we have $T^{Q_k}_Kf \in L^1(Q_k;\gamma(\R^d;Y))$, so in particular the average over $Q_1$ is well-defined. Moreover if $j>k$, then by Lemma \ref{lemma:TQ}\ref{it:TQlemma3}  there is a $c \in \gamma(\R^d;Y)$ such that $T^{Q_j}_Kf(s) -T^{Q_k}_Kf(s)= c$ for a.e. $s \in Q_k$. Therefore
  \begin{align*}
    T^{Q_k}_Kf(s)- \avint_{Q_1}T^{Q_k}_Kf &= \hab{T^{Q_j}_Kf(s)-c} - \has{\avint_{Q_1} T^{Q_j}_Kf - c} \\&= T^{Q_j}_Kf(s) - \avint_{Q_1} T^{Q_j}_Kf,
  \end{align*}
  thus the definition of $\widetilde{T}_Kf(s)$ is independent of the choice of $Q_k \ni s$.

  If $f \in L^p(\R^d;X)\cap L^\infty(\R^d;X)$, then for any $k \in \N$ there exist $c_1,c_2 \in \gamma(\R^d;Y)$ such that for a.e. $s \in Q_k$
  \begin{align*}
    T^{Q_k}_Kf(s)-T_Kf(s) &= c_1,\\
    T^{Q_k}_Kf(s)-\widetilde{T}_Kf(s) &= c_2
  \end{align*}
  by Lemma \ref{lemma:TQ}\ref{it:TQlemma2} and the definition of $\widetilde{T}_Kf$. As $(Q_k)_{k=1}^\infty$ is increasing and $\bigcup_{k=1}^\infty Q_k = \R^d$, we see that $c_1$ and $c_2$ are independent of $k$, so $\widetilde{T}_Kf-T_Kf$ is indeed constant.

  It remains to show that $\widetilde{T}_Kf \in \BMO(\R^d;X)$ with the claimed norm estimate. Let $Q\subseteq \R^d$ be any cube and fix $k \in \N$ such that $Q \subseteq Q_k$. By Lemma \ref{lemma:TQ}\ref{it:TQlemma3} there exists a $c_3 \in \gamma(\R^d;Y)$ such that for a.e. $s\in Q$
  \begin{align*}
    T^{Q_k}_Kf(x)-T^Q_Kf(s) &= c_3.
  \end{align*}
  Therefore
  \begin{align*}
    \nrmb{\widetilde{T}_Kf}_{\BMO(\R^d;\gamma(\R^d;Y))} \leq \avint_Q\nrmb{\widetilde{T}_Kf-(c_3-c_2)}_{\gamma(\R^d;Y)} = \avint_Q\nrmb{T^Q_Kf}_{\gamma(\R^d;Y)}.
  \end{align*}
  Now $\avint_Q\nrmb{T^Q_Kf}_{\gamma(\R^d;Y)}$ can be estimated exactly as in the proof of Proposition \ref{proposition:keyextrapolationup}, which yields the claimed norm estimate in Theorem \ref{theorem:extrapolationup}\ref{it:up2}.
\end{proof}

\begin{remark}
  By inspection of the proof it can easily be seen that for the extrapolation down in Theorem \ref{theorem:extrapolationdown} one only needs
  \begin{equation*}
    \has{\int_{\R^d \setminus B} \nrm{\ha{K(s,t)-K(s,t')}x}_Y^2\dd t}^{1/2} \leq C \,\nrm{x}_X,\qquad  s,s'\in \tfrac{1}{2}B , \quad x \in X\\
  \end{equation*}
  which is implied by \eqref{eq:horm2} of the $2$-H\"ormander condition. For the extrapolation up in Theorem \ref{theorem:extrapolationup} one only needs the left hand side of \eqref{eq:Tbestimate} to be bounded, which is implied by \eqref{eq:horm1} of the $2$-H\"ormander condition.
\end{remark}

\begin{remark}\label{remark:KimKim}\
\begin{itemize}
\item In \cite{KK16} a real-valued extrapolation result was proved under a parabolic H\"ormander condition which allows one to extend $L^2(\Omega\times \R_+\times D)$-boundedness to $L^p(\Omega\times \R_+\times D)$-boundedness. In applications to SPDEs this result can be combined with ours to extrapolate $L^2(\Omega\times \R_+\times D)$-boundedness to $L^p(\Omega\times\R_+;L^q(D))$-boundedness for all $p\in (2, \infty)$ and $q\in [2, \infty)$ (see Examples \ref{ex:BesselHeat} and \ref{example:domainneumann}).
\item Another type of BMO end-point estimate was obtained in \cite{Ki15}, but the result seems incomparable with ours.
\end{itemize}
\end{remark}

\begin{corollary}[$\gamma$-convolution operator with values in a Hilbert space]\label{corollary:Hilbertspacesch}
Let $X$ be a Banach space and $Y$ be a Hilbert space. Suppose $k:\R^d\to \mc{L}(X,Y)$ is strongly measurable and satisfies the $2$-H\"ormander condition in \eqref{eq:hormconvolution}.
Let $K(s,t) = k(s-t)$. Then the following are equivalent:
\begin{enumerate}[(i)]
\item\label{it:Hilbertspaceschi} $\|t\mapsto k(t)x\|_{L^2(\R^d;Y)}\leq A_0 \,\|x\|$ for some $A_0>0$;
\item\label{it:Hilbertspaceschii} $K \in \mc{K}_\gamma(L^{p}(\R^d))$ for all $p\in [2, \infty)$.
\item\label{it:Hilbertspaceschiii} $K \in \mc{K}_\gamma(L^{p,\infty}(\R^d))$ for some $p\in [2, \infty)$;
\end{enumerate}
In particular we have for all $p \in [2,\infty)$ and $A_0$ as in \ref{it:Hilbertspaceschi}:
\begin{equation*}
  \nrm{K}_{\mc{K}_\gamma(L^p(\R^d))} \leq C_{p,d} \, (A_0 + \nrm{K}_{\Hormander{2}}).
\end{equation*}
\end{corollary}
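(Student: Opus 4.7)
The strategy is to close the cycle (i) $\Rightarrow$ (ii) $\Rightarrow$ (iii) $\Rightarrow$ (i), exploiting at the very start and very end the fact that for a Hilbert space $Y$ one has the isometric identification $\gamma(\R^d;Y)=L^2(\R^d;Y)$, which in particular gives $\tau_{2,Y}=1$ and which is the reason the characterization works in this generality.

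For (i) $\Rightarrow$ (ii), I would first observe that (i) is exactly the hypothesis of Proposition \ref{proposition:simplesufficient}\ref{it:sufficientL2} in the convolution case: for every $t\in\R^d$ and $x\in X$ one has $\|s\mapsto K(s,t)x\|_{L^2(\R^d;Y)}=\|s\mapsto k(s)x\|_{L^2(\R^d;Y)}\leq A_0\|x\|_X$. Therefore $K\in\mc{K}_\gamma(L^2(\R^d))$ with norm bounded by $\tau_{2,Y}A_0=A_0$. Since $Y$ is a Hilbert space it certainly has type $2$, so the $2$-H\"ormander assumption and Theorem \ref{theorem:extrapolationup}\ref{it:up1} applied with $p=2$ then upgrade this to $K\in\mc{K}_\gamma(L^q(\R^d))$ for every $q\in(2,\infty)$, with the quantitative bound
\[
\|K\|_{\mc{K}_\gamma(L^q(\R^d))}\leq C_{q,d}\bigl(A_0+\|K\|_{\Hormander{2}}\bigr).
\]
Combining with the $p=2$ bound gives (ii) together with the displayed estimate of the corollary.

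The implication (ii) $\Rightarrow$ (iii) is immediate from the contractive embedding $L^p(\R^d)\hookrightarrow L^{p,\infty}(\R^d)$. For (iii) $\Rightarrow$ (i), since $Y$ has type $2$ and $K$ is of convolution type, Proposition \ref{proposition:Kconvolutionnecessary} applies and yields
\[
\|t\mapsto k(t)x\|_{\gamma(\R^d;Y)}\leq C_d\,\|K\|_{\mc{K}_\gamma(L^{p,\infty}(\R^d))}\,\|x\|_X,\qquad x\in X.
\]
Using the Hilbert-space identification $\gamma(\R^d;Y)=L^2(\R^d;Y)$ (cf.\ \cite[Proposition 9.2.9]{HNVW17}) this is exactly (i) with $A_0=C_d\|K\|_{\mc{K}_\gamma(L^{p,\infty}(\R^d))}$.

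There is no serious obstacle: every ingredient has already been prepared (the sufficient $L^2$-criterion, the extrapolation-up theorem, the necessary condition on convolution kernels, and the coincidence of $\gamma$- and $L^2$-norms for Hilbert-valued functions). The only mild point of care is to check that the constants produced along the chain (i) $\to$ (ii) $\to$ (iii) $\to$ (i) combine to the stated dependence $C_{p,d}(A_0+\|K\|_{\Hormander{2}})$, which follows directly by tracking the norm estimates in Proposition \ref{proposition:simplesufficient} and Theorem \ref{theorem:extrapolationup} and using $\tau_{2,Y}=1$.
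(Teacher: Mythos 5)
Your proof is correct and follows exactly the same route as the paper: Proposition \ref{proposition:simplesufficient}\ref{it:sufficientL2} for the $L^2$-case, Theorem \ref{theorem:extrapolationup} to go up to $q\in(2,\infty)$, the trivial embedding for (ii)$\Rightarrow$(iii), and Proposition \ref{proposition:Kconvolutionnecessary} for (iii)$\Rightarrow$(i). The only additions are the explicit remarks that $\gamma(\R^d;Y)=L^2(\R^d;Y)$ and $\tau_{2,Y}=1$ in the Hilbert case, which the paper leaves implicit but which you are right to use in tracking the constant.
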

\begin{proof}
The implication \ref{it:Hilbertspaceschi} $\Rightarrow$ \ref{it:Hilbertspaceschii} for $p=2$ follows from Proposition \ref{proposition:simplesufficient}\ref{it:sufficientL2} and for $p\in (2, \infty)$ we can apply Theorem \ref{theorem:extrapolationup}. The implication \ref{it:Hilbertspaceschii} $\Rightarrow$ \ref{it:Hilbertspaceschiii}  is trivial and \ref{it:Hilbertspaceschiii} $\Rightarrow$ \ref{it:Hilbertspaceschi} follows from Proposition \ref{proposition:Kconvolutionnecessary}.
\end{proof}

\section{Sparse domination for \texorpdfstring{$\gamma$}{y}-integral operators}\label{section:sparse}
In this section we will obtain weighted bounds for a $\gamma$-integral operator $T_K$ under an $(\omega,2)$-Dini condition on $K$. We will deduce these weighted bounds by estimating the operator $T_K$ pointwise by a much simpler operator. These simpler operators satisfy weighted bounds, which then imply weighted bounds for $T_K$.

Let us start by defining these simpler operators. We say that a collection of cubes $\mc{S}$ in $\R^d$ is $\eta$-sparse for some $\eta \in (0,1)$ if for every $Q \in \mc{S}$ there exists an $E_Q \subseteq Q$ such that $\abs{E_S}\geq \eta \abs{S}$ and such that the collection $\cbrace{E_Q:Q \in \mc{S}}$ is pairwise disjoint. Typically $\eta$ will only depend on the dimension $d$. We will dominate the $\gamma$-integral operators by operators of the form
\begin{equation*}
  f \mapsto \has{\sum_{Q \in \mc{S}}\has{\avint_Q \nrm{f}_X^2}\ind_Q}^{1/2}
\end{equation*}
for some $\eta$-sparse collection of cubes $\mc{S}$. These sparse operators are well-known to satisfy weighted bounds, see \cite[Proposition 4.1]{Lo19b}.

The sparse domination approach was developed in order to prove the so called $A_2$-conjecture, first solved by Hyt\"onen in \cite{Hy12}. The particular result we need stems from Lacey's simple proof of the $A_2$-conjecture \cite{La17b}, further clarified and simplified by Lerner \cite{Le16} and later by Lerner and Ombrosi \cite{LO19}. We will use a version of this result by the first author \cite{Lo19b}, which is adapted to our stochastic vector-valued setting.
In order to use this result we need to study a grand maximal truncation operator associated to  the operator that we wish to dominate by a sparse operator.

Let $X$ and $Y$ be Banach spaces, $p \in [1,\infty)$ and let $T$ be a bounded operator from $L^p(\R^d;X)$ to $L^{p,\infty}(\R^d;Y)$. Define for $f \in L^p(\R^d;X)$, $\alpha\geq 1$ and $s \in \R^d$
\begin{equation*}
  \mc{M}_{T,\alpha}^{\#}f(s) := \sup_{B \ni s}\, \esssup_{s',s''\in B} \,\nrmb{T(\ind_{\R^d \setminus \alpha B}f)(s')-T(\ind_{\R^d \setminus \alpha B}f)(s'')}_{Y},
\end{equation*}
where the first supremum is taken over all balls $B \subseteq \R^d$ containing $s$ and $\alpha B$ is the dilation of $B$ by a factor $\alpha$. The following theorem is a special case of \cite[Theorem 1.1]{Lo19b}.

\begin{theorem}[Abstract sparse domination]\label{theorem:domination}
Let $X$ and $Y$ be a Banach spaces, $p,r\in [1,\infty)$ and $\alpha \geq 6$. Assume the following conditions:
\begin{itemize}
  \item $T$ is a bounded linear operator from $L^{p}(\R^d;X)$ to $L^{p,\infty}(\R^d;Y)$.
  \item $\mc{M}_{T,\alpha}^{\#}$ is bounded from $L^{p}(\R^d;X)$ to $L^{p,\infty}(\R^d)$.
  \item For disjointly supported $f_1,\cdots,f_n \in L^{p}(\R^d;X)$ we have
\begin{equation}\label{eq:rsublinear}
  \qquad \nrms{T\has{\sum_{k=1}^n f_k}(s)}_Y\leq C_r \, \has{\sum_{k=1}^n \nrmb{Tf_k(s)}_Y^r}^{1/r},
  \qquad s \in \R^d.
\end{equation}
\end{itemize}
 Then there exists an $\eta \in (0,1)$ such that for any compactly supported $f \in L^{p}(\R^d;X)$ there is an $\eta$-sparse collection of cubes $\mc{S}$ such that
\begin{align*}
   \nrm{ Tf(s)}_Y&\lesssim_{\alpha,d} C_T \,C_r\, \has{\sum_{Q \in \mc{S}} \has{\avint_Q\nrm{f}_X^p}^{r/p} \ind_{Q}(s)}^{1/r}, \qquad s \in \R^d,
\end{align*}
where $C_T={\nrm{T}_{L^{p_1}\to L^{p_1,\infty}} + \nrm{\mc{M}_{T,\alpha}^{\#}}_{L^{p_2}\to L^{p_2,\infty}}}$.
\end{theorem}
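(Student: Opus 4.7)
The plan is to follow the recursive stopping-time approach of Lerner--Ombrosi \cite{LO19}, adapted to the Banach-space-valued $r$-sublinear setting (the result is stated as a special case of \cite[Theorem 1.1]{Lo19b}). Since $f$ has compact support I would first fix a cube $Q_0 \supseteq \alpha \cdot \supp f$ and work inside $Q_0$, so that $Tf = T(f\ind_{\alpha Q_0})$ on $Q_0$ and nothing outside $Q_0$ needs to be estimated.

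The heart of the argument is a one-step local principal estimate. Given a cube $Q$, define the collection of \emph{stopping children} $\mc{S}_Q = \{Q'\}$ to consist of the maximal dyadic subcubes $Q' \subseteq Q$ for which at least one of (a) $(\avint_{Q'} \nrm{f}_X^p)^{1/p}$ or (b) $\mc{M}_{T,\alpha}^{\#}(f\ind_{\alpha Q})$ restricted to $Q'$ exceeds a large constant $C = C(\alpha,d)$ times its $Q$-average. Using the weak-type bound of $T$ on $f\ind_{\alpha Q}$ to control the contribution of (a) and the weak-type bound of $\mc{M}_{T,\alpha}^{\#}$ to control (b), choosing $C$ sufficiently large forces $\sum_{Q' \in \mc{S}_Q}|Q'| \leq \tfrac{1}{2}|Q|$. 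On the good set $E_Q := Q \setminus \bigcup Q'$ one then obtains
\[ \nrmb{T(f\ind_{\alpha Q})(s)}_Y \leq C\, C_T\, \has{\avint_Q \nrm{f}_X^p}^{1/p}, \qquad s \in E_Q, \]
by writing $T(f\ind_{\alpha Q})(s) = T(f\ind_{\alpha Q})(s') + \bigl[T(f\ind_{\alpha Q})(s) - T(f\ind_{\alpha Q})(s')\bigr]$ for a suitably chosen $s' \in Q \setminus \bigcup Q'$: the first term is controlled by the weak-type bound of $T$ together with (a), while the oscillation between $s$ and $s'$ is exactly what is absorbed into $\mc{M}_{T,\alpha}^{\#}$, and so handled by (b).

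Iterating inside each $Q' \in \mc{S}_Q$, starting from $Q_0$, yields a nested tree of cubes $\mc{S} := \bigcup_{k\geq 0} \mc{S}_k$ with $\mc{S}_0 = \{Q_0\}$ and $\mc{S}_{k+1} = \bigcup_{Q \in \mc{S}_k} \mc{S}_Q$; the sets $E_Q$ witness $\tfrac12$-sparseness of $\mc{S}$. For a.e.\ $s \in Q_0$, let $Q_0 \supseteq Q_1 \supseteq \cdots$ be the maximal chain of cubes in $\mc{S}$ containing $s$ and telescope
\[ f \;=\; \sum_{k \geq 0}\bigl(f\ind_{\alpha Q_k} - f\ind_{\alpha Q_{k+1}}\bigr) \;=:\; \sum_{k \geq 0} g_k, \]
so that the $g_k$ have pairwise disjoint supports. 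The $r$-sublinearity hypothesis \eqref{eq:rsublinear} then yields
\[ \nrm{Tf(s)}_Y \leq C_r \has{\sum_{k\geq 0} \nrm{Tg_k(s)}_Y^r}^{1/r}, \]
and each summand $\nrm{Tg_k(s)}_Y$ is controlled by the local principal estimate applied at the cube $Q_k$, giving the claimed $\ell^r$-sparse domination.

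The main obstacle I anticipate is the calibration of the stopping rule: the constant $C$ must be simultaneously large enough that the two weak-type bounds jointly force $|\bigcup_{\mc{S}_Q} Q'| \leq \tfrac{1}{2}|Q|$ \emph{and} small enough that it is absorbed into the final constant $C_T$ in the local principal inequality. A related geometric subtlety, and the reason the hypothesis $\alpha \geq 6$ is needed, is that the grand maximal truncation $\mc{M}_{T,\alpha}^{\#}$ only sees averages over $\alpha$-dilated balls, so one must ensure that the annular pieces $\alpha Q_k \setminus \alpha Q_{k+1}$ are genuinely in the regime controlled by $\mc{M}_{T,\alpha}^{\#}$; this is where $\alpha \geq 6$ buys enough geometric room. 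Finally, since \eqref{eq:rsublinear} is only hypothesised for disjointly supported inputs, one must check that the telescoped decomposition $f = \sum_k g_k$ genuinely produces disjoint supports, which follows from the nestedness of the stopping cubes $Q_k$.
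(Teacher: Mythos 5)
You correctly reconstruct the Lerner--Ombrosi stopping-time strategy underlying \cite[Theorem 1.1]{Lo19b}, which the paper only cites and does not prove. In particular you spot the crucial ingredient for an $\ell^r$- rather than an $\ell^1$-sparse bound: telescope $f\ind_{\alpha Q_0}$ along the chain of stopping cubes into disjointly supported pieces $g_k = f\ind_{\alpha Q_k \setminus \alpha Q_{k+1}}$ and invoke the $r$-sublinearity hypothesis \eqref{eq:rsublinear}.

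The gap is in the final step, where you assert that each $\|Tg_k(s)\|_Y$ is ``controlled by the local principal estimate applied at the cube $Q_k$.'' The local principal estimate as you state it bounds $\|T(f\ind_{\alpha Q_k})(\cdot)\|_Y$ only on the good set $E_{Q_k}=Q_k\setminus\bigcup Q'$; but for $k<N$ the point $s$ lies in the stopping child $Q_{k+1}$, which is \emph{disjoint} from $E_{Q_k}$, and moreover $Tg_k(s)=T(f\ind_{\alpha Q_k\setminus\alpha Q_{k+1}})(s)$ is not $T(f\ind_{\alpha Q_k})(s)$. What is actually needed is a pointwise a.e.\ bound for $\|T(f\ind_{\alpha Q_k\setminus\alpha Q_{k+1}})(s)\|_Y$ on \emph{all} of $Q_{k+1}$ in terms of $(\avint_{\alpha Q_k}\|f\|_X^p)^{1/p}$. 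This is the crux of the argument: one must produce, for each $s\in Q_{k+1}$, a ball $B\ni s$ such that $\alpha B$ misses $\supp g_k$, together with a reference point $s'\in B$ at which the weak-$L^p$ levels of $\|Tg_k\|_Y$ and of $\mc{M}^{\#}_{T,\alpha}g_k$ are both small. You are right that $\alpha\geq 6$ is what makes this geometry close (for instance the $\alpha$-dilate of the circumscribed ball of $Q_{k+1}$ is \emph{not} contained in $\alpha Q_{k+1}$ once $d\geq 2$, so $B$ must be chosen with care), but you flag it as a deferred detail rather than carrying it out --- yet it is exactly this step, not the stopping-time bookkeeping, that makes the theorem true. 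A smaller point: the stopping rule is best set up as a level-set condition at height $\sim (\avint_{\alpha Q}\|f\|_X^p)^{1/p}$, so that the assumed weak-type bound of $\mc{M}^{\#}_{T,\alpha}$ directly yields $\sum|Q'|\leq \tfrac12|Q|$; comparing against the $Q$-average of $\mc{M}^{\#}_{T,\alpha}(f\ind_{\alpha Q})$, as you wrote, is not obviously controlled by the hypotheses, since a weak-type bound does not give $L^1$-averages.
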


In order to apply this theorem on a $K \in \mc{K}_{\gamma}(L^p(\R^d))$ we need to check weak $L^2$-boundedness of $T_K$ and $\mc{M}_{T_K,\alpha}^{\#}$ and we need to check  equation \eqref{eq:rsublinear} with $r=2$.
For a $(\omega,2)$-Dini kernel the boundedness of $\mc{M}_{T_K,\alpha}^{\#}$ is quite easy to check for $\alpha=6$:
\begin{lemma}[Boundedness of grand maximal truncation operator]\label{lemma:Lernertruncation}
   Let $X$ and $Y$ be Banach spaces and assume that $Y$ has type $2$. Let $p \in [2,\infty)$  and suppose $K \in \mc{K}_\gamma(L^{p,\infty}(\R^d))$ satisfies the $(\omega,2)$-Dini condition. Then for any $f \in L^p(\R^d;X)$ we have
   \begin{equation*}
    \mc{M}^{\#}_{T_K,6}f  \leq C_{d} \,\tau_{2,Y}\,\nrm{K}_{\Dini{\omega}{2}} \,M_2(\nrm{f}_X).
\end{equation*}
 In particular $\mc{M}^{\#}_{T_K,6}$ is bounded from $L^2(\R^d;X)$ to $L^{2,\infty}(\R^d)$  with
\begin{align*}
    \nrmb{\mc{M}^{\#}_{T_K,6}}_{L^2(\R^d;X) \to L^{2,\infty}(\R^d)} &\leq C_{d} \,\tau_{2,Y} \,\nrm{K}_{\Dini{\omega}{2}}.
\end{align*}
\end{lemma}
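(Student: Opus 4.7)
The plan is to fix $s \in \R^d$ and a ball $B = B(s_0,r)$ containing $s$, and then for any $s',s'' \in B$ estimate the $\gamma$-norm of the difference
\[
T_K(\ind_{\R^d \setminus 6B}f)(s') - T_K(\ind_{\R^d \setminus 6B}f)(s'')
= \bigl(K(s',\cdot) - K(s'',\cdot)\bigr)\ind_{\R^d \setminus 6B}f(\cdot) \in \gamma(\R^d;Y).
\]
The first move is to apply Lemma \ref{lemma:gammaL2} to reduce this $\gamma$-norm to an $L^2(\R^d;Y)$-norm at cost $\tau_{2,Y}$, so that one needs to control
\[
\has{\int_{\R^d \setminus 6B} \nrm{K(s',t)-K(s'',t)}^2 \nrm{f(t)}_X^2 \dd t}^{1/2}.
\]

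Next I would set up a dyadic annular decomposition $\R^d \setminus 6B = \bigcup_{k \geq 0} A_k$ with $A_k = 2^{k+1} \cdot 6B \setminus 2^{k} \cdot 6B$. For $t \in A_k$ and $s',s'' \in B$ one checks $|s'-s''| \leq 2r$ while $|s'-t| \gtrsim 2^k r$, so the hypothesis $|s'-s''|\leq \tfrac12 |s'-t|$ of the Dini condition \eqref{eq:dini2} is met, and
\[
\nrm{K(s',t) - K(s'',t)} \leq \omega\has{\tfrac{|s'-s''|}{|s'-t|}}\frac{1}{|s'-t|^{d/2}} \leq C_d\,\omega(2^{-k})\,(2^k r)^{-d/2},
\]
using that $\omega$ is increasing. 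Since $A_k$ is contained in a ball centered at $s$ of radius $\lesssim 2^k r$, the local $L^2$-average of $\|f\|_X^2$ on $A_k$ is dominated by $C_d (2^k r)^d \,M_2(\|f\|_X)(s)^2$, which gives
\[
\int_{A_k} \nrm{K(s',t)-K(s'',t)}^2 \nrm{f(t)}_X^2 \dd t \leq C_d\, \omega(2^{-k})^2\, M_2(\nrm{f}_X)(s)^2.
\]

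Summing over $k$ and invoking the standard comparison
$\sum_{k\geq 0}\omega(2^{-k})^2 \lesssim \int_0^1 \omega(r)^2 \tfrac{\dd r}{r} = \nrm{K}_{\Dini{\omega}{2}}^2$
(which uses that $\omega$ is increasing, together with subadditivity to handle the single term $\omega(1)^2$), and then taking the supremum over $s',s'' \in B$ and over balls $B \ni s$ yields the pointwise bound
\[
\mc{M}^\#_{T_K,6}f(s) \leq C_d\,\tau_{2,Y}\,\nrm{K}_{\Dini{\omega}{2}}\,M_2(\nrm{f}_X)(s).
\]
The weak-type claim then follows immediately from the $L^2 \to L^{2,\infty}$ bound on $M_2$ given by Lemma \ref{lemma:maximaloperator} (third inequality with $p = 2$). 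There is no real obstacle here; the only point requiring a little care is choosing the dilation factor $\alpha = 6$ large enough so that the Dini hypothesis $|s'-s''|\leq \tfrac12|s'-t|$ holds uniformly for $s',s'' \in B$ and $t \notin 6B$, which is exactly what the factor $6$ ensures.
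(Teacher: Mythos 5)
Your proof is correct and follows essentially the same route as the paper's: apply the embedding of Lemma \ref{lemma:gammaL2} to reduce the $\gamma$-norm to an $L^2$-norm, invoke the $(\omega,2)$-Dini condition on dyadic annuli around the ball, dominate the local averages by $M_2(\nrm{f}_X)(s)$, and compare the resulting geometric sum with $\nrm{K}_{\Dini{\omega}{2}}$. The only cosmetic difference is that the paper centers its annuli at $s'$ with $\varepsilon = 4\rho$ (producing $\omega(2^{-j-1})$ directly and thereby sidestepping the $\omega(1)$-term you handle via subadditivity), whereas you use dilates of $6B$; both are fine.
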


\begin{proof}
Let $f \in L^p(\R^d;X)$, $s \in \R^d$ and fix a ball $B \ni s$ with radius $\rho$. Take $s',s'' \in B$ and let $\varepsilon = 4\rho$. Then $\abs{s'-s''} \leq \frac{1}{2}\varepsilon$ and for any $t \in \R^d\setminus 6 B$ we have $\abs{s'-t} > \varepsilon$. Therefore applying Lemma \ref{lemma:gammaL2} and using the $(\omega,2)$-Dini condition we obtain
  \begin{align*}
    \nrm{T_K(\ind_{\R^d \setminus 6B}&f)(s') - T_K(\ind_{\R^d \setminus 6B}f)(s'')}_{\gamma(\R^d;Y)} \\
    &\leq  \tau_{2,Y} \has{\int_{\R^d\setminus 6B}\nrmb{\hab{K(s',t) - K(s'',t)}f(t)}^2_Y\dd t}^{1/2}\\
       &\leq \tau_{2,Y} \has{ \int_{\abs{s'-t}>\varepsilon}\omega\has{\frac{\varepsilon/2}{\abs{s'-t}}}^2\frac{1}{\abs{s'-t}^d}\nrm{f(t)}_X^2\dd t}^{1/2}\\
       &\leq \tau_{2,Y} \has{ \sum_{j=0}^\infty \frac{\omega(2^{-j-1})^2}{(2^j\varepsilon)^d} \int_{2^j\varepsilon<\abs{s'-t}\leq\varepsilon2^{j+1}}\nrm{f(t)}_X^2\dd t}^{1/2}\\
       &\leq C_d \,\tau_{2,Y}  \has{ \sum_{j=0}^\infty\omega(2^{-j-1})^2 \avint_{B(s',2^{j+1}\varepsilon)}\nrm{f(t)}_X^2\dd t}^{1/2}\\
       &\leq C_d \,\tau_{2,Y} \, \nrm{K}_{\Dini{\omega}{2}}   M_2\hab{\nrm{f}_X}(s) .
  \end{align*}
    where the last step follows from $s \in B(s',2^{j+1}\varepsilon)$ for all $j \in \N$.
  Now taking the essential supremum over $s',s'' \in B$ and the supremum over all balls $B \ni s$, we see that
  \begin{equation*}
    \mc{M}^{\#}_{T_K,6}f(s) \leq  C_d \,\tau_{2,Y} \, \nrm{K}_{\Dini{\omega}{2}}   M_2\hab{\nrm{f}_X}(s), \qquad s \in S.
  \end{equation*}
  The weak $L^2$-boundedness follows from the corresponding bound for $M_2$ in Lemma \ref{lemma:maximaloperator} and the density of $L^p(\R^d;X)$ in $L^2(\R^d;X)$.
\end{proof}

We can now prove sparse domination, and thus also weighted boundedness, for the $\gamma$-integral operators

\begin{theorem}[Sparse domination for $\gamma$-integral operators]\label{theorem:sparsedomination}
Let $X$ and $Y$ be Banach spaces with type $2$. Let $p \in [2,\infty)$ and suppose $K \in \mc{K}_\gamma(L^{p,\infty}(\R^d))$ satisfies the $(\omega,2)$-Dini condition. Then there is an $\eta \in (0,1)$ such that for every compactly supported $f \in L^2(\R^d;X)$ there exists an $\eta$-sparse collection of cubes $\mc{S}$ such that
\begin{equation*}
    \nrm{Tf(s)}_{\gamma(\R^d;Y)} \leq C_{X,Y,p,d}\,c_K\, \has{\sum_{Q \in \mc{S}}\has{\avint_Q \nrm{f}_X^2}\ind_Q(s)}^{1/2}, \qquad s \in \R^d
\end{equation*}
with $c_K:= \nrm{K}_{\mc{K}_\gamma(L^{p,\infty}(\R^d))} + \nrm{K}_{\Dini{\omega}{2}}$.
In particular:
\begin{enumerate}[(i)]
  \item $K \in \mc{K}_\gamma(L^{q}(\R^d,w))$
for all $q \in (2,\infty)$ and $w \in A_{q/2}$ with
  \begin{align*}
  \nrm{K}_{\mc{K}_\gamma(L^{q}(\R^d,w))}
  &\leq C_{X,Y,p,q,d}\,c_K\, [w]_{A_{q/2}}^{\max\cbraceb{\frac{1}{2},\frac{1}{q-2}}}.
  \intertext{\item $K \in \mc{K}_\gamma(L^{2,\infty}(\R^d,w))$
for all $w \in A_{1}$ with}
  \nrm{K}_{\mc{K}_\gamma(L^{2,\infty}(\R^d,w))} &\leq C_{X,Y,p,d} \,c_K\, [w]_{A_{1}}^{1/2}\log\hab{1+[w]_{A_\infty}}^{1/2}.
\end{align*}
\end{enumerate}
\end{theorem}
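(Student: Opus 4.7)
The plan is to apply the abstract sparse domination Theorem \ref{theorem:domination} to $T=T_K$ viewed as an operator from $L^2(\R^d;X)$ to $L^{2,\infty}(\R^d;\gamma(\R^d;Y))$ with parameters $p=r=2$ and dilation $\alpha=6$. Once the pointwise sparse bound is established, the strong weighted estimate on $L^q(\R^d,w)$ for $q\in(2,\infty)$ and $w\in A_{q/2}$, as well as the weak endpoint on $L^{2,\infty}(\R^d,w)$ for $w\in A_1$, will follow from the known sharp weighted bounds for $L^2$-sparse forms proved in \cite[Proposition 4.1]{Lo19b}, applied to $\|f\|_X^2$ together with a squaring/square-root trick (the $A_{q/2}$-bound in $L^{q/2}(w)$ gives the $[w]_{A_{q/2}}^{\max\{1,2/(q-2)\}}$ bound, whose square root matches the stated exponent; analogously for the $A_1$ endpoint).

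The core of the proof is thus the verification of the three hypotheses of Theorem \ref{theorem:domination}. First, since an $(\omega,2)$-Dini kernel is $2$-H\"ormander by Lemma \ref{lemma:hormanderdini} and $K\in\mc{K}_\gamma(L^{p,\infty}(\R^d))$ for some $p\in[2,\infty)$, the extrapolation downwards of Theorem \ref{theorem:extrapolationdown}\ref{it:extrapolationdownii} gives $K\in\mc{K}_\gamma(L^{2,\infty}(\R^d))$ with norm controlled by a constant times $c_K$. Second, the boundedness of $\mc{M}_{T_K,6}^{\#}$ from $L^2(\R^d;X)$ to $L^{2,\infty}(\R^d)$ with norm $\lesssim \tau_{2,Y}\,\nrm{K}_{\Dini{\omega}{2}}$ is precisely the content of Lemma \ref{lemma:Lernertruncation}.

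The third hypothesis, the $r$-sublinearity condition \eqref{eq:rsublinear}, is where the type $2$ assumption on $Y$ enters decisively, and it is the only point requiring a small argument. I would take $r=2$ and observe that if $f_1,\dots,f_n\in L^2(\R^d;X)$ are disjointly supported, then for every $s\in\R^d$ the functions $t\mapsto K(s,t)f_k(t)$ are disjointly supported members of $\gamma(\R^d;Y)$, so the type $2$ inequality in $\gamma(\R^d;Y)$ provided by Lemma \ref{lemma:2sublinear} yields
\[
\nrms{T_K\hab{\textstyle\sum_{k=1}^n f_k}(s)}_{\gamma(\R^d;Y)}^2 \leq \tau_{2,Y}\sum_{k=1}^n\nrmb{T_K f_k(s)}_{\gamma(\R^d;Y)}^2,
\]
which is exactly \eqref{eq:rsublinear} with $C_2=\tau_{2,Y}^{1/2}$. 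Plugging the three ingredients into Theorem \ref{theorem:domination} produces, for every compactly supported $f\in L^2(\R^d;X)$, an $\eta$-sparse collection $\mc{S}$ with
\[
\nrm{T_K f(s)}_{\gamma(\R^d;Y)}\lesssim_{X,Y,p,d} c_K \has{\sum_{Q\in\mc{S}}\has{\avint_Q\nrm{f}_X^2}\ind_Q(s)}^{1/2},\qquad s\in\R^d,
\]
which is the desired pointwise sparse domination.

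I do not expect any serious obstacle beyond assembling these pieces; the conceptual content is that the $(\omega,2)$-Dini hypothesis feeds the grand maximal truncation estimate, while the type $2$ geometry of $Y$ supplies the $2$-sublinearity needed to close the sparse domination scheme. The passage from the pointwise sparse bound to the weighted norm estimates is then routine, with the sharp exponent $\max\{\tfrac12,\tfrac1{q-2}\}$ arising naturally from taking the square root of the sharp $A_{q/2}$-bound for an $L^2$-sparse operator applied to $\|f\|_X^2$, and the logarithmic factor at the endpoint from the corresponding weak $A_1$-bound.
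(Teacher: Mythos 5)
Your proposal is correct and follows essentially the same route as the paper's proof: reduce to the weak $L^2$ endpoint via Lemma \ref{lemma:hormanderdini} and Theorem \ref{theorem:extrapolationdown}, invoke Lemma \ref{lemma:Lernertruncation} for the grand maximal truncation, use Lemma \ref{lemma:2sublinear} with disjoint supports to get the $2$-sublinearity \eqref{eq:rsublinear} with $C_2=\tau_{2,Y}^{1/2}$, feed all three into Theorem \ref{theorem:domination}, and then pass to the weighted estimates via \cite[Proposition 4.1]{Lo19b} and density. The only cosmetic difference is that you spell out the squaring/square-root step behind the weight exponents, which the paper delegates entirely to \cite{Lo19b}.
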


\begin{proof}
Since $K$ is an $(\omega,2)$-Dini kernel, it is also a $2$-H\"ormander kernel by Lemma \ref{lemma:hormanderdini} with
\begin{equation*}
     \nrm{K}_{\Hormander{2}} \leq C_{p,d}\, \nrm{K}_{\Dini{\omega}{2}}.
\end{equation*}
Therefore by Theorem \ref{theorem:extrapolationdown} we know that $T$ is bounded from $L^2(\R^d;X)$ to $L^{2,\infty}(\R^d;\gamma(\R^d;Y))$ with norm
\begin{equation*}
  \nrm{T}_{L^2 \to L^{2,\infty}} \leq C_{p,d}\,\has{\tau_{2,X} \tau_{2,Y} \nrm{K}_{\mc{K}_\gamma(L^{p,\infty}(\R^d))} + \tau_{2,Y} \nrm{K}_{\Dini{\omega}{2}}}.
\end{equation*}
By Lemma \ref{lemma:Lernertruncation} we  also know that $\mc{M}^{\#}_{T,6}$ is bounded from $L^2(\R^d;X)$ to $L^{2,\infty}(\R^d)$ with norm
\begin{equation*}
  \nrmb{\mc{M}_{T,6}^{\#} }_{L^2 \to L^{2,\infty}} \leq C_{d}\, \tau_{2,Y} \nrm{K}_{\Dini{\omega}{2}}.
\end{equation*}
Moreover for $f_1,\cdots,f_n \in L^2(\R^d;X)$ with disjoint support we have for a.e. $s \in \R^d$ that $T_Kf_1(s)\cdots T_Kf_n(s) $ have disjoint support as well and thus \eqref{eq:rsublinear} with $r=2$ follows from Lemma \ref{lemma:2sublinear}. The sparse domination therefore follows by applying Theorem \ref{theorem:domination} to $T_K$.
The weighted bounds follow directly from \cite[Proposition 4.1]{Lo19b} and the density of compactly supported $L^2$-functions in $L^q(\R^d,w;X)$ for all $q \in [2,\infty)$.
\end{proof}

\begin{remark}~\label{remark:weightedlessassumptions}
\begin{enumerate}[(i)]
  \item \label{it:weightedlessassumptions1} If we omit the type $2$ assumption for $X$ in Theorem \ref{theorem:sparsedomination} we can still conclude that $T_K$ is sparsely dominated by larger sparse operator
      \begin{equation*}
  f \mapsto \has{\sum_{Q \in \mc{S}}\has{\avint_Q \nrm{f}_X^p}^{p/2}\ind_Q}^{1/2}
\end{equation*}
 In the proof one then has to skip the step where Theorem \ref{theorem:extrapolationdown} is applied. This is in particular useful when $p=2$.
  \item  $A_{p/2}$ is the largest class of weights one can expect in Theorem \ref{theorem:sparsedomination}, since in the case that $X=Y=\K$ and $K(s,t)=k(s-t)$, Theorem \ref{theorem:sparsedomination} can be reduced to a statement about deterministic convolution operators with positive kernel (see Subsection \ref{section:scalarcase}).
      It is standard to check that the weighted boundedness of for example
\begin{equation*}
Tf(s) :=  \int_{\R^d} \lambda^d \ee^{-\lambda|s-t|} f(t) \dd t, \qquad s \in \R^d,
\end{equation*}
for all $\lambda \in \R_+$ implies the $A_p$-condition, see e.g. \cite[Section 7.1.1]{Gr14a}. Also the dependence on the weight characteristic is sharp, see Proposition \ref{proposition:weightsharp} below
\end{enumerate}
\end{remark}

Under a Dini type condition we obtain the following further characterization if $Y$ is a Hilbert space. The proof is immediate from Corollary \ref{corollary:Hilbertspacesch}, Theorem \ref{theorem:sparsedomination} and Remark \ref{remark:weightedlessassumptions}\ref{it:weightedlessassumptions1}.
\begin{corollary}\label{corollary:Hilbertspaceschweight}
Let $X$ be a Banach space and $Y$ be a Hilbert space. Suppose $k:\R^d\to \mc{L}(X,Y)$ is strongly measurable and satisfies the $(\omega,2)$-Dini condition in \eqref{eq:diniconvolution}.
Let $K(s,t) := k(s-t)$. Then statements \ref{it:Hilbertspaceschi}--\ref{it:Hilbertspaceschiii} in Corollary \ref{corollary:Hilbertspacesch} are equivalent to
\begin{enumerate}[(iv)]
\item\label{it:Hilbertspaceschweakii} $K \in \mc{K}_\gamma(L^{p}(\R^d,w))$ for all $p\in (2, \infty)$ and all $w\in A_{p/2}$.
\end{enumerate}
In particular we have for all $p \in (2,\infty)$, $w \in A_{p/2}$ and $A_0$ as in \ref{it:Hilbertspaceschi} of Corollary \ref{corollary:Hilbertspacesch}:
\begin{equation*}
  \nrm{K}_{\mc{K}_\gamma(L^p(\R^d))} \leq C_{p,d} \, (A_0 + \nrm{K}_{\Dini{\omega}{2}}) [w]_{A_{q/2}}^{\max\cbraceb{\frac{1}{2},\frac{1}{q-2}}}.
\end{equation*}
\end{corollary}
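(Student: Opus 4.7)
My plan is to derive Corollary \ref{corollary:Hilbertspaceschweight} by reducing it to previous results. Since statements \ref{it:Hilbertspaceschi}--\ref{it:Hilbertspaceschiii} of Corollary \ref{corollary:Hilbertspacesch} are already known to be equivalent, it suffices to prove equivalence of \ref{it:Hilbertspaceschweakii} with one of them. The implication \ref{it:Hilbertspaceschweakii} $\Rightarrow$ \ref{it:Hilbertspaceschiii} is immediate: fix any $p \in (2,\infty)$ and note $w \equiv 1 \in A_{p/2}$.

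For the converse direction, I would start from \ref{it:Hilbertspaceschi}. By Corollary \ref{corollary:Hilbertspacesch}\ref{it:cornoreg1} (which exploits the identification $\gamma(\R^d;Y) = L^2(\R^d;Y)$ since $Y$ is Hilbert) together with Proposition \ref{proposition:simplesufficient}\ref{it:sufficientL2}, assumption \ref{it:Hilbertspaceschi} gives $K \in \mc{K}_\gamma(L^2(\R^d))$ with norm bounded by $A_0$ (noting $\tau_{2,Y}=1$). In particular $K \in \mc{K}_\gamma(L^{2,\infty}(\R^d))$, so combined with the standing $\Dini{\omega}{2}$ hypothesis on $k$ I am in the setting of Theorem \ref{theorem:sparsedomination} with starting exponent $p=2$. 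Applying that theorem then yields $K \in \mc{K}_\gamma(L^q(\R^d,w))$ for all $q \in (2,\infty)$ and $w \in A_{q/2}$, with quantitative constant
\[
c_K = \|K\|_{\mc{K}_\gamma(L^{2,\infty}(\R^d))} + \|K\|_{\Dini{\omega}{2}} \leq A_0 + \|K\|_{\Dini{\omega}{2}},
\]
and dependence $[w]_{A_{q/2}}^{\max\{1/2,\,1/(q-2)\}}$, matching the claimed estimate.

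The only mildly delicate point is that Theorem \ref{theorem:sparsedomination} is stated under the assumption that $X$ has type $2$, whereas here $X$ is an arbitrary Banach space. To handle this I would invoke Remark \ref{remark:weightedlessassumptions}\ref{it:weightedlessassumptions1}: dropping the type $2$ assumption on $X$ still gives sparse domination, but with $\bigl(\avint_Q \|f\|_X^p\bigr)^{p/2}$ replacing $\avint_Q \|f\|_X^2$. At the starting exponent $p = 2$ these two averages coincide, so the weighted bounds from \cite[Proposition 4.1]{Lo19b} apply verbatim and no loss occurs. Since everything thus reduces to a clean invocation of Theorem \ref{theorem:sparsedomination} and Corollary \ref{corollary:Hilbertspacesch}, I anticipate no real obstacle: the only pieces to watch are the type $2$ workaround just described and the collapse of $Y$-dependent constants under the Hilbert-space hypothesis.
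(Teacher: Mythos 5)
Your proposal is correct and follows the paper's own route: the published proof is stated to be immediate from Corollary~\ref{corollary:Hilbertspacesch}, Theorem~\ref{theorem:sparsedomination} and Remark~\ref{remark:weightedlessassumptions}\ref{it:weightedlessassumptions1}, which are exactly the three ingredients you assemble. You also correctly isolate the one genuinely delicate point, namely that the type~$2$ hypothesis on $X$ must be dropped via Remark~\ref{remark:weightedlessassumptions}\ref{it:weightedlessassumptions1}, and that the enlarged sparse operator there coincides with the standard one at the starting exponent $p=2$, so no loss is incurred.
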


We will show next that the dependence on the weight characteristic $[w]_{A_{p/2}}$ in the bounds for $T_K$ in Theorem \ref{theorem:sparsedomination} is actually optimal. Therefore Theorem \ref{theorem:sparsedomination} can be thought of as a $\gamma$-analog of the $A_2$-theorem in the deterministic setting.
\begin{proposition} \label{proposition:weightsharp}
Let $X$ and $Y$ be Banach spaces and $p\in (2,\infty)$ and $\beta \geq 0$. There exists a kernel $K\colon \R^d \to \mc{L}(X,Y)$ satisfying the assumptions of Theorem \ref{theorem:sparsedomination} such that if for all $w \in A_{p/2}$ we have
\begin{equation*}
    \nrm{K}_{\mc{K}_\gamma(L^p(\R^d,w))} \lesssim [w]_{A_{p/2}}^{\beta},
\end{equation*}
then $\beta \geq \max\cbraceb{\tfrac{1}{2},\tfrac{1}{q-2}}$.
\end{proposition}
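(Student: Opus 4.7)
The plan is to reduce the sharpness problem to the classical sharpness of the deterministic $A_q$-theorem for positive Calder\'on--Zygmund operators, using the scalar-case identity from the end of Section \ref{section:scalarcase}.

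Take $X = Y = \K$. Since $\gamma(\R^d;\K) = L^2(\R^d)$ isometrically, for every $f \in L^p(\R^d, w)$ and a.e.\ $s \in \R^d$,
\begin{equation*}
\nrm{T_K f(s)}_{\gamma(\R^d)}^2 = \int_{\R^d} |K(s,t)|^2 |f(t)|^2 \dd t.
\end{equation*}
Setting $g := |f|^2$ and $q := p/2 \in (1, \infty)$, one obtains $\nrm{f}_{L^p(\R^d, w)}^p = \nrm{g}_{L^q(\R^d, w)}^q$ and $\nrm{T_K f}_{L^p(\R^d, w)}^p = \nrm{\tilde T g}_{L^q(\R^d, w)}^q$, where $\tilde T$ is the positive deterministic integral operator with kernel $|K|^2$. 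Consequently, a weighted estimate $\nrm{K}_{\mc K_\gamma(L^p(\R^d, w))} \lesssim [w]_{A_{p/2}}^\beta$ valid for all $w \in A_{p/2}$ is equivalent to the deterministic weighted estimate $\nrm{\tilde T}_{L^q(w) \to L^q(w)} \lesssim [w]_{A_q}^{2\beta}$ valid for all $w \in A_q$.

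Next, I would pick $K$ so that $|K|^2$ is, modulo smooth lower-order terms, the kernel of a positive Calder\'on--Zygmund operator $\tilde T$ for which the classical $A_q$-sharpness is known. A convenient choice is $K(s,t) = k(s-t)$ with $k \colon \R^d \to \R$ smooth, comparable to $|s|^{-d/2}$ away from the origin and with a suitable cut-off at infinity. Then Lemma \ref{lemma:standardkernelderivatives} gives that $K$ is an $(\omega,2)$-Dini (in fact standard) kernel, Proposition \ref{proposition:simplesufficient}\ref{it:sufficientconvolution} yields $K \in \mc K_\gamma(L^p(\R^d))$ for every $p \in [2, \infty)$, and $\tilde T$ takes the form of a positive Calder\'on--Zygmund convolution operator. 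The classical Buckley--Petermichl construction with power weights $w(t) = |t|^a$, applied as $a$ approaches the endpoints of the $A_q$-range, realises the sharp exponent $\max\{1, 1/(q-1)\}$ for this $\tilde T$. Inserting $q = p/2$ gives $2\beta \geq \max\{1, 2/(p-2)\}$ and hence $\beta \geq \max\{1/2, 1/(p-2)\}$.

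The main difficulty will be exhibiting a single $k$ that simultaneously satisfies all the regularity and $L^p$-boundedness hypotheses of Theorem \ref{theorem:sparsedomination} and whose associated $\tilde T$ admits a classical power-weight lower bound delivering the exponent $\max\{1, 1/(q-1)\}$ in both the $q < 2$ regime (producing the $1/(p-2)$ term) and the $q > 2$ regime (producing the $1/2$ term in the maximum). Once such a $k$ is in hand, the scalar reduction above directly transfers the known deterministic sharpness to the $\gamma$-setting.
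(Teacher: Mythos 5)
Your reduction to the positive deterministic operator $\tilde T$ with kernel $|K|^2$ is correct and is indeed the same identity the paper exploits: in the scalar case $\nrm{T_Kf(s)}_{\gamma(\R^d)}^2 = \int |K(s,t)|^2|f(t)|^2\,\dd t$, so $\nrm{K}_{\mc{K}_\gamma(L^p(w))}^2 = \nrm{\tilde T}_{L^{p/2}(w)\to L^{p/2}(w)}$. The difficulty is entirely in the choice of $K$, and here the proposal breaks down.

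You cannot take $K$ of convolution type with $k$ comparable to $|s|^{-d/2}$. By Proposition~\ref{proposition:Kconvolutionnecessary}, any scalar convolution kernel $K(s,t)=k(s-t)$ in $\mc{K}_\gamma(L^{p,\infty}(\R^d))$ must satisfy $\nrm{k}_{L^2(\R^d)}<\infty$; but $|s|^{-d/2}$ squared is $|s|^{-d}$, which is not locally integrable, so such a $k$ fails the hypothesis of Theorem~\ref{theorem:sparsedomination} entirely. This is exactly Example~\ref{example:scalarcase}\ref{example:hilbertanalog}: for $d=1$, $K(s,t)=|s-t|^{-1/2}$ is \emph{never} in $\mc{K}_\gamma(L^p(\R))$. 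Conversely, if you soften $k$ so that $k\in L^2(\R^d)$, Proposition~\ref{proposition:scalarkernels} gives $\nrm{K}_{\mc{K}_\gamma(L^p(\R^d))}\le\tau_{2,X}\nrm{k}_{L^2}$ uniformly in $p\in[2,\infty)$, and the associated $\tilde T g = |k|^2 * g$ is convolution with an $L^1$ kernel, which is pointwise dominated by the Hardy--Littlewood maximal operator. For such $\tilde T$ the optimal $A_q$ exponent is $1/(q-1)$ (Buckley), not $\max\{1,1/(q-1)\}$, so the transfer would only give $\beta\ge 1/(p-2)$ and miss the $1/2$ term. In short: a convolution kernel cannot produce the $p\to\infty$ blow-up, and a convolution kernel singular enough to produce any blow-up violates the boundedness assumption. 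What is needed is a \emph{non-convolution} kernel with a boundary-type (rather than diagonal-type) singularity. The paper uses
\begin{equation*}
  K\hab{(s_1,\bar s),(t_1,\bar t)} = \frac{(|s_1|+|t_1|)^{1/2}}{\absb{(|s_1|,\bar s)+(|t_1|,\bar t)}^{(d+1)/2}},
\end{equation*}
which is a $(1,2)$-standard kernel by Lemma~\ref{lemma:standardkernelderivatives}, and whose square is (after symmetrization) the Ostrovsky--Hardy kernel $\frac{s_1+t_1}{|s+t|^{d+1}}$ on $\R^d_+$ with unweighted norm exactly $1/\sin(\pi/q)$ on $L^q(\R^d_+)$. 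This norm blows up like $(q-1)^{-1}$ as $q\to 1^+$ and like $q$ as $q\to\infty$, giving $\alpha_K=\gamma_K=\tfrac12$, and the conclusion then follows from \cite[Theorem~5.2]{FN18}, which converts blow-up rates of unweighted operator norms into lower bounds on the admissible weight exponent. The power-weight computations you had in mind are not needed once one quotes this result, and they would not save the convolution ansatz.
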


\begin{proof}
  By considering one dimensional subspaces, we may assume without loss of generality that $X=Y=\K$. Define
\begin{equation*}
  K\hab{(s_1,\bar{s}),(t_1,\bar{t}\hspace{1pt})} :=
  \frac{(\abs{s_1}+\abs{t_1})^{1/2}}{\absb{(\abs{s_1},\bar{s})+(\abs{t_1},\bar{t}\hspace{1pt})}^{(d+1)/2}}, \qquad (s_1,\bar{s}),(t_1,\bar{t}\hspace{1pt})\in \R\times \R^{d-1}.
\end{equation*}
Then by Lemma \ref{lemma:standardkernelderivatives} we know that $K$ is a $(1,2)$-standard kernel.

Set $\R_+^d:= \cbrace{(s_1,\bar{s}) \in \R \times \R^{d-1}: s_1\geq 0}$, $\R_-^d:= \R^d \setminus \R^d_+$ and define for $q \in (1,\infty)$ and $f \in L^q(\R^d_+)$
\begin{equation*}
  T_1f(s) := \frac{\Gamma((d+1)/2)}{\pi^{(d+1)/2}} \int_{\R_+^d}
  \frac{s_1+t_1}{\abs{s+t}^{d+1}}f(t) \dd t, \qquad s \in \R^d_+.
\end{equation*}
Then $T_1$ is a bounded operator on $L^q(\R_+^d)$ for all $q \in (1,\infty)$ with
$$
\nrm{T_1}_{L^{q}(\R^d_+)\to L^{q}(\R^d_+)} = \frac{1}{\sin\ha{\pi/q}},
$$
by \cite[Theorem 1]{Os17}.
For $g \in L^p(\R^d)$ we have
\begin{align*}
  \nrm{T_K g}_{L^p(\R^d)}^p &= \nrm{T_K (g\ind_{\R^d_+}+g \ind_{\R^d_-})}_{L^p(\R^d_+)}^p +\nrm{T_K (g\ind_{\R^d_+}+g \ind_{\R^d_-})}_{L^p(\R^d_-)}^p\\
  &\eqsim_d \, \nrm{T_1 h }_{L^{p/2}(\R^d_+)}^{p/2}
\end{align*}
where $h(s):= |g(s)+g(-s)|^2$ for $s \in \R^d_+$. Therefore
\begin{equation*}
  \nrm{K}_{\mc{K}_\gamma(L^p(\R^d))} \eqsim_d \nrm{T_1}_{L^{p/2}(\R^d_+)\to L^{p/2}(\R^d_+)}^{1/2} = \frac{1}{\sin\ha{2\pi/p}^{1/2}},
\end{equation*}
so $K$ satisfies the assumptions of Theorem \ref{theorem:sparsedomination}. Moreover
  \begin{align*}
    \alpha_K&:= \sup\cbraces{\alpha\geq 0: \forall \varepsilon>0, \limsup_{p\to 2}  \frac{\nrm{K}_{\mc{K}_\gamma(L^p(\R^d))}}{(p-2)^{-\alpha+\varepsilon}}=\infty} = \frac{1}{2}\\
    \gamma_K&:= \sup\cbraces{\gamma\geq 0: \forall \varepsilon>0, \limsup_{p\to \infty}  \frac{\nrm{K}_{\mc{K}_\gamma(L^p(\R^d))}}{p^{-\gamma+\varepsilon}}=\infty} = \frac{1}{2}.
  \end{align*}
Thus by \cite[Theorem 5.2]{FN18} it follows that if
\begin{equation*}
    \nrm{K}_{\mc{K}_\gamma(L^p(\R^d,w))} \lesssim [w]_{A_{p/2}}^{\beta},
\end{equation*}
then
  \begin{equation*}
    \beta \geq  \max\cbraceb{\alpha_K \, \tfrac{2}{q-2}, \gamma_K}= \max\cbraceb{\tfrac{1}{q-2},\tfrac{1}{2}}.\qedhere
  \end{equation*}
\end{proof}

\section{Extension to spaces of homogeneous type}\label{section:homogeneoustype}
In this section we will describe how Sections \ref{section:kernels}-\ref{section:sparse} can be generalized from $\R^d$ to a space of homogeneous type $(S,d,\mu)$. This will be quite useful in our applications, as we will often want to take $S = \R_+$ or $S = (0,T)$ with the Euclidean metric and the Lebesgue measure. While these examples are in a sense trivial spaces of homogeneous type, they do not follow directly from our theory on $\R^d$.

A space of homogeneous type $(S,d,\mu)$, originally introduced by Coifman and Weiss in \cite{CW71}, is a set $S$ equipped with a quasimetric $d$, i.e. a metric which satisfies
\begin{equation*}
  d(s,t) \leq c_d\, \hab{d(s,u)+d(u,t)}, \qquad s,t,u\in S
\end{equation*}
for some $c_d \geq 1$ instead of the triangle inequality, and a Borel measure $\mu$ that satisfies the doubling property, i.e.
\begin{equation*}
  \mu\hab{B(s,2r)} \leq c_\mu \,\mu\hab{B(s,r)}, \qquad s \in S,\quad r>0
\end{equation*}
for some $c_\mu\geq 1$. In addition we assume that all balls $B\subseteq S$ are Borel sets and that we have $0 <\mu(B)<\infty$. As $\mu$ is a Borel measure the Lebesgue differentiation theorem holds and $C_c(S)$ is dense in $L^p(S)$ for all $p \in [1,\infty)$, see \cite[Theorem 3.14]{AM15} for the details.

\bigskip

We will now describe how Sections \ref{section:kernels}-\ref{section:sparse} can be adapted to spaces of homogeneous type. When we say that a result remains valid when we replace $\R^d$ with a space of homogeneous type $(S,d,\mu)$, we mean implicitly that all cubes are replaced by balls with the same center and diameter and that the dependence on the dimension $d$ of the involved constants is replaced by dependence on the quasimetric constant $c_d$ and the doubling constant $c_\mu$.
\begin{itemize}
\item The weighted bounds for the Hardy--Littlewood maximal operator in Lemma \ref{lemma:maximaloperator} are still valid, since we can do a covering lemma argument similar to the one we did for $\R^d$, see \cite[Theorem 4.1]{HK12}.
\item The definition of the $2$-H\"ormander condition carries over directly to spaces of homogeneous type. For the $(\omega,2)$-Dini we replace $\abs{s-t}^d$ by $\mu(B(s,d(s,t)))$ in \eqref{eq:dini2} and by $\mu(B(t,d(t,s)))$ in \eqref{eq:dini3}.
\item
  Lemma \ref{lemma:hormanderdini} remains valid in general spaces of homogeneous type and Lemma \ref{lemma:standardkernelderivatives} as well if $S$ is a convex subset of $\R^d$ with the Euclidean distance and the Lebesgue measure. More generally, Lemma \ref{lemma:standardkernelderivatives} also remains valid if $S$ is a smooth domain in $\R^d$, as one can then locally reduce to the $\R^d_+$ case. Lemma \ref{lemma:standardkernelfractional} remains true for $(0,T)$ with $T\in (0,\infty]$.
  \item Theorem \ref{theorem:extrapolationdown} remains valid. The main part of the proof that should be adapted, is the $L^2$-Calder\'on--Zygmund decomposition.
       This decomposition in spaces of homogeneous type can be found in \cite[Theorem 3.1]{BK03} in the case $X = \C$ and the proof again carries over verbatim to the vector-valued setting. Note that this decomposition at level $\lambda>0$ holds only when
      \begin{equation*}
        \lambda \geq \has{\avint_S \abs{f}^2 \dd \mu}^{1/2},
      \end{equation*}
  where the right hand side is of course zero if $\mu(S) = \infty$. So if $\mu(S) <\infty$ we need another argument in the proof of Theorem \ref{theorem:extrapolationdown} in the case
  \begin{equation*}
    \mu\lambda \leq \has{\avint_S \nrm{f}_X^2 \dd \mu}^{1/2}.
  \end{equation*}
  But this case is trivial, since
  \begin{align*}
    \abs{\cbrace{\nrm{T_Kf}_{\gamma(\R^d;Y)}>\lambda}}^{1/2} &\leq \mu(S)^{1/2} \leq \frac{1}{\mu\lambda} \nrm{f}_{L^2(S;X)}.
  \end{align*}
  The other difference in this decomposition is that we do not obtain a disjoint decomposition of the ``bad'' part $b$, but a decomposition with bounded overlap.  One can easily check that this does not cause any problems in our proof. For instance in the inequality
  \begin{equation*}
    \nrmb{\sum_j \ind_{Q_j}}_{L^{p}(\R^d)} \leq \has{\sum_j\abs{Q_j}}^{1/p}
  \end{equation*}
  one needs to add a constant depending on the amount of overlap.
  \item Theorem \ref{theorem:extrapolationup} also remains valid. The main difficulty here is the Fefferman-Stein theorem (Lemma \ref{lemma:sharpmaximal}) in spaces of homogeneous type, which can be found in  \cite[Proposition 3.1 and Theorem 4.2]{Ma04b} or \cite[Theorem 2.4]{DK18}. When $\mu(S) < \infty$, this requires some extra care, since we then have
      \begin{equation*}
        \nrm{f}_{L^q(\R^d)} \lesssim_{q,c_d,c_\mu} \, \hab{\nrm{M^{\#}f}_{L^q(\R^d)} + \mu(S)^{-1/q'}\nrm{f}_{L^1(\R^d)}}.
      \end{equation*}
      In the proof of Theorem \ref{theorem:extrapolationup} this means that we also need to estimate $\mu(S)^{-1/q'}\nrm{T_Kf}_{L^{1}(S)}$ in terms of $\nrm{f}_{L^q(\R^d;X)}$. By the assumption that $K \in \mc{K}_\gamma(L^{p,\infty}(S))$, H\"older's inequality and \eqref{eq:weakLpembeddingL1} we have
      \begin{align*}
        \qquad\mu(S)^{-1/q'} \nrm{T_Kf}_{L^{1}(S;Y)} &\leq C_p \,\mu(S)^{1/p'-1/q'}\nrm{T_Kf}_{L^{p,\infty}(S;Y)}\\
        &\leq C_p \, \mu(S)^{1/p'-1/q'} \nrm{K}_{\mc{K}_\gamma(L^{p,\infty}(S;X))} \nrm{f}_{L^{p}(S;X)}\\
        &\leq C_p \, \nrm{K}_{\mc{K}_\gamma(L^{p,\infty}(S))} \nrm{f}_{L^{q}(S;X)},
      \end{align*}
      which is exactly the required estimate.
  \item The proof of Theorem \ref{theorem:sparsedomination} relies completely on the results in \cite{Lo19b}, which are proven in a space of homogeneous type. Therefore Theorem  \ref{theorem:sparsedomination} remains valid.
    \item Corollary \ref{corollary:Hilbertspacesch} and Corollary \ref{corollary:Hilbertspaceschweight} remain valid on $\R_+$.
\end{itemize}

\section{Applications to stochastic maximal regularity}\label{section:pindSMR}
We will now apply our results from Sections \ref{section:extrapolation}-\ref{section:homogeneoustype} to obtain stochastic maximal regularity of various SPDE's. For this we will first need some background on sectorial operators.

\subsection{Sectorial operators}
Let $X$ be a Banach space and define
\begin{equation*}
  \Sigma_\varphi = \{z\in \C\setminus\{0\}: |\arg(z)|<\varphi\}.
\end{equation*}
A closed operator $(A,D(A))$ on $X$ will be called {\em sectorial} if there is a $\varphi\in [0,\pi)$ such that $\C\setminus\overline{\Sigma}_{\varphi} \subseteq \rho(A)$ and there is a constant $A_0>0$ such that
\[\|(\lambda-A)^{-1}\| \leq A_0/|\lambda|, \ \ \lambda\in \C\setminus\Sigma_{\varphi}.\]
The infimum over all such $\varphi$ is called the \emph{angle of sectoriality} of $A$.
For details on sectorial operators we refer to \cite{EN00,Ha06,Ya10}. In particular, recall that for a sectorial operator one can define the fractional powers $A^{\alpha}$ for $\alpha\in \R$.
For $\theta>0$ and $p\in [1, \infty]$, the spaces $D_A(\theta,p)$ are defined by
\[D_A(\theta,p) = (X, D(A^n))_{\theta/n,p},\]
where $n\in \N$ is the least integer larger than $\theta$. In the above we used the real interpolation method.  The complex interpolation method will be used as well, and our notation for this will be $[X, D(A)]_{\theta}$. For details on interpolation we refer to \cite{Tr78, Ha06, Lu95}.

\subsection{Setting}
Many stochastic PDEs can be analyzed as stochastic evolution equations by using functional analytic tools. We refer to the monograph \cite{DPZ14} and the papers \cite{Br97, NVW08}.

Consider the following linear stochastic evolution equation on a Banach space $X$:
\begin{equation}\label{eq:SEEGdW}
\begin{cases}
du + A u\dd t = G \dd W_H \quad& \text{on $\R_+$},\\
u(0)=0.
\end{cases}
\end{equation}
Here $(A(t))_{t\in \R_+}$ is a family of closed operators on $X$, $W_H$ is $H$-cylindrical Brownian motion and $G:\R_+\times\Omega\to \gamma(H,X)$ is adapted to $\ms{F}$. In this paper we will focus on linear equations. Nonlinear stochastic evolution equations can be studied by using suitable estimates for the linear case (see \cite{Br97,DPZ14}). In particular, stochastic maximal regularity estimates have been applied to nonlinear SPDEs in \cite{Ag18,Br95,Ho18,KK18,Kr99,NVW12b,PV18}.

The mild solution to \eqref{eq:SEEGdW} is given by
\[u(t) = S\diamond G(t):=\int_0^t S(t,s) G(s)\dd W_H(s), \qquad t \in \R_+.\]
Here we have assumed that $-A$ generates the strongly continuous evolution family $(S(t,s))_{0\leq s\leq t}$. In the case $A$ does not depend on time, one has that $S(t,s) = e^{-(t-s)A}$ is a strongly continuous semigroup. For details and unexplained terminology on semigroups and evolution families we refer to \cite{EN00,Lu95,Pa83,Ta79,Ya10}.

\begin{definition}[Stochastic maximal regularity]\label{definition:SMR}
Let $X$ be a $\UMD$ Banach space with type $2$, $p\in [2, \infty)$ and let $w$ be a weight on $\R_+$. Let $Y\hookrightarrow X$. We say that $A$ has {\em stochastic maximal $L^p(w;Y)$-regularity} and write $A \in \SMR(L^p(w;Y))$ if for all $G\in L^p_{\ms{F}}(\Omega\times \R_+,w;\gamma(H,X))$ the mild solution $u$ to \eqref{eq:SEEGdW} satisfies
\begin{equation}\label{eq:SMRest}
\|u\|_{L^p(\Omega\times \R_+,w;Y)} \leq  C\, \|G\|_{L^{p}(\Omega\times \R_+,w;\gamma(H,X))}.
\end{equation}We omit the weight if $w \equiv 1$.
\end{definition}
Written out explicitly the estimate \eqref{eq:SMRest} becomes
\[\Big\|t\mapsto \int_0^t S(t,s) G(s)\dd W_H(s)\Big\|_{L^p(\Omega\times \R_+,w;Y)} \leq C \, \|G\|_{L^{p}(\Omega\times \R_+,w;\gamma(H,X))}.\]
Interesting choices for $Y$ are the complex and real interpolation spaces
\[Y  = [X, D(A)]_{\frac12} \quad \text{and} \quad Y = (X, D(A))_{\frac12,r}, \ \ r\in [2, \infty),\]
and the fractional domain spaces $Y = D((A+\lambda)^{1/2})$ for $\lambda$ such that $A+\lambda$ is sectorial. In several places we will use the homogenous fractional domain space $\dot{D}((A+\lambda)^{1/2})$ with norm
\[\|x\|_{\dot{D}((A+\lambda)^{1/2})} = \|(A+\lambda)^{1/2} x\|_X.\]
Note that if $A+\lambda$ is invertible, then $\dot{D}((A+\lambda)^{1/2}) = D((A+\lambda)^{1/2})$.

In \cite{NVW12} it has been shown that under certain geometric restrictions on $X$, the boundedness of the $H^\infty$-calculus of angle $<\pi/2$ of $A$ (see \cite{Ha06,HNVW17}) implies
$$A\in \SMR(L^p(\dot{D}(A^{\frac12}))).$$
Extensions to the case of time-dependent $A$ have been obtained in \cite{PV18}. Abstract properties of stochastic maximal regularity have been studied in \cite{AV19}, where in particular it was shown that if $A$ is time-independent and $A\in \SMR(L^p(D(A^{\frac12})))$, then $-A$ generates an exponentially stable analytic semigroup.
In case $A$  has a bounded $H^\infty$-calculus of some angle, then one has $[X, D(A)]_{\frac12} = D(A^{\frac12})$ (see \cite[Theorem 6.6.9]{Ha06}).

Stochastic maximal regularity can equivalently be formulated using the stochastic integral operators of Definition \ref{definition:SIOW}. In this case the kernel is given by
\[K(t,s) :=  S(t,s) \ind_{0\leq s < t}\in \mc{L}(X,Y).\]
Here we implicitly assume that $S(t,s)$ maps $X$ into $Y$.
Below we will apply the extrapolation theory of Section \ref{section:extrapolation} to study independence of $p$ and the weight for Definition \ref{definition:SMR}.

\subsection{Semigroup case} We first turn to the time independent case.

\begin{theorem}[Extrapolation in the semigroups case]\label{theorem:SMRsemigroup}
Suppose $X$ is a $\UMD$ Banach space with type $2$.
Let $A$ be sectorial of angle $<\pi/2$. Take $r\in [2, \infty)$ and assume that $Y$ is one of the spaces
\begin{align}\label{eq:spacesY}
 D(A^{1/2}), \quad \dot{D}(A^{1/2}),   \quad [X, D(A)]_{\frac12}, \quad \text{or} \quad (X, D(A))_{\frac12,r},
\end{align}
Suppose $A\in \SMR(L^p(Y))$ for some $p\in [2, \infty)$. Then for all $q\in (2, \infty)$ and $w\in A_{q/2}$ one has $A\in \SMR(L^q(w;Y))$. In particular, the mild solution $u$ to \eqref{eq:SEEGdW} satisfies
\begin{align*}
\|u\|_{L^q(\Omega\times \R_+,w;Y)} \leq C\, [w]_{A_{q/2}}^{\max\{ \frac{1}{2}, \frac{1}{q-2}\}}\|G\|_{L^{q}(\Omega\times \R_+,w;\gamma(H,X))},
\end{align*}
where $C$ only depends on $X, A, p, q$.
\end{theorem}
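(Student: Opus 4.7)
The plan is to reformulate stochastic maximal regularity as boundedness of a $\gamma$-integral operator with convolution-type kernel, verify that this kernel is a $(1,2)$-standard kernel on $\R_+$, and then invoke the sparse domination result Theorem \ref{theorem:sparsedomination} in the homogeneous-type-space version from Section \ref{section:homogeneoustype}. Concretely, let $S(t) = e^{-tA}$ and set
\[
K(t,s) := S(t-s)\,\ind_{0<s<t}, \qquad t,s\in \R_+,
\]
viewed as an $\mc{L}(X,Y)$-valued kernel. By Proposition \ref{proposition:detcharacterization} (in the $\R_+$-version, cf.\ Section \ref{section:homogeneoustype}) and Proposition \ref{proposition:independenceH}, the hypothesis $A\in\SMR(L^p(Y))$ is equivalent to $K\in \mc{K}_\gamma(L^p(\R_+))$, and the conclusion $A\in\SMR(L^q(w;Y))$ with the stated weighted bound is equivalent to $K\in \mc{K}_\gamma(L^q(\R_+,w))$ together with the corresponding operator norm estimate. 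Thus it suffices to prove a weighted $\gamma$-kernel bound for $K$.

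Next I would verify that $K$ is a $(1,2)$-standard kernel on $\R_+$. Since $-A$ is sectorial of angle $<\pi/2$, analytic semigroup theory gives, for each of the four choices of $Y$ in \eqref{eq:spacesY}, the estimates
\[
\nrm{S(u)}_{\mc{L}(X,Y)} \lesssim u^{-1/2}, \qquad \nrm{A\,S(u)}_{\mc{L}(X,Y)} \lesssim u^{-3/2}, \qquad u>0.
\]
For $Y=\dot D(A^{1/2})$ or $D(A^{1/2})$ these reduce to the standard bounds $\|A^{1/2+k}e^{-uA}\|\lesssim u^{-1/2-k}$ for $k=0,1$; for $Y=[X,D(A)]_{1/2}$ they follow by complex interpolation between $\|Ae^{-uA}\|_{X\to X}\lesssim u^{-1}$ and $\|Ae^{-uA}\|_{X\to D(A)}\lesssim u^{-2}$; for $Y=(X,D(A))_{1/2,r}$ the analogous argument via the real method applies. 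Hence the convolution kernel $k(u):=S(u)\ind_{u>0}$, now viewed as $\mc{L}(X,Y)$-valued, satisfies the hypotheses of Lemma \ref{lemma:standardkernelderivatives} on $\R_+$ (which remains valid on the half-line by Section \ref{section:homogeneoustype}) with $d=1$, so $K$ is a $(1,2)$-standard, and in particular $(\omega,2)$-Dini, kernel.

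Finally, combining $K\in \mc{K}_\gamma(L^p(\R_+))$ (from the assumption) with the Dini condition just verified, the weighted sparse domination theorem Theorem \ref{theorem:sparsedomination}, applied in the homogeneous space $(\R_+,|\cdot|,dt)$, yields $K\in\mc{K}_\gamma(L^q(\R_+,w))$ for every $q\in(2,\infty)$ and $w\in A_{q/2}(\R_+)$, with the sharp dependence
\[
\nrm{K}_{\mc{K}_\gamma(L^q(\R_+,w))} \lesssim [w]_{A_{q/2}}^{\max\{\frac12,\frac{1}{q-2}\}}.
\]
Translating back via Propositions \ref{proposition:detcharacterization} and \ref{proposition:independenceH} gives exactly the claimed weighted bound on $u=S\diamond G$.

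The main obstacle is step two: checking the derivative bound $\|AS(u)\|_{\mc{L}(X,Y)}\lesssim u^{-3/2}$ uniformly in the interpolation parameter, especially for $Y=(X,D(A))_{1/2,r}$ where one has to be careful that the real interpolation norm is equivalent to the $K$-functional norm and that the resulting constant is independent of $u>0$. Once these semigroup estimates are in place, the reduction to Theorem \ref{theorem:sparsedomination} is automatic, and no further information about $p$ or about the geometry of $X$ beyond UMD and type $2$ is required.
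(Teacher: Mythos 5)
Your overall strategy is exactly the one the paper uses: reduce $\SMR$ to $K\in\mc{K}_\gamma(L^p(\R_+))$ via Propositions~\ref{proposition:detcharacterization} and \ref{proposition:independenceH}, verify the $(1,2)$-standard kernel condition using Lemma~\ref{lemma:standardkernelderivatives}, and invoke Theorem~\ref{theorem:sparsedomination} on the homogeneous space $\R_+$. There is, however, a genuine gap in the middle step that you have not noticed and that will cause the estimates you wrote down to fail as stated.

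The issue is that sectoriality of angle $<\pi/2$ alone does \emph{not} give $\|e^{-uA}\|_{\mc{L}(X,Y)}\lesssim u^{-1/2}$ or $\|Ae^{-uA}\|_{\mc{L}(X,Y)}\lesssim u^{-3/2}$ for large $u$, except in the single case $Y=\dot D(A^{1/2})$. Take $Y=D(A^{1/2})$ with the graph norm: then $\|e^{-uA}x\|_Y\geq\|e^{-uA}x\|_X$, which is bounded but in general does not decay, so $\|e^{-uA}\|_{\mc{L}(X,Y)}\lesssim u^{-1/2}$ cannot hold as $u\to\infty$ unless the semigroup decays. The same problem reappears in your interpolation argument for $Y=[X,D(A)]_{1/2}$ and $(X,D(A))_{1/2,r}$: the graph norm of $D(A)$ contributes an $\|Ae^{-uA}x\|_X\lesssim u^{-1}$ term, and interpolating against $\|e^{-uA}\|_{X\to X}\lesssim 1$ yields $u^{-1/2}$, not the decay you need on the other endpoint. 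What is missing is that $A$ must be \emph{invertible} (equivalently, $(e^{-tA})_{t\geq0}$ exponentially stable). This is not an assumption of the theorem, but the paper observes that it is a \emph{consequence} of the hypothesis $A\in\SMR(L^p(Y))$: by the argument in the proof of \cite[Proposition 4.8]{AV19}, stochastic maximal regularity on $\R_+$ forces $0\in\rho(A)$ in all the inhomogeneous cases. Once invertibility is in hand, the paper derives the two kernel bounds directly from the uniform moment inequality $\|x\|_Y\leq C\|x\|_X^{1/2}\|Ax\|_X^{1/2}$, which holds for each $Y$ in \eqref{eq:spacesY} and sidesteps the need to interpolate two separate semigroup estimates; this is a cleaner route to the same end and automatically settles your worry about uniformity in the real-interpolation parameter $r$. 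With that repair, your proof matches the paper's and the rest of your reduction to Theorem~\ref{theorem:sparsedomination} is correct.
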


\begin{proof}
The space $Y$ has type $2$ with $\tau_{2,Y}\leq \tau_{2,X}$, which is trivial for $D(A^{1/2})$ and $\dot{D}(A^{1/2})$, follows from \cite[Proposition 7.1.3]{HNVW17} for $[X, D(A)]_{\frac12}$ and follows from \cite[Corollary 1]{Co83} for $(X, D(A))_{\frac12,r}$.

In all cases except for $\dot{D}(A^{1/2})$ it follows from the proof of \cite[Proposition 4.8]{AV19} that $A$ is invertible. We claim that in all cases
\[\|x\|_Y \leq C\, \|x\|^{\frac12}_X \|Ax\|^{\frac12}_X, \qquad  x\in D(A).\]
Indeed, this standard interpolation estimate follows from \cite[Corollary 1.2.7 and Proposition 2.2.15]{Lu95}, \cite[Theorem 1.10.3]{Tr78} and \cite[Proposition 6.6.4]{Ha06}. Since $t\|A e^{-tA}\|_{\mc{L}(X,Y)} \leq M$ for $t\geq 0$ and some $M>0$ (see \cite[Theorem II.4.6]{EN00}), the above interpolation estimate implies
\[\|A e^{-tA}\|_{\mc{L}(X,Y)} \leq C M^{3/2} t^{-\frac32}, \qquad  t\geq 0.\]
By assumption $K\in \mathcal{K}_{W}^H(L^p(\R_+))$.  Applying Propositions \ref{proposition:detcharacterization} and \ref{proposition:independenceH} we obtain that $K\in \mathcal{K}_\gamma(L^p(\R_+))$. Next we will check the conditions of Theorem \ref{theorem:sparsedomination} for the homogenous space $\R_+$ (see Section \ref{section:homogeneoustype}).

Let $k(t) = \ee^{-tA}$. By the analyticity of the semigroup and the above estimate, we find that for $t>0$ and
\[ \|k'(t)\|_{\mc{L}(X,Y)} = \|A e^{tA}\|_{\mc{L}(X,Y)}\leq C\,M\, t^{-\frac32}.\]
Therefore, by Lemma \ref{lemma:standardkernelderivatives} we know that $K$ is a $(1, 2)$-standard kernel with
\[\|K\|_{(1, 2)-{\rm Std}} \leq C\,M. \]
Now Theorem \ref{theorem:sparsedomination} gives that $K\in \mathcal{K}_\gamma(L^q(\R_+,w))$.
Propositions \ref{proposition:detcharacterization} and \ref{proposition:independenceH} then imply $K\in \mathcal{K}_W^H(L^q(\R_+,w))$ with the claimed estimate.
\end{proof}

\begin{remark}\label{remark:severalthingonSMR}
\
\begin{enumerate}[{(i)}]
\item \label{it:finitetimeSMR} Combining Theorem \ref{theorem:SMRsemigroup} with \cite[Section 5]{AV19}, similar results as in Theorem \ref{theorem:SMRsemigroup} hold on finite time intervals $(0,T)$. Alternatively, this can be deduced by applying Theorem \ref{theorem:sparsedomination} on $(0,T)$ (see also Section \ref{section:homogeneoustype})
\item In general the result of Theorem \ref{theorem:SMRsemigroup} does not hold in the endpoint $q=2$. A counterexample can be found in \cite[Section 6]{NVW12}.
\item Arguing as in the proof of Theorem \ref{theorem:SMRsemigroup} but with
$$K(t,s) = (t-s)^{-\theta} A^{\frac12-\theta} e^{-(t-s)A},$$ it follows that for any $\theta\in (0,\frac12)$ the property $A\in \SMR_{\theta}(p,\infty)$ introduced in \cite{AV19} is $p$-independent.
\item \label{it:admissibleY} From the proof it is clear that Theorem \ref{theorem:SMRsemigroup} holds for any space $Y\hookrightarrow X$ such that $e^{-tA}:X\to Y$ with
\[\|e^{-tA}\|_{\mc{L}(X,Y)}\leq C t^{-\frac12},  \qquad t>0.\]
\end{enumerate}
\end{remark}

\begin{corollary}\label{corollary:HScaseSMR}
Let $X$ be a Hilbert space and let $Y$ be any of the spaces in \eqref{eq:spacesY} with $r=2$. Suppose that $A$ is sectorial of angle $<\pi/2$ on $X$. Then the following are equivalent:
\begin{enumerate}[{(i)}]
\item\label{it:HScaseSMR2} There exists a constant $C>0$ such that
\[\|t\mapsto e^{-tA}x\|_{L^2(\R_+;Y)}\leq C\,\|x\|_X, \qquad x\in X;\]
\item\label{it:HScaseSMR3} For all $p\in (2, \infty)$ and all $w\in A_{p/2}$ (and $p=2$, $w\equiv 1$) we have $A\in \SMR(L^p(w;Y))$.
\item\label{it:HScaseSMR1} $A\in \SMR(L^p(Y))$ for some $p\in [2, \infty)$.
\end{enumerate}
\end{corollary}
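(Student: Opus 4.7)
The plan is to prove the cycle $(\text{i}) \Rightarrow (\text{iii}) \Rightarrow (\text{ii}) \Rightarrow (\text{iii}) \Rightarrow (\text{i})$, after making one preliminary observation that unlocks everything.

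\textbf{Preliminary observation.} For every choice of $Y$ in the list (with $r=2$), since $X$ is a Hilbert space, $Y$ is again a Hilbert space: this is clear for $D(A^{1/2})$ and $\dot D(A^{1/2})$, it follows from the fact that complex interpolation of Hilbert spaces is Hilbert for $[X,D(A)]_{1/2}$, and from the fact that real interpolation with $q=2$ of Hilbert spaces is Hilbert for $(X,D(A))_{1/2,2}$. In particular $\gamma(\R_+;Y) = L^2(\R_+;Y)$ isomorphically, $Y$ has type $2$ with $\tau_{2,Y}=1$, and the results of Sections \ref{section:SIO}--\ref{section:sparse} in the homogeneous form of Section \ref{section:homogeneoustype} (applied to $\R_+$) are all available. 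Moreover, the kernel of the stochastic convolution is $K(s,t) = e^{-(s-t)A}\ind_{t<s} = k(s-t)$ where $k(u) := e^{-uA}\ind_{u>0}$, i.e.\ $K$ is of convolution type, so that Proposition \ref{proposition:detcharacterization} together with Proposition \ref{proposition:independenceH} identify $A\in\SMR(L^p(w;Y))$ with $K \in \mc K_\gamma(L^p(\R_+,w))$.

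\textbf{The implications.} The implication $(\text{ii}) \Rightarrow (\text{iii})$ is immediate (take $p=2$, $w\equiv 1$), and $(\text{iii}) \Rightarrow (\text{ii})$ is exactly Theorem \ref{theorem:SMRsemigroup}. For $(\text{iii}) \Rightarrow (\text{i})$, if $A\in\SMR(L^p(Y))$, then $K\in \mc K_\gamma(L^p(\R_+)) \subseteq \mc K_\gamma(L^{p,\infty}(\R_+))$. Applying the convolution-kernel necessary condition of Proposition \ref{proposition:Kconvolutionnecessary} (in the $\R_+$ version) to $k(\cdot)x$ yields
\[
\|t \mapsto e^{-tA}x\|_{\gamma(\R_+;Y)} \leq C \, \|K\|_{\mc K_\gamma(L^{p,\infty}(\R_+))}\,\|x\|_X,
\]
and the Hilbert identification $\gamma(\R_+;Y) = L^2(\R_+;Y)$ converts this to (i). For $(\text{i}) \Rightarrow (\text{iii})$ with $p=2$, observe that for each fixed $t\in\R_+$ the change of variable $u=s-t$ and the Hilbert identification give
\[
\|s \mapsto K(s,t)x\|_{L^2(\R_+;Y)} = \|u \mapsto e^{-uA}x\|_{L^2(\R_+;Y)} \leq C\|x\|_X,
\]
so Proposition \ref{proposition:simplesufficient}\ref{it:sufficientL2} (on $\R_+$) yields $K\in\mc K_\gamma(L^2(\R_+))$, i.e.\ $A\in\SMR(L^2(Y))$.

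\textbf{Expected obstacles.} There is no really hard step here; every ingredient is stated earlier in the paper. The one point requiring a careful check is the initial Hilbert space identification of the interpolation spaces $Y$ and the verification that $\gamma(\R_+;Y) = L^2(\R_+;Y)$ isomorphically, which is what collapses the distinction between Hörmander/Dini hypotheses and the single $L^2$-square-function condition in (i). Once this is in place, the four implications are essentially an application of the deterministic-characterization/independence-of-$H$ dictionary (Propositions \ref{proposition:detcharacterization} and \ref{proposition:independenceH}), together with the convolution necessary condition, the simple $L^2$ sufficient condition, and Theorem \ref{theorem:SMRsemigroup} for the weighted upgrade.
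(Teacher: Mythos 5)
Your approach is essentially the same as the paper's (identify $Y$ as a Hilbert space, use $\gamma(\R_+;Y)=L^2(\R_+;Y)$, then combine Proposition \ref{proposition:simplesufficient}\ref{it:sufficientL2}, Proposition \ref{proposition:Kconvolutionnecessary}, and Theorem \ref{theorem:SMRsemigroup}), but there is one small logical gap in your chain: the claim that ``$(\text{iii})\Rightarrow(\text{ii})$ is exactly Theorem \ref{theorem:SMRsemigroup}'' is not correct. Theorem \ref{theorem:SMRsemigroup} produces $A\in\SMR(L^q(w;Y))$ only for $q\in(2,\infty)$; it does \emph{not} yield the endpoint case $p=2$, $w\equiv 1$ that is explicitly included in statement (ii), and indeed Remark \ref{remark:severalthingonSMR} notes that this endpoint can fail in general. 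The whole reason (ii) includes the $p=2$ clause here is the Hilbert space structure, and that has to be used to recover it: one must go $(\text{iii})\Rightarrow(\text{i})$ (via Proposition \ref{proposition:Kconvolutionnecessary}) and then $(\text{i})\Rightarrow A\in\SMR(L^2(Y))$ (via Proposition \ref{proposition:simplesufficient}\ref{it:sufficientL2}), exactly as you do elsewhere in your cycle.

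The fix is trivial because you already prove $(\text{iii})\Rightarrow(\text{i})$ and $(\text{i})\Rightarrow A\in\SMR(L^2(Y))$: just state that the $p=2$ part of (ii) follows by composing these two implications, while the $p\in(2,\infty)$, $w\in A_{p/2}$ part comes from Theorem \ref{theorem:SMRsemigroup}. This is precisely what the paper does by proving $(\text{i})\Rightarrow(\text{ii})$ directly: first $A\in\SMR(L^2(Y))$ from the square-function bound, then the weighted $p>2$ cases by extrapolation. Your preliminary observation (the Hilbert identification of the interpolation spaces and of $\gamma(\R_+;Y)$) is exactly right and is what makes Corollary \ref{corollary:firstextrapol}-type arguments available here.
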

\begin{proof}

Note that $Y$ is a Hilbert space. For \ref{it:HScaseSMR2}$\Rightarrow$\ref{it:HScaseSMR3} define $K(t,s)\in \mc{L}(X,Y)$ by
$$K(t,s) = e^{-(t-s)A} \ind_{s <t}.$$ From Proposition \ref{proposition:simplesufficient}\ref{it:sufficientL2} we obtain $K\in \mathcal{K}_\gamma(L^2(\R_+))$. Therefore $K\in \mathcal{K}_W^H(L^2(\R_+))$ by Propositions \ref{proposition:detcharacterization} and \ref{proposition:independenceH}, so the result follows from Theorem \ref{theorem:SMRsemigroup}. \ref{it:HScaseSMR3}$\Rightarrow$\ref{it:HScaseSMR1} is trivial and \ref{it:HScaseSMR1}$\Rightarrow$\ref{it:HScaseSMR2} follows from Proposition \ref{proposition:Kconvolutionnecessary} combined with Propositions \ref{proposition:detcharacterization} and \ref{proposition:independenceH}.
\end{proof}

\begin{remark}\label{remark:noHinftymagreg}~
\begin{enumerate}[(i)]
\item Corollary \ref{corollary:HScaseSMR}\ref{it:HScaseSMR2} is equivalent to the {\em admissibility} of $A^{1/2}$ and is connected to the {\em Weiss conjecture}, which was solved negatively (See \cite{JZ04}, \cite[Theorem 5.5]{Le03} and references therein).
\item It is well-known that there exist operators $A$ on a Hilbert space $X$ such that $-A$ generates an analytic semigroup which is exponentially stable and
    \begin{align*}
    \|t\mapsto A^{\frac12} e^{-tA} x\|_{L^2(\R_+;X)} \leq C\|x\|_X,
      \intertext{but}
      c\|x\|_X\nleq \|t \mapsto A^{\frac12} e^{-tA} x\|_{L^2(\R_+;X)}.
    \end{align*}
     Such $A$ can be constructed as in \cite[Theorem 5.5]{Le03} (see \cite[Section 5.2]{AV19} for details), and  does not have a bounded $H^\infty$-calculus. On the other hand, Corollary \ref{corollary:HScaseSMR} implies $A\in \SMR(L^p(w;D(A^{1/2})))$ for all $p\in [2, \infty)$ and $w\in A_{p/2}$ (with $w=1$ if $p=2)$, which shows that having a bounded $H^\infty$-calculus is not necessary for stochastic maximal regularity.
\end{enumerate}
\end{remark}

\begin{theorem}[Real interpolation scale]\label{theorem:realinterpMR}
Let $E$ be a $\UMD$ Banach space with type $2$, let $A$ be sectorial of angle $<\pi/2$ and assume $0\in \rho(A)$. Let $\theta\in (0,1)$ and $q\in [2, \infty)$. Define $X = D_A(\theta,q)$ and
$Y = D_A(\theta+\frac12,q)$.  Then for all $p\in (2, \infty)$ and $w\in A_{p/2}$, one has
$A\in \SMR(L^p(w,Y))$ (the case $p=2$ and $w=1$ is allowed as well if $q=2$). In particular, the solution $u$ to \eqref{eq:SEEGdW} satisfies
\begin{align}\label{eq:realintest}
\|A^{\frac12} u\|_{L^p(\Omega\times \R_+,w;D_A(\theta,q))} \lesssim [w]_{A_{p/2}}^{\max\{\frac{1}{2}, \frac{1}{p-2}\}}\|G\|_{L^{p}(\Omega\times \R_+,w;\gamma(H,D_A(\theta,q)))},
\end{align}
where the implicit constant only depends on $E, A,\theta, p, q$.
\end{theorem}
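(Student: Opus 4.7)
The plan is to reduce the theorem to Theorem \ref{theorem:SMRsemigroup} applied to the part of $A$ in $X = D_A(\theta, q)$. Let $\widetilde A$ denote this part; using $0 \in \rho(A)$ and sectoriality of $A$ of angle $<\pi/2$, standard arguments give that $\widetilde A$ is sectorial on $X$ of the same angle with $0 \in \rho(\widetilde A)$ and $D(\widetilde A) = D_A(\theta+1, q)$. The reiteration theorem for real interpolation then identifies $Y = D_A(\theta+\tfrac{1}{2}, q) = (X, D(\widetilde A))_{1/2, q}$, placing $Y$ among the admissible target spaces of Theorem \ref{theorem:SMRsemigroup} applied to the pair $(X, \widetilde A)$ with $r = q$. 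The space $X$ itself is UMD (UMD is inherited by real interpolation spaces $(X_0, X_1)_{\theta, q}$ with $q \in (1, \infty)$, via the standard retraction/coretraction onto $L^q(\R_+, \mathrm{d}t/t; E + D(A))$) and has type $2$ (by \cite[Corollary 1]{Co83}, using $q \geq 2$).

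The crucial step is to establish an initial SMR estimate $\widetilde A \in \SMR(L^q(Y))$. This is the classical real-interpolation SMR estimate of \cite{Br95, DL98, BH09} (see also \cite[Proposition 6.32]{DPZ14}), whose proof uses the $K$-functional characterization
\[
\|y\|_Y^q \eqsim \|y\|_X^q + \int_0^\infty t^{q/2-1} \|\widetilde A e^{-t\widetilde A} y\|_X^q\, \mathrm{d}t,
\]
combined with It\^o's isomorphism in $X$, the type $2$ property, the decay $\|\widetilde A e^{-t\widetilde A}\|_{\mathcal{L}(X)} \lesssim t^{-1}$ and a Fubini argument. This is the main obstacle of the proof; in the case $q = 2$ the estimate reduces to a Hilbert-space-type square function bound, which explains why the endpoint $p = 2$, $w \equiv 1$ is available when $q = 2$.

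Armed with $\widetilde A \in \SMR(L^q(Y))$, Theorem \ref{theorem:SMRsemigroup} applied to the pair $(X, \widetilde A)$ with target $(X, D(\widetilde A))_{1/2, q}$ upgrades the estimate to all $p \in (2, \infty)$ and $w \in A_{p/2}$, yielding the sharp dependence $[w]_{A_{p/2}}^{\max\{1/2,\, 1/(p-2)\}}$ on the weight characteristic. Since the mild solution to \eqref{eq:SEEGdW} coincides with the one constructed from $\widetilde A$, and since $0 \in \rho(A)$ implies that $A^{1/2}\colon D_A(\alpha, q) \to D_A(\alpha - \tfrac{1}{2}, q)$ is an isomorphism (so that $\|u\|_Y \eqsim \|A^{1/2} u\|_X$), the claimed estimate \eqref{eq:realintest} follows.
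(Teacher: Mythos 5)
Your proposal matches the paper's first proof of Theorem \ref{theorem:realinterpMR} essentially step for step: pass to the part $\widetilde A$ of $A$ on $X = D_A(\theta,q)$, identify $Y$ as $(X, D(\widetilde A))_{1/2,q}$ by reiteration, invoke the classical real-interpolation SMR result of \cite{Br95,DL98,BH09} to get $\widetilde A \in \SMR(L^q(Y))$, then apply Theorem \ref{theorem:SMRsemigroup} and the isomorphism $A^{1/2}\colon D_A(\theta+\tfrac12,q) \to D_A(\theta,q)$. The only difference is cosmetic choice of references for the UMD/type-2 facts about $X$; the paper also gives a second, self-contained proof via an explicit square-function computation for $p=q=2$ followed by interpolation, which your argument does not attempt but is not required to.
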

\begin{proof}[First proof]
Note that $X$ is a $\UMD$ Banach space with type $2$ by \cite[Proposition 4.2.17]{HNVW16} and
$-A$ is the generator of an exponentially stable analytic semigroup on $X$ with domain $D_A(\theta+1,q)$ by \cite[Proposition 2.2.7]{Lu95}. Moreover, we have $Y = (X,D_A(\theta+1,q))_{\frac12,q}$. It follows from \cite{DL98} (see also \cite[Theorem 5.1]{BH09}) and \cite[Theorem 5.2]{AV19} that $A\in \SMR(L^q(Y))$. Therefore, the required result follows from Theorem \ref{theorem:SMRsemigroup}. The claimed norm estimate follows since $A^{1/2}$ maps $D_A(\theta+\frac12,q)$ isomorphically to $D_A(\theta,q)$ (see \cite[Theorem 1.15.2]{Tr78}).
\end{proof}

Next we give a self-contained proof.
\begin{proof}[Second proof]
First consider the case $p=q=2$. By Propositions \ref{proposition:simplesufficient}\ref{it:sufficientL2}, \ref{proposition:detcharacterization} and \ref{proposition:independenceH} and \cite[Theorem 1.15.2]{Tr78} it suffices to show
\begin{align}\label{eq:squarefuncreal}
\text{\framebox[15pt]{A}}:=\Big(\int_0^\infty \|A^{\frac12} e^{-t A} x\|_{D_A(\theta,2)}^2\dd t\Big)^{\frac12}  \leq C \|x\|_{D_A(\theta,2)}.
\end{align}
Since $D_A(\theta,2) = D_{A^2}(\theta/2,2)$ (see \cite[Theorem 1.15.2]{Tr78}), by \cite[Theorem 1.14.5]{Tr78} we can write
\begin{align*}
\text{\framebox[15pt]{A}\hspace{1pt}}^2 &\eqsim \int_0^\infty \|A^{\frac12} e^{-t A} x\|_{D_{A^2}(\theta/2,2)}^2\dd t
\\ & \eqsim \int_0^\infty \int_0^{\infty} r^{4(1-\frac{\theta}{2})}  \|A^{\frac{5}2} e^{-(r+t) A} x\|_{E}^2 \frac{\ddn r}{r} \dd t
\\ & \lesssim \int_0^\infty  \int_0^\infty  (t+r)^{-3} r^{4(1-\frac{\theta}{2})}  \|A e^{-r A} x\|^2_E \frac{\ddn r}{r} \dd t
\\ &  = 2 \int_0^\infty   r^{2(1-\theta)}  \|A e^{-r A} x\|_{E}^2 \frac{\ddn r}{r}
 \simeq \|x\|_{D_A(\theta,2)}^2
\end{align*}
which gives the required estimate \eqref{eq:squarefuncreal}.

From the previous case and Theorem \ref{theorem:SMRsemigroup} we obtain stochastic maximal $L^p$-regularity for $p\in [2, \infty)$ in the case $q=2$. Thus using Propositions \ref{proposition:detcharacterization} and \ref{proposition:independenceH} to take $H=\R$, the mapping
$$S(G)(t) := \int_0^t A^{\frac12}e^{-(t-s) A} G(s)\dd W(s), \qquad t \in \R_+$$
is bounded from $L^p(\R_+;D_A(\theta,2))$ to $L^p(\R_+\times \Omega;D_A(\theta,2))$ for all $\theta\in (0,1)$ and $p\in [2, \infty)$. By \cite[1.10 and 1.18.4]{Tr78} one has
\[(L^{q}(\R_+;D_A(\theta-\varepsilon,2)),L^{q}(\R_+;D_A(\theta+\varepsilon,2)))_{\frac12,q} = L^{q}(\R_+;D_A(\theta,q))\]
for $\varepsilon\in (0,\min\{\theta,1-\theta\})$ and the same holds with $\R_+$ replaced by $\R_+\times \Omega$. It follows from \cite[Theorem 1.3.3]{Tr78} that $S$ is bounded from  $L^q(\R_+;D_A(\theta,q))$ into $L^q(\R_+\times \Omega;D_A(\theta,q))$. Applying
Propositions \ref{proposition:detcharacterization} and \ref{proposition:independenceH} once more to recover a general cylindrical Brownian motion $W_H$, we obtain the stochastic maximal regularity for $p=q\in [2, \infty)$. Now another application of Theorem \ref{theorem:SMRsemigroup} gives the result for all required $p$, $q$ and weights $w\in A_{p/2}$. The claimed norm estimate again follows since $A^{1/2}$ maps $D_A(\theta+\frac12,q)$ isomorphically to $D_A(\theta,q)$ (see \cite[Theorem 1.15.2]{Tr78}).
\end{proof}

\begin{remark}\label{remark:realint}~
\begin{enumerate}[(i)]
  \item By carefully checking the proofs of Theorems \ref{theorem:SMRsemigroup} and \ref{theorem:realinterpMR} (and in particular Proposition \ref{proposition:detcharacterization}) one sees that Theorem \ref{theorem:realinterpMR} actually holds for all martingale type $2$ spaces $E$. As mentioned in Remark \ref{remark:severalthingonSMR}\ref{it:finitetimeSMR}, Theorem \ref{theorem:realinterpMR} holds on finite time intervals as well and in this case we only need that $A+\lambda$ is sectorial of angle $<\pi/2$ for some $\lambda\in \R$.
  \item Theorem \ref{theorem:realinterpMR} extends \cite[Theorem 5.1]{BH09} and \cite{DL98} to the case where $p\neq q$ and to the weighted setting.  Note that even for $w=1$ one cannot obtain Theorem \ref{theorem:realinterpMR} from the case $p=q$ and a real interpolation argument. Indeed, in general for an interpolation couple $(X_0, X_1)$ one has (see \cite{Cw74})
\[(L^{p_0}(\Omega\times \R_+;X_0),L^{p_1}(S;X_1))_{\theta,q} \neq L^{p_{\theta}}(\Omega\times\R_+;(X_0, X_1)_{\theta,q}),\]
with $\frac{1}{p_{\theta}} = \frac{1-\theta}{p_0} + \frac{\theta}{p_1}$. The equality does hold if $q=p_{\theta}$.

\item The assumption $0\in \rho(A)$ in Theorem \ref{theorem:realinterpMR} is needed in general. Indeed, there exists a bounded sectorial operator $A$ on a Hilbert space $E$ such that \eqref{eq:squarefuncreal} does not hold (see \cite[Corollary 10.2.29 and Theorem 10.4.21]{HNVW16}). Since in this case $D_A(\theta,p) = E$ for all $\theta\in (0,2)$ and $p\in [1, \infty]$, Propositions \ref{proposition:detcharacterization}, \ref{proposition:independenceH} and \ref{proposition:Kconvolutionnecessary} imply that \eqref{eq:realintest} cannot hold.
\end{enumerate}
\end{remark}

We end this subsection with another result for real interpolation spaces. It extends \cite[(4.10)]{Br95} to the case $p\in (2, \infty)$ and to the setting of infinite time intervals.
\begin{theorem}
Let $E$ be a $\UMD$ Banach space with type $2$ and let $A$ be sectorial of angle $<\pi/2$ on $E$. Let $X = D_A(\frac12, 2)$ and $Y = \dot{D}(A)$. Then for all $p\in (2, \infty)$ and $w\in A_{p/2}$, one has
$A\in \SMR(L^p(w,Y))$ (the case $p=2$ and $w=1$ is allowed as well). In particular, the solution $u$ to \eqref{eq:SEEGdW} satisfies
\begin{align*}
\|A u\|_{L^p(\Omega\times \R_+,w;E)} \leq C\, [w]_{A_{p/2}}^{\max\{ \frac{1}{2}, \frac{1}{p-2}\}}\|G\|_{L^{p}(\Omega\times \R_+,w;\gamma(H,D_A(\frac12,2)))},
\end{align*}
where $C$ only depends on $E,A,p$.
\end{theorem}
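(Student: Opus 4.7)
The plan is to reduce the desired SMR estimate to the boundedness of a $\gamma$-integral operator, and then apply the sparse-domination extrapolation of Theorem \ref{theorem:sparsedomination} on the homogeneous-type space $\R_+$. Set
\begin{equation*}
  K(t,s) := Ae^{-(t-s)A}\ind_{0\le s < t}, \qquad s,t \in \R_+,
\end{equation*}
viewed as an element of $\mc{L}(X,E)$. Since $\nrm{u(t)}_{\dot{D}(A)} = \nrm{Au(t)}_E$ and $Au(t)$ is the stochastic integral with kernel $K$, the theorem reduces by Propositions \ref{proposition:detcharacterization} and \ref{proposition:independenceH} (extended to $\R_+$ as in Section \ref{section:homogeneoustype}) to showing that $K \in \mc{K}_\gamma(L^q(\R_+,w))$ for every $q \in (2,\infty)$ and $w \in A_{q/2}$, with the claimed weight dependence. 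Both $E$ and $X = D_A(\tfrac12,2)$ have type $2$ (the latter by \cite[Corollary 1]{Co83}), so the hypotheses of Theorem \ref{theorem:sparsedomination} will apply to this kernel.

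For the $L^2$-base case I use Proposition \ref{proposition:simplesufficient}\ref{it:sufficientL2}: for $x \in X$ and $s \in \R_+$,
\begin{equation*}
  \nrmb{t \mapsto K(t,s)x}_{L^2(\R_+;E)}^2 = \int_0^{\infty}\nrm{Ae^{-rA}x}_E^2\,\dd r \leq C\,\nrm{x}_{D_A(1/2,2)}^2,
\end{equation*}
which is the standard semigroup characterization of $D_A(\tfrac12,2)$. Integrability at infinity follows from $\nrm{Ae^{-rA}}\lesssim r^{-1}$, valid for any sectorial operator of angle $<\pi/2$ (so no assumption $0\in\rho(A)$ is needed). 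This already yields the case $p=2$, $w\equiv 1$, and moreover places $K$ in $\mc{K}_\gamma(L^{2,\infty}(\R_+))$, which is exactly the starting hypothesis of Theorem \ref{theorem:sparsedomination}.

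Next I verify that $K$ is a $(1,2)$-standard kernel. For $t > s$ one has $\partial_t K(t,s) = -A^2e^{-(t-s)A}$ and $\partial_s K(t,s) = A^2 e^{-(t-s)A}$, so by Lemma \ref{lemma:standardkernelderivatives} (on $\R_+$) it suffices to prove
\begin{equation*}
  \nrm{A^2 e^{-rA}}_{\mc{L}(X,E)} \lesssim r^{-3/2}, \qquad r>0.
\end{equation*}
Factor $A^2 e^{-rA} = (Ae^{-rA/2})(Ae^{-rA/2})$: the left factor is bounded on $E$ with norm $\lesssim r^{-1}$ by analyticity, and for the right factor, real interpolation between $Ae^{-rA/2}\colon E \to E$ (norm $\lesssim r^{-1}$) and $Ae^{-rA/2}\colon D(A) \to E$ (norm $\lesssim 1$, using $Ae^{-rA/2}y = e^{-rA/2}(Ay)$), together with $(E,D(A))_{1/2,2} = D_A(\tfrac12,2) = X$, gives $\nrm{Ae^{-rA/2}}_{\mc{L}(X,E)} \lesssim r^{-1/2}$. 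Multiplying the two factors produces the $r^{-3/2}$ bound.

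With $K$ now known to be a $(1,2)$-standard kernel and $L^2$-bounded, Theorem \ref{theorem:sparsedomination} applied on $\R_+$ yields $K \in \mc{K}_\gamma(L^q(\R_+,w))$ for all $q \in (2,\infty)$ and $w \in A_{q/2}$, with operator norm bounded by $C\,[w]_{A_{q/2}}^{\max\{1/2,\,1/(q-2)\}}$. Transferring back to the stochastic setting via Propositions \ref{proposition:detcharacterization} and \ref{proposition:independenceH} concludes the proof. The principal technical point is the real-interpolation factorization argument used to establish the $r^{-3/2}$ derivative bound; the remainder is a routine application of the framework developed in Sections \ref{section:extrapolation}--\ref{section:homogeneoustype}.
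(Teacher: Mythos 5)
Your proof is correct and follows essentially the same route as the paper: establish the $p=2$, $w\equiv 1$ case from the square-function characterization of $D_A(\frac12,2)$, then extrapolate via the sparse-domination machinery on $\R_+$. The only cosmetic difference is that the paper reaches the $(1,2)$-standard kernel estimate by invoking Theorem \ref{theorem:SMRsemigroup} through Remark \ref{remark:severalthingonSMR}\ref{it:admissibleY}, checking $\nrm{e^{-tA}}_{\mc{L}(X,Y)}\lesssim t^{-1/2}$ via the embedding $D_A(\frac12,2)\hookrightarrow D_A(\frac12,\infty)$, whereas you inline that step with a real-interpolation factorization of $A^2e^{-rA}$ — these are the same computation packaged differently.
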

\begin{proof}
Note that as in the first proof of Theorem \ref{theorem:realinterpMR}, $A$ is sectorial of angle $<\pi/2$ on the space $X$.
For $p=2$, as in the second proof of Theorem \ref{theorem:realinterpMR}, it suffices to prove the following variant of \eqref{eq:squarefuncreal}
\begin{align*}
\Big(\int_0^\infty \|A e^{-t A} x\|_{E}^2\dd t\Big)^{\frac12}  \leq C \|x\|_{D_A(\frac12,2)}.
\end{align*}
The latter estimate is immediate from the definition of $D_A(\frac12,2)$. It remains to apply Theorem \ref{theorem:SMRsemigroup}. For this (see Remark \ref{remark:severalthingonSMR}\ref{it:admissibleY}) it suffices to check $\|e^{-tA}\|_{\mc{L}(X,Y)}\leq Ct^{-\frac12}$, which follows from
\[\sup_{t>0}\|t^{\frac12} A e^{-tA}x\|_E  \leq \|x\|_{D_A(\frac12,\infty)}\lesssim \|x\|_{D_A(\frac12, 2)  },\]
where we used \cite[Theorems 1.3.3(d) and 1.14.5]{Tr78}.
\end{proof}

\subsection{Stochastic heat equation on $\R^d$}
Next we continue with the stochastic heat equation. We will show that using only extrapolation results for stochastic singular integrals one can deduce the stochastic maximal $L^p(L^q)$-regularity results in \cite{Kr00} and \cite{NVW12}. Moreover we actually obtain results with weights in time. One can check that the proof of \cite{NVW12} based on the boundedness of the $H^\infty$-calculus of $-\Delta$ actually also gives the result with weights in time, and moreover $A_q$-weights in spaces could be added as well. Still we find it illustrative to show in the example below that the $L^2(L^2)$-case can be combined with extrapolation arguments to deduce the weighted $L^p(L^q)$-case for all $p\in (2, \infty)$ and $q\in [2, \infty)$.
For details on Bessel potential spaces we refer to \cite{Tr78}.
\begin{example}[Stochastic heat equation in Bessel-potential spaces]\label{ex:BesselHeat}
Let $m\in \N$, $s\in \R$, $p,q\in (2, \infty)$ and $w\in A_{p/2}$ (or $p=q=2$, $w \equiv 1$). On $\R^d$ consider
\begin{equation}\label{eq:SEEGdWheat}
\begin{cases}
du + (-\Delta)^m u\dd t = G \dd W_H, & \text{on $\R_+$},\\
u(0)=0,
\end{cases}
\end{equation}
where $G\in L^{p}_{\ms{F}}(\Omega\times \R_+,w;H^{s,q}(\R^d;H))$. Then the mild solution $u$ to \eqref{eq:SEEGdWheat} satisfies
\begin{align*}
\|(-\Delta)^{\frac{m}2}u\|_{L^p(\Omega\times \R_+,w;H^{s,q}(\R^d))} \leq C\, [w]_{A_{p/2}}^{\max\{ \frac{1}{2}, \frac{1}{p-2}\}}\|G\|_{L^{p}(\Omega\times \R_+,w;H^{s,q}(\R^d;H))},
\end{align*}
where $C$ only depends on $p,q,d$.
\end{example}
\begin{proof}
By lifting we may assume $s=0$ (see \cite[Theorems 2.3.2-2.3.4]{Tr78}). First suppose $q=2$. It suffices to check Corollary \ref{corollary:HScaseSMR}\ref{it:HScaseSMR2}. Note for any $f\in L^2(\R^d)$ by Plancherel's theorem
\begin{align*}
\int_{\R_+} \int_{\R^d}|(-\Delta)^{\frac{m}2} e^{t\Delta} f(x)|^2 \dd x \dd t & = \int_{\R_+} \int_{\R^d} (2\pi|\xi|)^{2m} e^{-2(2\pi|\xi|)^{2m} t} |\widehat{f}(\xi)|^2 \dd \xi \dd t
\\ & = \int_{\R^d} \int_{\R_+}(2\pi|\xi|)^{2m} e^{-2(2\pi|\xi|)^{2m} t} |\widehat{f}(\xi)|^2 \dd t \dd \xi
\\ & = \frac12 \int_{\R^d} |\widehat{f}(\xi)|^2 \dd \xi = \frac12 \|f\|_{L^2(\R^d)}^2.
\end{align*}
Therefore by Corollary \ref{corollary:HScaseSMR} we find the desired result for $q=2$.

From the extrapolation theorem \cite[Theorem 5.2 and Example 5.4]{KK16} we obtain the result for $w=1$ and $p=q\in (2, \infty)$. An application of Theorem \ref{theorem:SMRsemigroup} gives the required estimate for all $p,q\in (2, \infty)$ and $w\in A_{p/2}$.
\end{proof}

Next we prove a similar result on Besov spaces. For details on Besov space we refer to \cite{Tr78}.
\begin{example}[Stochastic heat equation in Besov spaces]\label{ex:BesovHeat}
Let $m\in \N$, $r\in [2,\infty)$, $s\in \R$, $p,q\in (2, \infty)$ and $w\in A_{p/2}$ (or $p=q=2$, $w\equiv 1$). On $\R^d$ consider
\begin{equation}\label{eq:SEEGdWheatalmost}
\begin{cases}
du +(1-\Delta)^m u\dd t = G \dd W_H, & \text{on $\R_+$},\\
u(0)=0,
\end{cases}
\end{equation}
where $G\in L^{p}_{\ms{F}}(\Omega\times \R_+,w;B^{s}_{r,q}(\R^d;H))$.
Then the mild solution $u$ to \eqref{eq:SEEGdWheatalmost} satisfies
\begin{align*}
\|(1-\Delta)^{\frac{m}2}u\|_{L^p(\Omega\times \R_+,w;B^{s}_{r,q}(\R^d))} \leq C\, [w]_{A_{p/2}}^{\max\{ 1, \frac{1}{p-2}\}}\|G\|_{L^{p}(\Omega\times \R_+,w;B^{s}_{r,q}(\R^d;H))},
\end{align*}
where $C$ only depends on $p$, $q$, $r$, $s$, $d$.
\end{example}

\begin{proof}
Again by lifting (see \cite[Theorem 2.3.4]{Tr78}) we may assume $s := 2m\theta\in (0,2m)$. Let $E = L^r(\R^d)$ and define
\begin{equation*}
  (A,D(A)) := ((1-\Delta)^m, W^{2m,r}(\R^d)).
\end{equation*}
 Then $A$ is sectorial of angle $0$ and $0\in \rho(A)$ on $E$. Since $D_A(\theta,q) = B^{s}_{r,q}$ (see \cite[Remark 2.4.2.4]{Tr78}) the result follows from Theorem \ref{theorem:realinterpMR}.
\end{proof}

\begin{remark}~
\begin{enumerate}[(i)]
\item There is an inconsistency between the equations \eqref{eq:SEEGdWheat} and \eqref{eq:SEEGdWheatalmost} ($-\Delta$ vs. $1-\Delta$). The reason to consider $1-\Delta$ is that one has the restriction $0\in \rho(A)$ in Theorem \ref{theorem:realinterpMR}. With a different proof one can also consider Example \ref{ex:BesovHeat}  with $1-\Delta$ replaced by $-\Delta$. For example one can obtain this by a real interpolation argument in Example \ref{ex:BesselHeat}. To avoid adaptedness problems in the interpolation argument one can first consider deterministic $G$ and afterwards apply Proposition \ref{proposition:detcharacterization}.
\item The results of Examples \ref{ex:BesselHeat} and \ref{ex:BesovHeat} are incomparable except if $r=q=2$ (see \cite[Theorem 2.3.9]{Tr83}). A similar example could be proved for Triebel--Lizorkin spaces, by using \cite{NVW12} and the boundedness of the $H^\infty$-calculus of $1-\Delta$ on $F^{s}_{r,q}(\R^d)$, which can be proved as in \cite{HNVW17} with the Mihlin multiplier theorem \cite[Theorem 2.3.7]{Tr83}. We do not see a way to prove this using just extrapolation.
\end{enumerate}

\end{remark}

\subsection{Stochastic heat equation on a wedge}
Our next application is an $L^p(L^q)$-version of the stochastic maximal regularity result in \cite{CKLL18} for the stochastic heat equation on an angular domain. The deterministic setting was considered in \cite[Theorem 1.1]{So01} and later improved in \cite[Theorem 1.1]{Na01} and \cite[Corollary 5.2]{PS07}. At the moment it is unclear whether the Dirichlet Laplacian $-\Delta$ on an angular domain has a bounded $H^\infty$-calculus, and how to characterize $D((-\Delta)^{1/2})$ in terms of weighted Sobolev spaces. Therefore, we cannot apply \cite{NVW12} and instead we will use \cite{CKLL18} and extrapolation theory to derive $L^p(L^q)$-regularity results.
\begin{example}\label{ex:wedge}
Let $\kappa\in (0,2\pi)$.
On the wedge
$$D:=\{x\in \R^2: x = (r\cos(\varphi), r\sin(\varphi)), r>0, \varphi\in (0,\kappa)\}$$ consider the stochastic heat equation:
\begin{equation}\label{eq:SEEGdWheatwedge}
\begin{cases}
du -\Delta u\dd t = G \dd W_H, & \text{on $\R_+$},\\
u(0)=0.
\end{cases}
\end{equation}
Let $q\in [2,\infty)$ and assume $\theta$ is such that
\begin{align}\label{eq:condtheta}
(1-\frac{\pi}{\kappa})q <\theta<(1+\frac{\pi}{\kappa})q.
\end{align}
Then for all $p\in (2, \infty)$ and $w\in A_{p/2}$ (where $p=2$ and $w=1$ if $q=2$ is allowed as well) the mild solution $u$ to \eqref{eq:SEEGdWheatwedge} satisfies
\begin{equation}
\label{eq:maxregheatwedge}
\begin{aligned}
\|u\|_{L^p(\Omega\times \R_+,w;\dot{W}^{1,q}(D,\abs{\cdot}^{\theta-2}))} & \leq C \|G\|_{L^{p}(\Omega\times \R_+,w;L^{q}(D,\abs{\cdot}^{\theta-2};H))}
\\ \|u\|_{L^p(\Omega\times \R_+,w;L^{q}(D,\abs{\cdot}^{\theta-2-q}))} & \leq C \|G\|_{L^{p}(\Omega\times \R_+,w;L^{q}(D,\abs{\cdot}^{\theta-2};H))},
\end{aligned}
\end{equation}
where $C$ only depends on $d,p,q,[w]_{A_{p/2}},\theta$ and $\kappa$. Here $\dot{W}^{1,q}(D,\abs{\cdot}^{\theta-2})$ denotes the usual homogenous Sobolev space of distributions  $u$ such that $\partial_j u\in L^q(D,\abs{\cdot}^{\theta-2})$.
\end{example}
\begin{proof}
In \cite{CKLL18} \eqref{eq:maxregheatwedge} was proved for $p=q$ and $w=1$, where it was stated for bounded intervals $(0,T)$. Since it holds with $T$-independent constants one can let $T\to \infty$ to find the result on $\R_+$. In order to prove the result for $p\neq q$ we will use Theorem \ref{theorem:SMRsemigroup} with
\begin{align*}
X &:= L^q(D,\abs{\cdot}^{\theta-2}))\\
  Y&:=\dot{W}^{1,q}(D,\abs{\cdot}^{\theta-2})\cap L^q(D,\abs{\cdot}^{\theta-2-q}).
\end{align*}
By Proposition \ref{proposition:propertiesheatwedge} $-\Delta$ is sectorial of angle $<\pi/2$ and $\|e^{t \Delta}\|_{\mc{L}(X,Y)}\leq C t^{-1/2}$ for $t>0$, so that $Y$ is allowed in Theorem \ref{theorem:SMRsemigroup} (see Remark \ref{remark:severalthingonSMR}\ref{it:admissibleY}), and hence the result follows.
\end{proof}

\subsection{Non-autonomous case with time-dependent domains}\label{subsection:nonauto}
In this subsection we prove extrapolation results under the conditions introduced by Acquistapace and Terreni \cite{AT87} (see also \cite{Ac88, AT92, Am95, Sc04, Ta97} and references therein). In the deterministic case extrapolation of maximal $L^p$-regularity was proved in \cite{CF14, CK18} under the Acquistapace--Terreni conditions and the Kato--Tanabe condition. Here the authors consider maximal $L^p$-regularity on $\R$ and $\R_+$ respectively. Below we prefer to consider maximal regularity results on finite intervals $(0,T)$ in order to avoid exponential stability assumptions. This is possible due to Section \ref{section:homogeneoustype} and a version of this theory could also be applied in the deterministic setting.

\hypertarget{asmp:AT}{
Fix $T\in (0,\infty)$. Next we introduce the (AT)-conditions due to Acquistapace and Terreni on a family of closed operators $(A(t))_{t\in [0,T]}$ on a Banach space $X$. Let us write $A_w(t) = A(t) + w$. We start with a uniform sectoriality condition:}
\setlist[enumerate]{leftmargin=40pt}
\begin{enumerate}[({AT}1)]
  \item\label{asmp:AT1} There exists a $\vartheta\in (0,\pi/2)$, $w\geq 0$ and $M>0$ such that for every $t\in [0,T]$, one has $\sigma(A_w(t)) \subseteq \Sigma_\vartheta $ and
  \[\|R(\lambda, A_w(t))\|_{\mc{L}(X)}\leq \frac{M}{|\lambda|+1}, \ \ \lambda \in \Sigma_\vartheta^c ,\]
      where
    \begin{equation*}
      \Sigma_{\vartheta} := \cbraceb{\lambda \in \C \setminus \cbrace{0}: \abs{\arg \lambda}<\vartheta}.
    \end{equation*}
\end{enumerate}
The next condition is a H\"older continuity assumption, which depends on the change of the domains $D(A(t))$.
\begin{enumerate}[({AT}1)]
\setcounter{enumi}{1}
  \item\label{asmp:AT2} There exist $0<\mu,\nu\leq 1$ with $\mu+\nu>1$ and $M \geq 0$ such that for all $s,t\in [0,\infty)$ and $\lambda \in \Sigma_{\vartheta}^c$,
    \begin{align*}
    |\lambda|^{\nu}\nrmb{A_w(t)R(\lambda,A_w(t))(A_w(t)^{-1}-A_w(s)^{-1})}_{\mc{L}(X)} \leq M\,|t-s|^{\mu}.
    \end{align*}
\end{enumerate}\setlist[enumerate]{leftmargin=25pt}
When $(A(t))_{t\in [0,T]}$ satisfies both \ref{asmp:AT1} and \ref{asmp:AT2} we say that it satisfies \hyperlink{asmp:AT}{(AT)}.

If the domains $D(A(t))$ all equal a fixed Banach space $X_1$ and
$$\nrm{A(t)-A(s)}_{\mc{L}(X_1,X)} \leq C \, \abs{t-s}^{\mu}$$
for some $\mu>0$, then $(A(t))_{t\in [0,T]}$ satisfies \ref{asmp:AT2} with $\nu=1$. Indeed, this follows directly from the equation  $A_w(t)^{-1} - A_w(s)^{-1} = A_w(t)^{-1}(A_w(s)-A_w(t))A_w(s)^{-1}$.

The following generation result is due to Acquistapace and Terreni (see \cite{Ac88, AT92, Sc04} for details).
We denote $D := \{(t,s) \in [0,T]\times [0,T]:\ t \geq s\}$.
\begin{proposition}[Evolution family]\label{proposition:exist-evol}
Assume \hyperlink{asmp:AT}{(AT)} for $(A(t))_{t \in [0,T]}$. There exists a unique strongly continuous map $S\colon D\to \mc{L}(X)$ such that
\begin{align*}
    S(t,t) &= I, &&t \in [0,T],\\
    S(t,s)S(s,r) &= S(t,r), &&t\geq s\geq r,\\
    \tfrac{d}{dt} S(t,s) &= A(t)S(t,s), && t>s.
  \end{align*}
Moreover for all $0\leq \alpha\leq 1$ there exists a constant $C>0$
such that
\begin{align*}
  \|A_w(t)^\alpha S(t,s)\|_{\mc{L}(X)} &\leq C\,(t-s)^{-\alpha}, \qquad (t,s) \in D.
\end{align*}
\end{proposition}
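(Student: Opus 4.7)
The plan is to follow the classical Acquistapace--Terreni parametrix construction, which is indicated by the citations to \cite{Ac88,AT92,Sc04}. The idea is to approximate $S(t,s)$ by the ``frozen-coefficient'' semigroup $U(t,s) := e^{-(t-s)A_w(s)}e^{w(t-s)}$, which is analytic by condition \ref{asmp:AT1}, and to look for $S$ in the form
\[
S(t,s) = U(t,s) + \int_s^t U(t,r)\,\Phi(r,s)\,\dd r,
\]
where $\Phi\colon D\to \mc{L}(X)$ is an unknown kernel. Formally differentiating in $t$ and imposing the ODE $\partial_t S(t,s) = A(t) S(t,s)$ leads to a Volterra integral equation
\[
\Phi(t,s) = Q(t,s) + \int_s^t Q(t,r)\,\Phi(r,s)\,\dd r, \qquad Q(t,s) := \bigl(A_w(t)-A_w(s)\bigr) U(t,s).
\]

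The first step is to show that $Q$ satisfies the singular bound $\|Q(t,s)\|_{\mc{L}(X)} \leq C\,(t-s)^{\mu+\nu-2}$. This is where \ref{asmp:AT2} is used decisively: writing $Q(t,s)$ in terms of the resolvent difference $A_w(t)R(\lambda,A_w(t))(A_w(t)^{-1}-A_w(s)^{-1})$ via a Dunford-type integral representation of $U(t,s)$, one can exploit the H\"older bound with exponent $\mu$ and the decay in $|\lambda|^\nu$. Since $\mu+\nu-1>0$, iteration of the Volterra equation converges in operator norm and produces a solution $\Phi$ with the sharp bound $\|\Phi(t,s)\|\leq C(t-s)^{\mu+\nu-2}$. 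Substituting back yields $S(t,s)$, and the initial condition, the evolution law $S(t,s)S(s,r)=S(t,r)$, and the differential equation are verified by direct calculation, using the strong continuity of each semigroup $U(\cdot,s)$ and dominated convergence in the Volterra integral. Uniqueness follows from a Gronwall-type argument on the same integral equation.

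For the final estimate $\|A_w(t)^\alpha S(t,s)\|_{\mc{L}(X)}\leq C(t-s)^{-\alpha}$, the case $\alpha=0$ is immediate from uniform boundedness of the construction. The case $\alpha=1$ uses the standard analytic-semigroup bound $\|A_w(r) U(t,r)\|\leq C(t-r)^{-1}$ applied inside the representation $A_w(t) S(t,s) = A_w(t) U(t,s) + \int_s^t A_w(t) U(t,r)\Phi(r,s)\,\dd r$, with the extra factor of $A_w(t)$ handled via $A_w(t)R(\lambda,A_w(r))$-type commutator estimates coming once more from \ref{asmp:AT2}. The intermediate range $\alpha\in(0,1)$ is then recovered by a Dunford integral representation of the fractional power $A_w(t)^\alpha$ together with Kato's moment inequality.

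The main obstacle will be the bookkeeping around the singular kernel $Q$: one must verify that the iteration not only converges in norm but produces a strongly continuous operator-valued function, and one must track the action of $A_w(t)$ (rather than $A_w(s)$ or $A_w(r)$) through the Volterra expansion to get the sharp singular bound at the endpoint $t$. All of this is carried out in detail in \cite{Ac88,AT92,Sc04}, so for the present paper a short reference to those sources suffices.
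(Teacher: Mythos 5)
Your sketch is exactly the classical Acquistapace--Terreni parametrix/Volterra construction, and the paper itself gives no self-contained argument but simply attributes the result to \cite{Ac88,AT92,Sc04}; you reach the same conclusion when you note at the end that a reference to those sources suffices. So your proposal matches the paper's approach.
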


Given $S$ as in Proposition \ref{proposition:exist-evol}, we call $(S(t,s))_{t \geq s}$ the evolution family generated by $(A(t))_{t\in [0,T]}$. In order to state our extrapolation result we will need some notation. For $0<\alpha\leq 1$ and $t \in \R$ define
\begin{equation*}
  X_\alpha^t := D(A_w(t)^{\alpha}).
\end{equation*}
endowed with the graph norm. Moreover set $X_0^t = \overline{ D(A_w(t))}^{\nrm{\cdot}_X}$. Note that since $-w \in \rho(A(t))$ we have
\begin{equation*}
  \nrm{x}_{X_\alpha^t} \leq C \, \nrm{A_w(t)^{\alpha}x}_X , \qquad x \in D(A_w(t)^{\alpha}).
\end{equation*}

\begin{lemma}\label{lemma:holdercont}
Let $0<\alpha\leq 1$.
Let $(\widetilde{X}_{\beta})_{\beta\in [0,\alpha]}$ be an interpolation scale and assume for $\beta \in [0,\alpha]$ one has $X_{\beta}^t \hookrightarrow \widetilde{X}_{\beta}$ uniformly in $t \in \R$.
Then
\[\|S(t,s) - I\|_{\mc{L}(X_{\alpha}^s,\widetilde{X}_{\beta})}\leq C\,(t-s)^{\alpha-\beta},\qquad (t,s) \in D.\]
\end{lemma}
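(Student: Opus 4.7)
The plan is to establish the two endpoint estimates at $\beta = 0$ and $\beta = \alpha$, and then interpolate in the target scale $(\widetilde{X}_\beta)_{\beta \in [0,\alpha]}$. Setting $T := S(t,s) - I$, once the bounds $\|T\|_{\mc{L}(X_\alpha^s,\widetilde{X}_0)} \leq C(t-s)^\alpha$ and $\|T\|_{\mc{L}(X_\alpha^s,\widetilde{X}_\alpha)} \leq C$ are in hand, the case of general $\beta \in [0,\alpha]$ follows from the defining interpolation property of the scale, giving $\|T\|_{\mc{L}(X_\alpha^s,\widetilde{X}_\beta)} \leq C(t-s)^{\alpha-\beta}$ via the geometric mean $M_0^{1-\beta/\alpha} M_\alpha^{\beta/\alpha}$.

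For the endpoint $\beta = \alpha$, the key input is the classical Acquistapace--Terreni estimate
\begin{equation*}
\|A_w(t)^\alpha S(t,s) A_w(s)^{-\alpha}\|_{\mc{L}(X)} \leq C,
\end{equation*}
which holds uniformly on $D$ under \hyperlink{asmp:AT}{(AT)} and yields $\|S(t,s)\|_{\mc{L}(X_\alpha^s, X_\alpha^t)} \leq C$. Composing with the uniform embedding $X_\alpha^t \hookrightarrow \widetilde{X}_\alpha$ controls the $S(t,s)$ part, while $X_\alpha^s \hookrightarrow \widetilde{X}_\alpha$ handles the identity contribution.

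For the endpoint $\beta = 0$, I would write $x = A_w(s)^{-\alpha} y$ with $\|y\|_X \eqsim \|x\|_{X_\alpha^s}$, and use the representation
\begin{equation*}
  (S(t,s)-I)x = \int_s^t A(r) S(r,s) x \, dr,
\end{equation*}
valid for $x \in D(A_w(s))$ from the fundamental theorem of calculus applied to $r \mapsto S(r,s)x$, and extending to $x \in X_\alpha^s$ by a density/approximation argument (e.g. via the Yosida approximation of $A_w(s)$). Applying the sharper AT estimate
\begin{equation*}
  \|A_w(r) S(r,s) A_w(s)^{-\alpha}\|_{\mc{L}(X)} \leq C(r-s)^{\alpha-1}, \qquad s < r \leq t,
\end{equation*}
then yields $\|A(r) S(r,s) x\|_X \leq C(r-s)^{\alpha-1}\|y\|_X$ (absorbing the lower-order term $w\, \|S(r,s)x\|_X$), and integrating over $r \in (s,t)$ produces the required factor $(t-s)^\alpha$. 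The resulting $X$-valued bound passes to $\widetilde{X}_0$ via the embedding $X_0^s \hookrightarrow \widetilde{X}_0$, noting that the integrand belongs to $X_0^r$ for each $r > s$.

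The main technical obstacle is the sharper estimate on $A_w(r) S(r,s) A_w(s)^{-\alpha}$, which is not stated explicitly in the excerpt. It should be derived by interpolating on the source side between the boundary cases $\alpha = 0$ (giving $\|A_w(r) S(r,s)\|_{\mc{L}(X)} \leq C(r-s)^{-1}$, the analytic-type bound from Proposition \ref{proposition:exist-evol}) and $\alpha = 1$ (giving $\|A_w(r) S(r,s) A_w(s)^{-1}\|_{\mc{L}(X)} \leq C$, an output of the Acquistapace--Terreni machinery), using the identification $[X, X_1^s]_\alpha \hookrightarrow X_\alpha^s$ that comes from sectoriality of $A_w(s)$. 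Some care is required to handle the $s$-dependence of the spaces $X_1^s$ in this interpolation and to ensure constants are uniform in $s$, but this is standard in the AT framework.
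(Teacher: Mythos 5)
Your proof is correct and takes essentially the same route as the paper: establish the two endpoint bounds $\beta = 0$ and $\beta = \alpha$ and interpolate in the target scale. The paper simply cites [Sc04, (2.16)] and [Sc04, (2.19)] for the two endpoint estimates, whereas you spell out the underlying Acquistapace--Terreni bounds and the integral representation for $S(t,s)-I$; the substance is the same.
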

\begin{proof}
The result for $\beta =\alpha$ is clear from the assumption and \cite[(2.19)]{Sc04}. For $\beta = 0$, the result follows from \cite[(2.16)]{Sc04}. The result for $0<\alpha<\beta$ follows by interpolation.
\end{proof}

We can now prove our extrapolation theorem for $(A(t))_{t\in [0,T]}$ in the setting of Acquistapace and Terreni:
\begin{theorem}[Extrapolation in the evolution family case]\label{theorem:SMRevolution}\label{theorem:ATextrapolatie} Let $\alpha \in  (\tfrac{1}{2},1]$ and let $(\widetilde{X}_{\beta})_{\beta\in [0,\alpha]}$ be an interpolation scale.
Assume the following conditions:
\begin{itemize}
  \item Both $(A(t))_{t\in [0,T]}$  and $(A(t)^*)_{t\in [0,T]}$ satisfy \hyperlink{asmp:AT}{(AT)}.
  \item For $\beta \in [0,\alpha]$ one has $X_{\beta}^t \hookrightarrow \widetilde{X}_{\beta}$ uniformly in $t \in [0,T]$.
  \item $\widetilde{X}_{\frac12}$ is a UMD Banach space with type $2$
\end{itemize}
Suppose $A\in \SMR(L^p(0,T;\widetilde{X}_{\frac12}))$ for some $p\in [2, \infty)$. Then for all $q\in (2, \infty)$ and $w\in A_{q/2}$ one has $A\in \SMR(L^q((0,T),w;\widetilde{X}_{\frac12}))$.
\end{theorem}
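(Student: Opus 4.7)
The plan is to recast stochastic maximal regularity as the $L^p$-boundedness of a $\gamma$-integral operator and then apply the sparse-domination machinery. Set $K(t,s) := S(t,s)\ind_{s<t}$, viewed as a map from $X$ into $\widetilde X_{1/2}$. By Propositions \ref{proposition:detcharacterization} and \ref{proposition:independenceH}, suitably adapted to the homogeneous-type interval $(0,T)$ as in Section \ref{section:homogeneoustype}, the hypothesis $A\in\SMR(L^p(0,T;\widetilde X_{1/2}))$ is equivalent to $K\in\mc{K}_\gamma(L^p(0,T))$. Thus, if we can check that $K$ satisfies an $(\omega,2)$-Dini condition on $(0,T)$ with $\omega(r)=Cr^{\alpha-1/2}$ (integrable since $\alpha>1/2$), then Theorem \ref{theorem:sparsedomination} delivers $K\in\mc{K}_\gamma(L^q((0,T),w))$ for every $q\in(2,\infty)$ and $w\in A_{q/2}$, and a final application of Propositions \ref{proposition:detcharacterization}, \ref{proposition:independenceH} converts this back into the desired $A\in\SMR(L^q((0,T),w;\widetilde X_{1/2}))$.

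The crux is therefore the verification of the two Dini estimates. For the regularity in the first variable, assume $s<t'<t$ with $t-t'\leq(t-s)/2$, and factor
\begin{equation*}
S(t,s)-S(t',s) \;=\; \bigl(S(t,t')-I\bigr)S(t',s).
\end{equation*}
Lemma \ref{lemma:holdercont} gives $\|S(t,t')-I\|_{\mc{L}(X_\alpha^{t'},\widetilde X_{1/2})}\leq C(t-t')^{\alpha-1/2}$, while Proposition \ref{proposition:exist-evol} together with the uniform embedding $X_\alpha^{t'}\hookrightarrow\widetilde X_\alpha$ yields $\|S(t',s)\|_{\mc{L}(X,X_\alpha^{t'})}\leq C(t'-s)^{-\alpha}$. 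Composing and using $t'-s\geq(t-s)/2$ produces the desired bound with $\omega(r)=Cr^{\alpha-1/2}$.

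For the regularity in the second variable I would pass to the adjoint evolution family. Define $\tilde A(\tau) := A(T-\tau)^*$ and $V(\tau_1,\tau_2) := S(T-\tau_1,T-\tau_2)^*$ for $\tau_1\leq\tau_2$. By the hypothesis that $(A(t)^*)$ satisfies \hyperlink{asmp:AT}{(AT)} and the adjoint theory of Acquistapace--Terreni, $(\tilde A(\tau))$ itself satisfies \hyperlink{asmp:AT}{(AT)} and $V$ is its evolution family. Dualising the embeddings $X_\beta^{t}\hookrightarrow\widetilde X_\beta$ and applying Lemma \ref{lemma:holdercont} to $\tilde A$ produces a bound of the form
\begin{equation*}
  \bigl\|V(\tau,\tau') - V(\tau,\tau'')\bigr\|_{\mc{L}((\widetilde X_{1/2})^*, X^*)} \;\leq\; C\,\omega\!\bigl(|\tau'-\tau''|/|\tau-\tau''|\bigr)\,|\tau-\tau''|^{-1/2},
\end{equation*}
under the analogous smallness condition. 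Unwinding the change of variables $\tau=T-s,\ \tau'=T-s'$ and taking adjoints yields the required second Dini estimate for $K$.

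The main obstacle is precisely this second estimate. The difficulty is not the dualisation per se but making sure the interpolation scale $(\widetilde X_\beta)$ behaves well under duality so that the dual embeddings $(\widetilde X_\beta)^*\hookrightarrow (X_\beta^t)^*$ interact correctly with the fractional domains $D(\tilde A(\tau)^\beta)$ of the adjoint family. This is standard for the real and complex interpolation methods together with the identification $D(A^\alpha)^* \simeq D((A^*)^\alpha)$ under the sectoriality and reflexivity implicit in the type--$2$/UMD setup, but it has to be put in place carefully; this is exactly where the assumption that \emph{both} $(A(t))$ and $(A(t)^*)$ satisfy \hyperlink{asmp:AT}{(AT)} is used. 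Once these two Dini estimates are in hand, Theorem \ref{theorem:sparsedomination} applied on the space of homogeneous type $(0,T)$ finishes the proof.
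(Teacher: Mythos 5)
Your reduction to the $(\omega,2)$-Dini conditions via Propositions~\ref{proposition:detcharacterization}, \ref{proposition:independenceH} and Theorem~\ref{theorem:sparsedomination} on the homogeneous-type interval $(0,T)$, and your handling of the first Dini estimate \eqref{eq:dini2} by factoring $S(t,s)-S(t',s)=(S(t,t')-I)S(t',s)$ and combining Lemma~\ref{lemma:holdercont} with Proposition~\ref{proposition:exist-evol}, coincide with the paper's proof. The divergence is in the second Dini estimate \eqref{eq:dini3}, and here your proposed dualisation has a genuine gap. The paper is far more direct: it invokes \cite[Theorem~6.4]{AT92} --- which is precisely where the hypothesis that $(A(t)^*)$ satisfies (AT) is consumed --- to get the estimate $\|S(t,s)A(s)\|_{\mc{L}(X)}\leq C(t-s)^{-1}$ as a black box. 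Combining this with Proposition~\ref{proposition:exist-evol} and the cocycle identity through the midpoint $\tfrac{s+t}{2}$ gives $\|\partial_s K(t,s)\|_{\mc{L}(X,Y)}\leq C(t-s)^{-3/2}$, after which the fundamental-theorem-of-calculus argument from Lemma~\ref{lemma:standardkernelderivatives} yields \eqref{eq:dini3} with $\omega(r)=Cr$. No interpolation scale is dualised at all.

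The dualisation route you sketch does not close as written. The ``identification $D(A^\alpha)^*\simeq D((A^*)^\alpha)$'' that you call standard is false: for a densely defined sectorial $A$ on a reflexive space one has $D((A^*)^\alpha)\hookrightarrow X^*\hookrightarrow \bigl(D(A^\alpha)\bigr)^*$, with the inclusions strict in general, so $\bigl(D(A^\alpha)\bigr)^*$ is an extrapolation (large) space whereas $D((A^*)^\alpha)$ is a domain (small) space. Correspondingly, dualising the hypothesis $X_\beta^t\hookrightarrow\widetilde X_\beta$ gives $(\widetilde X_\beta)^*\hookrightarrow (X_\beta^t)^*$, which is oriented the wrong way to serve as input for a Lemma~\ref{lemma:holdercont}-type argument applied to the adjoint family, since the latter would need embeddings of $D((A_w(t)^*)^\beta)$ \emph{into} a fixed scale, not embeddings of something into $(X_\beta^t)^*$. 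The concrete ingredient missing from your argument is the quantitative adjoint estimate from \cite[Theorem~6.4]{AT92}; once that is used, the duality of fractional domain scales you were trying to set up becomes unnecessary. As a minor side remark, in your first Dini estimate the embedding $X_\alpha^{t'}\hookrightarrow\widetilde X_\alpha$ plays no role --- the bound $\|S(t',s)\|_{\mc{L}(X,X_\alpha^{t'})}\leq C(t'-s)^{-\alpha}$ is a direct consequence of Proposition~\ref{proposition:exist-evol}.
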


\begin{proof}
Set $Y:= \widetilde{X}_{\frac12}$ and let $K\colon[0,T]^2\to \mc{L}(X,Y)$ be the kernel given by
$$K(t,s) = S(t,s) \ind_{t >s}.$$
Then by our assumptions, Propositions \ref{proposition:exist-evol},  \ref{proposition:detcharacterization} and \ref{proposition:independenceH} we know that $K \in \mc{K}_\gamma(L^p(0,T))$. Therefore by Theorem \ref{theorem:sparsedomination} it suffices to check the $(\epsilon,2)$-standard kernel conditions for $K$.

To do so take $t>s$ and note that by Proposition \ref{proposition:exist-evol} for $0\leq s<t\leq T$,
\[\|K(t,s)\|_{\mc{L}(X,Y)}\leq C\,\|A(t)^{\frac12} S(t,s)\|_{\mc{L}(X)} \leq C(t-s)^{-1/2}.\]

To check \eqref{eq:dini2} on $[0,T]$ let $\alpha \in (\frac12,1]$ be such that the conclusion of Lemma \ref{lemma:holdercont} holds and take $|t-t'|\leq \frac12|t-s|$. If $t<s$, then also $t'<s$ and there is nothing to prove. Thus it suffices to consider the case $t,t'>s$. If $t'>t$, then
\begin{equation}
\label{eq:berekeningKdiff}
\begin{aligned}
 \|K(t',s) - K(t,s)\|_{\mc{L}(X,Y)}& = \|K(t',t) - I\|_{\mc{L}(X_{\alpha}^{t},Y)} \|K(t,s)\|_{\mc{L}(X,X_{\alpha}^{t})}
\\ & \leq C \,(t'-t)^{\alpha-\frac12} (t-s)^{-\alpha}
\\ & = C \, \Big|\frac{t-t'}{t-s}\Big|^{\alpha-\frac12} |t-s|^{-1/2},
\end{aligned}
\end{equation}
where we used Lemma \ref{lemma:holdercont} and Proposition \ref{proposition:exist-evol}.
In the case $t>t'$ the same estimate holds with $t$ and $t'$ interchanged. Since
$t'-s\geq \tfrac12(t-s)$, \eqref{eq:dini2} also follows in this case.

Next we check \eqref{eq:dini3}. By \cite[Theorem 6.4]{AT92} we have for $0\leq s<t\leq T$
\begin{align}\label{eq:dualestAT}
\|S(t,s) A(s)\|_{\mc{L}(X)}\leq C(t-s)^{-1}.
\end{align}
Therefore using Proposition \ref{proposition:exist-evol}  we have
\begin{align*}
\nrmb{\tfrac{d}{ds} K(t,s)}_{\mc{L}(X,Y)} & = \|S(t,s)A(s)\|_{\mc{L}(X,Y)}\\ & \leq C\, \|A(t)^{1/2} S(t,s) A(s)\|_{\mc{L}(X)}
\\ & \leq C\,\|A(s)^{1/2} S(s,\tfrac{s+t}{2})\|_{\mc{L}(X)} \|S(\tfrac{s+t}{2},t) A(t)\|_{\mc{L}(X)}
\\ & \leq C\,{(s-t)^{-\frac32}}.
\end{align*}
As in the proof of Lemma \ref{lemma:standardkernelderivatives} we obtain that \eqref{eq:dini3} holds with $\omega(r) = C r$.
We can therefore conclude that $K$ is an $(\alpha-\frac12,2)$-standard kernel, which finishes the proof.
\end{proof}

\begin{remark}
  If $X$ and $\widetilde{X}_{\frac{1}{2}}$ are Hilbert spaces, the assumption that $A\in \SMR(L^2(0,T;\widetilde{X}_{\frac12}))$ in Theorem \ref{theorem:SMRevolution} can be checked by showing
\[\|t\mapsto S(t,s)x \ind_{t>s}\|_{L^2(0,T;\widetilde{X}_{\frac{1}{2}})}\lesssim \|x\|_X, \qquad x\in X, \quad s\in [0,T],\]
using Proposition \ref{proposition:simplesufficient}\ref{it:sufficientL2}. By the proof of \cite[Theorem 4.3]{Ve10} it is therefore sufficient to check
\[\|t\mapsto A_w(s)^{\frac12}e^{t A(s)}x\|_{L^2((0,T);X)}\lesssim \|x\|, \qquad  s\in [0,T],\quad x\in X.\]
\end{remark}

As an application we deduce stochastic maximal $L^p$-regularity for an operator family which was previously considered in \cite{Ac88,Sc04,Yag91} in the deterministic setting and in \cite{SV03} and \cite[Example 8.2]{Ve10} in the stochastic setting. In particular, stochastic maximal $L^2(L^2)$-regularity was derived in the latter. Below we extend this to an $L^p(L^q)$-setting.
\begin{example}[Stochastic heat equation on domains with time-dependent Neumann boundary condition]\label{example:domainneumann}
Let $\epsilon \in (0,\frac12)$ and $T \in (0,\infty)$. On a bounded $C^{3+\epsilon}$-domain $D\subseteq \R^d$ consider
\begin{equation}\label{eq:SEEGdWheattime}
\begin{cases}
du +A u\dd t = G \dd W_H, & \text{on $[0,T]\times D$},
\\
C u = 0 & \text{on $[0,T]\times \partial D$},
\\ u(0)=0.
\end{cases}
\end{equation}
Here the differential operator $A$ and boundary operator are given by
\begin{align*}
A(t,x)u & = -\sum_{i,j=1}^d \partial_i a_{ij}(t,x) \partial_j u,
\\ C(t,x) u & = \sum_{ij=1}^d a_{i,j}(t,x) n_i(x) \partial_j u,
\end{align*}
where for $x\in \partial D$, $n(x)\in \R^d$ denotes the outer normal of $D$. Assume that the coefficients are real-valued and satisfy
\begin{align*}
a_{ij}\in C^{1+\frac{\epsilon}{2}, 2+\epsilon}([0,T]\times \overline{D}),
\end{align*}
for all $i,j\in \{1, \ldots, d\}$.

We further assume $(a_{ij})$ is symmetric  and that there exists a $\kappa>0$ such that
\[\sum_{i,j=1}^n a_{ij}(t,x) \xi_i\xi_j\geq \kappa|\xi|^2, \ \ x\in D, t\in [0,T], \xi\in \R^d.\]
Then for all $p,q\in (2, \infty)$ and $w\in A_{p/2}$ (where $p=q=2$ and $w=1$ is allowed as well) the mild solution $u$ to \eqref{eq:SEEGdWheattime} satisfies
\begin{align*}
\|u\|_{L^p(\Omega\times (0,T),w;W^{1,q}(D))} \leq C \|G\|_{L^{p}(\Omega\times (0,T),w;L^q(\R^d;H))},
\end{align*}
where $C$ does not depend on $G$.
\end{example}

\begin{remark}
  Example \ref{example:domainneumann} for $p=q=2$ and $w \equiv 1$ has been shown in \cite[Example 8.2]{Ve10} assuming only a $C^2$ domain and
  \begin{align*}
a_{ij}&\in C^{1/2+\epsilon}([0,T];C(\overline{D})), \\
a_{ij}(t,\cdot) &\in C^1(\overline{D}), \\
\partial_k a_{ij}&\in C([0,T]\times\overline{D})
  \end{align*}
for all $i,j,k\in \{1, \ldots, d\}$ and $t\in [0,T]$ and some $\mu\in (\tfrac12,1]$. This in turn can be extrapolated to $p \in [2,\infty)$, $q=2$ and $w \in A_{p/2}$ as in Step 2 of the following proof. However, in this situation the kernel estimates in \cite[Theorem 1.1]{EI70} are not strong enough to check the parabolic H\"ormander condition needed in Step 1 of the following proof. Therefore only $L^p(L^2)$ theory can be obtained in this setting using the kernel estimates available in literature.
\end{remark}

\begin{proof}[Proof of Example \ref{example:domainneumann}]
In \cite[Example 8.2]{Ve10} the result has been shown for $p=q=2$ and $w \equiv 1$, we will use extrapolation techniques to deduce the general case. For this note that in \cite{Ac88,Sc04,Yag91} it is shown that for $q \in (1,\infty)$ the realization of $(A(t))_{t \in [0,T]}$ on $L^q(D)$ with domain
\begin{equation*}
  D(A(t)) := \cbraceb{u \in W^{2,q}(D):\Tr_{\partial D}(C(t,\cdot)u)=0}
\end{equation*}
satisfies \hyperlink{asmp:AT}{(AT)}.

\textbf{Step 1:} We will first use \cite[Theorem 2.5]{KK16} to deduce the result for $p=q$ and $w\equiv1$. For this let $\Gamma$ denote the Green kernel of the evolution family associated to the realization of $A$ on $L^2(D)$, which exists by \cite[Theorem 1.1]{EI70}. Then the mild
solution $u$ to \eqref{eq:SEEGdWheattime} is given by
\begin{equation*}
  u(t,x) = \int_0^t \int_D \Gamma(t,s,x,y) G(s,y)\dd y \dd W_H(s), \qquad (t,x) \in (0,T)\times D,
\end{equation*}
For $\abs{\alpha}= 1$ define
\begin{equation*}
  K_{\alpha}(t,s,x,y) = \partial_x^\alpha \Gamma(t,s,x,y), \qquad t,s \in (0,T),\quad  x,y \in D.
\end{equation*}
Then by \cite[Theorem 1.1]{EI70} we have for all $\abs{\beta}=1$, $t>s$ and $x,y \in D$
\begin{align*}
  \abs{\partial^\beta_x K_{\alpha}(t,s,x,y)} &\leq C\, \frac{1}{\ha{t-s}^{(\abs{\alpha}+d+1)/2}} \exp \has{-c\,\frac{\abs{x-y}}{(t-s)^{1/2}}}\\
  \abs{\partial_t  K_{\alpha}(t,s,x,y)} &\leq C\, \frac{1}{(t-s)^{(d+3)/2}} \exp \has{-c\,\frac{\abs{x-y}}{(t-s)^{1/2}}}
\end{align*}
from which the assumption, and therefore the conclusion of Lemma \ref{lemma:21hormanderdini} follows using Lemma \ref{lemma:checktechnical}. If we extend $K_\alpha$ by zero for $t,s \geq T$ the same conclusion holds on $\R_+ \times D$, which has infinite measure.
Combined with the case  $p=q=2$ from \cite[Example 8.2]{Ve10}  we have checked the assumptions of \cite[Theorem 2.5]{KK16} for the operators
$$
T_\alpha \colon L^{2}(\Omega\times \R_+;L^2(\R^d;H)) \to L^2(\Omega\times \R_+;W^{1,2}(D))
$$
given by
\begin{equation*}
  T_\alpha G(t,x) := \int_0^t \int_D K_\alpha(t,s,x,y) G(s,y)\dd y \dd W_H(s), \quad (t,x) \in \R_+\times D,
\end{equation*}
for all $\abs{\alpha} = 1$ and thus the result for $p=q$ and $w\equiv1$ follows.

\textbf{Step 2:} For the general case let $\widetilde{X}_{\beta} = W^{2\beta,q}(D)$ for $\beta\in (0,1]$ and $\widetilde{X}_{0} = L^q(D)$. Then $X_\beta^t \hookrightarrow \widetilde{X}_{\beta}$ for all $\beta \in [0,1]$ (see \cite[Example 2.8]{Sc04}) and by Step 1 we have $A\in \SMR(L^q(0,T;\widetilde{X}_{\frac12}))$. Therefore the result in the general case follows from Theorem \ref{theorem:ATextrapolatie}.
\end{proof}

\subsection{Volterra equations}

In \cite{DL13} the results of \cite{NVW12} have been extended to the setting of integral equations: \begin{align*}
U(t) +  A\int_0^t \frac{1}{\Gamma(\alpha)} (t-s)^{\alpha-1} U(s) \dd s =  \int_0^t \frac{1}{\Gamma(\beta)} (t-s)^{\beta-1} U(s) \dd W_H(s),
\end{align*}
where $\alpha\in (0,2)$ and, $\beta\in (\tfrac12,2)$.  The solution $U$ is given by
\begin{align*}
U(t) = \int_0^t S_{\alpha \beta}(t-s) G(s) \dd  W_H(s), \qquad t \in \R_+,
\end{align*}
where $S_{\alpha \beta}$ is the so-called resolvent associated with $A$, $\alpha$ and $\beta$.
The maximal regularity result in \cite[Theorem 3.1]{DL13} gives $L^p$-estimates for $A^{\theta} \partial^\eta_t U$ in terms of $G$, where $\beta-\alpha\theta-\eta = \frac12$ with $\theta\in [0,1)$ and $\eta\in (-1,1)$. In this case one has to estimate a stochastic convolution with kernel $k(t) = A^{\theta} \partial^\eta_t S_{\alpha\beta}(t)$. We will not go into details on Volterra equations further now, but restrict ourselves to checking that $K(s,t):=k(s-t)$ is an $(\epsilon, 2)$-standard kernel for suitable $\epsilon\in (0,\tfrac12)$. Consequently our extrapolation theorem can be applied to this setting as well.

First consider $\eta\in(-\tfrac12,1)$. Choose $\epsilon\in (0,\tfrac12)$ such that $\eta+\epsilon\in (0,1)$, then there is an $M>0$ such that (see \cite[Remark 2.4]{DL13})
\begin{align}\label{eq:condkernelvolterra}
\|\partial^{\epsilon} k(t)\|\leq M\, t^{-\epsilon-\frac12}.
\end{align}
Writing $K(t) = \partial^{-\epsilon} \partial^{\epsilon}  k(t)$, it follows from Lemma \ref{lemma:standardkernelfractional} that $K$ is an $(\epsilon, 2)$-standard kernel.

If $\eta\in(-1,-\tfrac12)$, we let $\epsilon = -\eta$. Then $k(t) = \partial^{-\epsilon}_t  A^{\theta} S_{\alpha\beta}(t)$ and there is an $M>0$ such that (see \cite[Remark 2.4]{DL13})
\[\|A^{\theta} S_{\alpha\beta}(t)\|\leq M t^{-\frac12-\epsilon}\]
Therefore, $K$ is an $(\epsilon, 2)$-standard kernel.

\section{\texorpdfstring{$p$}{p}-Independence of the \texorpdfstring{$\mc{R}$}{R}-boundedness of convolutions}\label{section:Rbddness}
In this final section we prove the $p$-independence of a Banach space property which was introduced in \cite{NVW15}. In order to state the condition we need to introduce the notion $\mc{R}$-boundedness of a family of operators.

For details on $\mc{R}$-boundedness we refer to \cite[Chapter 8]{HNVW17}. For us it will be enough to recall the definition. Let $X$ and $Y$ be Banach spaces and let $(\varepsilon_j)_{j\geq 1}$ be a Rademacher sequence on a probability sequence $(\Omega, \mathcal{A},\P)$. A family of operators $\mc{T}\subseteq \mc{L}(X,Y)$ is called {\em $\mc{R}$-bounded} if there exists a constant $C$ such that for all $T_1, \ldots, T_n\in \mathcal{T}$ and $x_1, \ldots, x_n\in X$ one has
\begin{align*}
\Big\|\sum_{j=1}^n \varepsilon_j T_j x_j\Big\|_{L^2(\Omega;X)}\leq C\Big\|\sum_{j=1}^n \varepsilon_jx_j\Big\|_{L^2(\Omega;X)}.
\end{align*}

Let $X$ be a Banach space with type $2$.
For $\lambda\in \C$ with $\re(\lambda)>0$ let $k_{\lambda}:\R_+\to \C$ be given by
\[k_{\lambda}(s) = \lambda^{1/2} e^{-\lambda s}, \qquad s \in \R_+,\]
and define $T_{\lambda}:L^p(\R_+;X)\to L^p(\R_+;\gamma(\R_+;X))$ by
\[T_{\lambda} f(s) = k_{\lambda}(s-\cdot) f(\cdot), \qquad s\in \R_+.\]
Then by Proposition \ref{proposition:scalarkernels}
\begin{equation}\label{eq:klambdanorm}
\nrm{k}_{\mathcal{K}_\gamma(L^p(\R_+))}\leq \tau_{2,X} \Big(\frac{|\lambda|}{2\re(\lambda)}\Big)^{1/2}.
\end{equation}
The following $p$-dependent condition was introduced in \cite{NVW15,NVW15b}.
\setlist[enumerate]{leftmargin=40pt}\hypertarget{asmp:Cp}{
\begin{enumerate}[($C_p$)]
\item \label{it:Cp} For each $\theta\in [0,\pi/2)$ the family $\mathcal{T} = \{T_{\lambda}: |\arg(\lambda)|\leq \theta\}$ is $\mc{R}$-bounded
from $L^{p}(\R_+;X)$ into $L^{p}(\R_+;\gamma(\R_+;X))$.
\end{enumerate}}
\setlist[enumerate]{leftmargin=25pt}

Note that \eqref{eq:klambdanorm} implies that $\mathcal{T}$ is uniformly bounded.
In \cite{NVW15b} the condition \ref{it:Cp} was combined with the boundedness of the $H^\infty$-calculus in order to derive stochastic maximal $L^p$-regularity (see Definition \ref{definition:SMR}).

From  \cite[Theorems 4.7 and 7.1]{NVW15} it can be seen
that in the following case the condition \ref{it:Cp} holds for all $p\in (2, \infty)$:
\begin{itemize}
\item $X$ is a $2$-convex Banach function space and the dual of its concavification $X^2$ is an $\HL$-space, i.e. the lattice Hardy--Littlewood maximal operator is bounded on $L^p(\R^d;(X^2)^*)$ for some (all) $p \in (1,\infty)$.
\end{itemize}
In particular, UMD Banach function spaces are HL-spaces, but also $L^\infty$ is an $\HL$-space. In particular, the space $L^q$ satisfies \ref{it:Cp} for any $q\in [2, \infty)$ and $p\in (2, \infty)$. In the case $q=2$ one can additionally allow $p=2$. On the other hand, $L^q$ for $q>2$ fails \hyperlink{asmp:Cp}{$(C_2)$} (see \cite[Theorem 6.1]{NVW12} and the proof of \cite[Theorem 7.1]{NVW15b}). A Banach function space with UMD and type $2$ for which we do not know whether \ref{it:Cp} holds for $p\in (2, \infty)$ is for instance $\ell^2(\ell^4)$. Some evidence against this can be found in \cite[Theorem 8.2]{NVW15}.

It was an open problem whether \ref{it:Cp} is $p$-independent. Below we settle this issue. In the special case of Banach function spaces one could also derive this by rewriting \ref{it:Cp} as a square function result (see \cite[Theorem 7.1]{NVW15}) and using operator-valued Calder\'on--Zygmund theory (see \cite{GR85}).
\begin{theorem}
Let $X$ be Banach space with type $2$  and let $p\in [2, \infty)$. If \ref{it:Cp} holds,
then for all $\theta\in [0,\pi/2)$, $q\in (2, \infty)$ and  $w\in A_{\frac{q}{2}}(\R_+)$ the family $$\mathcal{T} = \{T_{\lambda}: |\arg(\lambda)|\leq \theta\}$$ is $\mc{R}$-bounded
from $L^{q}(\R_+,w;X)$ into $L^{q}(\R_+,w;\gamma(\R_+;X))$.
In particular \hyperlink{asmp:Cp}{$(C_q)$} holds for all $q\in (2, \infty)$.
\end{theorem}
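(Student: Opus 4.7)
The plan is to express the $\mathcal{R}$-boundedness claim as the uniform $L^q(w)$-boundedness of a single operator-valued $\gamma$-integral kernel on an auxiliary Banach space of finite sequences, and then invoke Theorem \ref{theorem:sparsedomination} on $\R_+$ as a space of homogeneous type (Section \ref{section:homogeneoustype}). To set this up, I would fix $n\in\N$ and $\Lambda=(\lambda_1,\ldots,\lambda_n)$ with $|\arg\lambda_j|\leq \theta$, let $Y_n:=\gamma(\ell^2_n;X)$, and introduce the diagonal kernel $K_\Lambda\colon \R_+\times\R_+\to\mathcal{L}(Y_n)$ given by
\[
K_\Lambda(s,t)(x_j)_{j=1}^n := \hab{k_{\lambda_j}(s-t)\,x_j}_{j=1}^n\ind_{s>t}.
\]
The space $Y_n$ inherits type $2$ from $X$ with constant $\leq \tau_{2,X}$ uniformly in $n$; since type $2$ implies nontrivial type and hence finite cotype by Maurey--Pisier, $\mathcal{R}$-boundedness and $\gamma$-boundedness agree on every function space that appears. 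Via the $\gamma$-Fubini identification $\gamma(\R_+;Y_n)\cong\gamma(\ell^2_n;\gamma(\R_+;X))$ and Kahane--Khintchine for the outer $L^q(w)$-integration, one obtains
\[
\nrmb{T_{K_\Lambda}f}_{L^q(w;\gamma(\R_+;Y_n))}\eqsim_q \has{\E\nrms{\sum_j\gamma_j T_{\lambda_j}f_j}_{L^q(w;\gamma(\R_+;X))}^q}^{1/q},
\]
and analogously for $\|f\|_{L^q(w;Y_n)}$ on the input side. Hence the theorem is equivalent to a uniform bound $\sup_{n,\Lambda}\|K_\Lambda\|_{\mathcal{K}_\gamma(L^q(\R_+,w))}<\infty$, and $(C_p)$ is precisely the corresponding bound for $(q,w)=(p,1)$.

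Second, I would verify that $K_\Lambda$ is a $(1,2)$-standard kernel on $\R_+$ with constant independent of $n$ and $\Lambda$. The scalar diagonal structure lets Kahane's contraction principle for (complex) Gaussian sums reduce the operator norm of a diagonal multiplier on $Y_n$ to a universal multiple of the supremum of its scalar entries. Applied to $\partial_s K_\Lambda$ and $\partial_t K_\Lambda$ away from $\{s=t\}$, this yields, for $s>t$,
\[
\nrmb{\partial_s K_\Lambda(s,t)}_{\mathcal{L}(Y_n)}+\nrmb{\partial_t K_\Lambda(s,t)}_{\mathcal{L}(Y_n)}\lesssim \sup_{|\arg\lambda|\leq\theta}|\lambda|^{3/2}e^{-\re(\lambda)(s-t)}.
\]
Since $\re(\lambda)\geq \cos(\theta)|\lambda|$, substituting $u=|\lambda|(s-t)$ and using $\sup_{u>0}u^{3/2}e^{-\cos(\theta)u}<\infty$ bounds the right-hand side by $C_\theta(s-t)^{-3/2}$. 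Lemma \ref{lemma:standardkernelderivatives} (in its variant for $\R_+$ as in Section \ref{section:homogeneoustype}) then certifies the $(1,2)$-standard kernel property.

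With these two inputs in hand, Theorem \ref{theorem:sparsedomination} applied to $K_\Lambda$ produces, for every $q\in(2,\infty)$ and $w\in A_{q/2}(\R_+)$, an estimate
\[
\|K_\Lambda\|_{\mathcal{K}_\gamma(L^q(\R_+,w))}\lesssim_{X,p,q,\theta}[w]_{A_{q/2}}^{\max\{1/2,\,1/(q-2)\}}
\]
that is independent of $n$ and $\Lambda$. Translating back through the $\gamma$-Fubini dictionary of the first paragraph yields the asserted $\mathcal{R}$-boundedness; specializing to $w\equiv 1$ gives $(C_q)$ for all $q\in(2,\infty)$. The main obstacle I anticipate is the bookkeeping in the Fubini step, so that the diagonal multiplier structure survives the passage between $\gamma(\R_+;Y_n)$ and $\gamma(\ell^2_n;\gamma(\R_+;X))$ and aligns cleanly with the Kahane contraction argument; once that translation is in place, the extrapolation is a direct application of Theorem \ref{theorem:sparsedomination}.
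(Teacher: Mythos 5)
Your proposal is correct and follows essentially the same route as the paper: pass to a diagonal operator-valued convolution kernel on an auxiliary sequence space, check the $(1,2)$-standard kernel condition uniformly in $n$ and $\Lambda$ via the Kahane contraction principle and the derivative bound $|\lambda|^{3/2}e^{-\re(\lambda)(s-t)}\lesssim_\theta(s-t)^{-3/2}$, and invoke Theorem \ref{theorem:sparsedomination} on $\R_+$. The only cosmetic difference is that you work with the Gaussian space $\gamma(\ell^2_n;X)$ while the paper uses $\Rad_n(X)$ (Rademacher sums on $X^n$); these are equivalent uniformly in $n$ since $X$ has type $2$ and hence finite cotype, and the rest of the argument is the same.
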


\begin{proof}
Fix $n\in \N$. Let $\lambda_1, \ldots, \lambda_n\in \Sigma_{\theta}$ and $f_1, \ldots, f_n\in L^{q}(\R_+,w;\gamma(\R_+;X))$. Let $\Rad_n(X)$ be the space $X^n$ endowed with the norm
\[\|(x_j)_{j=1}^n\|_{\Rad_n(X)} := \Big\|\sum_{j=1}^n \varepsilon_jx_j\Big\|_{L^2(\Omega;X)},\]
where $(\varepsilon_j)_{j=1}^n$ is a Rademacher sequence. Replacing the $L^2(\Omega;X)$-norm by $L^r(\Omega;X)$ with $r\in [1, \infty)$ leads to an equivalent norm by the Kahane--Khintchine inequalities (see \cite[Theorem 6.2.4]{HNVW17}).
Define a diagonal operator $k:\R_+\to \mc{L}(\Rad_n(X))$ by
\[(k(s)x)_j = k_{\lambda_j}(s) x_j, \qquad j\in \{1, \ldots, n\}, \quad x \in \Rad_n(X),\]
and set $K(s,t) := k(s-t)$. To prove the required $\mc{R}$-boundedness of $\mathcal{T}$, by the Kahane--Khintchine inequalities, Fubini's theorem and Proposition \ref{proposition:gammaFubLorentz} it suffices to prove that
$\nrm{K}_{\mathcal{K}_\gamma(L^q(\R_+,w))}\leq C$ where $C$ is independent of $n$. Now by \ref{it:Cp} we know the latter is true for $w=1$ and $q=p$. Therefore, by Theorem \ref{theorem:sparsedomination} (see also Section \ref{section:homogeneoustype}) it suffices to check that $K$ satisfies the required Dini condition with constants only depending on $\theta$. For this we check the condition of Lemma \ref{lemma:standardkernelderivatives}. Moreover, since $K$ is of convolution type it suffices to check that $\|K'(s)\|\leq C s^{-3/2}$. Since $k'(s)$ is a diagonal operator we have for $x \in \Rad_n(X)$:
\begin{align*}
\|K'(s)x\|_{\Rad_n(X)} = \Big\|\sum_{j=1}^n \varepsilon_j k_{\lambda_j}'(s) x_j\Big\|_{L^2(\Omega;X)} \leq C s^{-3/2} \Big\|\sum_{j=1}^n \varepsilon_j  x_j\Big\|_{L^2(\Omega;X)},
\end{align*}
where we used the Kahane contraction principle (see \cite[Proposition 6.1.13]{HNVW17}) together with \begin{align*}
\abs{s^{3} k_{\lambda_j}'(s)^2} \leq \sup_{\lambda\in \Sigma_{\theta}} |\lambda|^3 e^{-2\re(\lambda)} \leq \frac{27}{8e^3 \cos^3(\theta)} := C^2.
\end{align*}
This implies the required estimates for $K$ and therefore finishes the proof.
\end{proof}

\begin{remark}
One could replace $k_{\lambda}$ by a class of functions which satisfies the $(\omega,2)$-Dini condition uniformly for one fixed function $\omega$. Moreover, a similar result holds on other spaces of homogenous type.
\end{remark}

\appendix

\section{Technical estimates}\label{section:technical}

\subsection*{Heat kernel estimates on a wedge}
\begin{lemma}\label{lemma:techincalkest}
Assume $\kappa\in (0,2\pi)$, $q\in (1, \infty)$, $\sigma>0$ and $\theta\in \R$.
Let $\mu = \frac{\pi}{\kappa}\in (\tfrac12,\infty)$. For $j\in \{0,1,2\}$ let $k_t^{j}:\R^2\to [0,\infty)$ be defined by
\begin{align*}
k_t^{j}(x,y) = \zeta^{\mu-j}(t,x) \zeta^{\mu}(t,y) t^{-1}\exp(-\sigma|x-y|^2/t),
\end{align*}
where $\zeta(t,x) = \frac{|x|}{|x|+\sqrt{t}}$. For $j-\mu<\frac{\theta}{q}<2+\mu$ one has
\begin{align*}
\sup_{t>0,y\in \R^2 }\int_{\R^2} k_t^{j}(x,y) |x|^{\frac{\theta}{q}} |y|^{2-\frac{\theta}{q}} \frac{{\rm d} x}{|x|^2}&<\infty,
\\ \sup_{t>0,x\in \R^2 }\int_{\R^2} k_t^{j}(x,y) |x|^{\frac{\theta}{q}} |y|^{2-\frac{\theta}{q}} \frac{{\rm d} y}{|y|^2}&<\infty.
\end{align*}
\end{lemma}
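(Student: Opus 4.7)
I would first perform the scaling change of variables $x=\sqrt{t}\,\xi$, $y=\sqrt{t}\,\eta$ to reduce both statements to the case $t=1$. Since $\zeta(t,\sqrt{t}\,\xi)=|\xi|/(|\xi|+1)=:\tilde\zeta(\xi)$, a short bookkeeping shows that every power of $t$ cancels: the Jacobian $t\,\dd\xi$, the factor $t^{-1}$ from the kernel, and the homogeneities of $|x|^{\theta/q}$, $|y|^{2-\theta/q}$ and $|x|^{-2}$ (or $|y|^{-2}$) combine to total exponent zero, the weights having total homogeneity $2$ matching the dimension. Writing $\alpha:=\theta/q$, the first estimate reduces to
\[ \sup_{\eta\in\R^2}\frac{|\eta|^{\mu+2-\alpha}}{(|\eta|+1)^\mu}\int_{\R^2} f(\xi)\,e^{-\sigma|\xi-\eta|^2}\,\dd\xi<\infty, \]
where $f(\xi):=|\xi|^{\mu-j+\alpha-2}(|\xi|+1)^{-(\mu-j)}$, and the second reduces to the analogous bound
\[ \sup_{\xi\in\R^2}\tilde\zeta(\xi)^{\mu-j}|\xi|^{\alpha}\int_{\R^2} g(\eta)\,e^{-\sigma|\xi-\eta|^2}\,\dd\eta<\infty, \]
where $g(\eta):=|\eta|^{\mu-\alpha}(|\eta|+1)^{-\mu}$.

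The key observation is that $f$ is locally integrable at the origin in $\R^2$ precisely when $\mu-j+\alpha>0$, i.e.\ when $\alpha>j-\mu$, while $g$ is locally integrable at the origin precisely when $\mu-\alpha>-2$, i.e.\ when $\alpha<\mu+2$. At infinity one has $f(\xi)\lesssim|\xi|^{\alpha-2}$ and $g(\eta)\lesssim|\eta|^{-\alpha}$. So each of the two inequalities in the hypothesised range for $\alpha$ corresponds to local integrability of one of $f$ or $g$, and both will enter into both bounds.

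For the first estimate I would split into $|\eta|\leq 1$ and $|\eta|>1$. When $|\eta|\leq 1$ the outer prefactor is at most $|\eta|^{\mu+2-\alpha}\leq 1$ (using $\alpha<\mu+2$), and the integral is controlled by splitting $\{|\xi|\leq 2\}$ (where local $L^1$-integrability of $f$ gives a uniform bound) and $\{|\xi|>2\}$ (where $|\xi-\eta|\geq|\xi|/2$ so the Gaussian absorbs the polynomial tail $|\xi|^{\alpha-2}$ of $f$). When $|\eta|>1$ the outer prefactor is comparable to $|\eta|^{2-\alpha}$, and I would split the $\xi$-integration into the \emph{near region} $\{|\xi-\eta|\leq|\eta|/2\}$, on which $|\xi|\simeq|\eta|$ forces $f(\xi)\lesssim|\eta|^{\alpha-2}$, yielding an integral of size $|\eta|^{\alpha-2}$ and hence a contribution of order one after multiplying by $|\eta|^{2-\alpha}$; and the \emph{far region} $\{|\xi-\eta|>|\eta|/2\}$, on which the Gaussian is bounded by $e^{-\sigma|\eta|^2/8}$. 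The second estimate is handled identically after swapping the roles $(f,\eta)\leftrightarrow(g,\xi)$.

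The main technical point is the far-region estimate when $|\eta|>1$: one needs to bound $\int f(\xi)\,e^{-\sigma|\xi-\eta|^2/2}\,\dd\xi$ by at most a polynomial in $|\eta|$. This is obtained by dominating
\[ f(\xi)\leq C\,|\xi|^{\mu-j+\alpha-2}\ind_{|\xi|\leq 1}(\xi)+C\,|\xi|^{\alpha-2}\ind_{|\xi|>1}(\xi), \]
and integrating each piece against the Gaussian (the first is in $L^1(\R^2)$ by local integrability; the second grows at most polynomially in $|\eta|$ by a Peetre-type inequality). The resulting polynomial growth is then defeated by the reserved factor $e^{-\sigma|\eta|^2/8}$ coming from the far-region restriction, and the same reasoning yields the $|\xi|>1$ far-region bound in the second estimate.
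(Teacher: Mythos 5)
Your proof is correct, and the key normalisation step -- the parabolic scaling $x=\sqrt{t}\,\xi$, $y=\sqrt{t}\,\eta$ that reduces to $t=1$ with all powers of $t$ cancelling -- is the same as in the paper. After that the two arguments diverge. The paper keeps the fixed variable arbitrary, makes the additional reduction to $\sigma=1$ by the elementary inequality $\zeta(1,x)\leq\zeta(1,x/\sigma)\leq\sigma^{-1}\zeta(1,x)$, and then splits the integration variable into three annular shells about the origin relative to the fixed variable, namely $|x|\leq\frac12|y|$, $\frac12|y|<|x|<\frac32|y|$, and $|x|\geq\frac32|y|$ (and the symmetric split for the $y$-integral), reading off Gaussian decay from $|x-y|\gtrsim\max\{|x|,|y|\}$ in the outer shells and a direct polynomial estimate in the middle shell. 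You instead retain $\sigma$, first distinguish whether the fixed point is near the origin ($|\eta|\leq1$) or far from it ($|\eta|>1$), and in the latter case perform a near/far decomposition centred at the fixed point ($|\xi-\eta|\lessgtr|\eta|/2$), handling the far region by splitting off a reserve Gaussian factor $e^{-\sigma|\eta|^2/8}$ and showing the remaining integral grows at most polynomially via a Peetre inequality. Both routes are elementary; yours has the virtue of framing the hypothesis $j-\mu<\theta/q<2+\mu$ transparently as the local integrability of $f$ and $g$ at the origin, while the paper's three-shell split is a single uniform case analysis that does not need to separate $|\eta|\leq1$ from $|\eta|>1$. One small economy you realised that the paper did not: by reserving half the Gaussian exponent there is no need to normalise $\sigma$ to $1$.
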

\begin{proof}
By a substitution replacing $x$ and $y$ by $x \sqrt{t}$ and $y \sqrt{t}$, one can check that it suffices to consider $t=1$, and we set $k^j(x,y) = k_1^j(x,y)$.
It suffices to consider $\sigma\in (0,1]$. Moreover, since $\zeta(1,x) \leq \zeta(1,x/\sigma) \leq \frac{1}{\sigma}\zeta(1,x)$, by a substitution one can reduce to $\sigma=1$. Let $a = \mu+2-\frac{\theta}{q}$. Then $a>0$ by the assumptions in the lemma, and a simple rewriting shows that
\begin{align*}
k^j(x,y) |x|^{\frac{\theta}{q}} |y|^{2-\frac{\theta}{q}}  = \frac{|x|^{2\mu+2-j-a}}{(|x|+1)^{\mu-j}} \frac{|y|^{a}}{(|y|+1)^{\mu}} e^{-|x-y|^2} .
\end{align*}

\textbf{Step 1:} First consider the integral with respect to $x$. One has
\begin{align*}
\int_{\R^2} k^j(x,y) |x|^{\frac{\theta}{q}} |y|^{2-\frac{\theta}{q}} \frac{{\rm d} x}{|x|^2} &= \int_{\R^2} \frac{|x|^{2\mu-j}}{(|x|+1)^{\mu-j}(|y|+1)^{\mu}}  \has{\frac{\abs{y}}{\abs{x}}}^{a} \ee^{-|x-y|^2}\dd x
\\ & = S_{1} + S_{2} + S_{3},
\end{align*}
where $S_{1}$ is the integral over $|x|\leq \tfrac{1}{2}|y|$,  $S_{2}$ is the integral over $\tfrac12|y|<|x|< \tfrac{3}{2}|y|$ and $S_{3}$ is the integral over $|x|\geq \tfrac{3}{2}|y|$.

For $S_1$ note that $|x-y|\geq |y| - |x| \geq \tfrac{1}{2} |y|$. Therefore, $e^{-|x-y|^2}\leq \ee^{-\frac14 |y|^2}$ and we find
\begin{align*}
S_1 & \leq |y|^a \ee^{-\frac14 |y|^2}  \int_{|x|\leq \frac12 |y|} |x|^{2\mu-j-a} (|x|+1)^{j-\mu} \dd x
\\ & \leq 2\pi (|y|+1)^{|j-\mu|+a} \ee^{-\frac14 |y|^2}  \int_{0}^{\frac12 |y|} r^{2\mu-j-a+1}   \dd r
\\ &\eqsim (|y|+1)^{|j-\mu|+a}  |y|^{2\mu-j-a+2} \ee^{-\frac14 |y|^2} \leq C,
\end{align*}
where we used $2\mu-j-a+2 = \mu-j+\frac{\theta}{q}>0$.

For $S_2$ if $|y|\leq1$, then
\begin{align*}
S_2 &\leq \int_{\frac12|y|<|x|< \frac{3}{2}|y|} \Big(\frac{|x|}{|x|+1}\Big)^{\mu-j} \Big(\frac{|x|}{|y|+1}\Big)^{\mu}  \dd x
\\ &  \eqsim \int_{\frac12|y|<|x|< \frac{3}{2}|y|} |x|^{2\mu-j} \dd x\eqsim |y|^{2\mu-j+1}\leq C,
\end{align*}
where we used $2\mu-j+1>2-j\geq 0$.
If $|y|>1$, then $S_1  \eqsim  \int_{\R^2} e^{-|x-y|^2} \dd x = C$.

For $S_3$, note that $|x-y|\geq |x|-|y|\geq \frac13 |x|$. Thus $e^{-|x-y|}\leq e^{-\frac19 |x|^2}$. Now if $|y|>1$, then
\begin{align*}
S_3 &  \leq |y|^{a-\mu} \int_{|x| > \frac32|y|} |x|^{\mu-a} \ee^{-\frac19 |x|^2} \dd x \\ & = 2\pi |y|^{a-\mu} \int_{\frac32|y|}^\infty r^{\mu-a+1} \ee^{-\frac19 r^2} \dd r
\\ & = 2\pi \int_{\frac32}^\infty s^{\mu-a+1} |y|^2 \ee^{-\frac19 |y|^2 s^2} \dd s
\\ & \leq  2\pi \,|y|^2 \ee^{-\frac1{36} |y|^2} \int_{\frac32}^\infty s^{\mu-a+1}\ee^{-\frac1{36} |s|^2}  \dd s \leq C,
\end{align*}
where we used $|y|s \geq \tfrac{1}{2}(|y|+s)$ for $s,y>1$.
If $|y|\leq 1$, then
\begin{align*}
S_3 & \leq \int_{|x| > \frac32|y|} |x|^{2\mu-j-a} (1+|x|)^{j-\mu} \ee^{-\frac19 |x|^2} \dd x
\\ & \leq 2\pi \int_{0}^\infty r^{2\mu-j-a+1} (1+r)^{\abs{\mu-j}} \ee^{-\frac19 r^2} \dd x <\infty,
\end{align*}
because $2\mu-j-a+2 = \mu-j+\frac{\theta}{q}>0$.

\textbf{Step 2:} Next consider the integral with respect to $y$. One has
\begin{align*}
\int_{\R^2} k(x,y) |x|^{\frac{\theta}{q}} |y|^{2-\frac{\theta}{q}} \frac{{\rm d} y}{|y|^2} &= \int_{\R^2} \frac{|x|^{2\mu-j}}{(|x|+1)^{\mu-j}(|y|+1)^{\mu} }  \has{\frac{\abs{y}}{\abs{x}}}^{a-2}\ee^{-|x-y|^2} \dd y
\\ & = T_{1} + T_{2} + T_{3},
\end{align*}
where $T_{1}$ is the integral over $|y|\leq \tfrac{1}{2}|x|$,  $T_{2}$ is the integral over $\tfrac12|x|<|y|< \tfrac{3}{2}|x|$ and $T_{3}$ is the integral over $|y|\geq \tfrac{3}{2}|x|$.

For $T_1$ note that $|x-y|\geq |x| - |y| \geq \tfrac{1}{2} |x|$. Therefore, $e^{-|x-y|^2}\leq e^{-\frac14 |x|^2}$ and we find
\begin{align*}
T_1 & \leq |x|^{2\mu+2-a-j} (|x|+1)^{|\mu-j|} \ee^{-\frac14 |x|^2}  \int_{|y|\leq \frac12 |x|} |y|^{a-2} \dd y
\\ & = 2\pi |x|^{2\mu+2-a-j} (|x|+1)^{|\mu-j|} \ee^{-\frac14 |x|^2}   \int_{0}^{\frac12 |x|} r^{a-1} \dd r
\\ & \eqsim |x|^{2\mu+2-j} (|x|+1)^{|\mu-j|} \ee^{-\frac14 |x|^2}   \leq C,
\end{align*}
where we used $a>0$ and $2\mu+2-j>0$.

For $T_2$ if $|x|\leq 1$ then we can write
\begin{align*}
T_2&\lesssim \Big(\frac{|x|}{|x|+1}\Big)^{2\mu-j} \int_{\frac12|x|<|y|< \frac{3}{2}|x|} \ee^{-|x-y|^2}\dd y \lesssim  |x|^{2\mu-j+2}  \leq C.
\end{align*}
where we used $2\mu+2-j> 0$. If $|x|\geq 1$, then
\[T_2\lesssim \int_{\R^2} e^{-|x-y|^2}\dd y = C.\]

For $T_3$, note that $|x-y|\geq |y|-|x|\geq \frac13 |y|$. Thus $e^{-|x-y|}\leq e^{-\frac19 |y|^2}$.
If $|x|>1$ we can write
\begin{align*}
T_3 & \lesssim \int_{|y|>\frac{3}{2}|x|} \Big(\frac{|y|}{|x|}\Big)^{a-2-\mu} e^{-\frac19|y|^2}\dd y
\\ & = 2\pi\int_{\frac{3}{2}|x|}^\infty \Big(\frac{r}{|x|}\Big)^{a-2-\mu} e^{-\frac19r^2} r \dd r
\\ & = 2\pi\int_{\frac{3}{2}}^\infty s^{a-1-\mu} |x|^2 \ee^{-\frac19 |x|^2 s^2}  \dd s
\\ & \leq 2\pi |x|^2 e^{-\frac1{36} |x|^2} \int_{\frac{3}{2}}^\infty s^{a-1-\mu} \ee^{-\frac1{36} s^2}  \dd s\leq C.
\end{align*}
If $|x|\leq 1$, then since $2\mu-j-a+2\geq 0$,
\begin{align*}
T_3 & \lesssim \int_{|y|>\frac{3}{2}|x|} |y|^{a-2} \ee^{-\frac19|y|^2}\dd y \leq 2\pi \int_{0}^\infty r^{a-1} e^{-\frac19r^2}\dd r <\infty.
\end{align*}
This finishes the proof.
\end{proof}

Let $\kappa\in (0,2\pi)$. On the wedge
$$D:=\{x\in \R^2: x = (r\cos(\varphi), r\sin(\varphi)), r>0, \varphi\in (0,\kappa)\}$$ consider heat equation:
\begin{equation}\label{eq:appheatwedge}
\begin{cases}
u_t= \Delta u, & \text{on $\R_+\times D$},\\
u(0,y)=f(y).
\end{cases}
\end{equation}
Let $\Gamma$ denote the Green kernel of the heat semigroup associated to \eqref{eq:appheatwedge}. The solution to \eqref{eq:appheatwedge} is given by  (see \cite[Lemma 3.7]{KN14})
\[e^{t \Delta} f(x) = \int_{D} \Gamma(x,y,t) f(y) \dd y.\]
In the next proposition we collect some properties of the heat semigroup on the wedge $D$.
\begin{proposition}\label{proposition:propertiesheatwedge}
Assume $\kappa\in (0,2\pi)$, $q\in (1, \infty)$, $\theta\in \R$ and set
\begin{align*}
  X &= L^q(D,\abs{\cdot}^{\theta-2}),\\
  Y &=\dot{W}^{1,q}(D,\abs{\cdot}^{\theta-2})\cap L^q(D,\abs{\cdot}^{\theta-2-q})
\end{align*}
The following assertions hold:
\begin{enumerate}[(i)]
\item \label{it:wedgesectorial} If $-\frac{\pi}{\kappa}<\frac{\theta}{q}<2+\frac{\pi}{\kappa}$, then $-\Delta$ is a sectorial operator of angle $<\pi/2$ on $X$. In particular, $(e^{t\Delta})_{t\geq 0}$ is a bounded analytic semigroup on $L^q(D,\abs{\cdot}^{\theta-2})$;
\item \label{it:wedgedini}  If $1-\frac{\pi}{\kappa}<\frac{\theta}{q}<2+\frac{\pi}{\kappa}$, then $\sup_{t>0} t^{\frac12}\|e^{t\Delta}\|_{\mc{L}(X,Y)}<\infty$.
\end{enumerate}
\end{proposition}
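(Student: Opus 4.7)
The plan is to combine classical Gaussian upper bounds for the Dirichlet heat kernel $\Gamma$ on the wedge $D$ with a weighted Schur test calibrated by Lemma \ref{lemma:techincalkest}. With $\mu := \pi/\kappa > \tfrac{1}{2}$, the relevant estimates (see e.g.\ the references in \cite{CKLL18}) read
\[
|\Gamma(x,y,t)| \leq C\, k_t^0(x,y), \qquad |\nabla_x \Gamma(x,y,t)| \leq C\, t^{-1/2}\, k_t^1(x,y),
\]
the second following from the first via the pointwise identity $|x|^{-1} k_t^0(x,y) \leq t^{-1/2} k_t^1(x,y)$. These estimates extend to complex times $z \in \Sigma_\omega$ with $\omega < \pi/2$ after replacing $t$ by $|z|$ and allowing the constants to depend on $\omega$.

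For (i), the isometry $f \mapsto f\cdot|\cdot|^{(\theta-2)/q}$ from $X$ onto $L^q(D)$ conjugates $e^{t\Delta}$ to the integral operator with kernel $\widetilde{K}(x,y) := |x|^{(\theta-2)/q}\Gamma(x,y,t)|y|^{(2-\theta)/q}$. Applying Schur's test with weight $\phi(y) = |y|^c$ and setting $\eta := (\theta-2)/q$ reduces $L^q(D)$-boundedness of $\widetilde{K}$ to
\[
\int k_t^0(x,y) |y|^{cq' - \eta}\, \dd y \lesssim |x|^{cq' - \eta}, \qquad \int k_t^0(x,y) |x|^{cq + \eta}\, \dd x \lesssim |y|^{cq + \eta}.
\]
Lemma \ref{lemma:techincalkest} applied with $j = 0$ secures both uniformly in $t > 0$ provided $cq' - \eta$ and $cq + \eta$ lie in $(-\mu - 2, \mu)$. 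Since $cq = (q-1)\, cq'$, a short algebraic check shows that such a $c$ exists precisely when $-\mu < \theta/q < 2 + \mu$, which is the assumption in (i). Repeating with the complex-time kernel bound yields $\sup_{z \in \Sigma_\omega} \|e^{z\Delta}\|_{\mc{L}(X)} < \infty$ for any $\omega < \pi/2$, hence $-\Delta$ is sectorial of angle $<\pi/2$ on $X$.

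For (ii) one must bound both $\|\nabla_x e^{t\Delta} f\|_{L^q(D,|\cdot|^{\theta-2})}$ and $\|e^{t\Delta} f\|_{L^q(D,|\cdot|^{\theta-q-2})}$ by $C\, t^{-1/2}\, \|f\|_X$. The gradient part is handled by the same Schur scheme, now with $\Gamma$ replaced by $\nabla_x \Gamma$ and hence $k_t^0$ by $k_t^1$; the lemma's ranges become $(-\mu - 1, \mu)$ and $(-\mu - 2, \mu - 1)$, and the intersection analysis yields exactly $1 - \mu < \theta/q < 2 + \mu$. For the second component, after conjugation with $|\cdot|^{(\theta - q - 2)/q}$, the kernel becomes $|x|^{(\theta - 2)/q - 1}\Gamma(x,y,t)|y|^{(2-\theta)/q}$; the identity $|x|^{-1}\Gamma \leq C\, t^{-1/2} k_t^1$ reduces this to the same $k_t^1$ Schur setup, giving the same range and the $t^{-1/2}$ factor.

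The main technical point is the sharp matching of the exponent ranges in Lemma \ref{lemma:techincalkest} with the ranges for $\theta/q$ in the proposition: the coupling $cq' = cq/(q-1)$ between the two Schur conditions is exactly what produces the $q$-independent upper endpoint $2 + \mu$, and the $j$-dependent lower endpoint shifts from $-\mu$ (for $j = 0$) to $1 - \mu$ (for $j = 1$), reflecting the additional half-derivative demanded in (ii). The complex-time kernel bounds needed for sectoriality are standard but must be established by hand; the remainder is bookkeeping.
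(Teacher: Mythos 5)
Your part~(ii) is essentially the paper's argument (real-time kernel bounds from the cited reference, a weighted Schur test calibrated by Lemma~\ref{lemma:techincalkest} with $j=1$, and the observation that $|x|^{-1}k_t^0 \leq t^{-1/2}k_t^1$ handles the second weight). But your part~(i) has a genuine gap. You prove sectoriality by establishing $\sup_{z\in\Sigma_\omega}\|e^{z\Delta}\|_{\mc{L}(X)}<\infty$, which requires complex-time kernel bounds of the form $|\Gamma(x,y,z)| \lesssim \zeta^\mu(|z|,x)\zeta^\mu(|z|,y)|z|^{-1}\exp(-\sigma|x-y|^2/|z|)$. You wave this off as ``standard but must be established by hand,'' but it is not standard: the classical Davies/Ouhabaz complex-time extension of Gaussian bounds for self-adjoint operators gives the $|z|^{-d/2}e^{-c\re(|x-y|^2/z)}$ profile, \emph{without} the corner-correction factors $\zeta^\mu$. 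Those factors are precisely what encodes the boundary decay near the vertex of the wedge, and they are what makes the Schur test produce the claimed $\theta/q$-range; the kernel reference you lean on only supplies them for real $t>0$. Without them, the complex-time Schur test collapses to a far narrower range.

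The paper sidesteps this entirely by characterizing analyticity via the real-time criterion (bound $\|e^{t\Delta}\|$ and $t\|\Delta e^{t\Delta}\|$ uniformly in $t>0$, citing \cite[Theorem~II.4.6]{EN00}), which needs only the real-time kernel bounds for $|\alpha|\le 2$. The price is that the $t\Delta e^{t\Delta}$ bound requires the $j=2$ case of Lemma~\ref{lemma:techincalkest}, whose admissible range $2-\mu<\theta/q<2+\mu$ is strictly narrower than the target $-\mu<\theta/q<2+\mu$; the paper then fills the rest by a duality argument (getting $-\mu<\theta/q<\mu$) and complex interpolation for the middle range $\mu\le\theta/q\le 2-\mu$. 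Your route avoids $j=2$ and the duality/interpolation bookkeeping at the cost of the unestablished complex-time bounds---these two gaps are two sides of the same trade-off, and you cannot simply drop both. Separately, your phrasing suggests the gradient estimate $|\nabla_x\Gamma|\lesssim t^{-1/2}k_t^1$ ``follows from'' the size bound $|\Gamma|\lesssim k_t^0$ via $|x|^{-1}k_t^0\le t^{-1/2}k_t^1$; it does not---the loss of one power of $\zeta$ in the boundary weight when differentiating is an independent estimate on $\Gamma$ and has to come from the kernel reference.
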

Although $-\Delta$ is sectorial of angle $<\pi/2$ for a large range of values of $\theta$, we do not know its domain on the full range.
Although we do not need it we note that if $2-\frac{\pi}{\kappa}<\frac{\theta}{q}<2+\frac{\pi}{\kappa}$, then (see \cite[Corollary 5.2]{PS07})
\[D(\Delta) = \cbraceb{u: u,u/\abs{\cdot}^2, \partial^{\alpha} u\in L^q(D,\abs{\cdot}^{\theta-2}) \text{ for } |\alpha|=2}.\]
The domain for other values of $\theta$ seems more difficult to characterize.

\begin{proof}
Let $\mu = \pi/\kappa \in (1/2,\infty)$. For \ref{it:wedgesectorial} first suppose $2-\mu<\frac{\theta}{q}<2+\mu$. It follows from \cite[Corollary 5.2]{PS07} that $-\Delta$ has deterministic maximal regularity. Thus in this case  \ref{it:wedgesectorial}  follows from \cite[Section 4]{Do00}.
 The range $-\mu<\frac{\theta}{q}<\mu$ follows by a duality argument from the range  $2-\mu<\frac{\theta}{q}<2+\mu$, since
\begin{equation*}
  \hab{L^q(D,\abs{\cdot}^{\theta-2})}^* = L^{q'}(D,\abs{\cdot}^{\widetilde{\theta}-2})
\end{equation*}
with $\widetilde{\theta} = (2q-\theta)/(q-1)$. The remaining range $\mu \leq \frac{\theta}{q}\leq 2-\mu$ follows by complex interpolation (see \cite[Theorem 1.18.5]{Tr78}).

For \ref{it:wedgedini} we use the following estimates for $\Gamma$ (see \cite[Theorem 3.10]{KN14}):
\begin{align*}
|\partial_x^{\alpha}\Gamma(x,y,t)| \leq C\zeta^{\mu-|\alpha|}(t,x) \zeta^{\mu}(t,y) t^{-\frac{2+|\alpha|}{2}} \exp\Big(-\frac{\sigma|x-y|^2}{t}\Big),\qquad \abs{\alpha}\leq 1
\end{align*}
where $\zeta(t,x) = \frac{|x|}{|x|+\sqrt{t}}$. Therefore it suffices to prove for $f \in L^q(D,\abs{\cdot}^{\theta-2})$
\begin{align*}
\Big\|x\mapsto \int_{D} k_t(x,y) f(y) \dd y\Big\|_{L^q(D,\abs{\cdot}^{\theta-2})} \leq C\|f\|_{L^q(D,\abs{\cdot}^{\theta-2})},
\end{align*}
where $k_t(x,y)$ is either
\begin{align}
\label{eq:kernel1}  & \zeta^{\mu-1}(t,x) \zeta^{\mu}(t,y) t^{-1} \exp\Big(-\frac{\sigma|x-y|^2}{t}\Big),    \intertext{or}
\label{eq:kernel2} & \zeta^{\mu}(t,x)  \zeta^{\mu}(t,y) \abs{x}^{-1}  t^{-1/2} \exp\Big(-\frac{\sigma|x-y|^2}{t}\Big),
\end{align}
where \eqref{eq:kernel1} and \eqref{eq:kernel2} correspond  to the boundedness in  $\dot{W}^{1,q}(D,|x|^{\theta-2})$ and $ L^q(D,|x|^{\theta-2-q})$ respectively. Since \eqref{eq:kernel2}$\leq$\eqref{eq:kernel1}  it suffices to prove the boundedness for the case \eqref{eq:kernel1}.
A simple rewriting shows that it is enough to prove for $g \in L^q(D,\abs{\cdot}^{-2})$
\begin{align*}
\Big\|x\mapsto \int_{D} k_t(x,y) |x|^{\frac{\theta}{q}} |y|^{2-\frac{\theta}{q}} g(y) \frac{{\rm d} y}{\abs{y}^2} \Big\|_{L^q(D,\abs{\cdot}^{-2})} \leq C\, \|g\|_{L^q(D,\abs{\cdot}^{-2})},
\end{align*}
To prove the latter by Schur's lemma (see \cite[Apendix A]{Gr14b}) it suffices to show
\begin{align*}
\sup_{t>0,y\in \R^2 }\int_{D} k_t(x,y) |x|^{\frac{\theta}{q}} |y|^{2-\frac{\theta}{q}} \frac{{\rm d} x}{|x|^2} &<\infty,
\\ \sup_{t>0,x\in \R^2 }\int_{D} k_t(x,y) |x|^{\frac{\theta}{q}} |y|^{2-\frac{\theta}{q}} \frac{{\rm d} y}{|y|^2} &<\infty,
\end{align*}
which follows from Lemma \ref{lemma:techincalkest}.
\end{proof}

\subsection*{Parabolic Horm\"ander and Dini conditions on a smooth bounded domain}
Define the parabolic norm on $\R \times \R^d$ by
\begin{equation*}
  \abs{(t,x)}_{(2,1)} := \max\cbrace{\abs{t}^{1/2}, \abs{x}}, \qquad (t,x), \in \R \times \R^d.
\end{equation*}
Let $D \subseteq \R^d$ be a smooth bounded domain and fix $T \in (0,\infty]$. We equip $(0,T) \times D$ with the parabolic metric induced by  $\abs{\cdot}_{(2,1)}$, which turns it into a space of homogeneous type (see also Section \ref{section:homogeneoustype}). We will show that versions of Lemma \ref{lemma:hormanderdini} and Lemma \ref{lemma:standardkernelderivatives} work in this setting.

\begin{lemma}\label{lemma:checktechnical}
  Fix $T \in (0,\infty]$ and let
  $K\colon(0,T)\times (0,T) \times D \times D \to \C$
 be measurable such that $K(t,s,x,y)=0$ for $t<s$. Suppose there exist $C,c>0$ such that for $\abs{\alpha} \leq 1$,
 \begin{align}\label{eq:technicaldini1}
  \absb{\partial^\alpha_x K(t,s,x,y)} &\leq C\, \frac{1}{(t-s)^{(\abs{\alpha}+d+1)/2}} \,\exp\has{-c\frac{\abs{x-y}}{(t-s)^{1/2}}},\\
    \absb{\partial_t K(t,s,x,y)} &\leq C\, \frac{1}{(t-s)^{(d+3)/2}} \,\exp\has{-c\frac{\abs{x-y}}{(t-s)^{1/2}}},\label{eq:technicaldini2}
\end{align}
for all $t>s$ and $x,y \in D$. Then
 \begin{equation*}
   \absb{K\ha{t,s,x,y}- K\ha{t',s,x',y}} \lesssim \, \frac{\abs{(t-t',x-x')}_{(2,1)}}{\abs{(t-s,x-y)}_{(2,1)}^{d+2}}
 \end{equation*}
for all $\abs{(t-t',x-x')}_{(2,1)}\leq \frac12 \abs{(t-s,x-y)}_{(2,1)}$.
\end{lemma}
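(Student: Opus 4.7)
The plan is to reduce the H\"older-type estimate to a derivative bound via the fundamental theorem of calculus, after disposing of a boundary case directly using the Gaussian decay. Set $\rho := \abs{(t-s,x-y)}_{(2,1)}$ and $\sigma := \abs{(t-t',x-x')}_{(2,1)}$, so our hypothesis is $\sigma \leq \rho/2$ and the goal is
\[
|K(t,s,x,y) - K(t',s,x',y)| \lesssim \sigma\rho^{-(d+2)}.
\]
Without loss we may assume $t > s$, since otherwise both sides vanish.

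The first step is a scale-invariant upgrade of the hypotheses. The elementary inequality $u^{-n} e^{-cr/u} \lesssim_n \max(u,r)^{-n}$ for $u,r > 0$ (trivial when $u \geq r$; for $u<r$ it reduces to $n \log v \lesssim v$ with $v=r/u$) applied with $u = \sqrt{t-s}$ and $r = |x-y|$ converts \eqref{eq:technicaldini1}--\eqref{eq:technicaldini2} into
\[
|\partial_x^\alpha K(t,s,x,y)| \lesssim \abs{(t-s,x-y)}_{(2,1)}^{-(d+1+|\alpha|)}, \qquad |\partial_t K(t,s,x,y)| \lesssim \abs{(t-s,x-y)}_{(2,1)}^{-(d+3)}.
\]

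Next I would split into two cases according to whether $t' \leq s$. If $t' \leq s$, then $K(t',s,x',y)=0$ and $\sqrt{t-s} \leq \sqrt{t-t'} \leq \sigma \leq \rho/2$, forcing $\rho = |x-y|$. Invoking $e^{-cr/u} \lesssim (u/r)^{d+2}$ directly in \eqref{eq:technicaldini1} with $|\alpha|=0$ yields
\[
|K(t,s,x,y)| \lesssim (t-s)^{-(d+1)/2}(t-s)^{(d+2)/2} |x-y|^{-(d+2)} = (t-s)^{1/2}\rho^{-(d+2)} \leq \sigma\rho^{-(d+2)}.
\]
For the remaining case $t' > s$, I would split
\[
K(t,s,x,y) - K(t',s,x',y) = \bigl[K(t,s,x,y)-K(t',s,x,y)\bigr] + \bigl[K(t',s,x,y)-K(t',s,x',y)\bigr]
\]
and apply the fundamental theorem of calculus to each bracket.

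The key geometric fact needed is: for every $\tau$ between $t$ and $t'$ and every $z$ on the segment $[x',x]$, one has $\abs{(\tau-s,z-y)}_{(2,1)} \geq \rho/2$. If $\rho = \sqrt{t-s}$, then $|t-t'| \leq \sigma^2 \leq \rho^2/4$ combined with $\tau \geq \min(t,t') \geq s$ gives $\tau - s \geq (3/4)\rho^2$, hence $\sqrt{\tau-s} \geq \rho\sqrt{3}/2$. If $\rho = |x-y|$, then the triangle inequality together with $|z-x| \leq |x-x'| \leq \sigma$ gives $|z-y| \geq \rho - \sigma \geq \rho/2$. Feeding the parabolic decay estimates of step one at this lower bound, the FTC yields
\[
|K(t,s,x,y)-K(t',s,x,y)| \lesssim |t-t'|\rho^{-(d+3)} \leq \sigma^2 \rho^{-(d+3)} \leq \sigma\rho^{-(d+2)}
\]
and analogously $|K(t',s,x,y)-K(t',s,x',y)| \lesssim |x-x'|\rho^{-(d+2)} \leq \sigma\rho^{-(d+2)}$, using $\sigma \leq \rho/2$ at the last step of each. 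The main obstacle is merely the case analysis in the geometric fact and the handling of the boundary case $t' \leq s$; once the scale-invariant pointwise bounds from step one are in place, everything else is a direct Calder\'on--Zygmund-style path estimate.
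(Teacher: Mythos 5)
Your overall structure matches the paper's: split by the triangle inequality into a time difference and a spatial difference, then estimate each piece by the fundamental theorem of calculus after reducing the Gaussian bounds to polynomial ones. The paper also uses the same case distinctions (whether $t'$ is above or below $s$, and whether the parabolic distance $\rho$ is realized by the time or space component). Your ``scale-invariant upgrade'' via $u^{-n}e^{-cr/u}\lesssim \max(u,r)^{-n}$ is a clean way of packaging manipulations that the paper carries out inline with the exponential.

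There is, however, a genuine gap in the spatial estimate. When you bound $|K(t',s,x,y)-K(t',s,x',y)|$ by the fundamental theorem of calculus along the segment $[x',x]$, you are differentiating $K$ in its third slot at points $z$ on that segment. But $D$ is a smooth bounded domain, not assumed convex: the segment $[x',x]$ may leave $D$, where $\partial_x K$ has no meaning. The paper addresses exactly this by replacing the segment with a smooth curve $\gamma\colon [0,1]\to D$ from $x$ to $x'$ chosen so that $\sup_r|\gamma'(r)|\lesssim|x-x'|$ and $\inf_r|\gamma(r)-y|\gtrsim\min\{|x-y|,|x'-y|\}$, and records that such $\gamma$ exists because $D$ is smooth and bounded. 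Both properties are then used in the resulting FTC estimate (the first to control $|\gamma'|$, the second to get the lower bound $|(\ \cdot\ ,\gamma(r)-y)|_{(2,1)}\gtrsim\rho$). Without this substitute for the straight segment, your ``key geometric fact'' is correct but irrelevant, since it is applied at points where the kernel may not be differentiable (or even defined). Separately, your opening reduction ``without loss $t>s$, since otherwise both sides vanish'' is not literally correct: if $t\le s< t'$ the left-hand side is $|K(t',s,x',y)|$, which need not vanish. The repair is easy --- by symmetry assume $t\ge t'$, so $t\le s$ forces both $t,t'\le s$ --- but as written the justification is wrong.
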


\begin{proof}
Take $\abs{(t-t',x-x')}_{(2,1)}\leq \frac12 \abs{(t-s,x-y)}_{(2,1)}$ and define
\begin{align*}
  \text{\framebox[15pt]{A}} &:= \absb{K\ha{t,s,x,y}-K\ha{t',s,x,y}},\\
  \text{\framebox[15pt]{B}} &:= \absb{K\ha{t',s,x,y}-K\ha{t',s,x',y}}.
\end{align*}
By the triangle inequality it suffices to estimate \framebox[15pt]{A} and \framebox[15pt]{B} separately. Let us first consider $\framebox[15pt]{A}$. If $t,t'<s$ there is nothing to prove. If $t'<s<t$ we have $t-t'>t-s$ and thus also $\frac12\abs{x-y}^2 \geq t-t'$. Therefore using \eqref{eq:technicaldini1} with $\alpha=0$ we have the estimate
\begin{align*}
  \text{\framebox[15pt]{A}} &\lesssim   \frac{\ha{t-s}^{1/2}}{\ha{t-s}^{(d+2)/2}} \exp\has{-c\frac{\abs{x-y}}{(t-s)^{1/2}}} \lesssim
  \frac{\abs{(t-t',x-x')}_{(2,1)}}{\abs{t-s,x-y}_{(2,1)}^{d+2}}.
\end{align*}
If $s<t'<t$ we first consider the case that $\abs{x-y} \leq \ha{t-s}^{1/2}$. Then by \eqref{eq:technicaldini2} and the fundamental theorem of calculus we have
\begin{align*}
  \text{\framebox[15pt]{A}}
  &\lesssim \frac{\ha{t-t'}}{\ha{t'-s}^{(d+3)/2}}  \leq 2 \frac{\abs{(t-t',x-x')}_{(2,1)}^2}{\abs{t-s,x-y}_{(2,1)}^{d+3}},
\end{align*}
since in this case
\begin{equation}\label{eq:tt'comparable}
  \abs{t'-s} \geq \abs{t-s} - \abs{t-t'} \geq \tfrac12 \abs{t-s}.
\end{equation}
Next if $\ha{t-s}^{1/2} \leq \abs{x-y}$, then again by \eqref{eq:technicaldini2} and the fundamental theorem of calculus we have
\begin{align*}
  \text{\framebox[15pt]{A}}
  &\lesssim \frac{\abs{t-t'}}{\abs{x-y}^{d+3}}  \leq \frac{\abs{(t-t',x-x')}_{(2,1)}^2}{\abs{t-s,x-y}_{(2,1)}^{d+3}}
\end{align*}
The cases $t<s<t'$ and $s<t<t'$ are treated similarly with the roles of $t$ and $t'$ interchanged.

Now for \framebox[15pt]{B} suppose that $t'>s$. Let $\gamma:[0,1]\to D$ be a smooth curve from $x$ to $x'$ such that
\begin{align*}
\sup_{r \in [0,1]}\gamma'(r) &\lesssim \abs{x-x'}\\
  \inf_{r \in [0,1]} \abs{\gamma(r)-y} &\gtrsim \min\cbraceb{\abs{x-y},\abs{x'-y}},
\end{align*}
which exists since $D$ is smooth and bounded.
 We first consider the case that $\abs{x-y} \leq \abs{t-s}^{1/2}$. Then by \eqref{eq:technicaldini1} and the fundamental theorem of calculus we have
\begin{align*}
  \text{\framebox[15pt]{B}} &= \absb{\int_0^1 \frac{\ddn}{\ddn r} K\hab{t,s,\gamma(r),y} \dd r}\\
  &\lesssim \frac{\abs{x-x'}}{\abs{t'-s}^{(d+2)/2}}  \leq 2 \frac{\abs{(t-t',x-x')}_{(2,1)}}{\abs{(t-s,x-y)}_{(2,1)}^{d+2}},
\end{align*}
since \eqref{eq:tt'comparable} is valid in this case. Similarly if $\abs{t-s}^{1/2} \leq \abs{x-y}$ we have
\begin{align*}
  \text{\framebox[15pt]{B}} &\lesssim \frac{\abs{x-x'}}{\min\cbrace{\abs{x-y},\abs{x'-y}}^{d+2}}  \leq 2 \frac{\abs{(t-t',x-x')}_{(2,1)}}{\abs{t-s,x-y}_{(2,1)}^{d+2}},
\end{align*}
since in this case
\begin{equation*}
  \abs{x'-y} \geq \abs{x-y} - \abs{x-x'} \geq \tfrac12 \abs{x-y}.\qedhere
\end{equation*}
\end{proof}

Now we give an analog of Lemma \ref{lemma:hormanderdini} in the parabolic setting:
\begin{lemma}\label{lemma:21hormanderdini}
  Fix $T \in (0,\infty]$, let
  $K\colon(0,T)\times (0,T) \times D \times D \to \C$
 be measurable  and take $\epsilon \in (0,1]$. Suppose there is an $A_0>0$ such that
 \begin{equation*}
   \absb{K\ha{t,s,x,y}-K\ha{t',s,x',y}} \leq A_0 \, {\frac{\abs{(t-t',x-x')}^\epsilon_{(2,1)}}{\abs{(t-s,x-y)}_{(2,1)}^{\epsilon+d+1}}}
 \end{equation*}
for all $\abs{(t-t',x-x')}_{(2,1)}\leq \frac12 \abs{(t-s,x-y)}_{(2,1)}$. Then for all $(t,x),(t',x') \in (0,T)\times D$ we have
\begin{equation*}
  \has{\int_0^T \has{\int_{I(s)} \abs{K\ha{t,s,x,y}-K\ha{t',s,x',y}} \dd y}^2\dd s}^{1/2} \leq C_\epsilon \, A_0
\end{equation*}
where
\begin{equation*}
  I(s) := \cbraceb{y \in D: \abs{(t-t',x-x')}_{(2,1)}\leq \tfrac12 \abs{(t-s,x-y)}_{(2,1)}}.
\end{equation*}
\end{lemma}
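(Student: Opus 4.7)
The plan is to reduce to a direct pointwise substitution followed by polar-coordinate integration in the parabolic metric. Write $\rho := \abs{(t-t',x-x')}_{(2,1)}$ and, for each fixed $s$, abbreviate $h := \abs{t-s}^{1/2}$, so that $\abs{(t-s,x-y)}_{(2,1)} = \max\{h,\abs{x-y}\}$. The membership condition $y \in I(s)$ reads $\max\{h,\abs{x-y}\} \geq 2\rho$, which naturally splits $s$-space into two regimes: either $h \geq 2\rho$ (and then $I(s)$ contains all of $D$) or $h < 2\rho$ (which forces $I(s) \subseteq \cbrace{y: \abs{x-y} \geq 2\rho}$).

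The first step is to plug the pointwise hypothesis into the inner integral and compute using polar coordinates in $y$ centered at $x$. In the regime $h \geq 2\rho$ I expect
\[
\int_D \frac{\dd y}{\max\{h,\abs{x-y}\}^{\epsilon+d+1}} \leq \int_0^h \frac{r^{d-1}}{h^{\epsilon+d+1}}\dd r + \int_h^\infty \frac{\dd r}{r^{\epsilon+2}} \leq C_{d,\epsilon}\, h^{-\epsilon-1},
\]
while in the regime $h < 2\rho$ the restriction to $\abs{x-y}\geq 2\rho$ gives the bound $C_{d,\epsilon}\,\rho^{-\epsilon-1}$. Multiplying by $A_0\,\rho^\epsilon$ yields, pointwise in $s$, that the inner integral is controlled by $C_{d,\epsilon}\, A_0\, \rho^\epsilon\, h^{-\epsilon-1}$ in the first regime and by $C_{d,\epsilon}\, A_0\,\rho^{-1}$ in the second.

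The second step is to square and integrate against $\dd s$, parameterized via $\tau = \abs{t-s}$ so that $h = \tau^{1/2}$ and the dichotomy becomes $\tau \geq 4\rho^2$ versus $\tau < 4\rho^2$. The short-time contribution is bounded by $A_0^2\,\rho^{-2}\cdot 8\rho^2 \lesssim A_0^2$, while the long-time contribution gives
\[
A_0^2\,\rho^{2\epsilon}\int_{4\rho^2}^\infty \tau^{-\epsilon-1}\dd\tau \;=\; \frac{4^{-\epsilon}}{\epsilon}\, A_0^2.
\]
Summing and taking square roots yields the desired bound $C_\epsilon\, A_0$.

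I do not anticipate any real obstacle; the only mild subtlety is that, in the short-time regime where the pointwise kernel bound is largest, it is precisely the $I(s)$-restriction which confines the $y$-integration to the exterior annulus $\cbrace{\abs{x-y}\geq 2\rho}$, supplying the missing decay. Boundedness of $D$ and the case $T=\infty$ are harmless, since all of the $r$- and $\tau$-integrals are extended to $(0,\infty)$ in the upper estimates. Structurally the argument mirrors the proof of Lemma \ref{lemma:hormanderdini}, with the parabolic scaling absorbed by the change of variable $\tau = \abs{t-s}$.
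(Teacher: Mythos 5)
Your proof is correct. It takes essentially the same route as the paper's: plug in the pointwise bound, compute the inner $y$-integral in polar coordinates, then integrate over $s$. The only difference is organizational: you split by ``short time'' ($\abs{t-s}<4\rho^2$, where the $I(s)$-restriction forces $\abs{x-y}\geq 2\rho$ and the $y$-integral gives $\rho^{-\epsilon-1}$) versus ``long time'' ($\abs{t-s}\geq 4\rho^2$, where the $y$-integral gives $h^{-\epsilon-1}$ with $h=\abs{t-s}^{1/2}$), whereas the paper first shifts variables to $(r,z)=(t-s,x-y)$ and then splits by which coordinate realizes the parabolic maximum $\abs{(r,z)}_{(2,1)}=\max\{\abs{r}^{1/2},\abs{z}\}$, producing the pieces $S_1$ and $S_2$. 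Both decompositions reduce to the same two convergent integrals, $\rho^{-2}\int_0^{4\rho^2}\dd\tau$ and $\rho^{2\epsilon}\int_{4\rho^2}^{\infty}\tau^{-\epsilon-1}\dd\tau$, and both make explicit the crucial point you flagged: in the short-time regime the $I(s)$-restriction alone supplies the decay in $y$. Your version is arguably a touch more streamlined since the $y$-integral is carried out in full once and for all before the $s$-dichotomy, but neither is materially simpler than the other.
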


\begin{proof}
Take $(t,x),(t',x') \in (0,T)\times D$ and define $\rho = \abs{(t-t',x-x')}_{(2,1)}$. Then by assumption we have
\begin{align*}
\has{\int_0^T \has{\int_{I(s)} &\abs{K\ha{t,s,x,y}-K\hab{t',s,x',y} \dd y}}^2\dd s}^{1/2}\\
 &\leq A_0\,
\has{\int_0^T \has{\int_{I(s)} {\frac{\abs{(t-t',x-x')}^\epsilon_{(2,1)}}{\abs{(t-s,x-y)}_{(2,1)}^{\epsilon+d+1}}} \dd y}^2\dd s}^{1/2}\\
&\leq A_0\,
\has{\int_\R \has{\int_{I_1(r)\cup I_2(r)} {\frac{\rho^\epsilon}{\abs{(r,z)}_{(2,1)}^{\epsilon+d+1}}} \dd z}^2\dd r}^{1/2}\\
&\leq A_0\,(S_1+S_2)
\end{align*}
where $S_1$ and $S_2$ are the parts of the inner integral over $I_1(r)$ and $I_2(r)$ respectively with
\begin{align*}
 I_1(r) &:= \cbraceb{z \in D:  \abs{(r,z)}_{(2,1)}\geq 2\rho \text{ and }\abs{(r,z)}_{(2,1)} = \abs{z} }\\
 I_2(r) &:= \cbraceb{z \in D:  \abs{(r,z)}_{(2,1)}\geq 2\rho \text{ and }\abs{(r,z)}_{(2,1)} = \abs{r}^{1/2}}
\end{align*}
For $S_1$ we have
   \begin{align*}
     S_1 &= \nrms{r \mapsto \int_{I_1(r)} \frac{\rho^{\epsilon}}{\abs{z}^{\epsilon+d+1}}\dd z}_{L^2(\R)}\\
     &\lesssim \nrms{r \mapsto \int_{\max\cbrace{2\rho,\abs{r}^{1/2}}}^\infty \frac{\rho^{\epsilon}}{u^{\epsilon+2}}\dd u}_{L^2(\R)}\\
     &\lesssim  \nrms{r \mapsto \int_{2\rho}^\infty  \frac{\rho^{\epsilon}}{u^{\epsilon+2}}\dd u}_{L^2(0,4\rho^2)}+ \nrms{r \mapsto \int_{\abs{r}^{1/2}}^\infty \frac{\rho^{\epsilon}}{u^{\epsilon+2}}\dd u}_{L^2(4\rho^2,\infty)}\\
     &\lesssim \rho^{-1} \has{\int_{0}^{4\rho^2} \dd r}^{1/2} + \rho^\epsilon \has{\int_{4\rho^2}^\infty \frac{1}{r^{\epsilon+1}}\dd r}^{1/2} \leq C_\epsilon,
   \end{align*}
   and for $S_2$ we have
   \begin{align*}
    S_2 &\leq 2 \nrms{r \mapsto \int_{\abs{z} \leq r^{1/2}} \frac{\rho^{\epsilon}}{r^{(\epsilon+d+1)/2}}\dd z}_{L^2(4\rho^2,\infty)}\\
    &\lesssim  \rho^{\epsilon}\nrmb{r \mapsto  \frac{1}{r^{(\epsilon+1)/2}}}_{L^2(4\rho^2,\infty)}\leq C_\epsilon,
   \end{align*}
   which proves the lemma.
  \end{proof}

\bibliographystyle{alpha-sort}
\bibliography{thesisbib}

\appendix
\end{document}